\definecolor{VeryDarkGreen}{rgb}{0,0.18,0.08}
\definecolor{VeryDarkBrown}{rgb}{0.12,0.08,0.04}
\newcommand{\into}{\hookrightarrow }
\DeclareMathOperator{\Supp}{Supp}
\DeclareMathOperator{\Hom}{Hom}
\DeclareMathOperator{\rk}{rk}
\DeclareMathOperator{\id}{id}
\DeclareMathOperator{\coker}{coker}
\DeclareMathOperator{\colim}{colim}
\DeclareMathOperator{\Ass}{\mathbf{Ass}}
\DeclareMathOperator{\Spec}{Spec}
\DeclareMathOperator{\im}{im}
\DeclarePairedDelimiter\abs{\lvert}{\rvert}%
\newcommand{\eps}{\varepsilon}
\title{Functorial Test Modules}
\author{Manuel Blickle}
\author{Axel St\"abler}
\begin{document}

\swapnumbers
\theoremstyle{plain}
\newtheorem{Le}{Lemma}[section]
\newtheorem{Ko}[Le]{Corollary}
\newtheorem{Theo}[Le]{Theorem}
\newtheorem*{TheoB}{Theorem}
\newtheorem{Prop}[Le]{Proposition}
\newtheorem*{PropB}{Proposition}
\newtheorem{Con}[Le]{Conjecture}
\theoremstyle{definition}
\newtheorem{Def}[Le]{Definition}
\newtheorem*{DefB}{Definition}
\newtheorem{Bem}[Le]{Remark}
\newtheorem{Bsp}[Le]{Example}
\newtheorem*{BspB}{Example}
\newtheorem{Be}[Le]{Observation}
\newtheorem{Sit}[Le]{Situation}
\newtheorem{Que}[Le]{Question}
\newtheorem{Dis}[Le]{Discussion}
\newtheorem{Prob}[Le]{Problem}
\newtheorem*{Konv}{Conventions}

\def\cocoa{{\hbox{\rm C\kern-.13em o\kern-.07em C\kern-.13em o\kern-.15em
A}}}
\address{Manuel Blickle\\
Johannes Gutenberg-Universit\"at Mainz\\ Fachbereich 08\\
Staudingerweg 9\\
55099 Mainz\\
Germany}
\email{blicklem@uni-mainz.de}

\address{Axel St\"abler\\
Johannes Gutenberg-Universit\"at Mainz\\ Fachbereich 08\\
Staudingerweg 9\\
55099 Mainz\\
Germany}
\email{staebler@uni-mainz.de}

\date{\today}

\subjclass[2010]{Primary 13A35; Secondary 14F10, 14B05}

\begin{abstract}
In this article we introduce a slight modification of the definition of test modules which is an additive functor $\tau$ on the category of coherent Cartier modules. We show that in many situations this modification agrees with the usual definition of test modules. Furthermore, we show that for a smooth morphism $f \colon X \to Y$ of $F$-finite schemes one has a natural isomorphism $f^! \circ \tau \cong \tau \circ f^!$. If $f$ is quasi-finite and of finite type we construct a natural transformation $\tau \circ f_* \to f_* \circ \tau$.
\end{abstract}

\maketitle

\section*{Introduction}
Since their appearance in tight closure theory in the nineties \cite{hochsterhunekebriancon}, test ideals played an ever more important role in the study of singularities of algebraic varieties over a field of positive characteristic. Their connection to multiplier ideals, first explored by Smith \cite{smithtestmultiplier} and Hara \cite{haratestmultiplier}, made them a key ingredient in the dictionary between the singularity types in characteristic zero arising from the minimal model program and the so-called $F$-singularities in positive characteristic. There have been various generalizations of the original definition, either incorporating additional data (test ideals for pairs \cite{haratakagigeneralization} and triples \cite{blickleschwedetakagizhangDiscretenessfjumpingsingular}) or allowing ever more general ground rings \cite{schwedenonqgorenstein}, or working with the canonical sheaf instead of the ring itself \cite{smithrational}. In \cite{blicklep-etestideale} the first author, building on work of Schwede \cite{schwedenonqgorenstein}, gave a very general framework in the context of Cartier Modules \cite{blickleboecklecartierfiniteness} which allowed the definition of so-called test modules, generalizing most approaches of test ideals considered previously. Besides many conceptual advantages, one shortcoming of this definition was that it is neither additive (does not preserve direct sums) nor functorial. In this article we study a slight variation of the original definition in \cite{blicklep-etestideale} which was suggested by Karl Schwede, which remedies these two shortcomings and at the same time agrees with the original definition in many key situations.

Let us briefly outline the original definition of test modules to pinpoint the necessary change in order to achieve this goal. For simplicity let us assume that $R = \mathbb{F}_p[x_1, \ldots, x_n]$ and $f$ is the equation of a hypersurface. We denote by $F_\ast R$ the ring $R$ considered as a module over itself via the Frobenius morphism $F \colon r \mapsto r^p$. In this case a basis of $F_\ast R$ over $R$ is given by $x_1^{i_1} \cdots x_n^{i_n}$ where each $i_j \leq p-1$. We denote by $\kappa: F_\ast R \to R$ the Cartier operator sending the basis monomial $x_1^{p-1} \cdots x_n^{p-1}$ to $1$ and all others to zero. Then the test ideal $\tau(R, f^t)$, for $t \in \mathbb{R}_{\geq 0}$, is defined as the smallest non-zero ideal that is stable under the maps $\kappa^e f^{\lceil tp^e \rceil}$ for all $e \geq 0$. It is known that varying $t$ one obtains a descending filtration of ideals of $R$. The numbers $t \in \mathbb{R}$ where $\tau(R, f^t) \neq \tau(R, f^{t-\eps})$ for all $\eps > 0$ are called $F$-jumping numbers of the hypersurface $f$. By \cite{blicklemustatasmithdiscretenesshypersurfaces} these form a discrete set of rational numbers.

To generalize this definition to modules one replaces the ring $R$ by a finitely generated $R$-module $M$ and the set of maps $\kappa^e f^{\lceil t p^e \rceil}$ by a graded subalgebra $\mathcal{C}$ of $\bigoplus_{e \geq 0} \Hom_R(F^e_\ast M, M)$ such that $\mathcal{C}_0 = R$. Denoting the positively graded part of $\mathcal{C}$ by $\mathcal{C}_+=\bigoplus_{e \geq 1} \Hom_R(F^e_\ast M, M)$ one obtains a descending sequence
\[
    M \supseteq \mathcal{C}_+M \supseteq \mathcal{C}^2_+ M \supseteq \mathcal{C}^3_+M \supseteq \ldots
\]
of $\mathcal{C}$-submodules which by \cite{blicklep-etestideale} stabilizes. We denote the stable member by $\underline{M}$. Then the test module $\tau(M, \mathcal{C})$ as defined in op.\ cit.\ is the smallest $\mathcal{C}$-submodule $N \subseteq \underline{M}$ such that for every generic point $\eta$ of $\Supp \underline{M}$ one has an equality $N_\eta = \underline{M}_\eta$. Replacing $M$ by $\underline{M}$ is a crucial point which already hints at the fact that a desired theory of test modules does not discriminate between Cartier modules $N \to M$ where on the kernel and cokernel some high enough power $\mathcal{C}^n_+$ acts as zero, i.e. $N \to M$ is a nil-isomorphism. One advantage of the generality obtained in this way is that one can reduce questions of test ideals on singular rings $R/I$ to test modules over the better behaved (e.g.\ regular) ambient ring $R$, and vice versa. In particular this allows to define test ideals for non-reduced rings \cite{blicklep-etestideale}. However, the following example brings to light some shortcomings:

\begin{BspB}
Consider $R = \mathbb{F}_p[x,y]$ and the inclusion $M = R/(y) \to R/(y) \oplus R = N$. We endow $M = \mathbb{F}_p[x]$ with the Cartier operator as defined above and likewise on the first summand of $N$. On the other summand the operator $\kappa$ acts by sending $y^{p-1}$ to $1$ and all other monomials of the basis to $0$. This defines the structure of a Cartier module on $M$ and $N$ and the natural inclusion into the first factor $N \subseteq M$ is compatible with the action of $\kappa$ just defined. One easily checks that $M$ admits no proper non-zero submodules stable under $\kappa$. Hence, $\tau(M, 1) = M$. On the other hand, the generic point of $N=\underline{N}$ is $(0)$. But clearly $0 \oplus (x)$ is a submodule stable under the just defined operation of $\kappa$. Hence, $\tau(N, 1) \subseteq (0) \oplus (x)$, and in fact, equality holds. Therefore we see that the inclusion $M \subseteq N$ does not induce an inclusion of test modules, nor is the test module of the direct sum $N$ the sum of test modules of its summands.
\end{BspB}

The source of these troubles is the fact that the definition of test modules does not account for non-minimal associated primes. By modifying the definition appropriately to also incorporate non-minimal associated primes this can be avoided, following a suggestion of Schwede:

\begin{DefB}
Let $R$ be a noetherian $F$-finite ring, $\mathcal{C}$ an $R$-Cartier algebra and $M$ a $\mathcal{C}$-module which is coherent as an $R$-module. The \emph{test module} $\tau(M, \mathcal{C})$ is the smallest $\mathcal{C}$-submodule $N$ of $M$ such that for every associated prime $\eta$ of $M$ the inclusion $H^0_\eta(N_\eta) \subseteq H^0_\eta(M_\eta)$ is a nil-isomorphism.
\end{DefB}

The preceding paragraph roughly explains the notation of Cartier algebra and nil-isomorphism, for more details see \autoref{SectionDefBasic}. $H^0_\eta$ denotes the $\eta$-power torsion functor. Since, in particular, $\underline{M} \subseteq M$ is a nil-isomorphism one may replace $M$ by $\underline{M}$ in the above definition. The following summarizes our key foundational results.

\begin{TheoB}
Let $R$ be a noetherian $F$-finite ring and $\mathcal{C}$ an $R$-Cartier algebra.
\begin{enumerate}
\item Theorem \autoref{TauImpliesTestElements} and Theorem \autoref{TestElementExistence}: Under mild assumptions (e.g. $R$ is of finite type over a field) the test module $\tau(M,\mathcal{C}) \subseteq M$ exists for all coherent $\mathcal{C}$-modules $M$, and there is a theory of test elements.
\item Theorem \autoref{TestModulesAreFunctorial}: The inclusion $\tau(M,\mathcal{C}) \subseteq M$ is a natural transformation of additive functors $\tau \to \id$ on the category of coherent $\mathcal{C}$-modules.
\item Theorem \autoref{OldNewTestModuleRelation}: Temporarily denoting by $\tau'(M,\mathcal{C})$ the test module as defined in \cite{blicklep-etestideale}, one has an inclusion $\tau' \subseteq \tau$ which is an equality if all associated primes of $\underline{M}$ are minimal. In particular, for $M=R$ (the test-ideal case) the two definitions coincide.
\end{enumerate}
\end{TheoB}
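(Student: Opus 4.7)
My plan is to handle the three parts in the order (3), (2), (1), since the formal arguments of the first two constrain what a test element has to do in (1).

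For part (3), the first observation is that $\tau(M,\mathcal{C})$ is automatically reduced: the nil-isomorphism $\underline{\tau} \hookrightarrow \tau$ composes with the defining nil-isomorphisms at each associated prime to show $\underline{\tau}$ also satisfies the defining property, so by minimality $\tau = \underline{\tau} \subseteq \underline{M}$. The key elementary point is then that for reduced Cartier submodules $N \subseteq P$ of $\underline{M}$, a nil-isomorphism at a prime $\eta$ collapses to an equality after localising: if $\mathcal{C}_+^k$ kills $(P/N)_\eta$, then $P_\eta = \mathcal{C}_+^k P_\eta \subseteq N_\eta$. At a minimal associated prime $\eta$ of $\underline{M}$ one moreover has $\underline{M}_\eta = H^0_\eta(\underline{M}_\eta)$, so the $\tau$-condition there reduces to $N_\eta = \underline{M}_\eta$; since every generic point of $\Supp \underline{M}$ is an associated prime, the $\tau$-condition is formally stronger than the $\tau'$-condition, giving $\tau' \subseteq \tau$. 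If all associated primes of $\underline{M}$ are minimal, the two conditions literally coincide and hence so do the smallest submodules realising them.

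For part (2), given a Cartier morphism $\varphi \colon M \to N$, I would verify that $P := \varphi^{-1}(\tau(N,\mathcal{C}))$ satisfies the defining property of $\tau(M,\mathcal{C})$; minimality then yields $\tau(M) \subseteq P$, i.e.\ $\varphi(\tau(M)) \subseteq \tau(N)$. Fix an associated prime $\eta$ of $M$. If $\eta$ is also associated to $N$, the equivalence $m \in H^0_\eta(P_\eta) \Leftrightarrow \varphi(m) \in H^0_\eta(\tau(N)_\eta)$ embeds the cokernel of $H^0_\eta(P_\eta) \hookrightarrow H^0_\eta(M_\eta)$ into the nil cokernel of $H^0_\eta(\tau(N)_\eta) \hookrightarrow H^0_\eta(N_\eta)$. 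If $\eta$ is not associated to $N$, then $H^0_\eta(N_\eta) = 0$, so by left exactness of $H^0_\eta$ the entire $\eta$-torsion of $M$ lies in $\ker \varphi \subseteq P$, and the condition is automatic. Additivity $\tau(M \oplus N) = \tau(M) \oplus \tau(N)$ then drops out by applying this naturality to the canonical inclusions and projections.

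Part (1) is the main obstacle and requires genuine commutative-algebra input. After reducing to $M = \underline{M}$ via (3)'s reducedness argument, the plan is to construct, under the $F$-finiteness and mild finite-type hypotheses of \autoref{TestElementExistence}, a non-zero test element $c_\eta \in R$ for each associated prime $\eta$ of $\underline{M}$: concretely, an element which forces every candidate Cartier submodule to absorb $c_\eta \cdot H^0_\eta(\underline{M}_\eta)$ via sufficiently high powers of Cartier operators. The candidate test module is then the Cartier submodule of $\underline{M}$ generated by these distinguished subsets, and verifying that it satisfies the defining nil-isomorphism property at every associated prime is a routine check. The hard part is producing such $c_\eta$ simultaneously for all associated primes; this will require adapting the completion and gamma-construction arguments of \cite{blicklep-etestideale} to the $\eta$-local cohomology of $\underline{M}$, rather than merely to the generic fibre. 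Theorem \autoref{TauImpliesTestElements} is the converse direction, extracting a test element from the assumed existence of $\tau$ by standard means.
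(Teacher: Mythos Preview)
Your argument for part (3) is essentially the paper's: reduce to $\underline{M}$, observe that at a minimal associated prime $\eta$ one has $H^0_\eta(\underline{M}_\eta)=\underline{M}_\eta$, and use $F$-purity of $\underline{M}_\eta$ to collapse the nil-isomorphism condition to an equality $N_\eta=\underline{M}_\eta$.

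For part (2) you take a genuinely different route. The paper factors $\varphi$ as a surjection followed by an injection and, for the surjective case, proves the separate lemma that the image of an $F$-regular Cartier module is $F$-regular (Lemma~\ref{ImageofFregularisFRegular}). Your preimage argument is more direct and avoids that lemma entirely: the observation that for $m\in H^0_\eta(M_\eta)$ one has $m\in H^0_\eta(P_\eta)$ iff $\varphi(m)\in H^0_\eta(\tau(N)_\eta)$ gives an embedding of cokernels in one stroke, and additivity then follows formally from the inclusions and projections. One slip: when $\eta\notin\Ass N$ you write $H^0_\eta(N_\eta)=0$, but the definition only gives that $H^0_\eta(N_\eta)$ is \emph{nilpotent}. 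This is harmless, since your Case~1 embedding argument works uniformly---the target $H^0_\eta(N_\eta)/H^0_\eta(\tau(N)_\eta)$ is nilpotent whether or not $\eta\in\Ass N$---so the case distinction is unnecessary.

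Part (1) has the right architecture but the wrong mechanism, and you have inverted the logical roles of the two theorems. Theorem~\ref{TestElementExistence} is the statement that \emph{given} a sequence of test elements $c_1,\ldots,c_n$ (one per associated prime $\eta_i$, with $\underline{H^0_{\eta_i}(M_{c_i})}$ $F$-regular), the test module exists and equals $\sum_i\sum_{e\geq e_0}\mathcal{C}_e c_i\,\underline{H^0_{\eta_i}(M)}$. Theorem~\ref{TauImpliesTestElements} is then the construction of such a sequence under the finite-type hypothesis---not the converse extraction you describe. More importantly, the paper does \emph{not} adapt completion or gamma-construction arguments. Instead it proceeds by a downward induction on the poset of associated primes: for a minimal $\eta$ one localises so that $\eta$ is the only associated prime, invokes part~(3) to identify $\tau$ with $\tau'$, and then appeals to the existence results of \cite{blicklep-etestideale} for $\tau'$ to find a test element. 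For a non-minimal $\eta$ one passes to $\underline{H^0_{(c)}(M)}$ for suitable $c$, which removes the minimal layer, and applies Lemma~\ref{DoubleH0FpureCompatible} to descend. The essential input is thus the \emph{old} test-module theory applied to subquotients with only minimal associated primes, not a new analytic construction.
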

Furthermore, also in the case where $(M, \kappa)$ is a so-called $F$-regular Cartier module (in the sense of \cite[Definition 3.4]{blicklep-etestideale}) and $f$ a non zero-divisor both versions of the test module $\tau'(M,\mathcal{C}) = \tau(M,\mathcal{C})$ agree for $\mathcal{C}$ the Cartier algebra generated by the $\kappa f^{\lceil tp^e\rceil}$ for $e \geq 1$. For this class of Cartier modules the authors constructed in \cite{blicklestaeblerbernsteinsatocartier} Bernstein-Sato polynomials and the second author also imposed similar conditions of $F$-regularity on many results of his construction of a $V$-filtration for Cartier modules in \cite{staeblertestmodulnvilftrierung}. One objective of our new definition of test modules here is to be able remove the $F$-regularity assumptions in these instances. But this will not be discussed in this paper.

Together with setting up the notation of Cartier modules and developing a theory of ``associated primes up to nilpotence'', the results mentioned so far are contained in the first four sections of this paper. Also included is a theory of test elements which is a crucial technical tool for many computations of test modules, and we will make ample use of them in Sections \ref{SectionStructuralResults}, \ref{SectionTestmodulesPullbacks} and \ref{SectionTestmodulesPushforwards}. We conclude this part with some structural results in Section \ref{SectionStructuralResults} such as discreteness and a Skoda-type theorem and some related notions of test modules analogous to the classical case.

In Section \ref{SectionCartieralgebraPullbackPushforward} we construct the (twisted) inverse and direct image for Cartier modules, generalizing the case of a principal Cartier algebra treated in \cite{blickleboecklecartierfiniteness}. Given a morphism $f: X \to Y$ and a Cartier algebra $\mathcal{C}_Y$ on $Y$ we define the structure of a Cartier algebra on $X$ on the pullback $f^*\mathcal{C}_Y$. This allows us to show that the twisted inverse image functor $f^!$ on $\mathcal{O}$-modules naturally restricts to a functor on $\mathcal{C}$-modules in the case where $f$ is either finite or essentially smooth, i.e. if $M$ is a $\mathcal{C}_Y$-module, then $f^!M$ naturally carries the structure of a $f^*\mathcal{C}_Y$-module. Furthermore, if $\mathcal{C}_X=f^*\mathcal{C}_Y$ the same is the case for the pushforward $f_*$, i.e. if $N$ is a $\mathcal{C}_X$-module then $f_*N$ naturally carries a structure of a $\mathcal{C}_Y$-module. The following theorem summarizes the behavior of the test module functor $\tau$ with respect to pullback and pushforward. The proofs of these results occupy Sections 6--8.

\begin{TheoB}
Let $f \colon X \to Y$ a morphism of $F$-finite schemes. For $\mathcal{C}_Y$ a Cartier algebra on $Y$ set $\mathcal{C}_X =f^*\mathcal{C}_Y$.
\begin{enumerate}
\item The pushforward $f_*$ on quasi-coherent sheaves induces a functor from $\mathcal{C}_X$-modules to $\mathcal{C}_Y$-modules which preserves nilpotence. If $f$ is of finite type, and under some mild boundedness assumption for $\mathcal{C}_Y$, the functor $f_*$ preserves coherence up to nilpotence.
\item (Theorem \ref{UpperShriekCartierStructure}) In each of the three following cases
 \begin{enumerate}
 \item$f$ is essentially \'etale and $f^!=f^*$
\item $f$ finite and $f^!=\bar{f}^{\ast}\mathcal{H}om_Y(f_*\mathcal{O}_X,\phantom{M})$
\item $f$ smooth and $f^! = f^* \otimes \omega_{X/Y}$
\end{enumerate}
the pullback $f^!$ on quasi-coherent sheaves  induces a functor $\mathcal{C}_Y$-modules to $\mathcal{C}_Y$-modules which preserves nilpotence and coherence.
\item (Corollary \ref{TauCommutesSmoothPullback}) If $f$ is smooth, then there is a natural isomorphism of functors $f^! \circ \tau \cong \tau \circ f^!$.
\item (Proposition \ref{FiniteShriekTauInclusion}) If $f$ is finite and dominant then one has a natural inclusion $\tau \circ f^! \into f^! \circ \tau$.
\item (Propositions \ref{FiniteMorphismTauNatTransf} and \ref{TauFiniteDomPushforward}) If $f$ is finite then one has a natural isomorphism $f_* \circ \tau \to \tau \circ f_*$.
\item (Proposition \ref{OpenImmersionTauNatTransf} and Theorem \ref{TauQuasifinitepushforward}) If $f$ is an open immersion or $f$ is quasi-finite and of finite type (and under some mild technical assumption on $\mathcal{C}_Y$) on has a natural inclusion $\tau \circ f_* \into f_* \circ \tau$.
\end{enumerate}
\end{TheoB}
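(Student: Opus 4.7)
The plan is to establish each of the six items by combining three ingredients developed in the preceding sections and in Section \ref{SectionCartieralgebraPullbackPushforward}: (i) the functoriality of the twisted inverse image $f^!$ and the direct image $f_\ast$ on Cartier modules once one has installed the Cartier structure $\mathcal{C}_X=f^\ast\mathcal{C}_Y$; (ii) the good behaviour of associated primes and of $H^0_\eta$ under localization and under the morphisms at hand; and (iii) the test element machinery, which lets one verify minimality of the test module by producing a single element whose $\mathcal{C}_+$-orbit is already all of the ambient module at each relevant associated prime.

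For item (1), the direct image of a $\mathcal{C}_X$-module acquires a $\mathcal{C}_Y$-structure via the adjunction $f^\ast\dashv f_\ast$ specialized to Cartier algebras, and since nilpotence is a module-theoretic equality it is preserved verbatim. Coherence up to nilpotence under a finite type morphism requires the mild boundedness hypothesis on $\mathcal{C}_Y$ so that one can bound the generators of the induced $\mathcal{C}_Y$-action on $f_\ast M$. For items (2)--(4), the Cartier structure on $f^!M$ must be built case by case: for \'etale maps one exploits that Frobenius commutes with $f^\ast$, for finite maps one uses the isomorphism $f^!(-)=\bar{f}^{\ast}\mathcal{H}om_Y(f_\ast\mathcal{O}_X,-)$, and for smooth maps one transfers Cartier linearity across $\omega_{X/Y}$ via the Cartier isomorphism.

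For (3), smooth $f^!$ is faithfully flat up to a line bundle twist, so localization at associated points commutes with $f^!$ and the associated primes of $f^!M$ are precisely the generic points of fibres over the associated primes of $M$; a test element of $M$ then pulls back to a test element of $f^!M$, giving both $f^!\tau(M)\subseteq\tau(f^!M)$ and the reverse containment via faithful flatness. For (4), in the finite dominant case $f^!\tau(M)$ is a sub-$\mathcal{C}_X$-module of $f^!M$ whose localization at every associated prime of $f^!M$ agrees with $f^!M$ up to nilpotence, so the minimality of $\tau(f^!M)$ supplies the inclusion. For the pushforward item (5) the essential observation is that a finite morphism neither creates nor destroys associated primes, so one checks that both $f_\ast\tau(M,\mathcal{C}_X)$ and $\tau(f_\ast M,\mathcal{C}_Y)$ define the correct sub-$\mathcal{C}_Y$-module at each associated prime of $f_\ast M$; combined with a test element argument this yields the natural isomorphism.

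Finally, (6) is reduced to (5) and the open immersion case by Zariski's Main Theorem: one factors $f=g\circ j$ with $j$ an open immersion and $g$ finite, applies (5) to $g$, and then handles $j_\ast$ separately using the boundedness assumption on $\mathcal{C}_Y$. I expect the main obstacle to be precisely this quasi-finite pushforward: unlike the proper case $j_\ast$ of a coherent sheaf need not be coherent, so showing that $j_\ast\tau(M)$ is a genuine sub-Cartier-module of $j_\ast M$ and that the test module of the ambient object restricts correctly to the open piece requires the technical boundedness hypothesis together with a careful noetherian argument controlling the growth of $\mathcal{C}_+^n\cdot j_\ast M$ near the boundary.
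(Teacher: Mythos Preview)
Your overall architecture matches the paper's: item (1) is Proposition~\ref{PushforwardPreservesNilpotence} plus the gauge-boundedness machinery of Section~\ref{SectionRelativeGaugebounds}; item (2) is Theorem~\ref{UpperShriekCartierStructure}; items (4)--(5) use the identification $\Ass f^!M=f^{-1}\Ass M$ respectively $\Ass f_\ast M=f(\Ass M)$ together with minimality of $\tau$; and item (6) is reduced to (5) and the open immersion case via Zariski's Main Theorem, exactly as you propose.

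There is, however, a genuine gap in your sketch of item (3). You write that ``a test element of $M$ pulls back to a test element of $f^!M$'', and this is supposed to give the inclusion $f^!\tau(M)\subseteq\tau(f^!M)$. But recall what a test element $c$ at an associated prime $\eta$ means: it is an element with $c\notin\eta$ such that $\underline{H^0_\eta(M)}_c$ is $F$-regular. For $c$ to be a test element of $f^!M$ at a prime $\nu$ above $\eta$ one needs $\underline{H^0_\nu(f^!M)}_c$ to be $F$-regular, and after the obvious identifications this module is $f'^!\bigl(\underline{H^0_\eta(M)}_c\bigr)$ for the induced map $f'$. So you are assuming precisely that $f^!$ preserves $F$-regularity, which is the content of the result you are trying to prove. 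Faithful flatness alone does not close this circle: it gives the easy direction (if $f^!M$ is $F$-regular and $f$ is surjective then $M$ is), but not the hard one.

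The paper breaks this circularity by a Galois descent argument (Theorem~\ref{EtalePullbackFRegularity}): after reducing via test elements to the situation where $\underline{H^0_\eta(M)}_c$ has a single minimal associated prime, one shrinks further so that the base is normal integral and the \'etale map is finite with Galois extension of function fields. Then if $f^!\underline{H^0_\eta(M)}_c$ admitted a proper submodule $N$ agreeing generically, the $G$-invariants $N^G$ would give a proper submodule of $\underline{H^0_\eta(M)}_c$ agreeing generically (Lemmata~\ref{GInvariantsNiceEtaleSituation}, \ref{GInvariantsGenericEquality}), contradicting $F$-regularity downstairs. This step is not visible in your outline and is the essential content of item (3); the smooth case then follows by factoring locally as \'etale over $\mathbb{A}^n_R$ and treating $\mathbb{A}^1_R\to\Spec R$ directly.

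A smaller remark on (6): the role of the boundedness hypothesis on $\mathcal{C}_Y$ is not to ``control growth of $\mathcal{C}_+^n\cdot j_\ast M$ near the boundary'' in a noetherian sense, but rather to guarantee (via Theorem~\ref{PushforwardCoherentCrystals}) that $j_\ast M$ contains a coherent submodule $N$ with $N\subseteq j_\ast M$ a local nil-isomorphism, so that $\tau(j_\ast M):=\tau(N)$ is even defined. Once this is in place, the natural inclusion $\tau(j_\ast M)\to j_\ast\tau(M)$ comes formally from adjunction together with the already established $j^\ast\tau=\tau j^\ast$ from item (3); no separate boundary argument is needed.
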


There is, by now, ample evidence that the associated graded of the test module filtration is connected to the \'etale $p$-torsion nearby cycles functor (cf.\ \cite{staeblertestmodulnvilftrierung}, \cite{stadnikvfiltrationfcrystal}, \cite{staeblerunitftestmoduln}, \cite{blicklestaeblerbernsteinsatocartier}). However, it also seems to avoid pathologies that only occur for $\ell$-torsion nearby cycles with $\ell = p$. The connection is similar to that of the multiplier ideal filtration with nearby cycles in the complex case. Namely, the multiplier ideal filtration is a sub-filtration of the $\mathcal{D}$-module theoretic $V$-filtration. The associated graded of the latter in the range $[0,1)$ in turn corresponds, via the Riemann-Hilbert correspondence, to perverse complex nearby cycles.

In characteristic $p > 0$ the replacement for the Riemann-Hilbert correspondence comes in two steps: First there is an equivalence of (abelian) categories (see \cite{blickleboecklecartierfiniteness}, \cite{emertonkisinrhunitfcrys}): \[\{ \kappa\text{-crystals}\} \to \{\text {locally finitely generated unit}(=\text{lfgu}) \, R[F]\text{-modules}\},\] where unit $R[F]$-modules are certain well-behaved $\mathcal{D}$-modules (see \cite{emertonkisinintrorhunitfcrys}). This is followed by the correspondence (on the level of appropriate derived categories) of Emerton and Kisin (\cite{emertonkisinrhunitfcrys})
\[\{ \text{lfgu } R[F]\text{-modules}\} \to \{\text{perverse constructible } p\text{-torsion sheaves}\}\]
which is an \emph{anti}-equivalence. Under this Riemann-Hilbert-type correspondence the functors $f^!$ and $f_\ast$ on $\kappa$-crystals correspond to $f^\ast$ and $f_!$. The category of $\kappa$-crystals is mapped to perverse constructible sheaves (in the sense of \cite{gabbertstructures}) by this equivalence.

In the special case that we start with a Cartier module of the form $(M, \kappa)$ we may associate a principal Cartier structure to $Gr^t = \tau(M, f^{t - \eps})/\tau(M, f^t)$ (cf. Remark \ref{AssGradedRemark} below, or \cite[Sections 4 and 5]{staeblertestmodulnvilftrierung}). Then as corollaries (\ref{GrCommutesSmoothPullback}, \ref{FiniteShriekGrInclusion}, \ref{GrQuasifinitepushforward}) to the above theorem we obtain a natural transformation of $\kappa$-crystals $Gr^t \circ f^! \to f^! \circ Gr^t$ for $f$ finite or smooth which in the latter case is an isomorphism and a natural transformation of $\kappa$-crystals $Gr^t \circ f_\ast \to f_\ast \circ Gr^t$ for $f$ quasi-finite of finite type which is an isomorphism if $f$ is proper. Via the Riemann-Hilbert correspondence we get corresponding natural transformations of perverse constructible sheaves by reversing the arrows.

Analogous transformations are true in the $\ell \neq p$ context: For $\ell$-adic nearby cycles $R \psi$  one has a natural transformation $f^\ast R \psi \to R \psi f^\ast$ which is an isomorphism if $f$ is smooth and for $f$ quasi-finite a natural transformation $f_! R \psi \to R \psi f_! $ which is an isomorphism if $f$ is proper (see \cite[Expos\'e XIII, 2.1]{SGA7II}). This is further evidence supporting the expectation that the perverse constructible sheaf corresponding to $Gr^{[0,1)}=\bigoplus_{t\in[0,1)} Gr^t$ is a suitable, possibly better behaved, replacement for the problematic $p$-torsion nearby cycles functor.

\subsection*{Conventions} We will work with noetherian rings containing a field of positive characteristic. Throughout the letter $F$ denotes the Frobenius morphism. We call a scheme $X$ \emph{$F$-finite} if the Frobenius morphism $F: X \to X$ is a finite morphism. In other words, $F_\ast \mathcal{O}_X$ is a finite $\mathcal{O}_X$-module.

\subsection*{Acknowledgements}
Both authors were supported by SFB/Transregio 45 Bonn-Essen-Mainz financed by Deutsche Forschungsgemeinschaft.

\section{Definition and basic properties}
\label{SectionDefBasic}
In this section we introduce a new and better behaved definition of the test module which, contrary to the original definition (cf. \cite[Definition 3.1, Remark 3.3]{blicklep-etestideale}), is an additive functor on the category of Cartier modules. Then we redevelop the basic theory of test modules in analogy with \cite{blicklep-etestideale}.

We start by recalling the definition of Cartier modules and some of their basic properties.

\begin{Def}
\label{CartierAlgebraDefRing}
Let $R$ be a commutative ring containing a field of characteristic $p > 0$. A \emph{Cartier algebra $\mathcal{C}$ over a ring $R$} (or \emph{$R$-Cartier algebra}) is an $\mathbb{N}$-graded ring $\bigoplus_{e \geq 0} \mathcal{C}_e$ with an $R$-bimodule structure which for a homogeneous element $\kappa \in \mathcal{C}_e$ and $r \in R$ satisfies $r \kappa = \kappa r^{p^e}$. Moreover, we require that $\mathcal{C}_0 = R$.

We say that $\mathcal{C}_R$ is \emph{finitely generated} if there are elements $\kappa_1, \ldots, \kappa_n \in \mathcal{C}_R$ such that the monomials in the $\kappa_i$ form a set of right $R$-module generators of $\mathcal{C}_R$.
\end{Def}

Note that this terminology is slightly unfortunate, since $\mathcal{C}$ is \emph{not} an $R$-algebra in the usual sense. Indeed, $R$ is in general not in the center of $\mathcal{C}_R$ if $R \neq \mathbb{F}_p$.

By convention, a $\mathcal{C}$-module is always a left $\mathcal{C}$-module, whose underlying $R$-module is finitely generated, unless stated otherwise. Synonymously to $\mathcal{C}$-module we will frequently use \emph{Cartier module} (if $\mathcal{C}$ is clear form the context) or \emph{$R$-Cartier-module} (if the ring $R$ needs to be emphasized).

We set $\mathcal{C}_+ = \bigoplus_{e \geq 1} \mathcal{C}_e$ and denote the $e$-times iterated product $(\mathcal{C}_+)^e$ by $\mathcal{C}_+^e$.
A Cartier module $M$ is called \emph{nilpotent} if $\mathcal{C}_+^e M = 0$ for some (equivalently all) $e \gg 0$. One can show (cf. \cite[Section 2.1]{blicklep-etestideale} for the case of an algebra and \cite{blickleboecklecartierfiniteness} for a more detailed treatment in the case of a single morphism) that nilpotent Cartier modules form a Serre-subcategory of the category of coherent Cartier modules. A morphism $\varphi \colon M \to N$ of $\mathcal{C}$-modules is called a \emph{nil-isomorphism} if both $\ker \varphi$ and $\coker \varphi$ are nilpotent. The category of $\emph{Cartier crystals}$ is obtained by localizing the category of Cartier modules at the Serre-subcategory of nilpotent Cartier modules; i.e.~it is obtained by formally inverting nil-isomorphisms. This category of \emph{Cartier crystals} is again an abelian category. The objects are the same as those of the category of Cartier modules. However, a morphism $M \to N$ is given by a diagram $M \xleftarrow{\varphi} M' \to N$ in Cartier modules, where $\varphi$ is a nil-isomorphism. We refer the reader to \cite{staeblertestmodulnvilftrierung} for more background on the connection of Cartier crystals and test modules and to \cite{blickleboecklecartierfiniteness} for more background on Cartier crystals.

A crucial result in the theory of coherent Cartier modules is the fact that the descending chain $\mathcal{C}_+ M \supseteq \mathcal{C}_+^2 M \supseteq \cdots$ stabilizes (cf.\ \cite[Proposition 2.13]{blicklep-etestideale}). From this fact one obtains the following

\begin{Prop}
Let $M$ be a coherent $\mathcal{C}$-module. Then there exists a unique $\mathcal{C}$-submodule $\underline{M}$ such that
\begin{enumerate}[(a)]
 \item The quotient $M/\underline{M}$ is nilpotent (in particular, the crystals associated to $M$ and $\underline{M}$ are isomorphic).
\item $\mathcal{C}_+ \underline{M} = \underline{M}$ (i.e.\ $\underline{M}$ does not admit nilpotent quotients).
\end{enumerate}
\end{Prop}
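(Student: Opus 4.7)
The plan is to use the stabilization result quoted immediately above the proposition as the one essential input, and then verify existence and uniqueness by elementary manipulations with the graded pieces $\mathcal{C}_+^e$.

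For existence, I would set $\underline{M} := \mathcal{C}_+^e M$ for any $e$ large enough that the descending chain $\mathcal{C}_+M \supseteq \mathcal{C}_+^2 M \supseteq \cdots$ has stabilized. Property (b) is then immediate since $\mathcal{C}_+ \underline{M} = \mathcal{C}_+^{e+1}M = \mathcal{C}_+^e M = \underline{M}$. For property (a), applying $\mathcal{C}_+^e$ to the quotient $M/\underline{M}$ gives $\mathcal{C}_+^e(M/\underline{M}) = (\mathcal{C}_+^e M + \underline{M})/\underline{M} = \underline{M}/\underline{M} = 0$, so $M/\underline{M}$ is nilpotent.

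For uniqueness, suppose $N \subseteq M$ is any $\mathcal{C}$-submodule satisfying (a) and (b). Since $M/N$ is nilpotent, $\mathcal{C}_+^{e'}(M/N)=0$ for some $e' \gg 0$, which means $\mathcal{C}_+^{e'} M \subseteq N$. Choosing $e'$ also large enough that $\mathcal{C}_+^{e'} M = \underline{M}$, we obtain $\underline{M} \subseteq N$. For the reverse inclusion, property (b) applied iteratively yields $N = \mathcal{C}_+^{e'} N \subseteq \mathcal{C}_+^{e'} M = \underline{M}$. Hence $N = \underline{M}$, proving uniqueness.

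There is no real obstacle here: all the substantive content is packaged into the stabilization of the chain $\mathcal{C}_+^e M$, which is cited from \cite[Proposition 2.13]{blicklep-etestideale}. The only small point to be careful about is that $\mathcal{C}_+$ is a \emph{two-sided} object (elements of $\mathcal{C}_e$ twist the $R$-action by Frobenius), so one has to check that $\mathcal{C}_+^e M$ is genuinely an $R$-submodule, but this is automatic from the bimodule axiom $r\kappa = \kappa r^{p^e}$ in Definition \ref{CartierAlgebraDefRing}. Beyond that, the argument is a routine two-sided bounding of $N$ between $\mathcal{C}_+^e M$ for a single sufficiently large $e$.
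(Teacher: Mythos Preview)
Your proof is correct and takes essentially the same approach as the paper: the paper's proof is a one-line instruction to verify that the stable member $\mathcal{C}_+^e M$ for $e \gg 0$ satisfies the conditions, and you have carried out exactly this verification, including the uniqueness argument which the paper leaves entirely implicit.
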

\begin{proof}
One verifies that the stable member $\mathcal{C}_+^e M$ for $e \gg 0$ satisfies these conditions.
\end{proof}

\begin{Def}
We call a coherent Cartier module $M$ $F$-pure if $\underline{M} = M$. This is equivalent to the condition that $\mathcal{C}_+ M = M$.
\end{Def}

Before we turn to a study of associated primes of a Cartier module we revisit the example from the introduction. We will illustrate that the lack of functoriality can also not be remedied by passing to the crystal of the associated graded of the test module filtration.

\begin{Bsp}
\label{Bsp1}
Consider the inclusion $k[x,y]/(x) \to k[x,y]/(x) \oplus k[x,y]$, where the Cartier structure is given by $\kappa x^{p-1}$ acting diagonally. If we consider the test module filtration of \cite{blicklep-etestideale} along $y$ then $\tau(k[y], y^t) = (y^{\lfloor t \rfloor})$ while $\tau(k[y] \oplus k[x,y], y^0)$ is given by $(0) \oplus (x)$. In particular, $Gr^1(k[y]) := \tau(k[y], y^{1 - \eps})/\tau(k[y], y^1) = k$ as a crystal. Since $\tau$ is a decreasing filtration $Gr^1$ of the direct sum has to be zero in the first component. Thus we do not obtain an induced map from the inclusion on the level of Cartier crystals either.
\end{Bsp}

The issue here is of course -- as mentioned in the introduction -- that the  definition in op.~cit.~only considers generic points and ignores non-minimal associated primes.

Note that given a Cartier module $M$ and an ideal $I$ in $R$ the $I$-power-torsion $H^0_I(M)$ of $M$ is again a Cartier module. This is simply due to the observation that for $\kappa \in \mathcal{C}^e$ and $m \in M$ with $Im=0$ one has $I \cdot \kappa(m)= \kappa(I^{[p]}m)=\kappa(0)=0$. This observation leads to the following definition.

\begin{Def}
For a Cartier module $M$ we denote by $\Ass M$ the set of primes $\eta$ of $\Spec R$ for which $H^0_\eta(M_\eta)$ is not nilpotent. We refer to $\Ass M$ as the set of \emph{associated primes} of $M$.
\end{Def}

In a sense to be made precise below, $\Ass M$ may be viewed as the associated primes of the Cartier Crystal underlying $M$. It is a subset of the usual associated primes of the underlying $R$-module $M$.

\begin{Le}
\label{AssForCrystals}
Let $0 \to P \to M \to N \to 0$ be a short exact sequence of Cartier modules.
\begin{enumerate}[(a)]
\item{Then $\Ass P \subseteq \Ass M \subseteq \Ass P \cup \Ass N$.}
\item{If the sequence splits, i.e.\ $M = N \oplus P$, then $\Ass M = \Ass N \cup \Ass P$.}
\item{If $S$ is a multiplicative set then $\Ass S^{-1}M = \{\eta \in \Ass M \, \vert \, \eta \cap S = \varnothing\}$.}
\end{enumerate}
\end{Le}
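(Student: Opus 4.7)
The plan is to use two general facts: that nilpotent Cartier modules form a Serre subcategory (recalled just above the lemma), so nilpotence is preserved under subobjects, quotients, and extensions; and that the composite functor $M \mapsto H^0_\eta(M_\eta)$ is left exact, commutes with finite direct sums, and commutes with further localization at a disjoint multiplicative set, all compatibly with the Cartier action. Given these, each of (a)--(c) reduces to a formal manipulation.

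For (a), I would apply the left-exact functor $H^0_\eta((-)_\eta)$ to the given short exact sequence, obtaining the four-term exact sequence $0 \to H^0_\eta(P_\eta) \to H^0_\eta(M_\eta) \to H^0_\eta(N_\eta)$ of Cartier modules. The first inclusion $\Ass P \subseteq \Ass M$ is then immediate: a Cartier submodule of a nilpotent Cartier module is nilpotent, so if $H^0_\eta(M_\eta)$ were nilpotent then so would $H^0_\eta(P_\eta)$ be. For $\Ass M \subseteq \Ass P \cup \Ass N$, I would argue the contrapositive: if neither $H^0_\eta(P_\eta)$ nor $H^0_\eta(N_\eta)$ is nilpotent, then $H^0_\eta(M_\eta)$ fits in a short exact sequence whose outer terms are a nilpotent module and a Cartier submodule of a nilpotent module, hence both nilpotent; the Serre property then forces $H^0_\eta(M_\eta)$ to be nilpotent as well.

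Part (b) follows essentially at once from the previous bullet once one observes that $H^0_\eta$ and localization both commute with finite direct sums and that the Cartier action on $P \oplus N$ is componentwise, so $H^0_\eta((P \oplus N)_\eta) \cong H^0_\eta(P_\eta) \oplus H^0_\eta(N_\eta)$ as Cartier modules; a direct sum of Cartier modules is nilpotent iff each summand is. For (c), I would use the bijective correspondence between primes of $S^{-1}R$ and primes $\eta$ of $R$ with $\eta \cap S = \varnothing$; for such $\eta$ the canonical map $M_\eta \to (S^{-1}M)_\eta$ is an isomorphism of $R_\eta$-modules, and one checks it intertwines the two Cartier structures (the relation $r\kappa = \kappa r^{p^e}$ extends formally to inverted elements). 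Taking $\eta$-power torsion then identifies $H^0_\eta((S^{-1}M)_\eta)$ with $H^0_\eta(M_\eta)$, which settles the equality of the associated-prime sets.

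I expect no real obstacle here: the nontrivial input --- that nilpotent Cartier modules are closed under subobjects, quotients, and extensions --- has already been cited from \cite{blicklep-etestideale} and \cite{blickleboecklecartierfiniteness}. The only routine verifications are compatibility of localization and of $H^0_\eta$ with the Cartier algebra action, which one reads off mechanically from the structural relation defining $\mathcal{C}$. The mildest subtlety is making sure in (a) that the image of $H^0_\eta(M_\eta)$ in $H^0_\eta(N_\eta)$ is taken as a Cartier submodule, but this is automatic since all the maps involved are morphisms of $\mathcal{C}$-modules.
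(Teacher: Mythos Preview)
Your proposal is correct and follows essentially the same route as the paper: apply the left-exact functor $H^0_\eta((-)_\eta)$ to the short exact sequence and invoke the Serre property of nilpotents for (a), and use the identification $(S^{-1}M)_\eta = M_\eta$ for primes disjoint from $S$ in (c). One wording slip to fix: in your contrapositive for the second inclusion of (a) you write ``if neither $H^0_\eta(P_\eta)$ nor $H^0_\eta(N_\eta)$ is nilpotent'' where you clearly mean ``if both are nilpotent'' (i.e., $\eta \notin \Ass P \cup \Ass N$); the sentence that follows makes the intended argument plain. For (b) the paper is marginally slicker: once (a) is established, the splitting makes $N$ a submodule of $M$ as well, so $\Ass N \subseteq \Ass M$ by the first inclusion of (a), and equality follows without recomputing $H^0_\eta$ on the direct sum; your direct argument is of course also valid.
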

\begin{proof}
The first inclusion of part (a) is clear. For the other inclusion assume that $\eta \in \Ass M \setminus \Ass P$. Localizing at $\eta$ and applying $H^0_\eta$ to the short exact sequence we get $0 = H^0_\eta(P_\eta) \to H^0_\eta(M_\eta) \to H^0_\eta(N_\eta)$ which shows that $H^0_\eta(N_\eta)$ is not nilpotent, since its submodule $H^0_{\eta}(M_\eta)$ is not nilpotent. Part (b) follows from (a) since the sequence splits in this case.

For part (c) note that the statement is true for the associated primes of the underlying modules (\cite[Theorem 3.1 (c)]{eisenbud}). Hence, given a prime $\eta$ with $\eta \cap S = \varnothing$ we have to show that $H^0_\eta(M_\eta)$ is nilpotent if and only if $H^0_\eta(S^{-1}M_\eta)$ is nilpotent. But clearly $M_\eta = S^{-1}M_\eta$ since $S \subseteq R \setminus \eta$.
\end{proof}

\begin{Prop}
\label{Nilisocriterion}
A morphism $\varphi: M \to N$ of Cartier modules is a nil-isomorphism if and only if there is $e$ such that for every homogeneous element $\kappa \in \mathcal{C}_+^e$ of degree $d$ there exists an $R$-linear map $\alpha: F_\ast^{2d} N \to M$ making the following diagram commutative.
\[
\begin{xy} \xymatrix{ F_\ast^{2d} M \ar[d]^{\kappa^2} \ar[r]^{F_\ast^{2d} \varphi} &F_\ast^{2d} N \ar[dl]^\alpha \ar[d]^{\kappa^2} \\ M \ar[r]^{\varphi} & N}\end{xy}
\]

\end{Prop}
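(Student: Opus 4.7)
The plan is to establish both implications separately. The forward direction is a direct construction of $\alpha$ from the nilpotence data, while the backward direction extracts nilpotence of $\ker\varphi$ and $\coker\varphi$ from the existence of the maps $\alpha$.

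For ($\Rightarrow$), I would choose $e$ large enough so that $\mathcal{C}_+^e$ annihilates both $\ker\varphi$ and $\coker\varphi$. Fix a homogeneous $\kappa\in\mathcal{C}_+^e$ of degree $d$. Since $\kappa$ kills $\coker\varphi$, we have $\kappa(N)\subseteq\varphi(M)$, so for each $n\in N$ there is some $m\in M$ with $\varphi(m)=\kappa(n)$; set $\alpha(n):=\kappa(m)$. This is well defined because any two such lifts $m,m'$ differ by an element of $\ker\varphi$, which $\kappa$ annihilates. The $R$-linearity of $\alpha\colon F^{2d}_\ast N\to M$ is verified by tracking the Cartier relation $r\kappa=\kappa r^{p^d}$: if $\varphi(m)=\kappa(n)$, then $\varphi(r^{p^d}m)=r^{p^d}\kappa(n)=\kappa(r^{p^{2d}}n)$, hence $\alpha(r^{p^{2d}}n)=\kappa(r^{p^d}m)=r\kappa(m)=r\alpha(n)$. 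Commutativity of the two triangles uses that $\varphi$ intertwines the Cartier actions: directly $\varphi(\alpha(n))=\kappa(\varphi(m))=\kappa^2(n)$, and taking $\kappa(m)$ as a lift of $\kappa(\varphi(m))=\varphi(\kappa(m))$ gives $\alpha(\varphi(m))=\kappa^2(m)$.

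For ($\Leftarrow$), evaluating the diagram at $m\in\ker\varphi$ yields $\kappa^2(m)=\alpha(F^{2d}_\ast\varphi(m))=\alpha(0)=0$, and the identity $\varphi\circ\alpha=\kappa^2_N$ shows $\kappa^2(N)\subseteq\im\varphi$. Hence every homogeneous $\kappa\in\mathcal{C}_+^e$ satisfies $\kappa^2(\ker\varphi)=0$ and $\kappa^2(\coker\varphi)=0$. What remains is to upgrade these pointwise square-vanishings into the ideal-level statement $\mathcal{C}_+^N(\ker\varphi)=\mathcal{C}_+^N(\coker\varphi)=0$ for some $N$, i.e.\ genuine nilpotence.

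This upgrade is the main obstacle, since $\mathcal{C}$ is non-commutative and $\mathcal{C}_+^{2e}$ is not generated by squares. Writing $K=\ker\varphi$, the descending chain $K\supseteq\mathcal{C}_+K\supseteq\mathcal{C}_+^2K\supseteq\cdots$ stabilises to an $F$-pure submodule $\underline K$ with $\mathcal{C}_+\underline K=\underline K$, and it suffices to show $\underline K=0$. The key algebraic input is polarisation: for two homogeneous $\kappa,\mu\in\mathcal{C}_+^e$ of the same degree, $\kappa+\mu$ is again homogeneous in $\mathcal{C}_+^e$, so $(\kappa+\mu)^2(K)=0$; subtracting the vanishings of $\kappa^2$ and $\mu^2$ gives $(\kappa\mu+\mu\kappa)(K)=0$. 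Combining this anti-commutativity on $\underline K$ with the finite generation of $\mathcal{C}$, the image of $\mathcal{C}_+^e$ in the endomorphism algebra of $\underline K$ behaves like a graded exterior algebra on a bounded number of generators and is therefore an ideal-nilpotent nilideal; together with $\mathcal{C}_+\underline K=\underline K$ this forces $\underline K=0$. The same argument applied to $\coker\varphi$ concludes the proof.
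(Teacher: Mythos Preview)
Your forward direction is correct and is essentially an explicit unpacking of what the paper obtains by citation: the paper observes that $\mathcal{C}_+^e\ker\varphi=0=\mathcal{C}_+^e\coker\varphi$ is equivalent to every homogeneous $\kappa\in\mathcal{C}_+^e$ killing $\ker\varphi$ and $\coker\varphi$, and then invokes the principal case (Blickle--B\"ockle, Proposition~2.3.2; B\"ockle--Pink, Proposition~3.3.9) to produce $\alpha$ for each such $\kappa$. Your direct construction of $\alpha$ is precisely the content of that principal-case lemma.

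The backward direction is where your argument has a genuine gap. You correctly derive $\kappa^2(\ker\varphi)=0$ and $\kappa^2(\coker\varphi)=0$ for every homogeneous $\kappa\in\mathcal{C}_+^e$, and you correctly flag that this does not immediately yield $\mathcal{C}_+^N(\ker\varphi)=0$. But your proposed fix does not work as written. First, the polarisation identity $(\kappa+\mu)^2=0$ is only available when $\kappa,\mu$ are homogeneous of the \emph{same} degree, since otherwise $\kappa+\mu$ is not homogeneous and the hypothesis says nothing; so you obtain anti-commutation only within each graded piece, not across degrees. Second, you invoke finite generation of $\mathcal{C}$, which is nowhere among the hypotheses of the proposition. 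Third, even granting both of these, the passage from ``square-zero plus same-degree anti-commutation'' to ``the image is an ideal-nilpotent nilideal'' is asserted rather than proved, and it does not obviously follow for a graded algebra with components in infinitely many degrees.

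The paper does not attempt this combinatorial route at all; it reduces everything to the single-operator setting and cites the principal-case equivalence, together with the explicit nilpotence bound from B\"ockle--Pink, as a black box. If you want a self-contained argument for the backward implication you should look up what that explicit bound actually says, rather than trying to assemble $\mathcal{C}$-nilpotence from the vanishing of squares.
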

\begin{proof}
The map $\varphi$ is a nil-isomorphism if and only if $\ker \varphi$ and $\coker \varphi$ are nilpotent. This in turn means that we find $e \geq 1$ such that $\mathcal{C}_+^e \ker \varphi = 0$ and $\mathcal{C}_+^e \coker \varphi = 0$. This is the case if and only if every \emph{homogeneous} element of $\mathcal{C}_+^e$ operates trivially on both $\ker \varphi$ and $\coker \varphi$. In this case, if $\kappa \in \mathcal{C}_+^e$ is homogeneous, then $\varphi$ is a nil-isomorphism for the Cartier algebra generated by $\kappa$. Now we deduce from \cite[Proposition 2.3.2]{blickleboecklecartiercrystals} (cf.\ \cite[Proposition 3.3.9]{boecklepinktaucrystals} for a proof and the explicit bound) that this is the case if and only if there is $\alpha$ as desired.
\end{proof}

\begin{Le}
\label{H0FunctorOnCrys}
If $\varphi: M \to N$ is a nil-isomorphism of Cartier modules then $H^0_I(\varphi)$ is a nil-isomorphism for any ideal $I$. In particular, $H^0_I$ induces a functor on crystals.
\end{Le}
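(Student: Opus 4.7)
The plan is to invoke the criterion of Proposition \ref{Nilisocriterion}. Preliminarily, I observe that $H^0_I$ is an endofunctor on Cartier modules: the discussion preceding \autoref{AssForCrystals} already endows $H^0_I(M)$ with a Cartier structure inherited from $M$, and any Cartier morphism $\varphi$ restricts to $H^0_I(M) \to H^0_I(N)$ by $R$-linearity of $\varphi$.

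The key technical ingredient is the identification $H^0_I(F^s_\ast X) = F^s_\ast H^0_I(X)$ as Cartier modules. As abelian groups $F^s_\ast X$ equals $X$ but with $R$-action twisted by $F^s$, so annihilation by a power of $I$ in the twisted action is annihilation by a power of $I^{[p^s]}$; since $\sqrt{I^{[p^s]}} = \sqrt{I}$, both sides have the same underlying set, and the Cartier operators agree because the structure on $H^0_I(X)$ is by definition the restriction of that on $X$.

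Granting this, I apply Proposition \ref{Nilisocriterion} to the nil-isomorphism $\varphi$ to obtain, for some uniform $e$ and each homogeneous $\kappa \in \mathcal{C}_+^e$ of degree $d$, a filling $\alpha \colon F^{2d}_\ast N \to M$ of the commutative square appearing there. Applying the functor $H^0_I$ and using the identification above replaces this diagram with the analogous one for $H^0_I(\varphi)$, filled by $H^0_I(\alpha) \colon F^{2d}_\ast H^0_I(N) \to H^0_I(M)$. The other direction of Proposition \ref{Nilisocriterion} then yields that $H^0_I(\varphi)$ is a nil-isomorphism. The final ``in particular'' sentence is then formal: any functor on Cartier modules which sends nil-isomorphisms to nil-isomorphisms factors uniquely through the localization defining Cartier crystals.

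The only non-routine step is verifying that the identification $H^0_I(F^s_\ast X) = F^s_\ast H^0_I(X)$ is compatible with the Cartier operators $\kappa$ and $\kappa^2$ appearing in Proposition \ref{Nilisocriterion}, rather than merely being an isomorphism of $R$-modules. This is essentially bookkeeping, but it has to be stated carefully before the diagram chase can be invoked cleanly. An alternative, slightly messier route would argue directly from the long exact sequences of $H^\bullet_I$ applied to $0 \to \ker\varphi \to M \to \im\varphi \to 0$ and $0 \to \im\varphi \to N \to \coker\varphi \to 0$, but one then has to know that the higher local cohomology of a nilpotent Cartier module is again nilpotent, which needs the same compatibility of $H^\bullet_I$ with $F^s_\ast$ anyway.
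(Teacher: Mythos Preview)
Your proof is correct and follows the same approach as the paper: both use the diagrammatic criterion of Proposition~\ref{Nilisocriterion}, applying the functor $H^0_I$ to the filled square and reading off the conclusion. The paper's proof is a single sentence invoking that criterion, leaving the compatibility $H^0_I(F^s_\ast X)=F^s_\ast H^0_I(X)$ implicit; you have made this step explicit, which is exactly the content that makes the one-line argument go through.
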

\begin{proof}
Since $H^0_I$ is a functor on Cartier modules it preserves nil-isomorphisms by Proposition \ref{Nilisocriterion}. The second claim is then due to \cite[Proposition 2.2.4]{boecklepinktaucrystals}.
\end{proof}

\begin{Le}
\label{AssDescendsToCrystals}
If $\varphi: M \to N$ is a nil-isomorphism of Cartier modules then $\Ass M = \Ass N$.
\end{Le}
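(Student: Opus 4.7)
The plan is to reduce the statement about associated primes to the statement that a nil-isomorphism is a bijection on nilpotence: for any fixed prime $\eta$ of $R$, we want to conclude that $H^0_\eta(M_\eta)$ is nilpotent if and only if $H^0_\eta(N_\eta)$ is nilpotent, which is exactly the desired equality $\Ass M = \Ass N$.

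First, I would observe that localizing at $\eta$ sends nil-isomorphisms to nil-isomorphisms. Indeed, localization is exact on $R$-modules, so $\ker(\varphi_\eta) = (\ker \varphi)_\eta$ and $\coker(\varphi_\eta) = (\coker \varphi)_\eta$. Both of these inherit Cartier module structures, and if $\mathcal{C}_+^e$ annihilates $\ker \varphi$ and $\coker \varphi$ then it annihilates their localizations as well. Hence $\varphi_\eta \colon M_\eta \to N_\eta$ is again a nil-isomorphism. Applying $H^0_\eta$ and invoking \autoref{H0FunctorOnCrys}, the induced map $H^0_\eta(M_\eta) \to H^0_\eta(N_\eta)$ is a nil-isomorphism.

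Now I reduce to the following general fact: if $\psi \colon A \to B$ is a nil-isomorphism of Cartier modules, then $A$ is nilpotent if and only if $B$ is nilpotent. This follows because nilpotent Cartier modules form a Serre subcategory, as recalled before \autoref{Nilisocriterion}. Concretely, from the short exact sequences $0 \to \ker\psi \to A \to \im\psi \to 0$ and $0 \to \im\psi \to B \to \coker\psi \to 0$, nilpotence of either $A$ or $B$ together with nilpotence of $\ker\psi$ and $\coker\psi$ forces $\im\psi$, and then the remaining term, to be nilpotent. Applied to $\psi = H^0_\eta(\varphi_\eta)$ this gives the equivalence $H^0_\eta(M_\eta) \text{ nilpotent} \iff H^0_\eta(N_\eta) \text{ nilpotent}$, i.e. $\eta \in \Ass M \iff \eta \in \Ass N$, as desired.

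No step here looks genuinely difficult; the only thing to be careful about is verifying that localization at a prime indeed respects the Cartier structure in the sense needed (so that $\varphi_\eta$ really is a morphism of Cartier modules and so that $H^0_\eta$ may be applied as in \autoref{H0FunctorOnCrys}), but this is essentially immediate from \autoref{AssForCrystals}(c) and the fact that $\mathcal{C}_+^e$ acts on the localization whenever it acts on the module. The heart of the argument is simply the Serre-subcategory property of nilpotent modules combined with the functoriality of $H^0_\eta$ on crystals.
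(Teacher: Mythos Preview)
Your proof is correct and follows essentially the same approach as the paper's: localize, apply Lemma~\ref{H0FunctorOnCrys} to see that $H^0_\eta(\varphi_\eta)$ is a nil-isomorphism, and then use that nilpotent Cartier modules form a Serre subcategory to conclude the equivalence of nilpotence. The paper's proof is a one-liner that leaves these verifications implicit, whereas you have spelled them out carefully.
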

\begin{proof}
Using Lemma \ref{H0FunctorOnCrys} above and localizing we obtain that $H^0_\eta(M)_\eta$ is nilpotent if and only if $H^0_\eta(N)_\eta$ is nilpotent.
\end{proof}

\begin{Def}
\label{DefTestmodule}
For a  $\mathcal{C}$-module $M$ we define the \emph{test module} $\tau(M, \mathcal{C}) = \tau(M)$ as the smallest $\mathcal{C}$-submodule $N$ of $M$ such that for every associated prime $\eta$ of $M$ the inclusion $H^0_\eta(N_\eta) \subseteq H^0_\eta(M_\eta)$ is a nil-isomorphism.
\end{Def}
Since the inclusion $\underline{M} \subseteq M$ is a nil-isomorphism, one could replace $M$ by $\underline{M}$ in the above definition. Since for primes $\eta$ not associated to $M$ one has $H^0_\eta(M_{\eta})=0$, one also might range over all primes of $R$ in the above definition.
\begin{Bem}
One could also simply work with the associated primes of the underlying module and require an equality $H^0_\eta(N_\eta) = H^0_\eta(\underline{M}_\eta)$ instead of merely a nil-isomorphism. However, contrary to the theory of test modules as developed in \cite{blicklep-etestideale} this notion does not pass to crystals (which in the end is essential when one wants to transport these constructions to unit $R[F]$-modules and constructible $\mathbb{F}_p$-sheaves). The key fact why the construction of \cite{blicklep-etestideale} is independent of the particular choice of Cartier module is that if $M$ is $F$-pure then the support of the associated crystal is the same as the underlying support of the module (cf.\ \cite[Theorem 1.8]{staeblerunitftestmoduln} for details).

The same is not true for associated primes however. Namely, consider the Cartier module $M = \omega_R \oplus \omega_R/(x)\omega_R$ over the ring $R = \mathbb{F}_p[x]$, $p \geq 3$, with Cartier structure given by $(m, n) \mapsto (\kappa(xm), \kappa(x^{p-1} m) + (x) \omega_R)$. Then $M$ is $F$-pure since if $\delta$ is a generator of $\omega_R$ then $(x^{p-2} \delta, 0)$ is mapped to $(\delta,0)$ and $( \delta, 0)$ is mapped to $(0, \delta + (x)\omega_R)$. Furthermore, $M$ admits the nilpotent submodule $N = \{(0, m) \mod (x) \omega_R \, \vert \, m \in \omega_R\}$ which corresponds to the associated non-minimal prime $(x)$. Hence, for the $F$-pure Cartier module $M$ we have $\Ass M = \{(0)\}$ while the associated primes of the underlying module are $\{(0), (x)\}$.
\end{Bem}

A slight variation of this example shows that $H^0_\eta(M)$ need not be $F$-pure if $M$ is $F$-pure and $\eta \in \Ass M$:
\begin{Bsp}
\label{FpurityNotPreservedByH^0}
Take $R = \mathbb{F}_p[x,y], p \geq 3$ and $M = \omega_R \oplus \omega_R/(x)\omega_R$ with Cartier structure $(m,n) \mapsto (\kappa(xm), \kappa(x^{p-1}m) + y\kappa(n))$. Since $(x^{p-2} y^{p-1} \delta, 0)$ is mapped to $(\delta, 0)$ and $(y^{p-1} \delta, 0)$ is mapped to $(0, \delta)$ we conclude that $M$ is $F$-pure. But the submodule $H^0_{(x)}(M) = \omega_R/(x)\omega_R$ is not $F$-pure.
\end{Bsp}

\begin{Def}
\label{DefFRegular}
We say that a $\mathcal{C}$-module $M$ is \emph{$F$-regular} if it is $F$-pure (i.e.\, $\mathcal{C}_+ M = M$) and if it contains no proper submodule $N$ for which the inclusion $H^0_\eta(N_\eta) \subseteq H^0_\eta(M_\eta)$ is a nil-isomorphism for all associated primes $\eta$ of $M$.
\end{Def}

In other words, $M$ is $F$-regular if and only if $M = \tau(M)$.

\begin{Le}
\label{ImageofFregularisFRegular}
Let $\varphi: M \to N$ be a surjective homomorphism of Cartier modules and assume that $M$ is $F$-regular. Then $N$ is $F$-regular.
\end{Le}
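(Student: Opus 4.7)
The plan is to verify the two defining conditions of $F$-regularity for $N$ separately. That $N$ is $F$-pure is immediate: since $\varphi$ is $\mathcal{C}$-equivariant we have $\varphi(\mathcal{C}_+M)=\mathcal{C}_+\varphi(M)=\mathcal{C}_+N$, so $\mathcal{C}_+M = M$ together with surjectivity of $\varphi$ gives $\mathcal{C}_+N=N$.

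For the minimality condition, suppose $N'\subseteq N$ is a $\mathcal{C}$-submodule such that $H^0_\eta(N'_\eta)\subseteq H^0_\eta(N_\eta)$ is a nil-isomorphism for every $\eta\in\Ass N$. Define $M':=\varphi^{-1}(N')$, which is a $\mathcal{C}$-submodule of $M$ (as $\varphi$ is $\mathcal{C}$-equivariant and $N'$ is $\mathcal{C}$-stable) that contains $\ker\varphi$, so that $\varphi$ induces an isomorphism $M/M'\cong N/N'$. The strategy is to check that $M'$ satisfies the analogous nil-isomorphism condition for every $\eta\in\Ass M$; once this is established, the $F$-regularity of $M$ forces $M\subseteq M'$, hence $M=M'$, and applying $\varphi$ yields $N=\varphi(M')=N'$, as required.

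The central computation is the following. For each prime $\eta$, applying the left exact functor $H^0_\eta((-)_\eta)$ to the short exact sequences $0\to M'\to M\to M/M'\to 0$ and $0\to N'\to N\to N/N'\to 0$, and using naturality together with the isomorphism $(M/M')_\eta\cong (N/N')_\eta$ induced by $\varphi$, yields commuting injections of Cartier modules
\[
H^0_\eta(M_\eta)/H^0_\eta(M'_\eta)\;\hookrightarrow\;H^0_\eta(N_\eta)/H^0_\eta(N'_\eta)\;\hookrightarrow\;H^0_\eta\bigl((N/N')_\eta\bigr).
\]
It therefore suffices to show that the middle term is nilpotent for every $\eta\in\Ass M$. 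If $\eta\in\Ass N$ this is precisely the hypothesis on $N'$; and if $\eta\notin\Ass N$ then $H^0_\eta(N_\eta)$ is already nilpotent by definition of $\Ass N$, so the quotient is trivially nilpotent. Since the inclusion $H^0_\eta(M'_\eta)\subseteq H^0_\eta(M_\eta)$ is always injective and its cokernel is now nilpotent, it is a nil-isomorphism, which is exactly what is needed.

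The only slightly delicate point is verifying the first injection displayed above. This amounts to the identity $\varphi^{-1}\bigl(H^0_\eta(N'_\eta)\bigr)\cap H^0_\eta(M_\eta)=H^0_\eta(M'_\eta)$, which follows from the definition $M'_\eta=\varphi^{-1}(N'_\eta)$ together with the fact that $H^0_\eta(N'_\eta)=N'_\eta\cap H^0_\eta(N_\eta)$ (since $N'_\eta\hookrightarrow N_\eta$). Beyond this bookkeeping the argument is routine.
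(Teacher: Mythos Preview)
Your argument is correct and follows the same overall strategy as the paper: pull back $N'$ to $M'=\varphi^{-1}(N')$, verify that $H^0_\eta(M'_\eta)\subseteq H^0_\eta(M_\eta)$ is a nil-isomorphism for every $\eta\in\Ass M$, and invoke the $F$-regularity of $M$.

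The execution differs in a small but pleasant way. The paper splits into the same two cases $\eta\in\Ass N$ and $\eta\notin\Ass N$, but in the first case it appeals to the long exact sequence of local cohomology (bringing in the $H^1$ term) together with the $5$-lemma to conclude that $H^0_\eta(M'_\eta)\subseteq H^0_\eta(M_\eta)$ is a nil-isomorphism. Your route is more elementary and more uniform: using only left exactness of $H^0_\eta((-)_\eta)$ and the isomorphism $M/M'\cong N/N'$, you obtain directly an injection of Cartier modules $H^0_\eta(M_\eta)/H^0_\eta(M'_\eta)\hookrightarrow H^0_\eta(N_\eta)/H^0_\eta(N'_\eta)$, and then the case analysis only has to show that the target is nilpotent. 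This avoids any reference to higher local cohomology or the $5$-lemma; the paper's version, by contrast, implicitly uses that the $H^i_\eta$ are functors on Cartier modules (as noted in its footnote). Both arguments are short, but yours is the cleaner packaging.
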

\begin{proof}
Note that $N$ is automatically $F$-pure since $\varphi$ is surjective.
Let $N'$ be a Cartier submodule of $N$ such that $H^0_\eta(N'_\eta) \subseteq H^0_\eta(N_\eta)$ is a nil-isomorphism for all associated primes $\eta$ of $N$. Denoting the pre-image of $N'$ by $M'$ we obtain the following sequence whose rows are exact.
\[\begin{xy}
  \xymatrix{0 \ar[r]& \ker \varphi \ar[r] & M \ar[r]^\varphi& N \ar[r]&0\\
0 \ar[r]& \ker \varphi \ar[u]^{\id} \ar[r] & M' \ar[u] \ar[r]^\varphi& N' \ar[u] \ar[r]&0}
  \end{xy}
\]
Let $\nu$ be in $\Ass M$. If $\nu$ is not contained in $\Ass N$ then applying $H^0_\nu$ and localization to the above sequences we obtain that $H^0_\nu(N'_\nu)$ and $H^0_\nu(N_\nu)$ are both nilpotent since $\Ass N' \subseteq \Ass N$. Consequently, the inclusions $H^0_\nu(\ker \varphi_\nu) \subseteq H^0_\nu(M_\nu)$ and $H^0_\nu(\ker \varphi_\nu) \subseteq H^0_\nu(M'_\nu)$ are nil-isomorphisms. We conclude that the inclusion $H^0_\nu(M'_\nu) \subseteq H^0_\nu(M_\nu)$ is a nil-isomorphism.

If $\nu$ is contained in $\Ass N$ then $H^0_\nu(N'_\nu) \subseteq H^0_\nu(N_\nu)$ is a nil-isomorphism. The long exact sequence of local cohomology (we only need the first $H^1$-term) and the $5$-lemma yield the nil-isomorphism $H^0_\nu(M'_\nu) \subseteq H^0_\nu(M_\nu)$.\footnote{Just as the $I$-power-torsion functor $H^0_\eta$ is a functor on Cartier modules, its higher derived functors, the local cohomology functors $H^i_\eta(M)$, are also functors on Cartier modules. This follows directly from the observation that the \v{C}ech complex used to compute local cohomology is in fact a complex of Cartier modules. See \cite{blickleboecklecartierfiniteness} for details.} Altogether we conclude that $H^0_\nu(M'_\nu)$ is nil-isomorphic to $H^0_\nu(M_\nu)$ for all $\nu \in \Ass M$ so that by $F$-regularity $M = M'$.
\end{proof}

\begin{Prop}
\label{TestModulesAreFunctorial}
Let $\varphi: M \to N$ be a homomorphism of Cartier modules and assume that $\tau(M)$ and $\tau(N)$ exist. The following hold:
\begin{enumerate}[(a)]
\item{Taking the test module is functorial and $\tau(\varphi) := \varphi\vert_{\tau(M)}$ induces the map $\tau(M) \to \tau(N)$.}
\item{Taking the test module commutes with finite direct sums, i.e.\ $\tau(M_1 \oplus \ldots \oplus M_n) = \tau(M_1) \oplus \ldots \oplus \tau(M_n)$.}
\end{enumerate}
In other words, the inclusion $\tau(M) \subseteq M$ is an additive sub-functor $\tau \to \id$.
 \end{Prop}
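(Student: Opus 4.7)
The plan is to establish part (a) by showing that the preimage $P := \varphi^{-1}(\tau(N))$ is itself a test-candidate for $M$; the minimality defining $\tau(M)$ will then force $\tau(M) \subseteq P$, i.e.\ $\varphi(\tau(M)) \subseteq \tau(N)$. Note that $P$ is automatically a $\mathcal{C}$-submodule of $M$ (since $\tau(N)$ is one), and by construction $\varphi$ descends to an injection $M/P \hookrightarrow N/\tau(N)$. Functoriality on morphisms is then immediate (restriction composes), and naturality of $\tau \hookrightarrow \id$ is tautological.

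To verify the test-candidate property for $P$, I would fix $\eta \in \Ass M$ and feed the short exact sequence $0 \to P \to M \to M/P \to 0$ through localization at $\eta$ (exact) followed by $H^0_\eta$ (left exact). This embeds the cokernel of $H^0_\eta(P_\eta) \hookrightarrow H^0_\eta(M_\eta)$ into $H^0_\eta((M/P)_\eta)$, and further into $H^0_\eta((N/\tau(N))_\eta)$ via the injection above. Crucially, the resulting map $H^0_\eta(M_\eta) \to H^0_\eta((N/\tau(N))_\eta)$ factors through $H^0_\eta(N_\eta) \to H^0_\eta((N/\tau(N))_\eta)$, so its image is contained in the image of $H^0_\eta(N_\eta)$ there. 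In both possible cases this latter image is nilpotent: if $\eta \in \Ass N$, it is the cokernel of the nil-iso $H^0_\eta(\tau(N)_\eta) \hookrightarrow H^0_\eta(N_\eta)$, and if $\eta \notin \Ass N$, then already $H^0_\eta(N_\eta)$ itself is nilpotent. Hence $H^0_\eta(P_\eta) \hookrightarrow H^0_\eta(M_\eta)$ is a nil-isomorphism for every $\eta \in \Ass M$, so $P$ is a test-candidate and (a) follows.

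For part (b), applying part (a) to each summand inclusion $M_i \hookrightarrow M$ produces at once $\bigoplus_i \tau(M_i) \subseteq \tau(M)$. For the reverse inclusion I invoke \autoref{AssForCrystals}(b), giving $\Ass M = \bigcup_i \Ass M_i$, together with the fact that $H^0_\eta$ and localization commute with finite direct sums. Fixing $\eta \in \Ass M$, the inclusion $H^0_\eta\bigl((\bigoplus_i \tau(M_i))_\eta\bigr) \subseteq H^0_\eta(M_\eta)$ splits as the direct sum of the inclusions $H^0_\eta(\tau(M_i)_\eta) \subseteq H^0_\eta((M_i)_\eta)$, each of which is a nil-iso (by the defining property of $\tau(M_i)$ when $\eta \in \Ass M_i$, and because $H^0_\eta((M_i)_\eta)$ is nilpotent otherwise). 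Hence $\bigoplus_i \tau(M_i)$ is a test-candidate in $M$ and the other inclusion follows by minimality.

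The genuine technical step is the diagram chase in the second paragraph: identifying the cokernel of $H^0_\eta(P_\eta) \hookrightarrow H^0_\eta(M_\eta)$ with (a submodule of) the image of $H^0_\eta(N_\eta)$ in $H^0_\eta((N/\tau(N))_\eta)$, so that the defining nil-iso for $\tau(N)$ (or the plain absence of $\eta$ from $\Ass N$) delivers the required nilpotence. Everything else amounts to formal bookkeeping with associated primes and direct sums.
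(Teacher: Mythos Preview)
Your proof is correct. For part (b) it is essentially identical to the paper's argument. For part (a), however, you take a genuinely different route.

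The paper factors $\varphi$ as a surjection $M \twoheadrightarrow \im\varphi$ followed by an inclusion $\im\varphi \hookrightarrow N$ and handles the two cases separately. The injective case is disposed of by observing that $\Ass M \subseteq \Ass N$, so $\tau(N)\cap M$ is a test-candidate in $M$; this is your argument restricted to injections. The surjective case, on the other hand, is handled by first proving a separate lemma (Lemma~\ref{ImageofFregularisFRegular}) asserting that the image of an $F$-regular Cartier module is again $F$-regular, and then applying the injective case to $\varphi(\tau(M)) \hookrightarrow N$.

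Your approach bypasses this factorisation entirely: by working with the preimage $P=\varphi^{-1}(\tau(N))$ and chasing the cokernel of $H^0_\eta(P_\eta)\hookrightarrow H^0_\eta(M_\eta)$ through $H^0_\eta(N_\eta)\to H^0_\eta((N/\tau(N))_\eta)$, you treat arbitrary $\varphi$ in one stroke and never invoke Lemma~\ref{ImageofFregularisFRegular}. This is more economical for the purpose at hand. The paper's detour through Lemma~\ref{ImageofFregularisFRegular} does yield the standalone fact that quotients of $F$-regular modules are $F$-regular, which may be of independent interest, but it is not needed for the proposition itself.
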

\begin{proof}
 \begin{enumerate}[(a)]
\item{We prove the statement if $\varphi$ is injective or surjective. Then the claim follows by factoring $\varphi$ as $M \to \im \varphi \to N$. Assume that $\varphi$ is injective. Since $\tau(M)$ is given as a minimal submodule of $M$ that satisfies certain properties for every $\eta \in \Ass M \subseteq \Ass N$ we obtain that $\tau(M) \subseteq \tau(N)$ since $\tau(N)$ satisfies these properties as well.

Assume now that $\varphi$ is surjective. By Lemma \ref{ImageofFregularisFRegular} the image of $\tau(M)$ by $\varphi$ is $F$-regular. By the first part the natural inclusion $\varphi(\tau(M)) \to N$ induces a morphism of test modules. Hence, $\varphi(\tau(M)) \subseteq \tau(N)$.}
\item{Denote by $i$ the inclusion $M \to M \oplus N$. By (a) we have $i(\tau(M)) \subseteq \tau(M \oplus N)$ and similarly for $N$. In particular, $\tau(N) \oplus \tau(M) \subseteq \tau(M \oplus N)$. Note that $\Ass N \cup \Ass M = \Ass (M \oplus N)$ and that $H^0_\eta(N_\eta)$ is not nilpotent if and only if $\eta \in \Ass N$. Let $\eta$ be an associated prime of $M \oplus N$. Then we have a nil-isomorphism \begin{align*}H_\eta^0((\tau(M) \oplus \tau(N))_\eta) &= H_\eta^0(\tau(M)_\eta) \oplus H_\eta^0(\tau(N)_\eta) \\&\cong H^0_\eta(M_\eta) \oplus H^0_\eta(N_\eta) = H^0_\eta((M \oplus N)_\eta).\end{align*} Since $\tau(M \oplus N)$ is minimal with this property we obtain the other inclusion $\tau(M \oplus N) \subseteq \tau(M) \oplus \tau(N)$.}
 \end{enumerate}
\end{proof}

\begin{Bsp}
It is not true that $\varphi(\tau(M)) = \tau(N)$ if $\varphi: M \to N$ is surjective. Consider $M = k[x]$ with Cartier structure $\kappa x^{p-1}$ and $N = k[x]/(x)$ with $\varphi$ the canonical projection. Then $\tau(M) = (x)$ which is mapped to zero by $\varphi$. However, $\tau(N) = N$.
\end{Bsp}

\begin{Le}
\label{TauNilisoLemma}
Let $\varphi: M \to N$ be a nil-isomorphism of Cartier modules. Then $\varphi(\tau(M)) = \tau(N)$.
\end{Le}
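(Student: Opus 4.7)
The plan is to prove $\varphi(\tau(M)) = \tau(N)$ by establishing both inclusions separately: the forward inclusion $\varphi(\tau(M)) \subseteq \tau(N)$ is almost immediate from the functoriality established in Proposition \ref{TestModulesAreFunctorial}, and the reverse inclusion follows from the minimality property in the definition of $\tau(N)$ once we verify that $\varphi(\tau(M))$ is an admissible candidate.

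For the forward inclusion, since $\tau$ is a sub-functor of $\id$ by part (a) of Proposition \ref{TestModulesAreFunctorial}, the restriction $\tau(\varphi) = \varphi\vert_{\tau(M)}$ lands in $\tau(N)$, giving $\varphi(\tau(M)) \subseteq \tau(N)$.

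For the reverse inclusion $\tau(N) \subseteq \varphi(\tau(M))$, it suffices to show that $\varphi(\tau(M))$ satisfies the defining property of $\tau(N)$, namely that for every $\eta \in \Ass N$ the inclusion $H^0_\eta(\varphi(\tau(M))_\eta) \subseteq H^0_\eta(N_\eta)$ is a nil-isomorphism. First, by Lemma \ref{AssDescendsToCrystals}, the nil-isomorphism $\varphi$ yields $\Ass M = \Ass N$, so fixing $\eta \in \Ass N$ we also have $\eta \in \Ass M$ and therefore $H^0_\eta(\tau(M)_\eta) \subseteq H^0_\eta(M_\eta)$ is a nil-isomorphism by definition of $\tau(M)$. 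Next, consider the commutative diagram
\[
\begin{xy}\xymatrix{
\tau(M) \ar[r] \ar[d] & M \ar[d]^{\varphi} \\
\varphi(\tau(M)) \ar[r] & \im \varphi
}\end{xy}
\]
in which both vertical maps are surjective with kernel contained in $\ker \varphi$, which is nilpotent. Hence both vertical maps are nil-isomorphisms, and since nilpotence is preserved under localization, this persists after localizing at $\eta$. Applying the functor $H^0_\eta$ and invoking Lemma \ref{H0FunctorOnCrys}, all four maps in the resulting square of $H^0_\eta(-_\eta)$-terms are nil-isomorphisms: the top by the preceding paragraph, the two verticals by Lemma \ref{H0FunctorOnCrys}, and therefore the bottom map $H^0_\eta(\varphi(\tau(M))_\eta) \to H^0_\eta(\im\varphi_\eta)$ by the two-out-of-three property in the abelian category of Cartier crystals (or directly, since the composition of the left vertical with the bottom inclusion equals the composition of the top inclusion with the right vertical).

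Finally, the inclusion $\im\varphi \subseteq N$ has nilpotent cokernel $\coker \varphi$ and zero kernel, so it is a nil-isomorphism, and Lemma \ref{H0FunctorOnCrys} gives that $H^0_\eta(\im\varphi_\eta) \subseteq H^0_\eta(N_\eta)$ is a nil-isomorphism. Composing with the previous step, $H^0_\eta(\varphi(\tau(M))_\eta) \subseteq H^0_\eta(N_\eta)$ is a nil-isomorphism for each $\eta \in \Ass N$. By the minimality in Definition \ref{DefTestmodule}, $\tau(N) \subseteq \varphi(\tau(M))$, completing the proof. I do not anticipate a real obstacle: the only mild point is keeping the diagram-chase bookkeeping straight and observing that nilpotence is stable under localization so that the nil-isomorphism property survives the passage to stalks at $\eta$.
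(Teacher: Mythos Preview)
Your proof is correct and follows essentially the same approach as the paper's: both establish $\varphi(\tau(M)) \subseteq \tau(N)$ via functoriality (Proposition~\ref{TestModulesAreFunctorial}), then obtain the reverse inclusion by verifying that $\varphi(\tau(M))$ satisfies the defining nil-isomorphism condition at every $\eta \in \Ass N = \Ass M$ and invoking minimality. The only cosmetic difference is that the paper organizes the diagram chase through $\tau(N)$ and $\coker\varphi$ directly, whereas you route it through $\im\varphi$; the underlying logic is identical.
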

\begin{proof}
By Proposition \ref{TestModulesAreFunctorial} (a) we have $\varphi(\tau(M)) \subseteq \tau(N)$. Moreover, we have for any associated prime $\eta$ of $M$ the following commutative diagram
\[\begin{xy}
   \xymatrix{ 0 \ar[r]& H^0_\eta(\varphi(M)_\eta) \ar[r] & H^0_\eta(N_\eta) \ar[r] & H^0_\eta(\coker \varphi_\eta) \\
0 \ar[r] & H^0_\eta(\varphi(\tau(M))_\eta) \ar[r] \ar[u] & H^0_\eta(\tau(N)_\eta) \ar[u]}
  \end{xy}
\]
where the vertical arrows are inclusions and nil-isomorphisms by the definition of the test module. Since $H^0_\eta(\coker \varphi_\eta)$ is nilpotent the inclusion $(H^0_\eta(\varphi(M)_\eta)) \subseteq H^0_\eta(N_\eta)$ is also a nil-isomorphism. We conclude that the inclusion $H^0_\eta(\varphi(\tau(M))_\eta) \to H^0_\eta(\tau(N)_\eta)$ is also a nil-isomorphism. By Lemma \ref{AssDescendsToCrystals} we have $\Ass M = \Ass N$ so that $\varphi(\tau(M))$ is a submodule for which the natural inclusion induces a nil-isomorphism $H^0_\eta(\varphi(\tau(M)_\eta) \subseteq H^0_\eta(\tau(N)_\eta) \subseteq H^0_\eta(N_\eta)$ at all associated primes of $N$. Since $\tau(N)$ is minimal with this property and $\varphi(\tau(M)) \subseteq \tau(N)$ the claim follows.
\end{proof}

\begin{Prop}
\label{TauFunctorOnCrys}
$\tau$ induces a functor on Cartier crystals.
\end{Prop}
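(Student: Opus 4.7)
The plan is to apply the universal property of the localization $Q \colon \{\text{Cartier modules}\} \to \{\text{Cartier crystals}\}$: any functor $F$ defined on Cartier modules descends to a functor on Cartier crystals provided that $Q \circ F$ inverts nil-isomorphisms, equivalently provided that $F$ itself sends nil-isomorphisms to nil-isomorphisms. So everything reduces to checking this latter property for $\tau$.

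First I would verify that $\tau$ preserves nil-isomorphisms. Given a nil-isomorphism $\varphi \colon M \to N$, Proposition \ref{TestModulesAreFunctorial}\,(a) produces the induced map $\tau(\varphi) \colon \tau(M) \to \tau(N)$ which is the restriction of $\varphi$. Lemma \ref{TauNilisoLemma} gives $\varphi(\tau(M)) = \tau(N)$, so $\tau(\varphi)$ is surjective and hence has zero (in particular, nilpotent) cokernel. For the kernel, note that
\[
\ker \tau(\varphi) = \tau(M) \cap \ker \varphi \subseteq \ker \varphi,
\]
and since $\ker \varphi$ is nilpotent (because $\varphi$ is a nil-isomorphism), so is its submodule $\ker \tau(\varphi)$. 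Thus $\tau(\varphi)$ has nilpotent kernel and cokernel, i.e.\ is a nil-isomorphism.

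It then follows that the composition $Q \circ \tau$ from Cartier modules to Cartier crystals sends nil-isomorphisms to isomorphisms (since $Q$ by definition inverts nil-isomorphisms). By the universal property of the localization at the class of nil-isomorphisms, there is a unique functor $\bar{\tau}$ on Cartier crystals with $\bar{\tau} \circ Q = Q \circ \tau$, which is the induced functor on crystals.

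I do not expect any serious obstacle here: the genuinely non-trivial input, namely that $\varphi(\tau(M)) = \tau(N)$ for a nil-isomorphism $\varphi$, was already established in Lemma \ref{TauNilisoLemma}, and the kernel estimate together with the universal property of localization are entirely formal.
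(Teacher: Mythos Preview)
Your argument is correct and matches the paper's proof essentially verbatim: the paper also checks that $\ker\tau(\varphi)=\ker\varphi\cap\tau(M)$ is nilpotent and that $\tau(\varphi)$ is surjective via Lemma~\ref{TauNilisoLemma}, and then invokes \cite[Proposition 2.2.4]{boecklepinktaucrystals}, which is precisely the universal property of localization you spell out explicitly.
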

\begin{proof}
If $\varphi: M \to N$ is a nil-isomorphism then $\ker \tau(\varphi) = \ker \varphi \cap \tau(M)$ is nilpotent. By Lemma \ref{TauNilisoLemma} above $\tau(\varphi)$ is surjective. Hence, $\tau(\varphi)$ is a nil-isomorphism and the claim follows from \cite[Proposition 2.2.4]{boecklepinktaucrystals}
\end{proof}

Let us recall, that for a closed immersion $i: \Spec R/I \to \Spec R$ the functor $i^\flat$ is simply the $I$-torsion functor given by $\Hom_R(R/I, -)$ considered as an $R/I$-module.
Next we list several basic properties of test modules (cf.\ \cite[Proposition 3.2]{blicklep-etestideale}).
\begin{Prop}
\label{TauFormalProperties}
Let $R$ be noetherian, $\mathcal{C}$ an $R$-Cartier algebra and $M$ a coherent $\mathcal{C}$-module. If $\tau(M)$ exists then the following hold:
\begin{enumerate}[(a)]
 \item{$\tau(M) = \tau(\underline{M})$.}
\item{If $S \subseteq R$ is a multiplicative set, then $\tau(S^{-1}M) = S^{-1}\tau(M)$.}
\item{If $\Supp(\underline{M}) \subseteq V(I)$ for some ideal $I \subseteq R$ then $\tau(i^\flat M) = i^\flat\tau(M)$, where $i \colon \Spec R/I \to \Spec R$ denotes the natural inclusion. In particular, this allows us to reduce to the case that $R$ is reduced and that $\Supp{\underline{M}} = \Spec R$.}
\item{If $i: \Spec R/I \to \Spec R$ is the natural inclusion then $i_\ast \tau(M) = \tau(i_\ast M)$.}
\end{enumerate}
\end{Prop}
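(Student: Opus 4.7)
My overall strategy is to reduce each of (a)--(d) to the defining minimality property of $\tau$, combined with Lemma~\ref{TauNilisoLemma} (invariance of $\tau$ under nil-isomorphisms) and the description of $\Ass$ in Lemma~\ref{AssForCrystals}. Part (a) is immediate: since $\underline{M} \hookrightarrow M$ is a nil-isomorphism, Lemma~\ref{TauNilisoLemma} gives $\tau(\underline{M}) = \tau(M)$. Part (d) is essentially tautological: the closed immersion $i$ corresponds to restriction of scalars along $R \twoheadrightarrow R/I$, and under the bijection between primes of $R$ containing $I$ and primes of $R/I$, both the associated primes and the local cohomology modules appearing in Definition~\ref{DefTestmodule} match up verbatim (primes of $R$ not containing $I$ satisfy $(i_\ast M)_\eta = 0$). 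Hence both $\tau(i_\ast M)$ and $i_\ast \tau(M)$ are characterized by the same minimality property as Cartier submodules of $M$.

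For part (b), the easy direction $\tau(S^{-1}M) \subseteq S^{-1}\tau(M)$ follows directly from minimality: any $\eta \in \Ass S^{-1}M$ lies in $\Ass M$ with $\eta \cap S = \varnothing$ by Lemma~\ref{AssForCrystals}(c), and localization at such $\eta$ factors through $S^{-1}(\cdot)$, so $(S^{-1}\tau(M))_\eta = \tau(M)_\eta$ satisfies the nil-iso property by the defining property of $\tau(M)$. For the reverse inclusion I plan to consider
\[
N \;:=\; \tau(M) \,\cap\, \pi^{-1}\bigl(\tau(S^{-1}M)\bigr) \;\subseteq\; M,
\]
where $\pi \colon M \to S^{-1}M$, and to verify that $N$ itself satisfies the defining property of $\tau(M)$. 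At an associated prime $\eta$ of $M$ with $\eta \cap S \neq \varnothing$ one has $(S^{-1}M)_\eta = 0$, hence $\pi^{-1}(\tau(S^{-1}M))_\eta = M_\eta$ and $N_\eta = \tau(M)_\eta$ works by the defining property of $\tau(M)$. At an associated prime $\eta$ of $M$ with $\eta \cap S = \varnothing$ (hence $\eta \in \Ass S^{-1}M$), exactness of localization identifies $\pi^{-1}(\tau(S^{-1}M))_\eta$ with $\tau(S^{-1}M)_\eta$, so that $N_\eta = \tau(S^{-1}M)_\eta$ satisfies the required nil-iso by the defining property of $\tau(S^{-1}M)$. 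Minimality of $\tau(M)$ then forces $N = \tau(M)$, and localizing this equality (using that localization commutes with finite intersections of submodules) gives $S^{-1}\tau(M) = \tau(S^{-1}M)$.

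For part (c), the hypothesis $\Supp \underline{M} \subseteq V(I)$ yields $I^n \underline{M} = 0$ for some $n$. Using the Cartier bimodule relation $r\kappa = \kappa r^{p^e}$ for $\kappa \in \mathcal{C}_e$, one checks that if $I^n m = 0$ and $p^e \geq n$, then $r\kappa(m) = \kappa(r^{p^e} m) = 0$ for all $r \in I$; hence $\mathcal{C}_+^e \underline{M} \subseteq i^\flat M$ for $e$ sufficiently large, and since $\mathcal{C}_+ \underline{M} = \underline{M}$ this forces $\underline{M} \subseteq i^\flat M$. Consequently the inclusion $i^\flat M \hookrightarrow M$ has cokernel a quotient of the nilpotent $M/\underline{M}$, hence is a nil-isomorphism; Lemma~\ref{TauNilisoLemma} together with (a) then gives $\tau(i^\flat M) = \tau(M)$. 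Since $\tau(M) \subseteq \underline{M} \subseteq i^\flat M$ we conclude $i^\flat \tau(M) = \tau(M) = \tau(i^\flat M)$.

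The main obstacle I expect is the $\supseteq$-direction in (b): one must guess the correct global submodule $N$ and split the verification into the two cases according to whether $\eta$ meets $S$, with the vanishing $(S^{-1}M)_\eta = 0$ for $\eta$ meeting $S$ being the essential input that allows $N$ to satisfy the condition at primes it has no direct control over. The Cartier bimodule computation in (c) and the matching of data in (d) should then be routine once the framework is in place.
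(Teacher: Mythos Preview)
Your proof is correct and follows essentially the same approach as the paper. Part (a) is immediate in both; for (b) both arguments pull a candidate submodule back along $M \to S^{-1}M$ and split the verification according to whether $\eta$ meets $S$ (the paper does this for an arbitrary $N$ to show minimality of $S^{-1}\tau(M)$, you specialize directly to $\tau(S^{-1}M)$---the extra intersection with $\tau(M)$ is harmless but unnecessary, since your first direction already gives $\tau(S^{-1}M)_\eta \subseteq \tau(M)_\eta$).

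The only differences worth noting are in (c) and (d). For (c) the paper simply cites \cite[Lemmata 2.19, 2.20]{blicklep-etestideale} to get $\underline{M} \subseteq i^\flat M$, whereas you give the underlying computation via the bimodule relation $r\kappa = \kappa r^{p^e}$; this is exactly the content of those lemmata, so your argument is a self-contained version of the same step. For (d) the paper takes a slightly more indirect route: it first deduces one inclusion from (c) via the $(i_\ast,i^\flat)$-adjunction and then gets the other by minimality. Your direct argument---that under the bijection between primes of $R/I$ and primes of $R$ containing $I$ the data entering Definition~\ref{DefTestmodule} match, while $(i_\ast M)_\eta = 0$ for $\eta \not\supseteq I$---is equally valid and arguably more transparent. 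Both approaches implicitly rely on the identification of the $R/I$-Cartier algebra with a quotient of $\mathcal{C}$, so that Cartier submodules of $M$ and of $i_\ast M$ coincide.
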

\begin{proof}
Part (a) is immediate from the definition. We come to part (b). By Lemma \ref{AssForCrystals} (c) every $\eta \in \Ass S^{-1}M$ is also an associated prime of $\Ass M$. Hence, we have a nil-isomorphism $H^0_\eta(\tau(M)_\eta) \subseteq H^0_\eta(M_\eta)$ for every $\eta \in \Ass S^{-1} M$ which induces a nil-isomorphism $H^0_\eta(S^{-1}\tau(M)_\eta) \subseteq H^0_\eta(S^{-1}M_\eta)$. We still have to show that $S^{-1}\tau(M)$ is minimal with this property. So let $N \subseteq S^{-1} M$ be a Cartier submodule for which $H^0_\eta(N_\eta) \subseteq H^0_\eta(S^{-1}M_\eta)$ is a nil-isomorphism for each $\eta \in \Ass S^{-1}M$ and fix $\nu \in \Ass M$. Denote by $\varphi: M \to S^{-1}M$ the localization map. If $\nu \cap S = \varnothing$ then $\varphi^{-1}(N)_\nu = N_\nu$ and hence $H^0_\nu(\varphi^{-1}(N)_\nu) \subseteq H^0_\nu(M_\nu)$ is a nil-isomorphism. Otherwise, if $\nu \cap S \neq \varnothing$ then $(\ker \varphi)_\nu = M_\nu$ so that $\varphi^{-1}(N)_\nu = M_\nu$. In particular, $H^0_\nu(\varphi^{-1}(N)_\nu) = H^0_\nu(M_\nu)$.
 But $\tau(M)$ is minimal with this property so that $\tau(M) \subseteq \varphi^{-1}(N)$. This shows that $S^{-1}\tau(M) \subseteq S^{-1}\varphi^{-1}(N) = N$.

For part (c) we may replace $M$ by $\underline{M}$ and thus assume that $M$ is $F$-pure. The claim then follows from \cite[Lemmata 2.19, 2.20]{blicklep-etestideale}. Also note that $i^\flat M = \Hom_R(R/I, M)$ may be identified with $M$ considered as an $R/I$-module.

Finally, part (c) yields via adjunction for $(i_*,i^\flat)$ an inclusion $i_\ast \tau(M) \subseteq \tau(i_\ast M)$. On the other hand, one easily sees that $\tau(i_\ast M) \subseteq i_\ast \tau(M)$ by minimality of $\tau$.
\end{proof}

\begin{Bem}
Everything discussed in the section generalizes to the context of noetherian schemes. Indeed, since we only consider local cohomology with respect to a point all the $H^0_\eta$ factor through some open affine $\Spec A \subseteq X$. In particular, Proposition \ref{TauFormalProperties} (b) and uniqueness of the test module show that existence of $\tau$ may be checked on an open affine cover and that one obtains $\tau$ by gluing. Hence, we can always argue on local affine charts.
\end{Bem}

\section{Comparison to the previous definition}
\label{SectionComparisonOldDef}

We now show several comparison results of our notion of test module with the one introduced in \cite[Definition 3.1]{blicklep-etestideale}. For this we will denote the test module as introduced in \cite{blicklep-etestideale} by $\tau'(M)$ while we continue to use $\tau(M)$ as defined in Definition \ref{DefTestmodule}.

Recall that $\tau'(M)$ is defined as the minimal $\mathcal{C}$-submodule $N$ of $\underline{M}$ such that $N \subseteq \underline{M}$ yields an equality after \emph{localizing} at any \emph{minimal} prime of $\Supp \underline{M}$ (where, as usual, $\Supp $ denotes the support of the underlying module).

\begin{Prop}
\label{OldNewTestModuleRelation}
Let $M$ be a Cartier module. Then $\tau'(M) \subseteq \tau(M)$ with equality if all associated primes of $M$ are minimal.
\end{Prop}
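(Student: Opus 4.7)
The plan is to reduce to the $F$-pure case, then compare the defining conditions of $\tau(M)$ and $\tau'(M)$ at minimal primes of $\Supp \underline{M}$, exploiting the fact that $F$-purity forces a nilpotent quotient of $\underline{M}$ at such a prime to vanish. First I would replace $M$ by $\underline{M}$: by Proposition \ref{TauFormalProperties}(a) we have $\tau(M)=\tau(\underline{M})$, and $\tau'(M)$ depends only on $\underline{M}$ by construction. So we may assume $M$ is $F$-pure. Next I would record the key observation that every minimal prime $\eta$ of $\Supp M$ lies in $\Ass M$: localization preserves $F$-purity, so $M_\eta$ is a non-zero $F$-pure $\mathcal{C}_\eta$-module whose support equals $\{\eta R_\eta\}$; in particular $H^0_\eta(M_\eta)=M_\eta$ and this is not nilpotent (an $F$-pure nilpotent module is zero).

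For the inclusion $\tau'(M)\subseteq \tau(M)$, I would verify that $\tau(M)\subseteq M$ satisfies the defining property of $\tau'(M)$, namely $\tau(M)_\eta=M_\eta$ at every minimal prime $\eta$ of $\Supp M$. By the previous observation $\eta\in\Ass M$, so the definition of $\tau(M)$ yields a nil-isomorphism $H^0_\eta(\tau(M)_\eta)\subseteq H^0_\eta(M_\eta)$. Since both modules are already $\eta$-torsion at $\eta$, this simplifies to $\tau(M)_\eta\subseteq M_\eta$ being a nil-isomorphism; its cokernel $M_\eta/\tau(M)_\eta$ is nilpotent and, being a quotient of the $F$-pure module $M_\eta$, is itself $F$-pure, hence zero. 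Minimality of $\tau'(M)$ then gives $\tau'(M)\subseteq \tau(M)$.

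For equality under the hypothesis that all associated primes of $M$ are minimal, I would symmetrically check that $\tau'(M)$ satisfies the defining condition of $\tau(M)$: for any $\eta\in\Ass M$ the hypothesis forces $\eta$ to be a minimal prime of $\Supp M$, whence $\tau'(M)_\eta=M_\eta$ by construction of $\tau'$, and therefore $H^0_\eta(\tau'(M)_\eta)=H^0_\eta(M_\eta)$ --- which is even stronger than a nil-isomorphism. By minimality of $\tau(M)$ we get $\tau(M)\subseteq \tau'(M)$, proving equality. The only delicate step is the killing of the nilpotent cokernel via $F$-purity at a minimal prime; everything else amounts to matching the two minimality conditions prime by prime.
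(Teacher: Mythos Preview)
Your proof is correct and follows essentially the same approach as the paper's: reduce to the $F$-pure case, then show that at a minimal prime $\eta$ of $\Supp M$ the nil-isomorphism condition on $H^0_\eta(-)_\eta$ reduces to an actual equality because $M_\eta$ is $F$-pure and hence has no nonzero nilpotent quotients. The only cosmetic difference is that the paper further reduces (via Proposition~\ref{TauFormalProperties}(c)) to the situation $\Supp M = \Spec R$ with $R$ reduced, so that $R_\eta$ is a field and $H^0_\eta(M_\eta)=M_\eta$ is immediate; you instead argue this directly from $\eta$ being minimal in the support, which is equally valid and arguably slightly cleaner.
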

\begin{proof}
We may assume that $M$ is $F$-pure. By Proposition \ref{TauFormalProperties} (c) and \cite[Proposition 3.2 (d), (f)]{blicklep-etestideale} we may further assume that $R$ is reduced and $\Supp M = \Spec R$. In particular, $R_\eta$ is a field for any minimal prime $\eta$ so that $H^0_\eta(M_\eta) = M_\eta$. If $\eta$ is a minimal prime of the underlying module $M$ then $M_\eta$ is not nilpotent by \cite[Lemma 1.4]{staeblerunitftestmoduln} so the minimal associated primes of the module coincide with those of the Cartier module. If the inclusion $N_\eta \subseteq M_\eta$ is a nil-isomorphism, then by definition the kernel and cokernel are nilpotent. But $M_\eta$ is $F$-pure so that it does not admit nilpotent quotients. This shows that the condition for $\tau(M)$ and $\tau'(M)$ at minimal primes of $\Supp M$ coincide. In particular, if all associated primes are minimal equality holds. Since $\tau'(M)$ is minimal with respect to a condition on all minimal primes of $\Supp M$ and $\tau(M)$
also satisfies these conditions we obtain, in general, an inclusion $\tau'(M) \subseteq \tau(M)$.
\end{proof}

\begin{Le}
\label{LocalizatonAtcCoincidesInclusionAfterMultbyc}
Let $M$ be a coherent $F$-pure Cartier module and $N$ a submodule. If $N_c = M_c$ for some $c$ then $cM \subseteq N$.
\end{Le}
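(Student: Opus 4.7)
The plan is to combine two facts: first, that $N_c = M_c$ forces $c^n M \subseteq N$ for some finite $n$, and second, that $F$-purity together with the defining relation $r\kappa = \kappa r^{p^e}$ lets us move arbitrarily high powers of $c$ past the Cartier action.

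More concretely, I would first observe that since $M$ is coherent, the cokernel $M/N$ is finitely generated as an $R$-module, and $(M/N)_c = 0$ means every element of $M/N$ is annihilated by some power of $c$. Because it is finitely generated, there is a single integer $n \geq 1$ with $c^n M \subseteq N$.

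Next I would invoke $F$-purity: by definition $\mathcal{C}_+ M = M$, and iterating gives $\mathcal{C}_+^e M = M$ for every $e \geq 1$. Fix $e$ large enough that $p^e \geq n$. Then any $m \in M$ can be written as a finite sum $m = \sum_i \kappa_i(m_i)$ with $\kappa_i \in \mathcal{C}_{e_i}$, $e_i \geq e$, and $m_i \in M$. Using the Cartier relation $c \kappa_i = \kappa_i c^{p^{e_i}}$, we compute
\[
cm = \sum_i c\kappa_i(m_i) = \sum_i \kappa_i\bigl(c^{p^{e_i}} m_i\bigr).
\]
Since $p^{e_i} \geq p^e \geq n$, we have $c^{p^{e_i}} m_i \in c^n M \subseteq N$; since $N$ is a $\mathcal{C}$-submodule, each $\kappa_i(c^{p^{e_i}} m_i) \in N$, so $cm \in N$.

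There is no real obstacle here; the only mild point to get right is that the bound $n$ must be uniform in $m$, which is where coherence of $M$ enters. After that, $F$-purity is used precisely to ensure we may represent every $m$ through Cartier operators of arbitrarily large degree, which is exactly what is needed to absorb the power $c^{p^e}$ into an element already lying in $N$.
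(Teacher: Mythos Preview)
Your proof is correct and follows essentially the same approach as the paper: first use coherence to get $c^k M \subseteq N$ for some $k$, then use $F$-purity to reduce to $k=1$. The only difference is packaging: the paper observes that $M/N$ is $F$-pure and cites \cite[Lemma 2.19]{blicklep-etestideale} (the annihilator of an $F$-pure Cartier module is radical), whereas you unwind that argument directly via the relation $c\kappa = \kappa c^{p^e}$; your version is thus self-contained but otherwise identical in spirit.
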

\begin{proof}
Since $M/N$ is finitely generated there is $k \in \mathbb{N}$ such that $c^k (M/N) = 0$. But the quotient $M/N$ is $F$-pure since $M$ is $F$-pure so that \cite[Lemma 2.19]{blicklep-etestideale} tells us that we may choose $k = 1$.
\end{proof}

The following lemma is a weak version for $\tau$ of \cite[Theorem 3.11]{blicklep-etestideale} concerning test elements.

\begin{Le}
\label{WeakTestelementLemma}
Let $(M, \mathcal{C})$ be a Cartier module and assume that there is an $f$ not contained in any associated prime of $M$ and such that $M_f$ is $F$-regular. Then $\tau(M, \mathcal{C})$ is the $\mathcal{C}$-module generated by $f \underline{M}$.
\end{Le}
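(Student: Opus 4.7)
The plan is to prove the two inclusions separately after first reducing to the $F$-pure case. By Proposition \ref{TauFormalProperties}(a) we have $\tau(M) = \tau(\underline{M})$, and by definition $f\underline{M}$ is the same whether viewed inside $\underline{M}$ or inside $M$, so I would replace $M$ by $\underline{M}$ from the start and assume $M$ is $F$-pure. Write $N$ for the $\mathcal{C}$-submodule of $M$ generated by $fM$.

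For the inclusion $\tau(M) \subseteq N$, I would show that $N$ itself satisfies the defining property of the test module, and then invoke minimality. The key observation is that $f \notin \eta$ for every $\eta \in \Ass M$, so $f$ becomes a unit after localizing at $\eta$. Hence $(fM)_\eta = M_\eta$, and in particular $N_\eta \supseteq (fM)_\eta = M_\eta$, so $N_\eta = M_\eta$. Applying $H^0_\eta$ gives $H^0_\eta(N_\eta) = H^0_\eta(M_\eta)$, which is trivially a nil-isomorphism. Minimality of $\tau(M)$ then yields $\tau(M) \subseteq N$.

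For the reverse inclusion $N \subseteq \tau(M)$, I would exploit the $F$-regularity of $M_f$. By Proposition \ref{TauFormalProperties}(b), localization commutes with $\tau$, so
\[
\tau(M)_f \;=\; \tau(M_f) \;=\; M_f,
\]
the last equality because an $F$-regular Cartier module equals its own test module by definition. Now $\tau(M)$ is a Cartier submodule of the $F$-pure module $M$ whose localization at $f$ equals $M_f$, so Lemma \ref{LocalizatonAtcCoincidesInclusionAfterMultbyc} (applied with $c = f$) gives $fM \subseteq \tau(M)$. Since $\tau(M)$ is in particular a $\mathcal{C}$-submodule of $M$, it contains the $\mathcal{C}$-submodule generated by $fM$, i.e.\ $N \subseteq \tau(M)$.

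No step here is really an obstacle; the proof is essentially a bookkeeping exercise combining the already established localization compatibility of $\tau$ with the prior lemma on $F$-pure quotients. The only point worth flagging is the initial reduction to the $F$-pure case, which is required both to apply Lemma \ref{LocalizatonAtcCoincidesInclusionAfterMultbyc} and to identify $fM$ with $f\underline{M}$ in the statement; everything else follows formally from the definitions and the fact that $f$ avoids $\Ass M$.
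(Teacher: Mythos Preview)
Your proof follows the same strategy as the paper's, but there is one subtle circularity worth flagging. In the paper's context this lemma is also an \emph{existence} statement: it appears in Section~\ref{SectionComparisonOldDef}, before the general existence result of Section~\ref{SectionExistenceTestElements}, and is used in Theorem~\ref{FregularTausCoincide} to compute $\tau(M,\mathcal{C})$ from scratch. But Proposition~\ref{TauFormalProperties}(b), which you invoke for the inclusion $N \subseteq \tau(M)$, is stated under the hypothesis ``if $\tau(M)$ exists''. So as written, your second inclusion presupposes what the lemma is partly meant to establish.

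The paper sidesteps this by proving slightly more: for \emph{any} $\mathcal{C}$-submodule $N' \subseteq M$ satisfying the nil-isomorphism condition at every $\eta \in \Ass M$, one has $N'_f = M_f$ (since $\Ass M_f = \Ass M$ by Lemma~\ref{AssForCrystals}(c) and the hypothesis on $f$, so $N'_f$ inherits the condition, and $M_f$ is $F$-regular), whence $fM \subseteq N'$ by Lemma~\ref{LocalizatonAtcCoincidesInclusionAfterMultbyc} and thus $\mathcal{C} fM \subseteq N'$. Together with your first step (that $\mathcal{C} fM$ itself satisfies the condition), this exhibits $\mathcal{C} fM$ as the minimum of all such $N'$, proving existence and the formula at once. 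Your argument is precisely the special case $N' = \tau(M)$ of this; the fix is simply to run it for arbitrary $N'$ rather than appealing to Proposition~\ref{TauFormalProperties}(b).
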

\begin{proof}
By Proposition \ref{TauFormalProperties} we may assume that $M$ is $F$-pure, $R$ is reduced and $\Supp M = \Spec R$. Let $N \subseteq M$ be a Cartier submodule such that $H^0_\eta(N_\eta) \subseteq H^0_\eta(M_\eta)$ is a nil-isomorphism for each $\eta \in \Ass M$. By $F$-regularity the inclusion $N_f \subseteq M_f$ is an equality and Lemma \ref{LocalizatonAtcCoincidesInclusionAfterMultbyc} yields $fM \subseteq N$.
It follows that $\mathcal{C} f M = \tau(M, \mathcal{C})$ as claimed since $\tau(M, \mathcal{C})$ is the minimal such $N$ and $H^0_\eta(\mathcal{C}fM_\eta) \subseteq H^0_\eta(M_\eta)$ induces a nil-isomorphism for each $\eta \in \Ass M$ since $f$ is not contained in any associated prime of $M$.
\end{proof}

\begin{Bem}
We note that in the context of Lemma \ref{WeakTestelementLemma} the $\mathcal{C}$-module generated by $f \underline{M}$ is also given by $N := \sum_{e \geq e_0} \mathcal{C}_e f \underline{M}$ for any $e_0 \geq 0$. One notes that $N$ is a Cartier module and that localizing at $f$ yields $N_f = \sum_{e \geq e_0} \mathcal{C}_e \underline{M}_f$. The inclusion $\mathcal{C}_+^h \subseteq \sum_{e \geq h} \mathcal{C}_h$ for any $h \geq 0$ and $F$-purity of $\underline{M}_f$ then imply that $N_f = \underline{M}_f$. By Lemma \ref{LocalizatonAtcCoincidesInclusionAfterMultbyc} $N$ then contains $f\underline{M}$ and since $N$ is a Cartier module also $\mathcal{C} f \underline{M}$.
\end{Bem}

\begin{Theo}
\label{FregularTausCoincide}
If $(M, \kappa)$ is $F$-regular in the sense of \cite[Definition 3.4]{blicklep-etestideale} (i.e.\, $\tau'(M, \kappa) = M$) and $\mathcal{C}$ is the Cartier algebra generated by $\{\kappa^e f^{\lceil tp^e\rceil}\,|\, e \geq 1\}$, where $f$ is a non zero-divisor on $M$ and $t \geq 0$ a real number, then $\tau'(M, \mathcal{C}) = \tau(M, \mathcal{C})$.
\end{Theo}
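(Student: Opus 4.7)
The plan is to show that $f$ serves as a test element for both $\tau$ and $\tau'$, yielding the common formula $\mathcal{C} f \underline{M}$ and thereby the equality. The inclusion $\tau'(M,\mathcal{C}) \subseteq \tau(M,\mathcal{C})$ already follows from Proposition \ref{OldNewTestModuleRelation}, so only the reverse inclusion is at issue.

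The core of the argument is to verify the two hypotheses of Lemma \ref{WeakTestelementLemma} for $f$ and $\mathcal{C}$. Since $f$ is a non-zero-divisor on $M$ it escapes every usual associated prime of the $R$-module $M$, and hence, by the remark following Definition \ref{DefTestmodule}, every $\eta \in \Ass_\mathcal{C} M$. The more delicate condition is that $M_f$ be $F$-regular as a $\mathcal{C}_f$-module. Here the key observation is that inverting $f$ turns each $f^{\lceil tp^e\rceil}$ into a unit of $R_f$, so that the generator $\kappa^e f^{\lceil tp^e\rceil}$ of $\mathcal{C}_e$ differs from $\kappa^e$ by a unit; consequently $\mathcal{C}_f$ acts on $M_f$ through the same submodules as the principal Cartier algebra generated by $\kappa_f$, and the corresponding notions of $F$-regularity coincide. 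Since $\tau'$ commutes with localization (the $\tau'$-analog of Proposition \ref{TauFormalProperties}(b), namely \cite[Proposition 3.2]{blicklep-etestideale}) and $\tau'(M,\kappa) = M$ by hypothesis, we have $\tau'(M_f,\kappa_f) = M_f$; Proposition \ref{OldNewTestModuleRelation} then upgrades this to $\tau(M_f,\kappa_f) = M_f = \tau(M_f,\mathcal{C}_f)$.

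Applying Lemma \ref{WeakTestelementLemma} now gives $\tau(M,\mathcal{C}) = \mathcal{C} f \underline{M}$. The corresponding characterization of $\tau'$, established in \cite[Theorem 3.11]{blicklep-etestideale}, applies to the same $f$: the hypothesis of lying outside every minimal prime of $\Supp \underline{M}$ is weaker than lying outside every associated prime, and the $F$-regularity of $M_f$ has been verified above. This yields $\tau'(M,\mathcal{C}) \supseteq \mathcal{C} f \underline{M}$, closing the chain of inclusions. The main obstacle is the careful identification of $\mathcal{C}_f$ with the principal Cartier algebra generated by $\kappa_f$ on $M_f$ and the transfer of $F$-regularity between the two structures, which requires unwinding the definition of a Cartier algebra (Definition \ref{CartierAlgebraDefRing}) under localization but amounts in the end to formal bookkeeping.
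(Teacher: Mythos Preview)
Your proof is correct and follows essentially the same approach as the paper: both arguments reduce to showing that $f$ is a common test element by observing that $\mathcal{C}_f$ coincides with the principal algebra generated by $\kappa_f$, that $\tau'(M,\kappa)=M$ upgrades to $\tau(M_f,\mathcal{C}_f)=M_f$ via Proposition~\ref{OldNewTestModuleRelation}, and then invoking Lemma~\ref{WeakTestelementLemma} for $\tau$ and \cite[Theorem~3.11]{blicklep-etestideale} for $\tau'$ to obtain the common description $\mathcal{C} f\underline{M}$. The only cosmetic difference is that you frame the conclusion as closing a chain of inclusions whereas the paper simply equates both test modules to $\sum_{e\geq 1}\mathcal{C}_e f\underline{M}$ directly.
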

\begin{proof}
Since $f$ is a non zero-divisor it is not contained in any of the associated primes of $M$. By \cite[Theorem 3.11]{blicklep-etestideale} and since $\mathcal{C}_f = \kappa$ we obtain that $\tau'(M, \mathcal{C}) = \sum_{e \geq 1} \mathcal{C}_e f \underline{M}$. For the computation of $\tau(M, \kappa)$ we have by assumption that $\tau'(M, \kappa) = M$ and Proposition \ref{OldNewTestModuleRelation} thus yields $\tau(M, \kappa) = M$. We apply Lemma \ref{WeakTestelementLemma} and obtain likewise $\tau(M, \mathcal{C}) = \sum_{e \geq 1} \mathcal{C}_e f \underline{M}$.
\end{proof}

\begin{Bem}
Note that it was previously observed in \cite[Proposition 4.2]{staeblertestmodulnvilftrierung} that $\tau'$ for $F$-regular Cartier modules in the sense of \cite[Definition 3.4]{blicklep-etestideale} is functorial. We also remark that many results of \cite{staeblertestmodulnvilftrierung} and the main result of \cite{blicklestaeblerbernsteinsatocartier} were proved using the test module theory as developed in \cite{blicklep-etestideale} under the assumption that $(M, \kappa)$ is $F$-regular and that the Cartier algebra is of the form $\mathcal{C}_e = \kappa f^{\lceil tp^e\rceil} R$.

We also note that in the comparison of $\tau'$ with Stadnik's $V$-filtration (\cite{stadnikvfiltrationfcrystal}, \cite{staeblerunitftestmoduln}) the Cartier modules considered do not have non-minimal associated primes so that $\tau' = \tau$.
\end{Bem}

\section{Existence of test modules; test elements}
\label{SectionExistenceTestElements}
In this section we develop a theory of test elements similar to the classical case. They are a key technical ingredient in both existence results and computations. The main difference to the classical case is that we will have to deal with a sequence of test elements for a single given module. This is due to the fact that we may have inclusion relations between the associated primes $P_i$. Hence, given $c_i \notin P_i$ we can, in general, no longer find a single $c$ such that $D(c) \subseteq \bigcup_{i=1}^n D(c_i)$.

\begin{Def}
Let $M$ be a Cartier module with associated primes $\eta_1, \ldots, \eta_n$. We call $c_1, \ldots, c_n$ a \emph{sequence of test elements} if $c_i \notin \eta_i$ and if $\underline{H^0_{\eta_i}(M_{c_i})}$ is $F$-regular.
\end{Def}

Example \ref{FpurityNotPreservedByH^0} above shows that $H^0_{\eta_i}(M_{c_i})$ is not automatically $F$-pure if $M_{c_i}$ is $F$-pure. We now prove a partial analogue of \cite[Theorem 3.11]{blicklep-etestideale}. Namely, we show that if $M$ admits a sequence of test elements then $\tau(M)$ exists and we can express it somewhat explicitly using test elements. However, we can only prove a converse to this under the additional assumption that the test module exists for all submodules and (sub)quotients of $M$. This holds e.g.\ if $R$ is essentially of finite type over an $F$-finite field.

Before we proceed we formulate an easy lemma that is central to many computations. It tells us that, up to nilpotence, we may replace $H^0_\eta$ with $i^!$, where $i: \Spec R/\eta \to \Spec R$.

\begin{Le}
\label{LocalCohomShriekUpToNilpotence}
Let $M$ be a Cartier module and $I \subseteq R$ an ideal with associated closed immersion $i: \Spec R/I \to \Spec R$. Then $\underline{H^0_{I}(M)} = \underline{i_\ast i^!M}$.
\end{Le}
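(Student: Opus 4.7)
The plan is to first identify $i_{*}i^{!}M$ explicitly: since $i^{!}M = \Hom_{R}(R/I, M)$, the pushforward $i_{*}i^{!}M$ is just the $I$-torsion submodule $\{m\in M\mid Im=0\}$, and it sits naturally as a Cartier submodule of the full $I$-power-torsion $H^{0}_{I}(M)$. The inclusion $i_{*}i^{!}M\hookrightarrow H^{0}_{I}(M)$ is injective, so to get a nil-isomorphism (and hence equality of stable members) it suffices to prove that the quotient $Q := H^{0}_{I}(M)/i_{*}i^{!}M$ is nilpotent.

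Next I would use coherence of $M$ and noetherianity of $R$: $H^{0}_{I}(M)$ is a finitely generated $R$-submodule of $M$, so there is some $n$ with $I^{n}\cdot H^{0}_{I}(M)=0$. Now fix a homogeneous $\kappa\in\mathcal{C}_{e}$ and $m\in H^{0}_{I}(M)$. Using the commutation rule $r\kappa = \kappa r^{p^{e}}$ (which is the defining relation of a Cartier algebra), we get
\[
    I\cdot\kappa(m) \;=\; \kappa\bigl(I^{[p^{e}]}\cdot m\bigr),
\]
where $I^{[p^{e}]}$ denotes the ideal generated by the $p^{e}$-th powers of elements of $I$. Since $I^{[p^{e}]}\subseteq I^{p^{e}}$, as soon as $p^{e}\geq n$ this forces $I\kappa(m)=0$, i.e.\ $\kappa(m)\in i_{*}i^{!}M$.

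From this it follows that for $k$ large enough with $p^{k}\geq n$, every homogeneous element of $\mathcal{C}_{+}^{k}$ has degree at least $k$ and thus sends $H^{0}_{I}(M)$ into $i_{*}i^{!}M$. Hence $\mathcal{C}_{+}^{k}\cdot Q = 0$, so $Q$ is nilpotent and the inclusion $i_{*}i^{!}M\hookrightarrow H^{0}_{I}(M)$ is a nil-isomorphism. Finally, a nil-isomorphism of Cartier modules induces equality of stable members (if $N\subseteq M$ has nilpotent cokernel, $\mathcal{C}_{+}^{e}M\subseteq N$ for $e\gg 0$, so the two eventually constant chains $\mathcal{C}_{+}^{e}N$ and $\mathcal{C}_{+}^{e}M$ agree), giving $\underline{H^{0}_{I}(M)} = \underline{i_{*}i^{!}M}$.

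The only mildly delicate point is the Cartier commutation step together with the inclusion $I^{[p^{e}]}\subseteq I^{p^{e}}\subseteq I^{n}$; everything else is bookkeeping with finite generation and the known stabilization of $\mathcal{C}_{+}^{e}(-)$. I do not expect a serious obstacle.
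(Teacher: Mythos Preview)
Your proof is correct and follows essentially the same idea as the paper's. The paper identifies $i_*i^!M$ with $\{m\in M\mid Im=0\}$, gets the inclusion into $H^0_I(M)$, and then invokes \cite[Lemma 2.19]{blicklep-etestideale} (the annihilator of an $F$-pure module is radical) to conclude $\underline{H^0_I(M)}\subseteq i_*i^!M$; you instead prove the nilpotence of the quotient directly via the commutation $r\kappa=\kappa r^{p^e}$ and $I^{[p^e]}\subseteq I^{p^e}$, which is exactly the computation underlying that cited lemma. So the approaches coincide, with yours being self-contained rather than citing the radical-annihilator fact.
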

\begin{proof}
One may identify $i_\ast i^! M$ with $\{ m \in M \, \vert \, I m = 0\}$. This shows that we have an inclusion $i_\ast i^! M \subseteq H^0_I(M)$. On the other hand, since the annihilator of an $F$-pure module is radical (\cite[Lemma 2.19]{blicklep-etestideale}) we conclude that $\underline{H^0_I(M)} \subseteq i_\ast i^!M$. Applying $\underline{\phantom{M}}$ to the inclusions yields the desired equality.
\end{proof}

\begin{Bem}
\label{ShriekLocCohomAbuseOfNotation}
Since $i_\ast \underline{M} = \underline{i_\ast M}$ (cf.\ Proposition \ref{PushforwardPreservesNilpotence} below) and since $i^! i_\ast \cong \id$ induces an equivalence of categories we will frequently view $\underline{H^0_I(M)}$ as an $R/I$-module and identify it with $\underline{i^!M}$. Likewise we will also identify $\underline{i^!M}$ with $H^0_I(M)$ viewed as $R$-modules.
\end{Bem}

\begin{Theo}
\label{TestElementExistence}
Let $M$ be a Cartier module, if $M$ admits a sequence of test elements $c_1, \ldots, c_n$ then $\tau(M)$ exists and is given by \[\tau(M) = \sum_{i=1}^n \sum_{e \geq e_0} \mathcal{C}_e c_i \underline{H^0_{\eta_i}(M)} \quad \text{for any } e_0 \geq 0.\]
\end{Theo}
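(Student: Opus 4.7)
Write $N := \sum_{i=1}^n N_i$ with $N_i := \sum_{e \geq e_0} \mathcal{C}_e c_i \underline{H^0_{\eta_i}(M)}$. Each $N_i$ is visibly a $\mathcal{C}$-submodule of $M$: the inclusion $\underline{H^0_{\eta_i}(M)} \subseteq M$ gives the inclusion into $M$, while closure under the action of every $\mathcal{C}_d$ with $d \geq 0$ follows from $\mathcal{C}_d \mathcal{C}_e \subseteq \mathcal{C}_{d+e}$ and $\mathcal{C}_0 = R$. The theorem then splits into two tasks: (a) verify that $N$ satisfies the nil-isomorphism condition of Definition \ref{DefTestmodule} at each $\eta_j \in \Ass M$; (b) show that any other $\mathcal{C}$-submodule $N' \subseteq M$ with that property contains $N$. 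Both parts are carried out for each $e_0 \geq 0$ simultaneously.

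For (a), fix $\eta_j$ and use $N_{\eta_j} \supseteq (N_j)_{\eta_j}$. Since $c_j \notin \eta_j$, it becomes a unit after localization, so
\[
 (N_j)_{\eta_j} \;\supseteq\; (\mathcal{C}_+^{e_0})_{\eta_j}\,\underline{H^0_{\eta_j}(M)}_{\eta_j} \;=\; \underline{H^0_{\eta_j}(M)}_{\eta_j},
\]
the last equality by $F$-purity of $\underline{H^0_{\eta_j}(M)}$. Since localization commutes with both $H^0_{\eta_j}$ and $\underline{\phantom{M}}$ (the latter because nilpotence is preserved under localization, cf.\ Lemma \ref{AssForCrystals}(c)), the right-hand side is $\underline{H^0_{\eta_j}(M_{\eta_j})}$. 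Applying $H^0_{\eta_j}$ (which is the identity on $\eta_j$-torsion) yields $H^0_{\eta_j}(N_{\eta_j}) \supseteq \underline{H^0_{\eta_j}(M_{\eta_j})}$, and since $H^0_{\eta_j}(N_{\eta_j}) \subseteq H^0_{\eta_j}(M_{\eta_j})$ and $\underline{H^0_{\eta_j}(M_{\eta_j})} \subseteq H^0_{\eta_j}(M_{\eta_j})$ has nilpotent cokernel, we conclude that the inclusion $H^0_{\eta_j}(N_{\eta_j}) \subseteq H^0_{\eta_j}(M_{\eta_j})$ is a nil-isomorphism.

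For (b), fix $i$ and look at the inclusion of $F$-pure Cartier modules $\underline{H^0_{\eta_i}(N')} \subseteq \underline{H^0_{\eta_i}(M)}$. Localizing at $c_i$ (which commutes with $H^0_{\eta_i}$ and $\underline{\phantom{M}}$) gives
\[
 \underline{H^0_{\eta_i}(N'_{c_i})} \;\subseteq\; \underline{H^0_{\eta_i}(M_{c_i})},
\]
and the right-hand side is $F$-regular by the definition of a sequence of test elements. To conclude this is an equality it suffices to show the inclusion satisfies the Definition \ref{DefTestmodule} condition at every $\mathfrak{p}\in\Ass \underline{H^0_{\eta_i}(M_{c_i})}$. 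By Lemma \ref{AssForCrystals}(a),(c) any such $\mathfrak{p}$ has the form $\eta_j R_{c_i}$ with $\eta_j \in \Ass M$ containing $\eta_i$ and disjoint from $\{c_i^k\}$; the required condition at $\mathfrak{p}$ then reduces, after further localization at $\eta_j$, to the nil-iso $H^0_{\eta_j}(\underline{H^0_{\eta_i}(N'_{\eta_j})}) \to H^0_{\eta_j}(\underline{H^0_{\eta_i}(M_{\eta_j})})$. Using $\eta_i \subseteq \eta_j$ one has $H^0_{\eta_j}\circ H^0_{\eta_i} = H^0_{\eta_j}$, and Lemma \ref{H0FunctorOnCrys} lets us discard the $\underline{\phantom{M}}$ inside without losing a nil-isomorphism, so the condition reduces to the hypothesis that $H^0_{\eta_j}(N'_{\eta_j})\subseteq H^0_{\eta_j}(M_{\eta_j})$ is a nil-isomorphism, which holds since $\eta_j \in \Ass M$. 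Thus $\underline{H^0_{\eta_i}(N'_{c_i})} = \underline{H^0_{\eta_i}(M_{c_i})}$, and Lemma \ref{LocalizatonAtcCoincidesInclusionAfterMultbyc} applied to $\underline{H^0_{\eta_i}(N')} \subseteq \underline{H^0_{\eta_i}(M)}$ yields $c_i \underline{H^0_{\eta_i}(M)} \subseteq \underline{H^0_{\eta_i}(N')} \subseteq N'$. Since $N'$ is a $\mathcal{C}$-submodule this gives $N_i \subseteq N'$, hence $N \subseteq N'$.

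The main obstacle lies in step (b): the $F$-regularity hypothesis is stated at the level of the localized, $F$-pure $\eta_i$-torsion module $\underline{H^0_{\eta_i}(M_{c_i})}$, so in order to exploit it one must propagate the original nil-isomorphism hypothesis at $\eta_j$ through two nontrivial operations (the truncation $H^0_{\eta_i}$ and the reduction $\underline{\phantom{M}}$). The identity $H^0_{\eta_j}\circ H^0_{\eta_i}=H^0_{\eta_j}$ for $\eta_i\subseteq\eta_j$, together with the preservation of nil-isomorphisms under $H^0_{\eta_j}$ from Lemma \ref{H0FunctorOnCrys}, is precisely what makes this propagation work; handling possibly non-minimal $\mathfrak{p} \in \Ass \underline{H^0_{\eta_i}(M_{c_i})}$ is what distinguishes this argument from the single-prime case of Lemma \ref{WeakTestelementLemma}.
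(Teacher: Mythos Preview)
Your proof is correct and follows essentially the same strategy as the paper: define the candidate $N$, verify that it meets the defining condition of $\tau(M)$ at each $\eta_j$, and then show that every $\mathcal{C}$-submodule $N'$ meeting that condition contains each $c_i\,\underline{H^0_{\eta_i}(M)}$ by exploiting the $F$-regularity of $\underline{H^0_{\eta_i}(M_{c_i})}$ together with Lemma~\ref{LocalizatonAtcCoincidesInclusionAfterMultbyc}.

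The one place where your argument is more explicit than the paper's is in part~(b): to invoke $F$-regularity of $\underline{H^0_{\eta_i}(M_{c_i})}$ one must verify the nil-isomorphism condition at \emph{every} associated prime of that module, not only at $\eta_i$. The paper checks only at $\eta_i$ and leaves the remaining primes implicit, whereas you observe that any such prime has the form $\eta_jR_{c_i}$ with $\eta_i\subseteq\eta_j\in\Ass M$, and you reduce the required nil-isomorphism there to the hypothesis on $N'$ via the identity $H^0_{\eta_j}\circ H^0_{\eta_i}=H^0_{\eta_j}$ and Lemma~\ref{H0FunctorOnCrys}. This extra care is justified---the definition of a sequence of test elements does not force $\eta_i$ to be the unique associated prime of $\underline{H^0_{\eta_i}(M_{c_i})}$---so your handling of possibly non-minimal $\mathfrak{p}$ genuinely strengthens the exposition while remaining within the same overall approach.
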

\begin{proof}
By Proposition \ref{TauFormalProperties} (a), (c) we may assume that $M$ is $F$-pure and that $\Supp M = \Spec R$.

Let us assume that we are given a sequence $c_1, \ldots, c_n$ of test elements. Let $N \subseteq M$ be a Cartier submodule such that for each $\eta_i \in \Ass M$ the inclusion $H^0_{\eta_i}(N_{\eta_i}) \subseteq H^0_{\eta_i}(M_{\eta_i})$ is a nil-isomorphism. We claim that $c_i \underline{H^0_{\eta_i}(M)} \subseteq N$ for each $i$. Once the claim is proven the implication follows since then also for any $e_0 \geq 0$ and every $i$ we have $ \sum_{e \geq e_0} \mathcal{C}_e c_i \underline{H^0_{\eta_i}(M)} \subseteq N$.

For each $i$ we have an inclusion $H^0_{\eta_i}(N_{c_i}) \subseteq H^0_{\eta_i}(M_{c_i})$. We claim that applying $\underline{\phantom{M}}$ on both sides yields an equality.
We certainly have an inclusion $\underline{H^0_{\eta_i}(N_{c_i})} \subseteq \underline{H^0_{\eta_i}(M_{c_i})}$ which is a nil-isomorphism. Moreover, $H^0_{\eta_i}(\underline{H^0_{\eta_i}(M_{c_i})}_{\eta_i}) = \underline{H^0_{\eta_i}(M_{\eta_i})}$ and similarly for $H^0_{\eta_i}(N_{c_i})$. Hence, localizing the inclusion at $\eta_i$ and applying $H^0_{\eta_i}$ yields by our assumption on $N$ and the $F$-regularity of
$\underline{H^0_{\eta_i}(M_{c_i})}$ the claimed equality. Both modules are finitely generated so that we get $c_i^t \underline{H^0_{\eta_i}(M)} \subseteq \underline{H^0_{\eta_i}(N)}$ for some $t \geq 0$. Applying Lemma \ref{LocalizatonAtcCoincidesInclusionAfterMultbyc} implies $c_i \underline{H^0_{\eta_i}(M)} \subseteq \underline{H^0_{\eta_i}(N)} \subseteq N$.
\end{proof}

\begin{Le}
\label{DoubleH0FpureCompatible}
Let $M$ be a Cartier module with associated prime $\eta$ and $I \subseteq \eta$ an ideal. Then $\underline{H^0_\eta(\underline{H^0_{I}(M)})} = \underline{H^0_\eta(M)}$.
\end{Le}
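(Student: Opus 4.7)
My plan is to assemble the equality from three elementary ingredients, none of which goes beyond what has already been established in this section.

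The first observation I would exploit is that the hypothesis $I \subseteq \eta$ gives $H^0_\eta(H^0_I(M)) = H^0_\eta(M)$ as $\mathcal{C}$-submodules of $M$: an element killed by a power of $\eta$ is automatically killed by the corresponding power of $I$, so the two sets coincide. This reduces the stated equality to proving that
\[
\underline{H^0_\eta(\underline{H^0_I(M)})} \;=\; \underline{H^0_\eta(H^0_I(M))}.
\]
In other words, I need to show that replacing $H^0_I(M)$ by the nil-minimal submodule $\underline{H^0_I(M)}$ has no effect after taking $\underline{H^0_\eta(\phantom{M})}$.

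Next, the inclusion $\underline{H^0_I(M)} \hookrightarrow H^0_I(M)$ is by construction a nil-isomorphism (its cokernel is nilpotent). I would then invoke Lemma \ref{H0FunctorOnCrys} to conclude that the induced inclusion $H^0_\eta(\underline{H^0_I(M)}) \hookrightarrow H^0_\eta(H^0_I(M))$ is still a nil-isomorphism of Cartier modules. The last step is then an application of the following general principle, which I would record as a small preliminary observation: if $A \hookrightarrow B$ is a nil-isomorphic inclusion of Cartier modules, then $\underline{A} = \underline{B}$ as submodules of $B$. The proof is formal: $\underline{A}$ is $F$-pure, and the quotient $B/\underline{A}$ fits in an exact sequence $0 \to A/\underline{A} \to B/\underline{A} \to B/A \to 0$ with both outer terms nilpotent by hypothesis, hence nilpotent, so $\underline{A}$ satisfies the two properties characterizing $\underline{B}$.

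I do not foresee any real obstacle; the entire argument is a careful bookkeeping of how $\underline{\phantom{M}}$ interacts with the local cohomology functors for nested ideals, and everything needed is already in place by the point of the statement. The only minor care is to handle the first identification $H^0_\eta(H^0_I(M)) = H^0_\eta(M)$ at the level of actual submodules of $M$ (not just up to nil-isomorphism), so that the subsequent passage to $\underline{\phantom{M}}$ gives a genuine equality rather than merely an isomorphism of crystals.
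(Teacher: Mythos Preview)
Your proposal is correct and follows essentially the same route as the paper's proof: identify $H^0_\eta(H^0_I(M)) = H^0_\eta(M)$ via $I \subseteq \eta$, use Lemma~\ref{H0FunctorOnCrys} to see that $H^0_\eta(\underline{H^0_I(M)}) \hookrightarrow H^0_\eta(H^0_I(M))$ is a nil-isomorphism, and then conclude that passing to $\underline{\phantom{M}}$ gives an equality. The paper phrases the last step as ``by $F$-purity of $\underline{H^0_\eta(M)}$ it is therefore an equality,'' which is precisely your general principle that $\underline{A} = \underline{B}$ for any nil-isomorphic inclusion $A \hookrightarrow B$, specialized to the case where $\underline{B}$ is already known.
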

\begin{proof}
Since $H^0_\eta$ induces a functor on crystals the inclusion \[\underline{H^0_\eta(\underline{H^0_{I}(M)})} \subseteq \underline{H^0_\eta(H^0_{I}(M))} = \underline{H^0_\eta(M)}\] is a nil-isomorphism. By $F$-purity of $\underline{H^0_\eta(M)}$ it is therefore an equality.
\end{proof}

\begin{Theo}
\label{TauImpliesTestElements}
Let $M$ be a Cartier module on $\Spec R$, where $R$ is essentially of finite type over an $F$-finite field. Then there exists a sequence $c_1, \ldots, c_n$ of test elements.
\end{Theo}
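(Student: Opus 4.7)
My plan is to construct the $c_i$ one associated prime at a time. The set $\Ass M$ is finite (being contained in the finite set of classical associated primes of $M$), so write it as $\{\eta_1,\ldots,\eta_n\}$. Fix $i$ and set $N_i := \underline{H^0_{\eta_i}(M)}$. By Lemma \ref{LocalCohomShriekUpToNilpotence} (and the convention of Remark \ref{ShriekLocCohomAbuseOfNotation}) we may view $N_i$ as an $F$-pure Cartier module on the domain $\bar R := R/\eta_i$, which is again essentially of finite type over an $F$-finite field. Because $\eta_i \in \Ass M$, the generic point $(0) \in \Spec \bar R$ belongs to $\Ass N_i$ as a $\bar R$-Cartier module; let $\mu_1,\ldots,\mu_k$ denote its remaining, necessarily nonzero, associated primes.

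Since $\bar R$ is a domain, the intersection $\mu_1\cap\cdots\cap\mu_k$ is nonzero, so we may pick a nonzero $s \in \mu_1\cap\cdots\cap\mu_k$. After inverting $s$, each $\mu_j$ is killed and $(N_i)_s$ is an $F$-pure $\bar R_s$-Cartier module whose only (new) associated prime is $(0)$. I then invoke the classical test element theorem \cite[Theorem 3.11]{blicklep-etestideale}, applicable to $(N_i)_s$ over $\bar R_s$, to produce $c' \in \bar R_s$ nonzero at $(0)$ such that $(N_i)_{sc'}$ is $F$-regular in the classical sense of \cite[Definition 3.4]{blicklep-etestideale}. Because every associated prime of $(N_i)_{sc'}$ is then minimal, Proposition \ref{OldNewTestModuleRelation} gives $\tau = \tau'$ on this Cartier module, so classical $F$-regularity coincides with $F$-regularity in the sense of Definition \ref{DefFRegular}. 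Consequently $(N_i)_{sc'}$ is $F$-regular in our sense.

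Finally, lift $sc' \in \bar R$ (clearing denominators if necessary) to an element $c_i \in R$; by construction $c_i \notin \eta_i$. Since localization at $c_i$ is exact it commutes with $H^0_{\eta_i}$ and with the operator $\underline{\phantom{M}}$, so
\[
  \underline{H^0_{\eta_i}(M_{c_i})} \;=\; (N_i)_{c_i},
\]
which is $F$-regular by the previous paragraph. The resulting sequence $c_1,\ldots,c_n$ is a sequence of test elements in the sense of the definition.

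The main obstacle I anticipate is the compatibility between the two notions of $F$-regularity; the whole point of inverting $s$ at the outset is to trim the Cartier module down to only minimal (new) associated primes, which is precisely the situation in which Proposition \ref{OldNewTestModuleRelation} allows the classical theory of test elements to be imported unchanged. The other points—finiteness of $\Ass M$, that $\bar R$ remains essentially of finite type over an $F$-finite field, and that localization commutes with both $H^0_{\eta_i}$ and $\underline{\phantom{M}}$—are routine.
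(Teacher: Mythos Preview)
Your argument is correct and is a genuinely more direct route than the paper's. The paper proceeds by a descending induction on the poset $\Ass M$: it first produces test elements for the minimal $\eta \in \Ass M$ by localizing so that only one minimal prime survives, then replaces $M$ by $\underline{H^0_{(c)}(M)}$ (where $c$ lies in every non-minimal associated prime but in no minimal one), invoking Lemma~\ref{DoubleH0FpureCompatible} to identify $\underline{H^0_\nu(\underline{H^0_{(c)}(M)})}$ with $\underline{H^0_\nu(M)}$, and iterates. You instead treat each $\eta_i$ on its own by passing immediately to $N_i=\underline{H^0_{\eta_i}(M)}$ on the domain $R/\eta_i$ and inverting a single element to strip away all non-generic associated primes of $N_i$ before appealing to the classical theory; this avoids both the induction and Lemma~\ref{DoubleH0FpureCompatible}. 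Two small remarks: the existence input you need is really \cite[Theorem 4.13]{blicklep-etestideale} (Theorem~3.11 there is only the equivalence with the existence of $\tau'$), and the implication ``classically $F$-regular $\Rightarrow$ $F$-regular in the sense of Definition~\ref{DefFRegular}'' already follows from the inclusion $\tau'\subseteq\tau$ in Proposition~\ref{OldNewTestModuleRelation} without checking that all associated primes are minimal.
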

\begin{proof}
By Proposition \ref{TauFormalProperties} (a), (c) we may assume that $M$ is $F$-pure and that $\Supp M = \Spec R$ with $R$ reduced. We find $c \in R$ such that $D(c) \cap \Ass M$ consists of all minimal primes of $\Supp M$. There exists $d \in R$ such that $\Ass M_{cd} = \{ \eta \}$. We claim that there is $f \in R$ such that $\underline{H^0_\eta(M_{cdf})}$ is $F$-regular. As $\eta$ is the only minimal prime of $M_{cd}$ we conclude that $R_{cd}$ is an integral domain. Hence, $H^0_\eta(M_{cdf}) = M_{cdf}$ and this is already $F$-pure. Finally, since the only associated primes of $M_{cd}$ are minimal the claim follows from Proposition \ref{OldNewTestModuleRelation}, the fact that $\tau(M_{cd}) = \tau(M)_{cd}$ (Proposition \ref{TauFormalProperties} (b)), \cite[Theorem 3.11]{blicklep-etestideale} and \cite[Theorem 4.13]{blicklep-etestideale}.

Note that the associated primes of $\underline{i^!M} = \underline{H^0_{(c)}(M)}$, where $i: \Spec R/(c) \to \Spec R$, are those that contain $c$, i.e.\ all associated primes of $M$ except the minimal primes. We find an element $c'$ that such that $D(c') \cap V(c) \cap \Ass \underline{i^!M}$ consists of the minimal associated primes of $i^!M$. Fix such a minimal associated prime $\nu$. By the above argument we find an element $u \notin \nu$ such that $\underline{H^0_\nu(\underline{i^!M})}_u$ is $F$-regular. By Lemma \ref{DoubleH0FpureCompatible} $\underline{H^0_\nu(\underline{H^0_{(c)}(M)})}_u = \underline{H^0_\nu(M)}_u$. We conclude that $u$ is a test element for $\nu$ and are done by descending induction.
\end{proof}

\begin{Bem}
In the proof of Theorem \ref{TauImpliesTestElements} the key issue is that, when applying the induction step, we need to know that $\tau$ exists for all sub-quotients of $M$ which only have minimal associated primes. By \cite[Theorem 3.9]{blicklep-etestideale} this is the case if and only if an $F$-pure $R$-Cartier module $M$ has finite length. This is also satisfied for an $F$-finite ring $R$ if the Cartier algebra is generically principal or if $\rk M \leq 1$. (cf. \cite[Proposition 3.15]{blicklep-etestideale} and the discussion before \cite[Lemma 3.12]{blicklep-etestideale}). We expect that test modules exist for finitely generated Cartier modules over any $F$-finite Ring, but this remains to be shown in this generality.
\end{Bem}

The following examples shows that given an associated prime $\eta$ of an $F$-regular $\mathcal{C}$-module $M$ then $\underline{H_\eta^0(M)}$ need not be $F$-regular.

\begin{Bsp}
Consider $R = \mathbb{F}_p[x,y]$ with $p \geq 3$ and the Cartier module $M = \omega_R \oplus \omega_R/(x)\omega_R$ with Cartier structure \[(a,b + (x)\omega_R) \mapsto (\kappa(xa), \kappa(x^{p-1}a) + \kappa((xy)^{p-1} (b + (x)\omega_R))),\] where $\kappa$ denotes the Cartier operator on $\omega_R$. Just as in Example \ref{FpurityNotPreservedByH^0} one verifies that $M$ is $F$-pure. Note that $x, y$ is a sequence of test elements for the associated primes $(0), (x)$. Indeed, $H^0_{(0)}(M)_x = \omega_{R_x}$ with Cartier structure $a \mapsto \kappa(xa)$ and $H^0_{(x)}(M)_y = (\omega_R/(x)\omega_R)_y$ with Cartier structure $b + (x)\omega_R \mapsto \kappa((xy)^{p-1}(b + x \omega_R))$. One easily checks that $\mathcal{C}_+ xH^0_{(0)}(M) = M$. Hence, $M$ is $F$-regular by Theorem \ref{TestElementExistence}. However, $\underline{H^0_{(x)}(M)} = \omega_R/(x) \omega_R$ is not $F$-regular. In particular, the constant sequence $1,1$ is not a sequence of test elements although $M$ is $F$-regular. This is in contrast to the case when one only considers minimal primes (cf. \cite[Proposition 5.2]{staeblertestmodulnvilftrierung}). This example also illustrates that $F$-regularity of $M$ does not imply $F$-regularity of $\underline{i^!M}$ which one might naively expect.
\end{Bsp}

\section{Properties of test modules}
\label{SectionStructuralResults}
In this section we fix an ideal $\mathfrak{a} \subseteq R$ and given a Cartier algebra $\mathcal{C}$ and a non-negative real number $t$ we consider the Cartier algebra $\mathcal{C}^{\mathfrak{a}^t}$ which is given by $\mathcal{C}^{\mathfrak{a}^t}_e = \mathcal{C}_e \mathfrak{a}^{\lceil t p^e \rceil}$ for $e \geq 1$. If $M$ is a $\mathcal{C}$-module then we denote the test module with respect to the Cartier algebra $\mathcal{C}^{\mathfrak{a}^t}$ by $\tau(M, \mathcal{C}^{\mathfrak{a}^t})$ or simply by $\tau(M, \mathfrak{a}^t)$.

Throughout this section we will assume that $R$ is essentially of finite type over an $F$-finite field (except for the results after Remark \ref{AssGradedRemark}). We will prove a Brian\c{c}on-Skoda Theorem as well as semi-continuity and discreteness results for our notion of test module, where we let $t$ vary for a fixed ideal $\mathfrak{a}$. The reason for our hypothesis on $R$ is twofold. For one, it ensures that test modules of finite Cartier modules exists which allows us to apply test element theory. Moreover, we use results of Blickle (\cite{blicklep-etestideale}) on gauge-boundedness of Cartier algebras in order to prove discreteness results.

We start by proving right-continuity. The argument is similar the one in \cite[Proposition 4.16]{blicklep-etestideale}

\begin{Prop}[Right-continuity]
\label{Rightcontinuity}
For an $R$-Cartier module $M$, $\mathfrak{a} \subseteq R$ an ideal and $t > 0$ a real number we have for all $\eps \geq 0$
\[ \tau(M, \mathfrak{a}^t) \supseteq \tau(M,\mathfrak{a}^{t + \eps})\] with equality for sufficiently small $\eps > 0$.
\end{Prop}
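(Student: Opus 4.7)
Write $\mathcal{C}^{s}:=\mathcal{C}^{\mathfrak{a}^{s}}$. The key algebraic fact driving both halves of the statement is the graded subalgebra inclusion $\mathcal{C}^{t+\eps}\subseteq\mathcal{C}^t$, which follows from $\lceil(t+\eps)p^e\rceil\geq\lceil tp^e\rceil$ for every $e\geq 1$.

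For the easy inclusion $\tau(M,\mathfrak{a}^{t+\eps})\subseteq\tau(M,\mathfrak{a}^t)$ I would argue by minimality. The containment $\mathcal{C}^{t+\eps}\subseteq\mathcal{C}^t$ has three consequences: every $\mathcal{C}^t$-submodule of $M$ is automatically a $\mathcal{C}^{t+\eps}$-submodule; nilpotence as a $\mathcal{C}^t$-module is stronger than nilpotence as a $\mathcal{C}^{t+\eps}$-module, so $\Ass_{\mathcal{C}^{t+\eps}}(M)\subseteq\Ass_{\mathcal{C}^t}(M)$; and every $\mathcal{C}^t$-nil-isomorphism is a $\mathcal{C}^{t+\eps}$-nil-isomorphism. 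Putting these together, $\tau(M,\mathfrak{a}^t)$ satisfies all the defining conditions of $\tau(M,\mathfrak{a}^{t+\eps})$, and minimality of the latter gives the inclusion.

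For the equality I would use noetherianity: the chain $\{\tau(M,\mathfrak{a}^{t+\eps})\}$ ascends as $\eps\downarrow 0$, so it stabilizes at some value $T=\tau(M,\mathfrak{a}^{t+\eps_0})$ with $T=\tau(M,\mathfrak{a}^{t+\eps})$ for all $\eps\in(0,\eps_0]$. Since $T\subseteq\tau(M,\mathfrak{a}^t)$ is already known from the preceding paragraph, only the reverse inclusion remains. To establish it, I would pick a sequence of test elements $c_1,\ldots,c_n$ for $(M,\mathcal{C}^t)$ via Theorem~\ref{TestElementExistence} and verify (shrinking $\eps_0$ if necessary) that the same $c_i$ serve as test elements for $(M,\mathcal{C}^{t+\eps})$ for every $\eps\in[0,\eps_0]$ — this uses the monotonicity of associated primes and that on the localization $D(c_i)$ one can arrange uniform $F$-regularity of the relevant subquotient across the range of Cartier algebras in question. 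Theorem~\ref{TestElementExistence} then yields
\[
\tau(M,\mathcal{C}^s)\;=\;\sum_i\sum_{e\geq e_0}\mathcal{C}_e\,\mathfrak{a}^{\lceil sp^e\rceil}\,c_i\,\underline{H^0_{\eta_i}(M)}
\]
simultaneously for $s=t$ and all $s\in(t,t+\eps_0]$, and by noetherianity the right-hand side for $s=t$ may be truncated to a finite sum over $e\leq E_0$.

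The remaining technical step is to show $\mathcal{C}_e\mathfrak{a}^{\lceil tp^e\rceil}c_i N_i\subseteq T$ for every $e\leq E_0$, where $N_i:=\underline{H^0_{\eta_i}(M)}$. The plan is to exploit $F$-purity of $N_i$ to write $n=\mu(n')$ with $\mu\in\mathcal{C}_{e'}$ for $e'$ as large as desired, and to apply the Cartier commutation rule $r\mu=\mu r^{p^{e'}}$ to rewrite $\kappa(ac_i\mu(n'))=(\kappa\mu)((ac_i)^{p^{e'}}n')\in\mathcal{C}_{e+e'}\mathfrak{a}^{p^{e'}\lceil tp^e\rceil}c_i N_i$. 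A direct numerical estimate shows this lies in $\mathcal{C}_{e+e'}\mathfrak{a}^{\lceil(t+\eps)p^{e+e'}\rceil}c_i N_i\subseteq T$ as soon as $\eps\cdot p^e$ is dominated by the positive fractional part $\lceil tp^e\rceil-tp^e$. Since only finitely many $e$ appear, a uniform $\eps>0$ exists provided these fractional parts are all nonzero. The main obstacle I anticipate is the boundary case $tp^e\in\mathbb{Z}$ for some $e\leq E_0$: here the estimate degenerates and one must absorb an extra factor of $\mathfrak{a}$ into the Cartier-algebra side, converting $\mathfrak{a}^{tp^e}$ effectively into $\mathfrak{a}^{tp^e+1}=\mathfrak{a}^{\lceil(t+\eps)p^e\rceil}$. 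This boundary case is handled by invoking gauge-boundedness of $\mathcal{C}$, following the same pattern as \cite[Proposition 4.16]{blicklep-etestideale}. Taking the minimum of the finitely many bounds on $\eps$ produces a single $\eps>0$ for which $\tau(M,\mathfrak{a}^t)\subseteq\tau(M,\mathfrak{a}^{t+\eps})\subseteq T$, completing the proof.
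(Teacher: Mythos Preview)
Your overall architecture matches the paper's: establish a common sequence of test elements for the algebras $\mathcal{C}^{\mathfrak{a}^t}$ and $\mathcal{C}^{\mathfrak{a}^{t+\eps}}$, invoke the explicit description of $\tau$ from Theorem~\ref{TestElementExistence}, truncate the sum over $e$ to a finite range, and then compare ideal exponents. The easy inclusion and the noetherian stabilization are fine.

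The gap is in your handling of the boundary case $tp^e\in\mathbb{Z}$, and more generally in how you produce the extra factor of $\mathfrak{a}$. You propose to invoke gauge-boundedness, but Proposition~\ref{Rightcontinuity} carries no such hypothesis (contrast Proposition~\ref{Discretenss}, where it is explicitly assumed). So as stated the argument does not close. There is also a subtlety in your $F$-purity trick: the module $N_i=\underline{H^0_{\eta_i}(M)}$ is $F$-pure for $\mathcal{C}^{\mathfrak{a}^t}$, not necessarily a $\mathcal{C}$-submodule, so writing $n=\mu(n')$ with $\mu\in\mathcal{C}_{e'}$ and $n'\in N_i$ is not automatic; one should take $\mu\in(\mathcal{C}^{\mathfrak{a}^t})_{e'}$.

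The paper avoids all of this with one stroke: it arranges the test elements $c_\eta$ to lie in $\mathfrak{a}$. One first checks that $\mathfrak{a}$ is not contained in any minimal prime of $\Supp M$ (else $F$-purity would force $M_\eta=0$), so after the usual reduction one may multiply any test element by some $a\in\mathfrak{a}\setminus\eta$. With $c_\eta\in\mathfrak{a}$ two things happen at once. First, $(\mathcal{C}^{\mathfrak{a}^s})_{c_\eta}=\mathcal{C}_{c_\eta}$ for every $s$, which is exactly what makes $\underline{H^0_\eta(M_{c_\eta})}$ $F$-regular uniformly in $s$ and justifies the ``common test elements'' claim you left vague. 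Second, since $c_\eta^2$ is again a test element and $c_\eta\in\mathfrak{a}$, for $\eps<1/p^E$ one has $\lceil(t+\eps)p^e\rceil\le\lceil tp^e\rceil+1$ for all $e\le E$, and the spare copy of $c_\eta$ absorbs the $+1$:
\[
\mathcal{C}_e\,\mathfrak{a}^{\lceil tp^e\rceil}c_\eta^2\;\subseteq\;\mathcal{C}_e\,\mathfrak{a}^{\lceil(t+\eps)p^e\rceil}c_\eta.
\]
This handles the integral and non-integral cases simultaneously, with no appeal to gauge bounds.
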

\begin{proof}
Using Proposition \ref{TauFormalProperties} (a), (c) we may replace $M$ by $\underline{M}_{\mathcal{C}^{\mathfrak{a}^t}}$ and assume that $\Supp M = \Spec R$ with $R$ reduced. We now want to argue that we find a sequence $c_1, \ldots, c_n$ which is a sequence of test elements for both $(M, \mathcal{C}^{\mathfrak{a}^t})$ and $(M, \mathcal{C}^{\mathfrak{a}^{t+\eps}})$.

If $M = 0$ then there is nothing to prove. So assume that $M \neq 0$. We claim that $\mathfrak{a}$ is not contained in the union of the minimal primes. Otherwise, it is contained in some minimal prime $\eta$ by prime avoidance. Then we have $\mathcal{C}^{\mathfrak{a}^t}_\eta = 0$ since $\mathfrak{a} \subseteq \eta$ is zero in $R_\eta$. Since $M_\eta$ still is $F$-pure we obtain a contradiction.

Fix a minimal prime $\eta$. We find $c$ such that $D(c) \cap \Ass M = \{\eta\}$ and such that $\underline{H^0_\eta(M)}_c$ is $F$-regular. Multiplying $c$ by $a$ for some $a \in \mathfrak{a} \setminus \eta$ we may further assume that $c \in \mathfrak{a}$. Hence, for all $t$ we have $\mathcal{C}^{\mathfrak{a}^t}_c = \mathcal{C}$. We conclude that $(\underline{H^0_\eta(M)}_c, \mathcal{C}^{\mathfrak{a}^{t + \eps}})$ is also $F$-regular.

If $\eta$ is a non-minimal associated prime then we find $c'$ such that $\Ass H^0_{c'}(M)$ admits $\eta$ as a minimal associated prime. We may again assume that $\Supp H^0_{c'}(M) = \Spec R$ and that $R$ is reduced. Using Lemma \ref{DoubleH0FpureCompatible} we have reduced to the previous situation.

By Theorem \ref{TestElementExistence} we conclude that for all $\eps \geq 0$ \[\tau(M, \mathfrak{a}^{t+\eps}) = \sum_\eta \mathcal{C}^{\mathfrak{a}^{t + \eps}} c_\eta H^0_{\eta}(M) \subseteq \sum_\eta \mathcal{C}^{\mathfrak{a}^{t}} c_\eta H^0_{\eta}(M) = \tau(M, \mathfrak{a}^t).   \]
For the reverse inclusion fix $E > 0$ and choose $\eps < \frac{1}{p^E}$. Then we have \[\tau(M, \mathfrak{a}^{t+\eps}) \supseteq \sum_{e=1}^E \sum_\eta \mathcal{C}_e \mathfrak{a}^{\lceil (t + \eps) p^e \rceil} c_\eta H^0_\eta(M) \supseteq \sum_{e=1}^E \sum_\eta \mathcal{C}_e \mathfrak{a}^{\lceil t p^e \rceil} c_\eta^2 H^0_\eta(M) = \tau(M, \mathfrak{a}^t),\] where the last equality follows since $c^2_\eta$ is a test element if $c_\eta$ is one and by choosing $E$ sufficiently large (each $\mathcal{C}^{\mathfrak{a}^t}_+ c_\eta H^0_\eta(M) = N$ is a finite $R$-module so if $n_i$ are generators of $N$ then there is $\varphi \in \bigoplus_{e=1}^{E_i} \mathcal{C}_e \mathfrak{a}^{\lceil tp^e \rceil}$ and $m \in H^0_\eta(M)$ such that $\varphi(c_\eta m) = n_i$. Taking the maximum over the $E_i$ and then the maximum over those for the finitely many $\eta$ we find an $E$ as desired).
\end{proof}

\begin{Prop}[Brian\c{c}on-Skoda]
For an $R$-Cartier module $M$, $\mathfrak{a} \subseteq R$ an ideal and $t \geq 0$ a real number we have
\[\mathfrak{a} \cdot \tau(M, \mathfrak{a}^{t-1}) \subseteq \tau(M, \mathfrak{a}^t), \] with equality if $t$ is greater or equal than the minimal number of generators of $\mathfrak{a}$.
\end{Prop}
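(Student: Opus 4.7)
The plan is to use the explicit test element formula of Theorem \ref{TestElementExistence} to reduce both inclusions to purely algebraic manipulations in the graded algebra $\mathcal{C}^{\mathfrak{a}^t}_e = \mathcal{C}_e \mathfrak{a}^{\lceil tp^e\rceil}$. Invoking Proposition \ref{TauFormalProperties} (a), (c), I would first reduce to the case where $M$ is $F$-pure and $\Supp M = \Spec R$ with $R$ reduced. Following the trick used in the proof of Proposition \ref{Rightcontinuity}, I would then choose a sequence of test elements $c_1, \ldots, c_n \in \mathfrak{a}$ which serves both $(M, \mathcal{C}^{\mathfrak{a}^{t-1}})$ and $(M, \mathcal{C}^{\mathfrak{a}^t})$ simultaneously: since $c_i \in \mathfrak{a}$ forces $(\mathcal{C}^{\mathfrak{a}^s})_{c_i} = \mathcal{C}_{c_i}$ for every $s \geq 0$, the $F$-regularity of $\underline{H^0_{\eta_i}(M_{c_i})}$ is independent of the exponent $s$. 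Theorem \ref{TestElementExistence} then yields, for both values $s \in \{t-1, t\}$,
\[
\tau(M, \mathfrak{a}^s) \;=\; \sum_{i=1}^n \sum_{e \geq 1} \mathcal{C}_e \mathfrak{a}^{\lceil sp^e\rceil} c_i \underline{H^0_{\eta_i}(M)}.
\]

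For the first inclusion I would push the factor $\mathfrak{a}$ through the $\mathcal{C}_e$ using the bimodule relation $r\kappa = \kappa r^{p^e}$: one has $\mathfrak{a} \mathcal{C}_e = \mathcal{C}_e \mathfrak{a}^{[p^e]} \subseteq \mathcal{C}_e \mathfrak{a}^{p^e}$, and the identity $\lceil tp^e\rceil = p^e + \lceil(t-1)p^e\rceil$ then gives
\[
\mathfrak{a} \cdot \mathcal{C}_e \mathfrak{a}^{\lceil(t-1)p^e\rceil} \;\subseteq\; \mathcal{C}_e \mathfrak{a}^{p^e + \lceil(t-1)p^e\rceil} \;=\; \mathcal{C}_e \mathfrak{a}^{\lceil tp^e\rceil}.
\]
Summing over $i$ and $e$ and applying to $\underline{H^0_{\eta_i}(M)}$ produces the desired inclusion $\mathfrak{a} \cdot \tau(M, \mathfrak{a}^{t-1}) \subseteq \tau(M, \mathfrak{a}^t)$.

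For the reverse inclusion when $t \geq m$ with $\mathfrak{a} = (a_1, \ldots, a_m)$, I would establish the stronger claim $\mathcal{C}^{\mathfrak{a}^t}_e \subseteq \mathfrak{a} \cdot \mathcal{C}^{\mathfrak{a}^{t-1}}_e$ for every $e \geq 1$, which through the test element formula again produces $\tau(M, \mathfrak{a}^t) \subseteq \mathfrak{a} \cdot \tau(M, \mathfrak{a}^{t-1})$. Any monomial generator $a_1^{i_1}\cdots a_m^{i_m}$ of $\mathfrak{a}^{\lceil tp^e\rceil}$ satisfies $i_1 + \cdots + i_m = \lceil tp^e\rceil \geq tp^e \geq mp^e$, so by pigeonhole some exponent $i_j$ is at least $p^e$. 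Factoring $a_1^{i_1}\cdots a_m^{i_m} = a_j^{p^e} \cdot b$ with $b \in \mathfrak{a}^{\lceil tp^e\rceil - p^e} = \mathfrak{a}^{\lceil(t-1)p^e\rceil}$, I then obtain, for any $\kappa \in \mathcal{C}_e$, the identity $\kappa \cdot a_j^{p^e} b = a_j \cdot \kappa b \in \mathfrak{a} \cdot \mathcal{C}_e \mathfrak{a}^{\lceil(t-1)p^e\rceil}$.

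The main technical point is the simultaneous choice of test elements for two distinct Cartier algebras, addressed by the device $c_i \in \mathfrak{a}$; once that is set up, the rest is bookkeeping with ceilings of $tp^e$ and the pigeonhole argument. An edge case worth monitoring is $0 \leq t < 1$ (so $t - 1 < 0$), where one reads $\mathfrak{a}^{\lceil(t-1)p^e\rceil} = R$; the argument for the first inclusion still goes through because $\mathfrak{a}^{[p^e]} \subseteq \mathfrak{a}^{p^e} \subseteq \mathfrak{a}^{\lceil tp^e\rceil}$ as soon as $t \leq 1$, and the equality case forces $t \geq m \geq 1$ whenever $\mathfrak{a}$ is a proper nonzero ideal.
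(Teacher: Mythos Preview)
Your proposal is correct and follows precisely the approach the paper takes: the paper's own proof reduces to a common sequence of test elements via the device of Proposition~\ref{Rightcontinuity} and then invokes \cite[Theorem 4.21]{blicklep-etestideale}, whose argument is exactly the ceiling arithmetic $\lceil tp^e\rceil = p^e + \lceil(t-1)p^e\rceil$ together with the pigeonhole step on the generators of $\mathfrak{a}$ that you have written out in detail.
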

\begin{proof}
First one shows as in Proposition \ref{Rightcontinuity} that $(M, \mathcal{C}^{\mathfrak{a}^t})$ and $(M, \mathcal{C}^{\mathfrak{a}^{t+1}})$ admit a common sequence of test elements. Then the argument proceeds just as in \cite[Theorem 4.21]{blicklep-etestideale}.
\end{proof}

In order to obtain a discreteness result we need to require that the pair $(M,\mathcal{C})$ is \emph{gauge bounded} in the sense of \cite[Definition 4.8]{blicklep-etestideale}. This is satisfied in many situations of interest. It holds in particular, if $\mathcal{C}$ is of the form $\kappa^{\mathfrak{a}^t}$ or more generally whenever $\mathcal{D}$ is a finitely generated Cartier algebra and $\mathcal{C} = \mathcal{D}^{\mathfrak{a}^t}$ (\cite[Proposition 4.15]{blicklep-etestideale}) and $M$ is any coherent module.

\begin{Prop}[Discreteness]
\label{Discretenss}
Let $(M, \mathcal{C})$ be a gauge bounded Cartier module, $\mathfrak{a}$ an ideal and $T > 0$. Then the set of test modules $\tau(M, \mathfrak{a}^t)$ for $0 \leq t \leq T$ is finite.
\end{Prop}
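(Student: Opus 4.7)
The plan is to reduce the discreteness statement for $\tau$ to the classical discreteness result for the old test module $\tau'$ from \cite{blicklep-etestideale}, argued one associated prime at a time.

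First, by Proposition \ref{TauFormalProperties} (a), (c) we may assume that $M$ is $F$-pure, $R$ is reduced, and $\Supp M = \Spec R$. Arguing as in the proof of Proposition \ref{Rightcontinuity}, we produce a sequence $c_1, \ldots, c_n$ of test elements (one for each associated prime $\eta_i$ of $M$) with each $c_i \in \mathfrak{a}$, so that this sequence serves simultaneously as a sequence of test elements for $(M, \mathcal{C}^{\mathfrak{a}^t})$ for every $t \geq 0$. By Theorem \ref{TestElementExistence} we then obtain the explicit description
\[ \tau(M, \mathfrak{a}^t) = \sum_{i=1}^n N_i(t), \qquad N_i(t) := \sum_{e \geq 1} \mathcal{C}_e\, \mathfrak{a}^{\lceil t p^e \rceil}\, c_i\, \underline{H^0_{\eta_i}(M)}. \]

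The key observation is that, by Lemma \ref{DoubleH0FpureCompatible}, $\underline{H^0_{\eta_i}(M)}$ has $\eta_i$ as its only (hence minimal) associated prime; hence by Proposition \ref{OldNewTestModuleRelation} combined with the test-element description of $\tau'$ in \cite[Theorem 3.11]{blicklep-etestideale}, each $N_i(t)$ coincides with the old-style test module $\tau'(\underline{H^0_{\eta_i}(M)}, \mathcal{C}^{\mathfrak{a}^t})$. Since gauge boundedness of $(M, \mathcal{C})$ is inherited by the subquotient $(\underline{H^0_{\eta_i}(M)}, \mathcal{C})$, the classical discreteness theorem of \cite{blicklep-etestideale} applies and shows that $\{N_i(t) \mid 0 \leq t \leq T\}$ is finite for each $i$. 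Taking finite sums of finitely many finite collections of submodules then gives finiteness of $\{\tau(M, \mathfrak{a}^t) \mid 0 \leq t \leq T\}$.

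The main obstacle is to verify rigorously that gauge boundedness descends from $(M, \mathcal{C})$ to the subquotients $(\underline{H^0_{\eta_i}(M)}, \mathcal{C})$ and that the identification of $N_i(t)$ with $\tau'(\underline{H^0_{\eta_i}(M)}, \mathcal{C}^{\mathfrak{a}^t})$ is precise enough to invoke the classical discreteness statement in the form needed. If this descent of gauge boundedness is problematic, a parallel alternative is to adapt the proof of discreteness in \cite[Section 4]{blicklep-etestideale} directly, replacing localizations at minimal primes by the $\underline{H^0_{\eta_i}}$ construction, and using right-continuity (Proposition \ref{Rightcontinuity}) together with gauge-bounded growth estimates to bound the jumps of $t \mapsto \tau(M, \mathfrak{a}^t)$ on $[0,T]$.
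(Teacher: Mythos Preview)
Your approach is genuinely different from the paper's, and it contains a real gap.

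The paper's proof is one sentence: the argument of \cite[Theorem 4.18]{blicklep-etestideale} applies verbatim. The point is that the old proof uses only the test-element description of $\tau'$ together with gauge boundedness; since Theorem \ref{TestElementExistence} furnishes the same kind of formula for the new $\tau$ (a finite sum of terms $\sum_{e} \mathcal{C}_e \mathfrak{a}^{\lceil tp^e\rceil} c_i \underline{H^0_{\eta_i}(M)}$), the gauge-bound estimate goes through unchanged. No reduction to $\tau'$ is needed.

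Your route instead tries to identify each summand $N_i(t)$ with an old-style test module $\tau'(\underline{H^0_{\eta_i}(M)}, \mathcal{C}^{\mathfrak{a}^t})$ and then quote the old discreteness theorem. The gap is in the ``key observation'': Lemma \ref{DoubleH0FpureCompatible} does \emph{not} say that $\underline{H^0_{\eta_i}(M)}$ has $\eta_i$ as its only associated prime, and this is in fact false whenever there is some $\eta_j \in \Ass M$ with $\eta_j \supsetneq \eta_i$. So Proposition \ref{OldNewTestModuleRelation} cannot be invoked on $\underline{H^0_{\eta_i}(M)}$ as stated. What you actually need is that $c_i$ is a test element in the \emph{old} sense for $\underline{H^0_{\eta_i}(M)}$: its support has $\eta_i$ as unique minimal prime, and the construction in Theorem \ref{TauImpliesTestElements} (together with Proposition \ref{Rightcontinuity}) can be arranged so that after localizing at $c_i$ only $\eta_i$ survives as an associated prime, whence old and new $F$-regularity agree there. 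With this correction your identification $N_i(t) = \tau'(\underline{H^0_{\eta_i}(M)}, \mathcal{C}^{\mathfrak{a}^t})$ goes through (one also has to be a bit careful that the $\underline{\phantom{M}}$ in the formula is taken with respect to $\mathcal{C}^{\mathfrak{a}^t}$ rather than $\mathcal{C}$, but this is harmless by the remark after Lemma \ref{WeakTestelementLemma}). Gauge boundedness does descend to subquotients by \cite[Proposition 4.9]{blicklep-etestideale}, so that part is fine.

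In short: your reduction-to-$\tau'$ strategy can be made to work, but the justification you give for the crucial identification is incorrect; the paper avoids all of this by simply rerunning the original gauge-bound argument with the new test-element formula.
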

\begin{proof}
The proof of \cite[Theorem 4.18]{blicklep-etestideale} applies.
\end{proof}

Some of these results also generalize to the case of mixed test modules of the form $\tau(M, \mathfrak{a}_1^{t_1}, \ldots, \mathfrak{a}_n^{t_n})$. In this case the Cartier algebra is given in degree $e$ by $\mathcal{C}_e \mathfrak{a}_1^{\lceil t_1 p^e \rceil} \cdots \mathfrak{a}_n^{\lceil t_n p^e \rceil}$. In the following we will write $\mathbb{R}^n \ni t > 0$ if $t_i > 0$ for all $i$.

\begin{Prop}
Fix ideals $\mathfrak{a}_1, \ldots, \mathfrak{a}_m$. Given $0 (t_1, \ldots, t_m) \in \mathbb{R}^m \setminus \{0\}$ there is $(r_1, \ldots, r_m) > 0$ such that for all $0 \leq (\eps_1, \ldots, \eps_m) \leq r$ the mixed test ideals $\tau(M, \mathfrak{a}_1^{t_1} \cdots \mathfrak{a}_n^{t_n})$ and $\tau(M, \mathfrak{a}_1^{t_1+\eps_1} \cdots \mathfrak{a}_n^{t_n+\eps_n})$ coincide.
\end{Prop}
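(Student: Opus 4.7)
The plan is to adapt the proof of Proposition \ref{Rightcontinuity} to the mixed setting. Write $\mathcal{C}^{\mathfrak{a}^t}$ as shorthand for $\mathcal{C}^{\mathfrak{a}_1^{t_1}\cdots\mathfrak{a}_m^{t_m}}$ and let $I=\{i : t_i>0\}$. First, I would apply Proposition \ref{TauFormalProperties}(a), (c) to reduce to the case that $M$ is $F$-pure with respect to $\mathcal{C}^{\mathfrak{a}^t}$ and $\Supp M=\Spec R$ with $R$ reduced.

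The core step is to produce a sequence of test elements $c_\eta$, indexed by $\eta\in\Ass M$, that works simultaneously for \emph{every} $\mathcal{C}^{\mathfrak{a}^{t+\eps}}$ with $\eps\geq 0$ small, and such that $c_\eta\in\bigcap_{i\in I}\mathfrak{a}_i$. For each minimal prime $\eta\in\Ass M$, $F$-purity of the nonzero module $M_\eta$ rules out $\mathfrak{a}_i\subseteq\eta$ for any $i\in I$, since otherwise $\mathcal{C}^{\mathfrak{a}^t}_\eta$ would vanish in high degree. Prime avoidance then yields $a_i\in\mathfrak{a}_i\setminus\eta$ for each $i\in I$; multiplying the standard test element provided by the construction in Theorem \ref{TauImpliesTestElements} by $\prod_{i\in I}a_i$ gives the desired $c_\eta$. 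Non-minimal associated primes are handled by the same descending induction as in Theorem \ref{TauImpliesTestElements}, using Lemma \ref{DoubleH0FpureCompatible}. Since $c_\eta\in\mathfrak{a}_i$ for $i\in I$, localizing at $c_\eta$ kills all $\mathfrak{a}_i$-factors, so $\underline{H^0_\eta(M)}_{c_\eta}$ is $F$-regular for $\mathcal{C}^{\mathfrak{a}^{t+\eps}}$ for every $\eps\geq 0$ at once.

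With this common sequence fixed, the inclusion $\tau(M,\mathfrak{a}^{t+\eps})\subseteq\tau(M,\mathfrak{a}^t)$ is immediate from Theorem \ref{TestElementExistence} since $\mathcal{C}^{\mathfrak{a}^{t+\eps}}\subseteq\mathcal{C}^{\mathfrak{a}^t}$. For the reverse inclusion I use finite generation of $\tau(M,\mathfrak{a}^t)$ as an $R$-module together with Theorem \ref{TestElementExistence} to pick $E$ large enough that
\[
\tau(M,\mathfrak{a}^t)=\sum_{e=1}^{E}\sum_{\eta}\mathcal{C}_e\,\mathfrak{a}_1^{\lceil t_1p^e\rceil}\cdots\mathfrak{a}_m^{\lceil t_m p^e\rceil}\,c_\eta^{|I|+1}\,H^0_\eta(M),
\]
using that any positive power of a test element is again a test element. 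Set $r_i=1/(2p^E)$ for $i\in I$ and $r_i=0$ otherwise. Then for $0\leq\eps\leq r$ componentwise and $1\leq e\leq E$ one has $\lceil(t_i+\eps_i)p^e\rceil\leq\lceil t_i p^e\rceil+1$, and since $c_\eta\in\mathfrak{a}_i$ for each $i\in I$ the extra factor $c_\eta^{|I|}$ can be distributed one-per-ideal to absorb these exponent increments, giving
\[
\prod_{i}\mathfrak{a}_i^{\lceil t_i p^e\rceil}\,c_\eta^{|I|+1}\ \subseteq\ \prod_{i}\mathfrak{a}_i^{\lceil(t_i+\eps_i)p^e\rceil}\,c_\eta,
\]
which by Theorem \ref{TestElementExistence} lands in $\tau(M,\mathfrak{a}^{t+\eps})$.

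The main obstacle is purely combinatorial: arranging a single $c_\eta$ that lies simultaneously in each $\mathfrak{a}_i$ for $i\in I$, remains a test element for every $\mathcal{C}^{\mathfrak{a}^{t+\eps}}$, and carries enough powers to absorb the $m$ independent exponent jumps coming from different ideals. Once the multiplicative adjustment by $\prod_{i\in I}a_i$ and the use of the power $c_\eta^{|I|+1}$ are in place, the rest of the argument is a direct transcription of the proof of Proposition \ref{Rightcontinuity}.
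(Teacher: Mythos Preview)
Your proposal is correct and takes essentially the same approach as the paper, whose entire proof reads: ``The proof of Proposition \ref{Rightcontinuity} applies. One only needs to replace $c^2_\eta$ by $c^{n+1}_\eta$ in the last computation.'' Your writeup is a faithful unpacking of exactly this; the one minor discrepancy is that you set $r_i=0$ for $i\notin I$, which formally conflicts with the stated requirement $r>0$, but the paper's terse proof is equally silent on this case and the statement as printed already has a typo at that point.
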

\begin{proof}
The proof of Proposition \ref{Rightcontinuity} applies. One only needs to replace $c^2_\eta$ by $c^{n+1}_\eta$ in the last computation.
\end{proof}

Similarly one can prove a Brian\c{c}on-Skoda theorem for mixed test modules:

\begin{Prop}
 For an $R$-Cartier module $M$, and a sequence of Ideals $\mathfrak{a}_1, \dots ,\mathfrak{a}_n$ in $R$, and $0 \leq t_1,\ldots,t_n \in \mathbb{R}^n$ with $t_i \geq 1$ for some fixed $i$ we have
\[\mathfrak{a}_i \cdot \tau(M, \mathfrak{a}_1^{t_1} \cdots \mathfrak{a}_i^{t_i-1} \cdots \mathfrak{a}_n^{t_n}) \subseteq \tau(M, \mathfrak{a}_1^{t_1} \cdots \mathfrak{a}_n^{t_n}), \] with equality if $t_i$ is greater or equal than the minimal number of generators of $\mathfrak{a}_i$.
\end{Prop}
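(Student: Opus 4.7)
The plan is to adapt the proof of the single-ideal Briançon--Skoda theorem above (i.e.\ that of \cite[Theorem 4.21]{blicklep-etestideale}), which rests on the explicit description of $\tau$ in terms of test elements given by Theorem \ref{TestElementExistence}. As in the preceding right-continuity proposition for mixed test modules, I would first argue that one can find a common sequence $c_1,\dots,c_n$ of test elements for the two Cartier modules $(M,\mathcal{C}^{\mathfrak{a}_1^{t_1}\cdots\mathfrak{a}_i^{t_i-1}\cdots\mathfrak{a}_n^{t_n}})$ and $(M,\mathcal{C}^{\mathfrak{a}_1^{t_1}\cdots\mathfrak{a}_n^{t_n}})$. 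After reducing to the case that $M$ is $F$-pure with $\Supp M=\Spec R$, the construction from that proof applies verbatim: at each stage one arranges the test element to lie in $\mathfrak{a}_i$ (and, by symmetry, in every $\mathfrak{a}_j$ that is nonzero at the prime in question), so that after inverting it the various $\mathfrak{a}_k^{\lceil s p^e\rceil}$ twists all become the unit ideal and the two twisted Cartier algebras coincide with $\mathcal{C}$ there. Theorem \ref{TestElementExistence} then yields the uniform formula
\[
\tau(M,\mathfrak{a}_1^{s_1}\cdots\mathfrak{a}_n^{s_n}) \;=\; \sum_{j}\sum_{e\geq 1} \mathcal{C}^{\mathfrak{a}_1^{s_1}\cdots\mathfrak{a}_n^{s_n}}_e\, c_j\, H^0_{\eta_j}(M)
\]
for both exponent tuples, so the statement reduces to comparing the graded pieces of the two Cartier algebras.

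For the containment $\mathfrak{a}_i\cdot\tau(M,\mathfrak{a}_1^{t_1}\cdots\mathfrak{a}_i^{t_i-1}\cdots\mathfrak{a}_n^{t_n}) \subseteq \tau(M,\mathfrak{a}_1^{t_1}\cdots\mathfrak{a}_n^{t_n})$, I would use the integer identity $\lceil(t_i-1)p^e\rceil + p^e = \lceil t_i p^e\rceil$. A typical element of $\mathfrak{a}_i\cdot\mathcal{C}^{\mathfrak{a}_1^{t_1}\cdots\mathfrak{a}_i^{t_i-1}\cdots\mathfrak{a}_n^{t_n}}_e$ has the form $a\kappa b$ with $a\in\mathfrak{a}_i$, $\kappa\in\mathcal{C}_e$, and $b$ in the twist with $\mathfrak{a}_i^{\lceil(t_i-1)p^e\rceil}$ in the $i$-th slot. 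The defining relation $r\kappa = \kappa r^{p^e}$ of a Cartier algebra gives $a\kappa b = \kappa (a^{p^e}b)$, and $a^{p^e}b$ lies in the twist with $\mathfrak{a}_i^{\lceil t_i p^e\rceil}$ in the $i$-th slot by the exponent identity. Feeding this into the formula above gives the first inclusion.

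For the reverse inclusion under the assumption $t_i\geq s$, where $s$ is the minimal number of generators of $\mathfrak{a}_i$, I would invoke the standard pigeonhole containment $\mathfrak{a}_i^{sp^e}\subseteq \mathfrak{a}_i^{[p^e]}\cdot \mathfrak{a}_i^{(s-1)p^e}$: in any monomial of degree $sp^e$ in $s$ generators, some generator must appear with exponent $\geq p^e$. Together with $\lceil t_i p^e\rceil \geq sp^e$ this yields
\[
\mathfrak{a}_i^{\lceil t_i p^e\rceil}\ \subseteq\ \mathfrak{a}_i^{[p^e]}\cdot \mathfrak{a}_i^{\lceil(t_i-1)p^e\rceil}.
\]
Hence every generator $\kappa b\in\mathcal{C}^{\mathfrak{a}_1^{t_1}\cdots\mathfrak{a}_n^{t_n}}_e$ can be written $\sum_k \kappa\,a_k^{p^e} b_k' = \sum_k a_k\,(\kappa b_k')$ with $a_k\in\mathfrak{a}_i$ and $\kappa b_k' \in \mathcal{C}^{\mathfrak{a}_1^{t_1}\cdots\mathfrak{a}_i^{t_i-1}\cdots\mathfrak{a}_n^{t_n}}_e$, again via $r\kappa=\kappa r^{p^e}$. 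Plugging this back into the test-element description of $\tau$ produces the opposite inclusion.

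The one step that is not purely formal is the construction of the common sequence of test elements, and this is genuinely the main obstacle; however, it is precisely the obstacle that had to be resolved in the preceding mixed right-continuity proposition, and the same trick (arranging each $c_j$ to lie in the ideals being twisted by) applies unchanged here. Everything else is bookkeeping with ceilings and Frobenius powers, and the two steps above assemble into the stated equality.
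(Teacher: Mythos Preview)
Your proposal is correct and follows exactly the approach the paper intends: the paper gives no explicit proof of this proposition, merely prefacing it with ``Similarly one can prove a Brian\c{c}on--Skoda theorem for mixed test modules'', which is shorthand for precisely the argument you have written out---construct a common sequence of test elements as in the mixed right-continuity proposition (arranging each $c_j$ to lie in the ideals $\mathfrak{a}_k$ with $t_k>0$ so that the twisted algebras all collapse to $\mathcal{C}$ after localization), and then run the ceiling/Frobenius-power bookkeeping from \cite[Theorem 4.21]{blicklep-etestideale} on the resulting test-element formula. Your treatment of both inclusions via $\lceil (t_i-1)p^e\rceil + p^e = \lceil t_i p^e\rceil$ and the pigeonhole containment $\mathfrak{a}_i^{sp^e}\subseteq \mathfrak{a}_i^{[p^e]}\mathfrak{a}_i^{(s-1)p^e}$ is the standard one and is what the reference carries out in the single-ideal case.
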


We do not know how to prove a discreteness result in the mixed test module case unless the field is finite. The issue is that the set of test modules $\tau(M, \mathfrak{a}_1^{t_1} \cdots \mathfrak{a}_n^{t_n})$ is in general not totally ordered.

\begin{Bem}
\label{AssGradedRemark}
An important special case is where the algebra $\mathcal{C}$ is generated by a single element $\kappa$. Given a principal ideal $\mathfrak{a} = (f)$ one can then form the associated graded of the test module filtration \[Gr^t(M, \mathfrak{a}) = \tau(M, \mathfrak{a}^{t- \eps})/\tau(M, \mathfrak{a}^t).\] This filtration again carries a Cartier module structure given by $\kappa f^{\lceil t (p-1) \rceil}$. Passing to crystals we are in a situation where we have an equivalence with constructible $p$-torsion sheaves (we refer to \cite{staeblertestmodulnvilftrierung} for a more detailed treatment).
\end{Bem}

One also has inclusion results similar to Proposition \ref{Rightcontinuity} in a quite general setting for the mixed test modules with an analogous proof henceforth omitted.

We now assume that $R$ is any noetherian ring. In the following we will denote by $\Ass_\mathcal{C}(M)$ the associated primes of the $\mathcal{C}$-module $M$.

\begin{Le}
\label{AssForDifferentAlgebras}
Let $\mathcal{C}' \subseteq \mathcal{C}$ be Cartier algebras and $N \subseteq M$, where $N$ is a $\mathcal{C}'$-module and $M$ is a $\mathcal{C}$-module. Then $\Ass_{\mathcal{C}'} N \subseteq \Ass_{\mathcal{C}} M$.
\end{Le}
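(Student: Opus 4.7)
The plan is to unwind the definition of associated primes in terms of non-nilpotence under the action of the positively graded part of the Cartier algebra, and then observe that each step is monotonic both in the module direction and in the algebra direction.

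Concretely, fix $\eta \in \Ass_{\mathcal{C}'} N$. By definition this means that $H^0_\eta(N_\eta)$ is not $\mathcal{C}'$-nilpotent, i.e. $(\mathcal{C}'_+)^e \cdot H^0_\eta(N_\eta) \neq 0$ for every $e \geq 0$. I want to show the same holds for $M$ and $\mathcal{C}$, namely that $(\mathcal{C}_+)^e \cdot H^0_\eta(M_\eta) \neq 0$ for every $e$.

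The key observations are: (i) localization at $\eta$ is exact on $R$-modules, so the inclusion $N \subseteq M$ gives $N_\eta \subseteq M_\eta$; (ii) the $\eta$-power torsion functor $H^0_\eta$ is left exact, hence $H^0_\eta(N_\eta) \subseteq H^0_\eta(M_\eta)$; (iii) the Cartier action on $N$ is by assumption the restriction of that on $M$ under $\mathcal{C}' \hookrightarrow \mathcal{C}$, so the actions are compatible with the above inclusion; and (iv) since $\mathcal{C}'_+ \subseteq \mathcal{C}_+$, we have $(\mathcal{C}'_+)^e \subseteq (\mathcal{C}_+)^e$ for all $e$. Combining these gives
\[
(\mathcal{C}'_+)^e \cdot H^0_\eta(N_\eta) \;\subseteq\; (\mathcal{C}_+)^e \cdot H^0_\eta(M_\eta)
\]
as submodules of $H^0_\eta(M_\eta)$. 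Since the left-hand side is nonzero for every $e$, so is the right-hand side, proving that $H^0_\eta(M_\eta)$ is not $\mathcal{C}$-nilpotent, i.e. $\eta \in \Ass_{\mathcal{C}} M$.

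There is no real obstacle here; the statement is a formal consequence of the definition once one checks that the $\mathcal{C}'$-action on $H^0_\eta(N_\eta)$ is a restriction of the $\mathcal{C}$-action on $H^0_\eta(M_\eta)$, which is built into the hypothesis that $N \subseteq M$ as (an inclusion of a $\mathcal{C}'$-submodule into a $\mathcal{C}$-module, compatible with the inclusion of Cartier algebras).
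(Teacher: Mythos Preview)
Your proof is correct and follows essentially the same idea as the paper's: both arguments reduce to the observation that non-nilpotence is monotone under enlarging both the module and the Cartier algebra. The only difference is organizational: the paper splits the argument into two steps, first noting $\Ass_{\mathcal{C}'} M \subseteq \Ass_{\mathcal{C}} M$ (algebra change) and then invoking Lemma~\ref{AssForCrystals}(a) to get $\Ass_{\mathcal{C}'} N \subseteq \Ass_{\mathcal{C}'} M$ (submodule inclusion), whereas you combine both monotonicities into the single containment $(\mathcal{C}'_+)^e H^0_\eta(N_\eta) \subseteq (\mathcal{C}_+)^e H^0_\eta(M_\eta)$.
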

\begin{proof}
If $H^0_\eta(M)_\eta$ is not $\mathcal{C}'$-nilpotent then a fortiori it is not $\mathcal{C}$-nilpotent. Hence, $\Ass_{\mathcal{C}'} M \subseteq \Ass_{\mathcal{C}} M$. Using Lemma \ref{AssForCrystals} (a) the claim follows.
\end{proof}

\begin{Le}
\label{Nilisointersection}
Let $A, B \subseteq M$ be Cartier submodules. If $B \subseteq M$ is a nil-isomorphism then $A \cap B \subseteq A$ is also a nil-isomorphism.
\end{Le}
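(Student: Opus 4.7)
The plan is very short because the statement is a formal property of nil-isomorphisms and reduces to a one-line application of the second isomorphism theorem combined with the fact that nilpotent Cartier modules form a Serre subcategory.

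First, I would observe that since $A \cap B \hookrightarrow A$ is an inclusion, the kernel vanishes automatically, so the only thing to verify is that the cokernel $A/(A \cap B)$ is nilpotent as a Cartier module. Next, I would invoke the standard second isomorphism theorem for $R$-modules to identify
\[
A/(A \cap B) \;\cong\; (A+B)/B,
\]
noting that this isomorphism is compatible with the Cartier structures, since both $A$ and $B$ are Cartier submodules of $M$, and so $A+B$ and $A \cap B$ are again Cartier submodules. Hence it suffices to show that $(A+B)/B$ is nilpotent.

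Now the hypothesis that $B \subseteq M$ is a nil-isomorphism (more precisely, that its cokernel is nilpotent, since again the kernel of an inclusion is zero) gives some $e \geq 1$ with $\mathcal{C}_+^e(M/B)=0$. Since $(A+B)/B$ is a Cartier submodule of $M/B$, we get $\mathcal{C}_+^e \bigl((A+B)/B\bigr) \subseteq \mathcal{C}_+^e (M/B) = 0$, which is exactly nilpotence. This is a special case of the general principle, already used freely in the paper, that submodules of nilpotent Cartier modules are nilpotent (equivalently, nilpotent modules form a Serre subcategory, cf.\ the discussion after \autoref{CartierAlgebraDefRing}).

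There is no real obstacle: the argument is purely formal. The only thing to be slightly careful about is to justify that the isomorphism $A/(A\cap B) \cong (A+B)/B$ is an isomorphism of Cartier modules, not merely of underlying $R$-modules; this is immediate from the fact that the map $A \to (A+B)/B$ given by restriction of the quotient $M \to M/B$ is a morphism of Cartier modules with kernel $A \cap B$ and image $(A+B)/B$, and Cartier quotients by Cartier submodules inherit the expected Cartier structure.
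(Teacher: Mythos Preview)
Your proof is correct and is essentially the same as the paper's: the paper simply observes that $A/(A\cap B)$ embeds into $M/B$, which is nilpotent by assumption, and you have just made the second isomorphism theorem explicit in justifying this embedding.
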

\begin{proof}
The cokernel $A/(B\cap A)$ is contained in $M/B$ and the latter is nilpotent by assumption.
\end{proof}

\begin{Prop}
\label{TauForDifferentAlgebras}
Let $M$ be a $\mathcal{C}$-module and $\mathcal{C}' \subseteq \mathcal{C}$ then $\tau(M, \mathcal{C}') \subseteq \tau(M, \mathcal{C})$.
\end{Prop}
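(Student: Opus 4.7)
The plan is to exhibit $\tau(M,\mathcal{C})$ itself as an admissible candidate in the definition of $\tau(M,\mathcal{C}')$, so that minimality of the latter forces the desired inclusion. First, since $\mathcal{C}'\subseteq\mathcal{C}$, any $\mathcal{C}$-submodule of $M$ is automatically a $\mathcal{C}'$-submodule, so $\tau(M,\mathcal{C})$ does make sense as a $\mathcal{C}'$-submodule of $M$. Also, the underlying $R$-module structures agree (both have $\mathcal{C}_0=\mathcal{C}'_0=R$), so the functor $H^0_\eta$ and the operation of localization give the same $R$-modules whichever Cartier structure one is remembering.

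The next step is to compare associated primes. By Lemma~\ref{AssForDifferentAlgebras} we have $\Ass_{\mathcal{C}'}M\subseteq\Ass_{\mathcal{C}}M$; concretely, $\mathcal{C}'_+\subseteq\mathcal{C}_+$ implies that any $\mathcal{C}$-nilpotent module is a fortiori $\mathcal{C}'$-nilpotent, so a prime that fails to be $\mathcal{C}'$-associated cannot be $\mathcal{C}$-associated either.

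Now fix $\eta\in\Ass_{\mathcal{C}'}M$. By the defining property of $\tau(M,\mathcal{C})$ applied to $\eta\in\Ass_{\mathcal{C}}M$, the inclusion
\[
H^0_\eta(\tau(M,\mathcal{C})_\eta)\;\subseteq\; H^0_\eta(M_\eta)
\]
is a $\mathcal{C}$-nil-isomorphism. Its kernel vanishes and its cokernel is $\mathcal{C}$-nilpotent, hence $\mathcal{C}'$-nilpotent by the same containment $\mathcal{C}'_+\subseteq\mathcal{C}_+$ used above. Therefore this inclusion is also a $\mathcal{C}'$-nil-isomorphism. Consequently $\tau(M,\mathcal{C})$ is a $\mathcal{C}'$-submodule of $M$ that satisfies the defining condition of $\tau(M,\mathcal{C}')$ at every $\eta\in\Ass_{\mathcal{C}'}M$, and minimality of $\tau(M,\mathcal{C}')$ among such submodules yields $\tau(M,\mathcal{C}')\subseteq\tau(M,\mathcal{C})$.

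There is no genuine obstacle here; the only points to watch are (i) tracking which notion of nilpotence is in force (resolved by the elementary observation $\mathcal{C}'_+\subseteq\mathcal{C}_+$) and (ii) noting that the comparison of associated primes goes in the direction needed so that the nil-isomorphism condition demanded by $\tau(M,\mathcal{C}')$ is checked on a \emph{smaller} set of primes than the condition already verified for $\tau(M,\mathcal{C})$.
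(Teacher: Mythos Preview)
Your proof is correct and follows the same core strategy as the paper: show that $\tau(M,\mathcal{C})$ (or something inside it) satisfies the defining nil-isomorphism condition for $\tau(M,\mathcal{C}')$ at every $\mathcal{C}'$-associated prime, then invoke minimality. Both arguments rest on Lemma~\ref{AssForDifferentAlgebras} and on the observation that $\mathcal{C}$-nilpotence implies $\mathcal{C}'$-nilpotence via $\mathcal{C}'_+\subseteq\mathcal{C}_+$.

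The only difference is that the paper routes the comparison through the $F$-pure parts $\underline{M}_{\mathcal{C}'}\subseteq\underline{M}_{\mathcal{C}}$: it takes the candidate $N=\tau(M,\mathcal{C})\subseteq\underline{M}_{\mathcal{C}}$, intersects with $\underline{M}_{\mathcal{C}'}$, and appeals to Lemma~\ref{Nilisointersection} to verify the condition inside $\underline{M}_{\mathcal{C}'}$. Since Definition~\ref{DefTestmodule} is phrased in terms of submodules of $M$ itself (with the remark that one may equivalently use $\underline{M}$), your more direct argument working in $M$ avoids this detour and is slightly cleaner; the paper's version would be mandatory only under a convention where $\tau$ is required to sit inside $\underline{M}$.
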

\begin{proof}
Note that $\underline{M}_{\mathcal{C}'} \subseteq \underline{M}_{\mathcal{C}}$. Assume now that $N \subseteq \underline{M}_{\mathcal{C}}$ is a $\mathcal{C}$-submodule such that $H^0_\eta(N)_\eta \subseteq H^0_\eta(\underline{M}_{\mathcal{C}})_\eta$ is a nil-isomorphism for all $\eta \in \Ass M$. It is, in particular, a nil-isomorphism of $\mathcal{C}'$-modules and we conclude with Lemma \ref{Nilisointersection} that $\underline{M}_{\mathcal{C}'} \cap N \subseteq \underline{M}_{\mathcal{C}'}$ induces a nil-isomorphism after localizing and applying $H^0_\eta$. From Lemma \ref{AssForDifferentAlgebras} and the fact that the test module is the minimal submodule $\tau$ such that $H^0_\eta(\tau)_\eta \subseteq H^0(\underline{M}_{\mathcal{C}'})_\eta$ is a nil-isomorphism we conclude that $\tau(M, \mathcal{C}') \subseteq \tau(M, \mathcal{C})$ as claimed.
\end{proof}

\begin{Ko}
Let $\mathcal{C}' \subseteq \mathcal{C}$ Cartier algebras and $N \subseteq M$, where $N$ is a $\mathcal{C}'$-module and $M$ is a $\mathcal{C}$-module. Then $\tau(N, \mathcal{C}') \subseteq \tau(M, \mathcal{C})$.
\end{Ko}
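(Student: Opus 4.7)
The plan is to bridge the two sides by inserting the $\mathcal{C}$-submodule of $M$ generated by $N$. Write $\mathcal{C}N$ for this submodule. Since $R$ is noetherian and $M$ is coherent, $\mathcal{C}N \subseteq M$ is automatically a coherent $R$-module, hence a coherent $\mathcal{C}$-module; by restriction of scalars along $\mathcal{C}' \subseteq \mathcal{C}$ it is also a $\mathcal{C}'$-module, and it sits as a $\mathcal{C}'$-submodule between $N$ and $M$.

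With this intermediate object in place I would chain three inclusions. First, applying the functoriality of $\tau$ on injections (Proposition \ref{TestModulesAreFunctorial}(a)) to the $\mathcal{C}'$-linear inclusion $N \hookrightarrow \mathcal{C}N$ yields
\[
\tau(N, \mathcal{C}') \subseteq \tau(\mathcal{C}N, \mathcal{C}').
\]
Second, Proposition \ref{TauForDifferentAlgebras} applied to the single $\mathcal{C}$-module $\mathcal{C}N$ together with the subalgebra $\mathcal{C}' \subseteq \mathcal{C}$ gives
\[
\tau(\mathcal{C}N, \mathcal{C}') \subseteq \tau(\mathcal{C}N, \mathcal{C}).
\]
Third, Proposition \ref{TestModulesAreFunctorial}(a) applied to the $\mathcal{C}$-linear inclusion $\mathcal{C}N \hookrightarrow M$ yields
\[
\tau(\mathcal{C}N, \mathcal{C}) \subseteq \tau(M, \mathcal{C}).
\]
Concatenating these three inclusions produces the claim.

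The only point requiring any care is the verification that $\mathcal{C}N$ is a bona fide coherent $\mathcal{C}$-module so that the test-module formalism applies to it; this is immediate from noetherianity of $R$ since $\mathcal{C}N \subseteq M$. Beyond that I do not expect any obstacle: the corollary is a purely formal combination of Proposition \ref{TauForDifferentAlgebras} with the functoriality of $\tau$ on injective maps, mediated by the intermediate object $\mathcal{C}N$.
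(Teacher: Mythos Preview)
Your proof is correct and uses the same two ingredients as the paper---functoriality of $\tau$ on injections (Proposition~\ref{TestModulesAreFunctorial}) together with Proposition~\ref{TauForDifferentAlgebras}---so the approach is essentially identical. The only difference is that you introduce the intermediate object $\mathcal{C}N$, whereas the paper simply uses $M$ itself: since $M$ is already a $\mathcal{C}'$-module by restriction and $N \subseteq M$ is $\mathcal{C}'$-linear, functoriality gives $\tau(N,\mathcal{C}') \subseteq \tau(M,\mathcal{C}')$ directly, and then Proposition~\ref{TauForDifferentAlgebras} gives $\tau(M,\mathcal{C}') \subseteq \tau(M,\mathcal{C})$. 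Your detour through $\mathcal{C}N$ is harmless but unnecessary.
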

\begin{proof}
From Proposition \ref{TauForDifferentAlgebras} we get $\tau(M, \mathcal{C}') \subseteq \tau(M, \mathcal{C})$. By functoriality (Proposition \ref{TestModulesAreFunctorial}) we have $\tau(N, \mathcal{C}') \subseteq \tau(M, \mathcal{C}')$.
\end{proof}

We note the following lemma. It asserts that existence of $\tau$ may always be reduced to a question about finitely generated Cartier algebras.

\begin{Le}
\label{TauExistenceFinitelyGeneratedC}
Let $M$ be an $F$-pure coherent $\mathcal{C}$-module. Then there is a finitely generated Cartier algebra $\mathcal{C}' \subseteq \mathcal{C}$ such that $\underline{H^0_\eta(M)}_\mathcal{C} = \underline{H^0_\eta(M)}_{\mathcal{C}'}$ for every associated prime $\eta$ of $M$ and such that $\underline{M}_\mathcal{C} = \underline{M}_{\mathcal{C}'}$. For any such algebra we have $\mathcal{C} \tau(M, \mathcal{C}') = \tau(M, \mathcal{C})$.
\end{Le}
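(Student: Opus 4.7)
The plan is to construct $\mathcal{C}'$ explicitly from finitely many elements of $\mathcal{C}_+$ witnessing $\mathcal{C}$-$F$-purity of $M$ and of each $\underline{H^0_\eta(M)}_\mathcal{C}$, and then verify the three claimed equalities in turn. For the construction, $\Ass_\mathcal{C} M$ is finite, since it is contained in the classical $\Ass_R M$ of the noetherian module $M$. Because $M$ is $\mathcal{C}$-$F$-pure and finitely generated, finitely many $\mu_k \in \mathcal{C}_+$ and $m_k \in M$ suffice to express a set of $R$-generators of $M$ as $\sum_k \mu_k(m_k)$, and analogously each coherent $\mathcal{C}$-$F$-pure module $\underline{H^0_\eta(M)}_\mathcal{C}$ is generated as an $R$-module by expressions $\sum_i \kappa_{\eta, i}(n_{\eta, i})$ with $\kappa_{\eta, i} \in \mathcal{C}_+$ and $n_{\eta, i} \in \underline{H^0_\eta(M)}_\mathcal{C}$. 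Letting $\mathcal{C}'$ be the Cartier subalgebra of $\mathcal{C}$ generated by the $\mu_k$ together with the $\kappa_{\eta, i}$ (finitely many in total) yields a finitely generated Cartier algebra with $\mathcal{C}'_+ M = M$ and $\mathcal{C}'_+ \underline{H^0_\eta(M)}_\mathcal{C} = \underline{H^0_\eta(M)}_\mathcal{C}$ for every $\eta \in \Ass_\mathcal{C} M$.

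The two underline equalities then follow quickly. The inclusion $\underline{M}_{\mathcal{C}'} \subseteq \underline{M}_\mathcal{C}$ holds automatically since $\mathcal{C}'_+ \subseteq \mathcal{C}_+$, while $\mathcal{C}'_+ M = M$ forces $\underline{M}_{\mathcal{C}'} = M$. For $\underline{H^0_\eta(M)}_\mathcal{C} = \underline{H^0_\eta(M)}_{\mathcal{C}'}$, I will note that by construction $\underline{H^0_\eta(M)}_\mathcal{C}$ is $\mathcal{C}'$-$F$-pure, while its quotient $H^0_\eta(M)/\underline{H^0_\eta(M)}_\mathcal{C}$ is $\mathcal{C}$-nilpotent and hence \emph{a fortiori} $\mathcal{C}'$-nilpotent; uniqueness of the $\underline{\phantom{M}}$-construction for $\mathcal{C}'$ identifies it with $\underline{H^0_\eta(M)}_{\mathcal{C}'}$.

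For the final equality $\mathcal{C}\tau(M,\mathcal{C}') = \tau(M,\mathcal{C})$, one direction is immediate from Proposition~\ref{TauForDifferentAlgebras}: we have $\tau(M,\mathcal{C}') \subseteq \tau(M,\mathcal{C})$, and since the right hand side is already a $\mathcal{C}$-module this upgrades to $\mathcal{C}\tau(M,\mathcal{C}') \subseteq \tau(M,\mathcal{C})$. For the reverse I plan to show that $\mathcal{C}\tau(M,\mathcal{C}')$ satisfies the defining minimality condition of $\tau(M,\mathcal{C})$. Localizing the previous step gives $\Ass_\mathcal{C} M = \Ass_{\mathcal{C}'} M$, so I may fix $\eta$ in this common set and write $U_\eta := \underline{H^0_\eta(M_\eta)}_\mathcal{C} = \underline{H^0_\eta(M_\eta)}_{\mathcal{C}'}$. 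The defining $\mathcal{C}'$-nil-isomorphism $H^0_\eta(\tau(M,\mathcal{C}')_\eta) \subseteq H^0_\eta(M_\eta)$ yields an $e$ with $(\mathcal{C}'_+)^e H^0_\eta(M_\eta) \subseteq H^0_\eta(\tau(M,\mathcal{C}')_\eta)$, and the $\mathcal{C}'$-$F$-purity of $U_\eta$ then gives $U_\eta = (\mathcal{C}'_+)^e U_\eta \subseteq (\mathcal{C}'_+)^e H^0_\eta(M_\eta) \subseteq H^0_\eta(\mathcal{C}\tau(M,\mathcal{C}')_\eta)$. Since $H^0_\eta(M_\eta)/U_\eta$ is $\mathcal{C}$-nilpotent by definition of $U_\eta$, the further surjective quotient $H^0_\eta(M_\eta)/H^0_\eta(\mathcal{C}\tau(M,\mathcal{C}')_\eta)$ is also $\mathcal{C}$-nilpotent, delivering the required $\mathcal{C}$-nil-isomorphism.

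The main subtlety I anticipate is precisely this last step: $\mathcal{C}$-nilpotence is strictly stronger than $\mathcal{C}'$-nilpotence, so one cannot directly upgrade the $\mathcal{C}'$-nil-isomorphism furnished by $\tau(M,\mathcal{C}')$ into a $\mathcal{C}$-nil-isomorphism. The key device is the careful choice of $\mathcal{C}'$ ensuring that the $F$-pure parts $U_\eta$ coincide for both algebras; once $U_\eta$ is captured inside $H^0_\eta(\mathcal{C}\tau(M,\mathcal{C}')_\eta)$, the cokernel is forced to be $\mathcal{C}$-nilpotent as a quotient of the $\mathcal{C}$-nilpotent $H^0_\eta(M_\eta)/U_\eta$.
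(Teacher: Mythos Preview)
Your proposal is correct and follows essentially the same approach as the paper: construct $\mathcal{C}'$ from finitely many witnesses of $F$-purity, verify the underline equalities, use Proposition~\ref{TauForDifferentAlgebras} for one inclusion, and for the other show that $\mathcal{C}\tau(M,\mathcal{C}')$ satisfies the nil-isomorphism condition by trapping the common $F$-pure part $U_\eta$ inside $H^0_\eta(\mathcal{C}\tau(M,\mathcal{C}')_\eta)$. The only cosmetic differences are that the paper verifies $\underline{H^0_\eta(M)}_\mathcal{C} = \underline{H^0_\eta(M)}_{\mathcal{C}'}$ via an explicit chain of inclusions rather than the uniqueness characterization, and makes the step $\mathcal{C}H^0_\eta(N) \subseteq H^0_\eta(\mathcal{C}N)$ explicit, whereas you bypass it by going directly through $U_\eta$.
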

\begin{proof}
Since $M$ is coherent $\Ass_\mathcal{C}(M)$ is finite. Moreover, each of the $\mathcal{C}$-modules $H^0_\eta(M)$ (for $\eta \in \Ass_\mathcal{C}(M)$) is also coherent. Hence, we find finitely many (homogeneous) elements $\kappa_e \in \mathcal{C}_e$ such that $\sum_e \kappa_e(M) = M$. Fixing $\eta$ we find finitely many $\kappa'_e$ such that $\sum_e \kappa'_e \underline{H^0_\eta(M)}_\mathcal{C} = \underline{H^0_\eta(M)}_\mathcal{C}$. Denote the algebra generated by the $\kappa'_e$ and $\kappa_e$ by $\mathcal{C}'$. Then we have \[\underline{H^0_\eta(M)}_\mathcal{C} = \mathcal{C}'^e_+ \underline{H^0_\eta(M)}_\mathcal{C} \subseteq \mathcal{C}'^e_+ H^0_\eta(M) \subseteq \mathcal{C}^e_+ H^0_\eta(M) = \underline{H^0_\eta(M)}_\mathcal{C}.\]

Repeating this for all $\eta$ and enlarging $\mathcal{C}'$ we may assume that $\mathcal{C}'_+ M = M$ and $\underline{H^0_\eta(M)}_{\mathcal{C}'} = \underline{H^0_\eta(M)}_\mathcal{C}$.

Let now $N = \tau(M, \mathcal{C}')$. That is $N$ is the minimal $\mathcal{C}'$-submodule of $M$ for which $H^0_\eta(N)_\eta \subseteq H^0_\eta(M)_\eta$ is a nil-isomorphism for every $\eta$. In particular, we have \begin{equation}\label{asdf} \mathcal{C}^e_+ H^0_\eta(M)_\eta = \mathcal{C}'^e_+ H^0_\eta(M)_\eta \subseteq H^0_\eta(N)_\eta \end{equation} for all $e \gg 0$ and all $\eta$.

Note that one has $\mathcal{C}H^0_\eta(N) \subseteq H^0_\eta(\mathcal{C} N)$. Indeed, the left hand side is the $\mathcal{C}$-module generated by the $\eta^k$-torsion of $N$ while the right-hand side is a $\mathcal{C}$-module which contains the $\eta^k$-torsion of $N$ since $N \subseteq \mathcal{C} N$.

This observation shows together with (\ref{asdf}) that we have inclusions \[\mathcal{C}^e_+H^0_\eta(M)_\eta = \mathcal{C}_+ \mathcal{C}^e_+H^0_\eta(M)_\eta \subseteq \mathcal{C}_+ H^0_\eta(N)_\eta \subseteq \mathcal{C} H^0_\eta(N)_\eta \subseteq H^0_\eta(\mathcal{C} N)_\eta.\]
This shows that we have a nil-isomorphism $H^0_\eta(\mathcal{C} N)_\eta \subseteq H^0_\eta(M)_\eta$ for each associated prime $\eta$. Recall, that $\tau(M, \mathcal{C})$ is minimal with this property. Moreover, we have $N \subseteq \tau(M, \mathcal{C})$ by Proposition \ref{TauForDifferentAlgebras} and hence $\mathcal{C} N \subseteq \tau(M, \mathcal{C})$. By minimality we conclude that $\tau(M, \mathcal{C}) = \mathcal{C}N$.
\end{proof}

\section{Twisted inverse images and pushforwards for Cartier modules}
\label{SectionCartieralgebraPullbackPushforward}
The purpose of this section is to introduce the twisted inverse image and pushforward functors for Cartier modules. We show that these functors are compatible with nil-isomorphisms and thus induce functors on the corresponding categories of Cartier crystals.

This extends some results of \cite{blickleboecklecartiercrystals}, where the case of a single structural morphism $\kappa: F_\ast M \to M$ was treated.

We recall first the definition of a Cartier algebra for a scheme. It generalizes the one for a ring (cf.\ Definition \ref{CartierAlgebraDefRing}) in a straightforward way.
\begin{Def}
An $\mathbb{N}$-graded sheaf of rings $\mathcal{C}_X$ on a scheme $X$ is a \emph{Cartier algebra over $X$} if $\mathcal{C}_X$ is a quasi-coherent $\mathcal{O}_X$-bimodule with $r \cdot \kappa = \kappa \cdot r^{p^e}$ for local sections $r, \kappa$ with $\kappa$ homogeneous of degree $e$. We further require that the homogeneous elements of degree zero of $\mathcal{C}_X$ coincide with $\mathcal{O}_X$. A \emph{morphism of Cartier algebras} is a ring homomorphism $\mathcal{C}_1 \to \mathcal{C}_2$ that is $\mathcal{O}_X$-linear for the bimodule structures.
\end{Def}

As in the case of $X = \Spec R$ a $\mathcal{C}$-module (or Cartier module) will always denote a \emph{left} $\mathcal{C}$-module. We will only consider Cartier modules which are (quasi-)coherent as $\mathcal{O}_X$-modules.

\begin{Le}
\label{Cartiermodulestructureviahomomorphism}
The structure of a $\mathcal{C}_X$-module on a sheaf of sets $M$ is equivalent to a coherent $\mathcal{O}_X$-module structure on $M$ together with a graded homomorphism of rings $\Xi: \mathcal{C}_X \to \bigoplus_{e \geq 0} \mathcal{H}om_{\mathcal{O}_X}(F^e_\ast M, M)$, here the multiplication on the right hand side is defined as $\varphi \cdot \psi = \varphi \circ F_\ast^e \psi$, if $\varphi$ is homogeneous of degree $e$.
\end{Le}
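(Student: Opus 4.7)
The approach is to construct natural mutually-inverse assignments between $\mathcal{C}_X$-module structures on $M$ and pairs consisting of an $\mathcal{O}_X$-module structure plus a graded ring homomorphism $\Xi$ as described. First I would observe that since $(\mathcal{C}_X)_0 = \mathcal{O}_X$ by definition, any $\mathcal{C}_X$-module structure on $M$ restricts to an $\mathcal{O}_X$-module structure. For a homogeneous local section $\kappa$ of degree $e$, the assignment $m \mapsto \kappa \cdot m$ is a morphism of abelian sheaves $M \to M$. The bimodule relation $r\kappa = \kappa r^{p^e}$ translates to $r \cdot (\kappa \cdot m) = \kappa \cdot (r^{p^e} m)$, which is precisely the statement that this morphism is $\mathcal{O}_X$-linear when viewed as a map $F^e_* M \to M$. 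Assembling these over all $e$ produces the graded $\mathcal{O}_X$-bimodule map $\Xi$.

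The central step is verifying that $\Xi$ respects multiplication. For homogeneous $\kappa$ of degree $e$ and $\lambda$ of degree $d$, the product $\kappa\lambda$ acts on a section $m$ by $\kappa(\lambda m) = \Xi(\kappa)(\Xi(\lambda)(m))$. To interpret this composition in the correct graded piece $\mathcal{H}om_{\mathcal{O}_X}(F^{e+d}_* M, M)$, one uses the identification $F^{e+d}_* M = F^e_*(F^d_* M)$ and applies $F^e_*$ to $\Xi(\lambda)\colon F^d_* M \to M$, obtaining $F^e_*\Xi(\lambda)\colon F^{e+d}_* M \to F^e_* M$; post-composing with $\Xi(\kappa)$ yields exactly $\Xi(\kappa) \circ F^e_* \Xi(\lambda)$. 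This matches the multiplication rule $\varphi \cdot \psi = \varphi \circ F^e_* \psi$ prescribed on the right-hand side, so $\Xi(\kappa\lambda) = \Xi(\kappa)\cdot \Xi(\lambda)$.

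For the reverse direction, given the data of a coherent $\mathcal{O}_X$-module $M$ together with such a $\Xi$, I would define $\kappa \cdot m := \Xi(\kappa)(m)$ for homogeneous $\kappa$ and extend additively. The hypothesis that each $\Xi(\kappa)$ is $\mathcal{O}_X$-linear as a map $F^e_* M \to M$ recovers the bimodule condition $r\kappa = \kappa r^{p^e}$, while the fact that $\Xi$ is a graded ring homomorphism with respect to the twisted multiplication on the target recovers associativity of the action: $(\kappa\lambda)\cdot m = \Xi(\kappa\lambda)(m) = (\Xi(\kappa) \circ F^e_*\Xi(\lambda))(m) = \Xi(\kappa)(\Xi(\lambda)(m)) = \kappa\cdot(\lambda\cdot m)$. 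The two constructions are visibly inverse to one another by construction.

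The main obstacle is purely notational, namely keeping careful track of the Frobenius twists so that the product $\kappa\lambda$ in $\mathcal{C}_X$ corresponds to $\Xi(\kappa) \circ F^e_*\Xi(\lambda)$ rather than a naive composition of endomorphisms of $M$, and dually so that associativity of the $\mathcal{C}_X$-action on $M$ matches the twisted associativity inherent in $\bigoplus_e \mathcal{H}om_{\mathcal{O}_X}(F^e_* M, M)$. Once this twist is built into the definition of the multiplication on the target, the entire proof reduces to a direct transcription of the axioms of a graded module action.
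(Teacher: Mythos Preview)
Your proposal is correct and follows exactly the same approach as the paper; the paper's proof is a single sentence stating the correspondence $\kappa_e \cdot m = \Xi(\kappa_e)(m)$, and you have simply filled in the routine verifications that the paper leaves implicit.
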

\begin{proof}
The correspondence is given by defining $\kappa_e \cdot m = \Xi(\kappa_e)(m)$ for $\kappa_e \in \mathcal{C}_X$ homogeneous of degree $e$.
\end{proof}

\begin{Prop}
\label{PullbackCartierRingStructure}
Let $f: X \to Y$ be a morphism of schemes and $\mathcal{C}_Y$ an $\mathcal{O}_Y$-Cartier algebra. Then $\mathcal{C}_X = f^{-1} \mathcal{C}_Y \otimes_{f^{-1} \mathcal{O}_Y} \mathcal{O}_X$ is an $\mathcal{O}_X$-Cartier algebra if we define its left $\mathcal{O}_X$-module structure for local sections via $s (\kappa \otimes t)= \kappa \otimes s^{p^e} t$, where $\kappa \in \mathcal{C}_e$, and its multiplication by $(\kappa \otimes s) \cdot (\kappa' \otimes t)= \kappa \kappa' \otimes s^{p^{e'}} t$, where $\kappa'$ is homogeneous of degree $e'$.
\end{Prop}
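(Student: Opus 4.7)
The problem is local on $Y$, so I would restrict to affine opens $\Spec B \subseteq Y$ and $\Spec A \subseteq f^{-1}(\Spec B)$ and prove the corresponding statement for $B$-Cartier algebras along a ring map $B \to A$. Throughout, the key ingredient is the Cartier relation on $\mathcal{C}_Y$, namely $r \kappa = \kappa r^{p^e}$ for a section $\kappa$ homogeneous of degree $e$ and $r$ a section of $\mathcal{O}_Y$. Every claim to be verified is an identity on pure tensors that extends by bilinearity.

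The first step is to verify that the prescribed left $\mathcal{O}_X$-action $s(\kappa \otimes t) = \kappa \otimes s^{p^e} t$ descends modulo the tensor relations over $f^{-1}\mathcal{O}_Y$, i.e.\ $s(\kappa r \otimes t) = s(\kappa \otimes rt)$ for $r \in f^{-1}\mathcal{O}_Y$: both sides equal $\kappa \otimes r s^{p^e} t$ after using commutativity of $\mathcal{O}_X$. Multiplicativity $(s_1 s_2)(\kappa \otimes t) = s_1(s_2(\kappa \otimes t))$ follows from $(s_1 s_2)^{p^e} = s_1^{p^e} s_2^{p^e}$. Together with the evident right action $(\kappa \otimes t)s = \kappa \otimes ts$ one obtains a bimodule, and the Cartier algebra relation $s \cdot (\kappa \otimes t) = (\kappa \otimes t) \cdot s^{p^e}$ on $\mathcal{C}_X$ is automatic since both sides equal $\kappa \otimes s^{p^e} t$.

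The second step, which carries the main content, is the well-definedness of the multiplication $(\kappa \otimes s)(\kappa' \otimes t) = \kappa\kappa' \otimes s^{p^{e'}} t$ on the tensor product. Well-definedness in the right factor reduces to commutativity of $\mathcal{O}_X$. Well-definedness in the left factor is where the Cartier relation on $\mathcal{C}_Y$ enters crucially: for $r \in f^{-1}\mathcal{O}_Y$ one computes
\[
(\kappa r \otimes s)(\kappa' \otimes t) = \kappa(r\kappa') \otimes s^{p^{e'}} t = \kappa\kappa' r^{p^{e'}} \otimes s^{p^{e'}} t = \kappa\kappa' \otimes (rs)^{p^{e'}} t = (\kappa \otimes rs)(\kappa' \otimes t),
\]
where the second equality uses $r\kappa' = \kappa' r^{p^{e'}}$ in $\mathcal{C}_Y$ and the third pushes the scalar $r^{p^{e'}}$ across the tensor.

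Finally, associativity follows from $(s^{p^{e'}} t)^{p^{e''}} = s^{p^{e'+e''}} t^{p^{e''}}$ together with associativity of multiplication in $\mathcal{C}_Y$, and the degree-zero part is $(\mathcal{C}_X)_0 = f^{-1}\mathcal{O}_Y \otimes_{f^{-1}\mathcal{O}_Y} \mathcal{O}_X = \mathcal{O}_X$ since $(\mathcal{C}_Y)_0 = \mathcal{O}_Y$. To the extent there is a nontrivial point, it lies in the left-factor well-definedness in the second step: this is the only place where two Frobenius twists must be matched, namely the twist $r \mapsto r^{p^{e'}}$ arising from the Cartier relation applied to the right-hand operand and the twist $s \mapsto s^{p^{e'}}$ built into the definition of the multiplication. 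They agree because both are governed by the same degree $e'$, which is exactly why the formula defining the multiplication has to record the degree of the right factor rather than the left.
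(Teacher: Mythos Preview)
Your proposal is correct and follows essentially the same approach as the paper: reduce to the affine situation $R \to S$ and verify directly that the formulas for the left $S$-action and multiplication are well defined on the tensor product and satisfy the Cartier algebra axioms. The paper is slightly more careful about the sheaf-theoretic reduction (working at the presheaf level and invoking a colimit argument to reach the ring-theoretic statement), whereas you are more explicit about the algebraic verifications themselves (in particular the left-factor well-definedness of the multiplication), which the paper leaves to the reader; the two write-ups are in this sense complementary.
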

\begin{proof}
It is enough to show that the presheaf $f^{-1} \mathcal{C}_Y \otimes_{f^{-1}\mathcal{O}_Y} \mathcal{O}_X$ is a Cartier algebra, since sheafification preserves both ring and module structures. For any open $U \subseteq X$ we have $f^{-1}\mathcal{O}_Y(U) = \colim_{f(U) \subseteq V} \mathcal{O}_Y(V)$ and similarly for $f^{-1} \mathcal{C}_Y(U)$. Since both are filtered colimits we can endow $\mathcal{O}_X$ with the same colimit structure (with transition maps the identity) and apply \cite[Lemma 3.4]{blicklestaeblerbernsteinsatocartier} to obtain an isomorphism \[(f^{-1} \mathcal{C}_Y \otimes_{f^{-1}\mathcal{O}_Y} \mathcal{O}_X)(V) \cong \colim_{f(U) \subseteq V} \mathcal{C}_Y(V) \otimes_{\mathcal{O}_Y(V)} \mathcal{O}_X(U).\]

We have reduced to showing: If $R \to S$ is a ring homomorphism and $\mathcal{C}_R$ is a Cartier algebra then $\mathcal{C}_R \otimes_R S$ with multiplication and bimodule structure as defined above is an $S$-Cartier algebra. Since $S$ is commutative and of characteristic $p > 0$ the left-module structure is well-defined. Next, one verifies that the map \begin{align*}&(\mathcal{C}_R \times S) \times (\mathcal{C}_R \times S) \longrightarrow \mathcal{C}_R \otimes_R S,\\& ((\sum_e \varphi_e, s), (\sum_d \psi_d, s')) \longmapsto \sum_{e, d} \varphi_e \circ \psi_d \otimes s^{p^d} s'\end{align*} induces the desired multiplication map $\sum_e \varphi_e \otimes s \cdot \sum_d \psi_d \otimes s' = \sum_{e, d} \varphi_e \psi_d \otimes s^{p^d} s'$.
An explicit computation shows that this defines a ring structure on $\mathcal{C}_S$.
\end{proof}

\begin{Def}
If $f: X \to Y$ is a morphism and $\mathcal{C}_Y$ a Cartier algebra on $Y$ then we will denote $\mathcal{C}_X$ as constructed in Proposition \ref{PullbackCartierRingStructure} by $f^\ast \mathcal{C}_Y$ and refer to it as the \emph{pullback of $\mathcal{C}_Y$ (along $f$)}.
\end{Def}

Let $f: X \to Y$ be a morphism of noetherian schemes and $M$ a quasi-coherent $\mathcal{O}_Y$-module. If $f$ is finite then the twisted inverse image $f^! M$ is $\bar{f}^\ast \mathcal{H}om_{\mathcal{O}_Y}(f_\ast \mathcal{O}_X, M)$, where $\bar{f}: (X, \mathcal{O}_X) \to (Y, f_\ast \mathcal{O}_X)$ is the canonical flat map of ringed spaces. If $f$ is essentially smooth then the twisted inverse image $f^!M$ is given by $\omega_f \otimes f^\ast M$. In particular, if $f$ is essentially \'etale then $f^! M = f^\ast M$. Furthermore, in the essentially \'etale case one has an isomorphism $f^\ast F_\ast^e M \to F_\ast^e f^\ast M$ which locally is given by $F_\ast^e M \otimes_R S \to F_\ast^e(M \otimes_R S), m \otimes s \mapsto m \otimes s^{p^e}$ (cf.\ \cite[Lemma 2.2.1]{blickleboecklecartiercrystals}).

\begin{Theo}
\label{UpperShriekCartierStructure}
Let $f: X \to Y$ be a morphism of noetherian schemes and $M$ a $\mathcal{C}_Y$-module. Let $\kappa \otimes s \in {\mathcal{C}_X}$ be a local section homogeneous of degree $e$. We define a $\mathcal{C}_X$ module structure on $f^!M$ in the following cases.
\begin{enumerate}[(a)]
 \item For finite $f$ and a local section $\varphi \in f^!M$ we set $(\kappa \otimes s) \cdot \varphi = \kappa \circ F_\ast^e( \varphi \circ \mu_s) \circ F^e_X$, where $\mu_s$ is multiplication by $S$.
\item For essentially \'etale $f$: If $U \subseteq Y$ open affine and $U' \subseteq f^{-1}(U)$ open affine and $m \otimes t^{p^e} \in M(U)$ a local section we define a Cartier structure via $(\kappa \otimes s^{p^e}) \cdot m \otimes t^{p^e} = \kappa(m) \otimes ts$.
\item For $f$ smooth: If $\Spec R = U \subseteq Y$ is open affine and $U' \subseteq f^{-1}(U)$ is such that $U' \to U$ factors as $U' \to \mathbb{A}^n_R \to \Spec R$, where the first map is \'etale, then we define a $\mathcal{C}_{R[x]}$-module structure on $g^! M$, where $g: \Spec R[x] \to \Spec R$, by $\kappa \otimes \sum_i a_i x^i \cdot dx \otimes r \otimes m = \sum_i x^{\frac{i+1}{p^e} -1} dx \otimes 1 \otimes \kappa (r a_i m)$, where the expression $\frac{i+1}{p^e} -1 $ is understood to be zero whenever $\frac{i+1}{p^e}$ it is not an integer.
\end{enumerate}
\end{Theo}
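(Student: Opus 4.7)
The plan is, in each of the three cases, to apply Lemma \ref{Cartiermodulestructureviahomomorphism}: this reduces the construction of a $\mathcal{C}_X$-module structure on $f^! M$ to exhibiting a graded ring homomorphism $\Xi \colon \mathcal{C}_X = f^\ast\mathcal{C}_Y \to \bigoplus_{e \ge 0} \mathcal{H}om_{\mathcal{O}_X}(F^e_\ast f^! M, f^! M)$. By the universal property of $f^\ast\mathcal{C}_Y = f^{-1}\mathcal{C}_Y \otimes_{f^{-1}\mathcal{O}_Y} \mathcal{O}_X$ it is enough to define $\Xi$ on elementary tensors $\kappa \otimes s$ and to verify the bimodule identity $(\kappa \otimes s^{p^e}t) = (s\kappa \otimes t)$. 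The conceptually clean route is to produce, in each case, a natural isomorphism
\[
\alpha_e \colon F^e_\ast \circ f^! \xrightarrow{\;\cong\;} f^! \circ F^e_\ast
\]
and then to define $\Xi(\kappa \otimes 1)$ as the composite $F^e_\ast f^! M \xrightarrow{\alpha_e} f^! F^e_\ast M \xrightarrow{f^!(\kappa)} f^! M$; the factor $s$ is then inserted via the $\mathcal{O}_X$-action on $f^! M$. Multiplicativity of $\Xi$ will follow from the naturality of $\alpha_e$ (applied to $\kappa' \colon F^{e'}_\ast M \to M$) together with the cocycle identity $\alpha_{e + e'} = \alpha_e \circ F^e_\ast(\alpha_{e'})$, which is formal in each case.

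In case (a), one constructs $\alpha_e$ as the composition $F^e_\ast \bar{f}^\ast\mathcal{H}om(f_\ast\mathcal{O}_X, M) \cong \bar{f}^\ast F^e_\ast \mathcal{H}om(f_\ast\mathcal{O}_X, M) \cong \bar{f}^\ast \mathcal{H}om(f_\ast\mathcal{O}_X, F^e_\ast M)$, using that $f_\ast$ commutes with $F^e_\ast$ (as $f$ is affine) and flat base change for the finite morphism $\bar f$. Unwinding this and incorporating the $\mathcal{O}_X$-action of $s$ yields precisely the formula $\kappa \circ F^e_\ast(\varphi \circ \mu_s) \circ F^e_X$: precomposition by $F^e_X$ realizes the source Frobenius twist, $\mu_s$ records the right $\mathcal{O}_X$-module structure on the target, and $\kappa$ applies the Cartier operator pointwise. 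In case (b), $f^! = f^\ast$ and $\alpha_e$ is the inverse of the isomorphism $f^\ast F^e_\ast M \cong F^e_\ast f^\ast M$ from \cite[Lemma 2.2.1]{blickleboecklecartiercrystals}; postcomposing with $f^\ast(\kappa)$ recovers the displayed formula $(\kappa \otimes s^{p^e}) \cdot (m \otimes t^{p^e}) = \kappa(m) \otimes ts$. In both cases, well-definedness over the tensor product reduces to the fundamental bimodule relation $r\kappa = \kappa r^{p^e}$.

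For case (c), since the question is local on $X$, we may work on an open chart $U' \subseteq X$ with étale factorization $U' \to \mathbb{A}^n_U \to U$; combining case (b) with induction on $n$ reduces the verification to $g \colon \Spec R[x] \to \Spec R$. Here $\omega_{R[x]/R} = R[x]\, dx$ and $F^e_\ast R[x]$ is free over $R[x]$ with basis $x^0, \ldots, x^{p^e - 1}$, and the natural identification $F^e_\ast \omega_{R[x]/R} \cong \omega_{R[x]/R}$ sends $x^i\, dx$ to $x^{(i+1)/p^e - 1}\, dx$ precisely when $p^e \mid i+1$, and to zero otherwise. Combined with the Cartier action $\kappa$ on $M$, this reproduces the displayed formula. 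The main obstacle I anticipate is proving that this local construction is intrinsic, i.e.\ independent of the chosen étale factorization and of the order in which the coordinates are introduced, so that it glues to a global $\mathcal{C}_X$-module structure on $f^!M$. This should follow from the compatibility of the Cartier isomorphism on top differentials with étale base change, at which point multiplicativity of $\Xi$ follows by the same cocycle computation as in the previous cases.
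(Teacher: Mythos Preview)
Your approach is essentially correct and is a genuinely different, more conceptual route than the paper's. One correction: in case (a) the map $\alpha_e \colon F^e_\ast f^! M \to f^! F^e_\ast M$ is \emph{not} an isomorphism for general finite $f$. Locally it sends $\varphi \in \Hom_R(S,M)$ to $a \mapsto \varphi(a^{p^e})$, and for a closed immersion $R \to R/I$ this identifies the source with $\{m : Im=0\}$ but the target with $\{m : I^{[p^e]}m=0\}$, which is strictly larger. Fortunately your argument only uses that $\alpha_e$ is a natural \emph{transformation} satisfying the cocycle identity $\alpha_{e+e'} = \alpha_e \circ F^e_\ast(\alpha_{e'})$, and both of these hold; so multiplicativity of $\Xi$ goes through exactly as you say. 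You should simply drop the word ``isomorphism'' in case (a).

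By contrast, the paper works directly with the explicit formulae: it checks by hand that $(\kappa,s) \mapsto [\varphi \mapsto \kappa \circ F^e_\ast(\varphi \circ \mu_s) \circ F^e_X]$ (and the analogues in (b), (c)) is $R$-bilinear and multiplicative, reducing in (b) to homogeneous elements of the special form $\kappa \otimes s^{p^a}$ via the \'etale identity $s = \sum r_i s_i^{p^e}$. Your packaging via $\alpha_e$ and the cocycle identity makes multiplicativity formal rather than computational, which is cleaner. On the other hand, the paper is more explicit about gluing: in (b) it argues that the local structure is the \emph{unique} one extending both the $\mathcal{C}_R$-action on the image of $M \to f_\ast f^! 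M$ and the $S$-action, hence independent of choices; in (c) it invokes a functoriality statement \cite[Lemma 3.1]{staeblerunitftestmoduln} showing that the local $\mathcal{C}_S$-structure arises by extension of scalars from an $R$-linear map $\mathcal{C}_R \to \bigoplus_e f_\ast \Hom_S(F^e_\ast f^! M, f^! M)$, and this $R$-linear map manifestly glues. Your sketch (``compatibility of the Cartier isomorphism on top differentials with \'etale base change'') points in the right direction but is less precise; the paper's uniqueness-of-extension argument is worth incorporating, as it handles independence of the \'etale chart without further computation.
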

\begin{proof}
We have to show in each case that the assignment \[\mathcal{C}_S \to \bigoplus_{e \geq 0} \Hom_S(F_\ast^e f^! M\vert_{U'}, f^!M\vert_{U'}),\] where $U' = \Spec  S \subseteq f^{-1}(U)$ and $\Spec R = U \subseteq Y$ are open and affine, is a ring homomorphism and that we can glue these homomorphisms to obtain a homomorphism $\mathcal{C}_X \to \mathcal{H}om(F_\ast^e f^! M, f^!M)$. By Lemma \ref{Cartiermodulestructureviahomomorphism} the claim then follows.
\begin{enumerate}[(a)]
 \item One readily verifies that the assignment is a ring homomorphism. 
Gluing follows from the fact that both $\mathcal{C}_R$ and $\mathcal{O}_Y$ are quasi-coherent and that we have $f^\ast \mathcal{C}_R\vert_{U'} = (\mathcal{C}_Y(U) \otimes_{\mathcal{O}_Y(U)} \otimes \mathcal{O}_X(U))^{\sim}$. The map defining the module structure in turn is induced from a map $C_R \times S \to \bigoplus_e \Hom_S(F_\ast^e f^!M, f^!M)$ and this map clearly glues.
\item{One checks that the map \[\mathcal{C}_{e, R} \times S \to \Hom_S(F_\ast^e f^!M, f^!M),  (\kappa, s) \mapsto \varphi_\kappa \circ F^e_\ast (id \otimes \mu_s)\] is bilinear, where $\varphi_\kappa$ is the map $F_\ast^e(M \otimes_R S) \to M \otimes_R S$ induced by the image of $\kappa \in \mathcal{C}_R$ in $\Hom_R(F_\ast^eM, M)$.

Hence, we obtain the desired map $\mathcal{C}_S \to \bigoplus_{e\geq0} \Hom_S(F_\ast^e f^!M, f^!M)$. Since this map is additive and since we can write any $s \in S$ as $\sum_i r_i s_i^{p^e}$ for suitable $r_i \in R$ and any $e \geq 1$ we only have to verify multiplicativity for homogeneous elements of the form $\kappa \otimes s^{p^a}, \kappa' \otimes s'^{p^b}$. This is a straightforward computation which we leave to the reader.

Note that we have natural maps of rings $\mathcal{C}_R \to \mathcal{C}_S$ and $S \to \mathcal{C}_S$. 
Moreover, $f^!M$ carries an $S$-module structure and the image of $M \to f_\ast f^! M$ is a $\mathcal{C}_R$-module. The $\mathcal{C}_S$-module structure defined above is uniquely determined by the fact that it extends these module structures. In particular, the $\mathcal{C}_S$-module structure is compatible on intersections and thus glues.}
\item{In order to show that $\mathcal{C}_{U'} \to \bigoplus_{e \geq 0}\Hom(F_\ast^e f^! M\vert_U,f^! M\vert_U)$ is a ring homomorphism it suffices by (b) to show that the map \[\mathcal{C}_{\mathbb{A}^1_U} \to \bigoplus_{e \geq 0} \Hom_{R[x]}(F_\ast^e g^! M\vert_U,g^! M\vert_U)\] is a ring homomorphism. This in turn is easily verified.

Note that by \cite[Lemma 3.1 and the preceding discussion]{staeblerunitftestmoduln} our local definition of the module structure is functorial in the sense that we have an $R$-linear map \[\mathcal{C}_R \to \bigoplus_{e \geq 0} \Hom_R(F^e_\ast M , M) \to \bigoplus_{e \geq 0} f_\ast \Hom_S(F^e_\ast f^!M, f^!M)\] which uniquely corresponds to the $S$-linear map \[\mathcal{C}_S \to \bigoplus_{e \geq 0} \Hom_S(F_\ast^e f^!M, f^!M)\] that defines the module structure. 
Since the $R$-linear map glues we can also glue our module structure.} 
\end{enumerate}
\end{proof}

\begin{Bem}
\begin{enumerate}[(a)]
 \item In defining a $\mathcal{C}_S$-module structure for $f$ \'etale on $f^!M$ one might think that it is induced by tensoring the map $\mathcal{C}_R \to \bigoplus_{e \geq 0} \Hom_R(F_\ast^e M , M)$ with $S$ and making the natural identification \[\Hom_R(F_\ast^e M, M) \otimes_R S \cong \Hom_S(F_\ast^e f^!M, f^!M).\] However, this is not the case. Indeed, with this identification the element $\kappa \otimes s^p$ corresponds to the map $[m \otimes t^p \mapsto \kappa(m) \otimes t s^p]$, while first applying $\id \otimes s^p$ and then $\kappa \otimes 1$ yields the map $[m \otimes t^p \mapsto \kappa(m) \otimes st]$.
\item One expects that if $f: X \to Y$ can be factored into a composition of finite and smooth morphisms, then one similarly has a functor $f^!$ from Cartier modules on $Y$ to Cartier modules on $X$. However, we do not verify the required compatibilities here (cf.\ also \cite[III.8]{hartshorneresidues}).
\end{enumerate}
\end{Bem}

The following is an immediate generalization of corresponding adjointness results shown in \cite{blickleboecklecartiercrystals} in the case of a single Cartier linear operator.
\begin{Prop}\label{adjunctionsoffiniteandetale}
Let $f \colon X \to Y$ be a morphism of noetherian schemes.
\begin{enumerate}
\item If $f$ is finite, then $(f_*,f^!)$ form an adjoint pair of functors of Cartier modules (resp. Cartier crystals).
\item If $f$ is essentially \'etale, the $(f^*,f_*)$ form an adjoint pair of functors of Cartier modules (resp. Cartier crystals).
\end{enumerate}
\end{Prop}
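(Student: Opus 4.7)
The plan is to derive both adjunctions from the classical adjunctions on the underlying categories of quasi-coherent $\mathcal{O}$-modules, and then verify that under these natural isomorphisms Cartier-linearity on one side corresponds to Cartier-linearity on the other. By Lemma \ref{Cartiermodulestructureviahomomorphism} we can test Cartier-linearity of a morphism one homogeneous degree at a time, which reduces the compatibility check to the case of a single Cartier-linear operator; this single-operator case is essentially handled in \cite{blickleboecklecartiercrystals}, so the task is to promote those arguments to the setting of a general Cartier algebra.

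For (1), fix a finite $f$, a $\mathcal{C}_X$-module $N$ and a $\mathcal{C}_Y$-module $M$. The underlying $\mathcal{O}$-linear adjunction sends $\phi \colon f_\ast N \to M$ to $\tilde{\phi}\colon N \to f^!M$, $n \mapsto [a \mapsto \phi(an)]$. The $\mathcal{C}_Y$-structure on $f_\ast N$ is obtained via the canonical ring map $\mathcal{C}_Y \to f_\ast(f^\ast\mathcal{C}_Y)= f_\ast \mathcal{C}_X$ (sending $\kappa$ to $\kappa\otimes 1$) composed with the given $\mathcal{C}_X$-action, while the $\mathcal{C}_X$-structure on $f^!M$ is the one from Theorem \ref{UpperShriekCartierStructure}(a). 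Writing out the condition that $\tilde{\phi}$ commutes with the action of a homogeneous section $\kappa\otimes s \in \mathcal{C}_X$ of degree $e$, and using the explicit formula $(\kappa\otimes s)\cdot \varphi = \kappa \circ F^e_\ast(\varphi \circ \mu_s)\circ F^e_X$, a direct unwinding shows via the identity $\tilde\phi(n)(a)=\phi(an)$ that this is equivalent to $\mathcal{C}_Y$-linearity of $\phi$ (one verifies e.g.\ that the counit $f_\ast f^!M \to M$, $\varphi\mapsto\varphi(1)$, is $\mathcal{C}_Y$-linear by computing $((\kappa\otimes 1)\cdot\varphi)(1)=\kappa(\varphi(1))$). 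Naturality in $M$ and $N$ is then inherited from the $\mathcal{O}$-module adjunction.

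For (2), the essentially \'etale case is analogous but simpler: here $f^!=f^\ast$ and one exploits the canonical isomorphism $f^\ast F^e_\ast \cong F^e_\ast f^\ast$ of \cite[Lemma 2.2.1]{blickleboecklecartiercrystals}. The unit $M \to f_\ast f^\ast M$ and counit $f^\ast f_\ast N \to N$ of the standard $(f^\ast,f_\ast)$-adjunction of quasi-coherent sheaves are checked to be Cartier linear in each degree by combining Lemma \ref{Cartiermodulestructureviahomomorphism} with the explicit formula $(\kappa\otimes s^{p^e})\cdot(m\otimes t^{p^e})=\kappa(m)\otimes ts$ of Theorem \ref{UpperShriekCartierStructure}(b). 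The triangle identities for the Cartier-module adjunction then reduce to those on the underlying $\mathcal{O}$-module categories.

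Finally, to descend to Cartier crystals, both $f_\ast$ and $f^!$ (respectively $f^\ast$ and $f_\ast$) preserve nil-isomorphisms as stated in Theorem~B of the introduction, so they induce functors on the Serre quotient by nilpotent modules. The natural bijection between the relevant Hom-sets is functorial in both arguments, and a standard categorical principle (an adjoint pair of functors which both preserve a class of morphisms to be inverted induces an adjoint pair on the corresponding localized categories) then yields the adjunction on Cartier crystals. The main technical obstacle is really the bookkeeping in the second paragraph, where one must align the evaluation pairing that defines the classical $\mathcal{O}$-module adjunction with the twisted Cartier structure of Theorem \ref{UpperShriekCartierStructure}(a); everything else is formal.
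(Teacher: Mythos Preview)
Your approach is essentially the same as the paper's: start from the underlying $\mathcal{O}$-module adjunction, check that the unit and counit are Cartier linear by testing against each homogeneous element of $\mathcal{C}$ (via Lemma~\ref{Cartiermodulestructureviahomomorphism}), thereby reducing to the single-operator case of \cite{blickleboecklecartiercrystals}, and then pass to crystals. The paper's proof is terser---it simply cites \cite[Sections 3.3 and 3.4]{blickleboecklecartiercrystals} for the homogeneous case and declares the crystal case ``immediate''---whereas you spell out the explicit formulas and the categorical principle for descending adjunctions to a Serre localization; but there is no substantive difference in strategy.
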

\begin{proof}
The respective units of adjunctions are obtained from the adjunctions of the underlying quasi-coherent sheaves. To check that these are Cartier linear for a Cartier algebra $\mathcal{C}$ comes down to checking this for every homogeneous element of $\mathcal{C}$, but this is precisely the case that is verified in \cite[Sections 3.3 and 3.4]{blickleboecklecartiercrystals}.  That these adjunctions on the level of Cartier modules induce such for the corresponding Cartier crystals is immediate.
\end{proof}

If $f: X \to Y$ is finite \'etale then we have defined a $\mathcal{C}_X$-module structure on $f^! M$ in two ways. These two module structures coincide:

\begin{Le}
\label{TracePthPowers}
Let $f: \Spec S \to \Spec R$ be finite \'etale. Then $Tr(s^p) = Tr(s)^p$.
\end{Le}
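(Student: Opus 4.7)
My approach is to realize $\Tr(s)$ as the ordinary trace of the $R$-linear endomorphism ``multiplication by $s$'' on $S$, and then prove the purely matrix-theoretic identity $\Tr(A^p) = \Tr(A)^p$ for any square matrix $A$ over a commutative ring of characteristic $p$, using a short orbit-counting argument. This is slightly stronger than what is required; étaleness is only used to guarantee that $S$ is a finite locally free $R$-module.

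To set things up, note that since $R \to S$ is finite étale, $S$ is a finite locally free $R$-module. The claimed equality lives in $R$, and such an equality can be verified after Zariski localization on $\Spec R$, so we may assume $S$ is free of rank $n$ over $R$. Fix an $R$-basis and let $A = (a_{ij}) \in M_n(R)$ be the matrix representing multiplication by $s$. Then multiplication by $s^p$ is represented by $A^p$, so $\Tr(s^p) = \Tr(A^p)$ and $\Tr(s)^p = \Tr(A)^p$, reducing the problem to the matrix identity $\Tr(A^p) = \Tr(A)^p$.

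Expanding the left-hand side gives
\[
\Tr(A^p) = \sum_{(i_0, \ldots, i_{p-1}) \in \{1,\ldots,n\}^p} a_{i_0 i_1} a_{i_1 i_2} \cdots a_{i_{p-1} i_0}.
\]
The cyclic group $\mathbb{Z}/p$ acts on the indexing tuples by rotation, and by commutativity of $R$ each summand is invariant under this action. Since $p$ is prime, every orbit has size either $1$ or $p$: the fixed tuples are precisely the constant ones $(i,i,\ldots,i)$, contributing $a_{ii}^p$, while each non-fixed orbit contributes $p$ times a common value, which vanishes in characteristic $p$. Hence $\Tr(A^p) = \sum_i a_{ii}^p$, and the Frobenius identity $(x_1 + \cdots + x_n)^p = x_1^p + \cdots + x_n^p$ gives $\sum_i a_{ii}^p = \bigl(\sum_i a_{ii}\bigr)^p = \Tr(A)^p$, as desired. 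The orbit count is the only substantive step and it is entirely routine; there is no real obstacle.
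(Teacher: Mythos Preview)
Your proof is correct. The orbit-counting argument for the matrix identity $\Tr(A^p)=\Tr(A)^p$ over any commutative ring of characteristic $p$ is standard and cleanly executed, and you are right that \'etaleness enters only to guarantee that $S$ is finite locally free over $R$.

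The paper takes a different route that uses the \'etale hypothesis more structurally. After localizing so that $S$ is free with basis $s_1,\ldots,s_n$, it invokes the relative Frobenius isomorphism $F_\ast R \otimes_R S \to F_\ast S$, $r\otimes s \mapsto rs^p$, to conclude that $s_1^p,\ldots,s_n^p$ is again an $R$-basis of $S$. If $s\cdot s_j=\sum_i a_{ij}s_i$, then raising to the $p$th power gives $s^p\cdot s_j^p=\sum_i a_{ij}^p s_i^p$, so the matrix of multiplication by $s^p$ in the new basis is the entrywise Frobenius of the old matrix, whence $\Tr(s^p)=\sum_i a_{ii}^p=(\sum_i a_{ii})^p=\Tr(s)^p$. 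Your argument is more elementary and strictly more general (it applies to any finite locally free $R$-algebra in characteristic $p$, not just \'etale ones); the paper's argument is shorter once one has the Frobenius base change at hand and stays closer to the ambient machinery of the article.
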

\begin{proof}
This is local so that we may assume that $S$ is a free $R$-module. Fix a basis $s_1, \ldots, s_n$. The isomorphism $F_\ast R \otimes_R S \to F_\ast S, r \otimes s \mapsto r s^p$ shows $s_1^p, \ldots, s_n^p$ is also a basis for $S$. One readily deduces the claim. 
\end{proof}

\begin{Prop}
If $f: X \to Y$ is finite and \'etale and $M$ a $\mathcal{C}_Y$-module then one has an isomorphism of $\mathcal{C}_X$-modules $f^\ast M \to \bar{f}^\ast \mathcal{H}om_{\mathcal{O}_Y}(f_\ast \mathcal{O}_X, M)$.
\end{Prop}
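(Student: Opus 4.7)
The plan is to work locally and reduce to the affine situation of a finite étale ring map $R \to S$, in which case the classical trace isomorphism furnishes the natural map; the main content will be to check that this map intertwines the two different $\mathcal{C}_X$-module structures defined in parts (a) and (b) of \autoref{UpperShriekCartierStructure}.

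Concretely, I would first pick an affine open $\Spec R \subseteq Y$ and an affine open $\Spec S \subseteq f^{-1}(\Spec R)$ so that $R \to S$ is finite étale. Then $S$ is finitely generated projective over $R$ and the trace pairing $S \times S \to R$, $(s,t) \mapsto \mathrm{Tr}(st)$, is perfect, yielding an $R$-linear isomorphism $\psi \colon S \to \Hom_R(S,R)$, $s \mapsto [t \mapsto \mathrm{Tr}(st)]$. Tensoring with $M$ and using projectivity of $S$ gives the $S$-linear isomorphism
\[
\Phi \colon f^\ast M = M \otimes_R S \xrightarrow{\;\sim\;} \Hom_R(S,M) = \bar f^\ast \mathcal{H}om_R(f_\ast \mathcal{O}_X, M),\qquad m \otimes s \longmapsto \bigl(t \mapsto \mathrm{Tr}(st)\, m\bigr).
\]
Quasi-coherence of all objects shows these local isomorphisms glue to a global isomorphism of $\mathcal{O}_X$-modules. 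It remains to check Cartier linearity.

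Since both Cartier structures are additive, $R$-bilinear, and since every element of $S$ can be written as an $R$-linear combination of $p^e$-th powers, it is enough to verify compatibility with the action of a homogeneous element $\kappa \otimes s^{p^e}\in \mathcal{C}_{X,e}$ on a section of the form $m \otimes t^{p^e}$. Using the structure from \autoref{UpperShriekCartierStructure}(b) on the source, the image is
\[
\Phi\bigl((\kappa\otimes s^{p^e})\cdot (m\otimes t^{p^e})\bigr) \;=\; \Phi(\kappa(m)\otimes ts) \;=\; \bigl[u\mapsto \mathrm{Tr}(tsu)\,\kappa(m)\bigr].
\]
Using the structure from \autoref{UpperShriekCartierStructure}(a) on the target, the element $\varphi = \Phi(m\otimes t^{p^e}) = [u\mapsto \mathrm{Tr}(t^{p^e}u)m]$ is sent to $\kappa\circ F^e_\ast(\varphi\circ \mu_{s^{p^e}})\circ F^e_X$, which evaluates on $u\in S$ to $\kappa\bigl(\varphi(s^{p^e}u^{p^e})\bigr)=\kappa\bigl(\mathrm{Tr}((tsu)^{p^e})\,m\bigr)$. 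The key step is now \autoref{TracePthPowers}, which gives $\mathrm{Tr}((tsu)^{p^e}) = \mathrm{Tr}(tsu)^{p^e}$, and since $\kappa$ is Cartier-linear of degree $e$ we pull this scalar out to obtain $\mathrm{Tr}(tsu)\,\kappa(m)$. Thus the two expressions agree for every $u$, proving $\Phi$ is $\mathcal{C}_X$-linear.

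The only real obstacle is making sure the computation aligns correctly: one must carefully unwind the definition of the structure in (a), which involves the absolute Frobenius $F^e_X$ and the pushforward $F^e_\ast$, and identify it with the formula in (b); after that, \autoref{TracePthPowers} is tailor-made for the remaining discrepancy. Since $\Phi$ is already an iso of underlying $\mathcal{O}_X$-modules and is compatible with both Cartier actions, it is an isomorphism of $\mathcal{C}_X$-modules, completing the proof.
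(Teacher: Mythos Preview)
Your proposal is correct and follows essentially the same approach as the paper: reduce to the affine case, use the trace-pairing isomorphism $m\otimes s \mapsto [t\mapsto \mathrm{Tr}(st)\,m]$, and verify $\mathcal{C}_X$-linearity on elements of the form $\kappa\otimes s^{p^e}$ acting on $m\otimes t^{p^e}$ by invoking \autoref{TracePthPowers}. Your write-up is in fact slightly more explicit than the paper's (you spell out the gluing and the reduction to $p^e$-th powers), but the argument is the same.
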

\begin{proof}
This is local so that we may assume that $f: \Spec S \to \Spec R$. Then one has an isomorphism of $S$-modules $M \otimes_R S \to \Hom_R(S, M), m \otimes s^{p^e} \mapsto [t \mapsto Tr(s^{p^e}t) \cdot m]$, where $Tr: S \to R$ is the ordinary trace map (cf. e.g.\ \cite[1.4, 1.2 and Proposition 6.9]{lenstragaloistheory}). We need to verify that this is compatible with Cartier structures. So let $\kappa \otimes u^{p^e}$ be homogeneous of degree $e$. Then letting $\kappa \otimes u^{p^e}$ act on $t \mapsto Tr(s^{p^e} t) \cdot m$ we obtain according to Proposition \ref{UpperShriekCartierStructure} (a) and Lemma \ref{TracePthPowers} $\kappa (Tr((stu)^{p^e}) m) = Tr(stu) \kappa(m)$.

The other way around, first applying $\kappa \otimes u^{p^e}$ to $m \otimes s^{p^e}$ yields $\kappa(m) \otimes u s$ which is mapped to $t \mapsto Tr(stu) \kappa(m)$ by the above isomorphism.
\end{proof}

\begin{Ko}
\label{Nonnilpotencepreservedbyshriek}
Let $f: X \to Y$ be a finite or smooth morphism of noetherian schemes. If $M$ is a nilpotent $\mathcal{C}_Y$-module, then $f^!M$ is also nilpotent. In particular, $f^!$ preserves nil-isomorphisms and induces a functor on crystals.
\end{Ko}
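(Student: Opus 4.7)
The plan is to first prove preservation of nilpotence, and then deduce preservation of nil-isomorphisms as a formal consequence.

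For the nilpotence claim, I would use Lemma \ref{Cartiermodulestructureviahomomorphism} to rephrase the Cartier structures on $M$ and $f^!M$ as graded ring homomorphisms $\Xi_M \colon \mathcal{C}_Y \to \bigoplus_e \mathcal{H}om_{\mathcal{O}_Y}(F^e_\ast M, M)$ and $\Xi_{f^!M} \colon \mathcal{C}_X \to \bigoplus_e \mathcal{H}om_{\mathcal{O}_X}(F^e_\ast f^!M, f^!M)$. Nilpotence of $M$ is the vanishing $\Xi_M(\mathcal{C}_{Y,+}^N) = 0$ for some $N \gg 0$, and I want to establish the analogous vanishing for $\Xi_{f^!M}$. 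Working locally, a generating element of $\mathcal{C}_{X,+}^N = (f^\ast \mathcal{C}_Y)_+^N$ is a product $(\kappa_1 \otimes s_1)\cdots(\kappa_N \otimes s_N)$ with each $\kappa_i \in \mathcal{C}_{Y,+}$; by the product rule of Proposition \ref{PullbackCartierRingStructure}, this equals $(\kappa_1 \cdots \kappa_N) \otimes s$ for some section $s$, with the first factor lying in $\mathcal{C}_{Y,+}^N$. It therefore suffices to show that whenever $\Xi_M(\kappa) = 0$, one has $\Xi_{f^!M}(\kappa \otimes s) = 0$ for any section $s$; this is immediate by inspection of the three explicit action formulas of Theorem \ref{UpperShriekCartierStructure}, each of which has $\Xi_M(\kappa)$ occurring as a factor.

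For the nil-isomorphism claim, the essentially étale and smooth cases are formal: $f^! = f^\ast \otimes \omega_f$ is an exact functor (with $\omega_f = \mathcal{O}_X$ in the étale case, and $\omega_f$ locally free and $f^\ast$ exact, since $f$ is flat, in the smooth case). Hence for a nil-isomorphism $\varphi$, the kernel and cokernel of $f^!\varphi$ identify with $f^!\ker\varphi$ and $f^!\coker\varphi$, both of which are nilpotent by the first part, making $f^!\varphi$ a nil-isomorphism. For $f$ finite, $f^! = \bar f^\ast \mathcal{H}om_Y(f_\ast\mathcal{O}_X, -)$ is only left exact in general, and I would argue via the criterion of Proposition \ref{Nilisocriterion}. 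Given a homogeneous $\kappa \otimes s \in \mathcal{C}_{X,+}^e$ of degree $d$, I would combine the $R$-linear $\alpha \colon F^{2d}_\ast M' \to M$ provided on the $Y$-side with the $\mathcal{O}_X$-module structure on $f^!M$ and the action formula of Theorem \ref{UpperShriekCartierStructure}(a) to produce an $\mathcal{O}_X$-linear $\alpha' \colon F^{2d}_\ast f^!M' \to f^!M$ fitting into the required diagram on the $X$-side.

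The principal obstacle is the finite case of nil-isomorphism preservation: the functor $f^!$ does not commute with Frobenius pushforward $F^e_\ast$ on the nose, so one must carefully reconcile the $\mathcal{O}_X$-twist introduced by the section $s$ with the way the original $\alpha$ interacts with $F^{2d}_\ast$ after applying $\Hom_R(S, -)$. Since all data are functorial and commutativity of the resulting diagram is local on $X$, this should reduce to an explicit local verification using the formula in Theorem \ref{UpperShriekCartierStructure}(a). The final conclusion that $f^!$ induces a functor on Cartier crystals then follows formally from nil-isomorphism preservation.
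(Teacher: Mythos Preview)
Your approach is correct and matches the paper's: reduce locally, observe that homogeneous elements of $(f^\ast\mathcal{C}_Y)_+^N$ are sums of terms $\kappa \otimes s$ with $\kappa \in \mathcal{C}_{Y,+}^N$, and check via the explicit action formulas of Theorem \ref{UpperShriekCartierStructure} that each such term acts as zero on $f^!M$ whenever $\kappa$ acts as zero on $M$. The paper leaves the ``in particular'' clause without further argument, whereas you spell it out---exactness of $f^!$ in the smooth case and the diagram criterion of Proposition \ref{Nilisocriterion} in the finite case; this is precisely the pattern used in Lemma \ref{H0FunctorOnCrys} and is the intended argument.
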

\begin{proof}
This is a local issue so that we may assume that $Y = \Spec R$ and $X = \Spec S$ are both affine. The nilpotence of $M$ is equivalent to the fact that every homogeneous element of $\mathcal{C}_{R +}^e$ operates trivially on $N$. A homogeneous element of $\mathcal{C}_{S +}^e$ is a sum of elements $\kappa \otimes s$ with $\kappa \in \mathcal{C}_{R +}^e$. Let us assume that $f$ is finite. If $\kappa$ operates as zero on $M$ then clearly $(\kappa \otimes s) \varphi = 0$ for any $\varphi \in f^!M$.

The smooth case follows from a similar computation.
\end{proof}

\begin{Le}
\label{PushforwardCartierstructure}
Let $f: X \to Y$ be a morphism of noetherian schemes and let $\mathcal{C}_Y$ be a Cartier algebra on $Y$. Then the adjoint of $\id_{f^\ast \mathcal{C}_Y}$, i.e.\ the natural map $\mathcal{C}_Y \to f_\ast f^\ast \mathcal{C}_Y$ is a morphism of Cartier algebras.
\end{Le}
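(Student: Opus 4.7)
The plan is to reduce the statement to a local computation on affine opens. Choose $U = \Spec R \subseteq Y$ with an affine preimage $\Spec S \subseteq f^{-1}(U)$. By the quasi-coherence of $\mathcal{C}_Y$ and $f^*\mathcal{C}_Y$, together with the explicit description of $f^*\mathcal{C}_Y$ from Proposition \ref{PullbackCartierRingStructure}, the unit map $\mathcal{C}_Y \to f_\ast f^\ast \mathcal{C}_Y$ takes on sections over $U$ the form of the canonical map $\mathcal{C}_R \to \mathcal{C}_R \otimes_R S$, $\kappa \mapsto \kappa \otimes 1$. Since the map visibly preserves the $\mathbb{N}$-grading (sending $\mathcal{C}_{R,e}$ into $\mathcal{C}_{R,e}\otimes_R S$), it suffices to verify that this local assignment is a ring homomorphism which is $\mathcal{O}_Y$-bilinear for the bimodule structures.

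The ring-homomorphism property is immediate from the multiplication rule: for $\kappa \in \mathcal{C}_{R,e}$ and $\kappa' \in \mathcal{C}_{R,e'}$ one has
\[
(\kappa \otimes 1)(\kappa' \otimes 1) \;=\; \kappa \kappa' \otimes 1^{p^{e'}}\cdot 1 \;=\; \kappa \kappa' \otimes 1,
\]
matching the image of $\kappa \kappa'$; compatibility with units is clear. Right $R$-linearity is a triviality, since the tensor product is taken over $R$ and the right $S$-action on $f^*\mathcal{C}_Y$ is the standard one: $\kappa r \otimes 1 = \kappa \otimes r = (\kappa \otimes 1)\cdot r$. The only nontrivial point is left $R$-linearity, and this is where the Frobenius twist in the definition of $f^\ast\mathcal{C}_Y$ is handled. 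The left $R$-action on $f_\ast f^\ast \mathcal{C}_Y$ factors through the structure map $R \to S$ and the twisted formula $s\cdot(\kappa \otimes t) = \kappa \otimes s^{p^e}t$ of Proposition \ref{PullbackCartierRingStructure}, so
\[
r \cdot (\kappa \otimes 1) \;=\; \kappa \otimes r^{p^e}.
\]
On the other hand, applying the defining relation $r\kappa = \kappa r^{p^e}$ in $\mathcal{C}_R$ gives $r\kappa \otimes 1 = \kappa r^{p^e} \otimes 1 = \kappa \otimes r^{p^e}$, where the last equality pulls $r^{p^e}$ across the $R$-balanced tensor. Thus the two agree.

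Finally, these local ring homomorphisms are natural in the pair $(R \to S)$, so by the uniqueness in the quasi-coherent adjunction $(f^\ast,f_\ast)$ they are precisely the restrictions of the global unit map, which therefore is a morphism of Cartier algebras. The only potential obstacle is bookkeeping the Frobenius twist correctly, and this is resolved by invoking the Cartier-algebra relation $r\kappa = \kappa r^{p^e}$ once, exactly before moving scalars across the tensor product; no deeper input is required.
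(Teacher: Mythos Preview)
Your proof is correct and follows essentially the same approach as the paper: reduce to the affine situation and observe that the unit map is $\kappa \mapsto \kappa \otimes 1$, which is multiplicative by the definition of the product on $f^\ast\mathcal{C}_Y$. The paper's own proof is terser---it declares $\mathcal{O}_Y$-linearity ``clear'' and multiplicativity ``clear'' once the local formula is written down---whereas you spell out the verification of left $R$-linearity via the relation $r\kappa = \kappa r^{p^e}$; this extra detail is harmless and arguably clarifying.
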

\begin{proof}
The map $\mathcal{C}_Y \to f_\ast f^\ast \mathcal{C}_Y$ is clearly $\mathcal{O}_Y$-linear. We still need to check that it is multiplicative. This in turn is local so that we may reduce to the case where $Y= \Spec R$. Then the adjoint of the identity is given by the map $\mathcal{C}_R \to \Gamma(X,f^*\mathcal{C}_R),\, \kappa \mapsto \kappa \otimes 1$ which is clearly multiplicative.
\end{proof}

\begin{Def}
If $f: X \to Y$ is a morphism of noetherian schemes, $\mathcal{C}_Y, \mathcal{C}_X$ are Cartier algebras with $f^\ast \mathcal{C}_Y \cong \mathcal{C}_X$ and $M$ is a $\mathcal{C}_X$-module then we endow $f_\ast M$ with a $\mathcal{C}_Y$-structure via the map of Lemma \ref{PushforwardCartierstructure} above.
\end{Def}

\begin{Prop}
\label{FPurePushforward}
Let $f: X \to Y$ be an affine morphism of noetherian schemes and let $M$ be a Cartier module on $X$. Then $f_\ast {\mathcal{C}_{X+}} M = \mathcal{C}_{Y+} f_\ast M$. In particular, $M$ is $F$-pure if and only if $f_\ast M$ is $F$-pure.
\end{Prop}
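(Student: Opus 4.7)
The plan is to reduce immediately to the affine case and then verify the equality of the two submodules by a direct computation which unwinds the explicit Cartier algebra structure on $f^\ast \mathcal{C}_Y$ given in Proposition \ref{PullbackCartierRingStructure}.

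Since $f$ is affine and the assertion is local on $Y$, I may assume $Y=\Spec R$ and $X=\Spec S$. Then $\mathcal{C}_X = \mathcal{C}_Y\otimes_R S$ with bimodule structure $s(\kappa\otimes t)=\kappa\otimes s^{p^e}t$ and multiplication $(\kappa\otimes s)(\kappa'\otimes t)=\kappa\kappa'\otimes s^{p^{e'}}t$, and by Lemma \ref{PushforwardCartierstructure} the $\mathcal{C}_Y$-action on $f_\ast M$ is induced by $\kappa\mapsto \kappa\otimes 1$; explicitly, $\kappa\cdot m=(\kappa\otimes 1)\cdot m$ for a homogeneous $\kappa\in\mathcal{C}_{R,e}$ and $m\in M$.

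The key observation is the identity $\kappa\otimes s=(\kappa\otimes 1)(1\otimes s)$ in $\mathcal{C}_X$, where $1\otimes s\in \mathcal{C}_{X,0}=S$ acts as $S$-scalar multiplication. This yields $(\kappa\otimes s)\cdot m=\kappa\cdot(sm)$, which shows the inclusion $f_\ast\mathcal{C}_{X+}M\subseteq\mathcal{C}_{Y+}f_\ast M$: any generator $\bigl(\sum_j s_j(\kappa_j\otimes s'_j)\bigr)\cdot n$ of $\mathcal{C}_{X+}M$ (with $\kappa_j\in\mathcal{C}_{R+}$) simplifies via the bimodule relation to a sum of terms of the form $\kappa_j\cdot(t_j n)$, which lie in $\mathcal{C}_{Y+}f_\ast M$. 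Conversely, for the inclusion $\mathcal{C}_{Y+}f_\ast M\subseteq f_\ast\mathcal{C}_{X+}M$, any element $\sum_i r_i(\kappa_i\cdot m_i)$ can be rewritten as $\sum_i(\kappa_i\otimes r_i^{p^{e_i}})\cdot m_i$ using $r(\kappa\otimes 1)=\kappa\otimes r^{p^e}$, placing it visibly inside $\mathcal{C}_{X+}M$.

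I do not expect a real obstacle here; both sides turn out to be the abelian subgroup of $M$ generated by the elements $\kappa\cdot m$ with $\kappa\in\mathcal{C}_{R+}$ homogeneous and $m\in M$, and the only care needed is in tracking the twisted bimodule structure. The ``in particular'' claim is then immediate: $M=\mathcal{C}_{X+}M$ if and only if $f_\ast M=f_\ast\mathcal{C}_{X+}M=\mathcal{C}_{Y+}f_\ast M$, i.e. $M$ is $F$-pure if and only if $f_\ast M$ is $F$-pure.
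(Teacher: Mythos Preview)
Your proof is correct and follows essentially the same approach as the paper: reduce to the affine case, then use the identity $(\kappa\otimes s)\cdot m=(\kappa\otimes 1)\cdot(sm)$ to get $f_\ast\mathcal{C}_{X+}M\subseteq\mathcal{C}_{Y+}f_\ast M$, while the reverse inclusion is immediate from $\kappa\cdot m=(\kappa\otimes 1)\cdot m$. The paper's proof is just a terser version of yours, summarizing the key inclusion as $(\kappa\otimes s)M\subseteq(\kappa\otimes 1)M$.
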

\begin{proof}
This is local so that we may assume that $X = \Spec S$ and $Y = \Spec R$. 
The inclusion from right to left is clear. The other inclusion follows from $(\kappa \otimes s)M \subseteq (\kappa \otimes 1) M$.
\end{proof}

\begin{Bsp}
It is easy to see that for a proper morphism $f_\ast M$ is $F$-pure if $M$ is $F$-pure. The converse is not true. Consider a supersingular elliptic curve $f: X \to \Spec k$, where $k$ is a perfect field. Then $\omega_X$ endowed with a Cartier structure via the Cartier operator $\kappa$ is $F$-pure.

However, $f_\ast \omega_X = H^0(X, \omega_X) \cong k$ with the Cartier structure induced from $F_\ast f_\ast \omega \cong f_\ast F_\ast \omega \xrightarrow{f_\ast \kappa} f_\ast \omega_X$ is zero. This is due to the fact that the action of the Cartier operator on global sections corresponds via Grothendieck-Serre duality to the Frobenius action $H^1(X, \mathcal{O}_X) \to H^1(X, F_\ast \mathcal{O}_X)$ which, by our assumption, is zero.

In fact, for any smooth projective variety $X$ of dimension $d$ whose $p$-rank is not maximal one has that the Frobenius action on $H^d(X, \mathcal{O}_X) \to H^d(X, F_\ast \mathcal{O}_X)$ is not injective. This implies that the corresponding Cartier action on $F_\ast H^0(X,  \omega_X) \to H^0(X, \omega_X)$ is not surjective.
\end{Bsp}

\begin{Prop}
\label{PushforwardPreservesNilpotence}
Let $f: X \to Y$ be a morphism of noetherian schemes and let $M$ be a Cartier module on $X$. If $M$ is nilpotent then so is $f_\ast M$. If $f$ is affine the converse holds. Moreover, $f_\ast$ induces a functor on crystals.
\end{Prop}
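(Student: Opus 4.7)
The plan is to handle the three claims in sequence: (i) $f_*$ preserves nilpotence, (ii) when $f$ is affine the converse holds, and (iii) $f_*$ preserves nil-isomorphisms, which by \cite[Proposition 2.2.4]{boecklepinktaucrystals} gives the induced functor on crystals.

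For (i), I would argue locally on $Y$, reducing to the affine base case $Y=\Spec R$ and restricting $M$ to sections over $f^{-1}(V)$ for $V\subseteq Y$ affine. Assume $\mathcal{C}_{X+}^{e}M=0$. The Cartier structure on $f_{\ast}M$ was defined via the map of Cartier algebras $\mathcal{C}_{Y}\to f_{\ast}f^{\ast}\mathcal{C}_{Y}=f_{\ast}\mathcal{C}_{X}$ of Lemma \ref{PushforwardCartierstructure}, namely $\kappa\mapsto \kappa\otimes 1$. The multiplication rule on $f^{\ast}\mathcal{C}_{Y}$ in Proposition \ref{PullbackCartierRingStructure} gives $(\kappa_{1}\otimes 1)\cdots (\kappa_{e}\otimes 1)=(\kappa_{1}\cdots \kappa_{e})\otimes 1\in \mathcal{C}_{X+}^{e}$, so the action of every homogeneous element of $\mathcal{C}_{Y+}^{e}$ on $f_{\ast}M$ factors through the action of $\mathcal{C}_{X+}^{e}$ on $M$, which is zero. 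Hence $\mathcal{C}_{Y+}^{e}f_{\ast}M=0$.

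For (ii), I would first iterate Proposition \ref{FPurePushforward} to get $f_{\ast}\mathcal{C}_{X+}^{e}M=\mathcal{C}_{Y+}^{e}f_{\ast}M$ for all $e\geq 0$, via induction on $e$ (using that $\mathcal{C}_{X+}^{e-1}M$ is again a Cartier module). If $f_{\ast}M$ is nilpotent, pick $e$ with $\mathcal{C}_{Y+}^{e}f_{\ast}M=0$; then $f_{\ast}\mathcal{C}_{X+}^{e}M=0$. Because $f$ is affine, $f_{\ast}$ is faithful on quasi-coherent $\mathcal{O}_{X}$-modules, so $\mathcal{C}_{X+}^{e}M=0$ and $M$ is nilpotent.

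For (iii), let $\varphi\colon M\to N$ be a nil-isomorphism, so $\ker\varphi$ and $\coker\varphi$ are nilpotent. Left exactness of $f_{\ast}$ gives $\ker(f_{\ast}\varphi)=f_{\ast}\ker\varphi$, which is nilpotent by (i). Factoring $\varphi$ as $M\twoheadrightarrow \im\varphi\hookrightarrow N$ and applying the left exact $f_{\ast}$ to the second short exact sequence $0\to \im\varphi\to N\to \coker\varphi\to 0$ embeds $\coker(f_{\ast}\varphi)=f_{\ast}N/f_{\ast}\im\varphi$ into $f_{\ast}\coker\varphi$, which is nilpotent by (i); any Cartier submodule of a nilpotent Cartier module is nilpotent, so $\coker(f_{\ast}\varphi)$ is nilpotent. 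Thus $f_{\ast}\varphi$ is a nil-isomorphism, and \cite[Proposition 2.2.4]{boecklepinktaucrystals} yields the desired functor on crystals. The only non-routine point is the observation that $(\kappa\otimes 1)$'s multiply to $(\kappa_{1}\cdots \kappa_{e})\otimes 1$ in $f^{\ast}\mathcal{C}_{Y}$, which is exactly where the twisted bimodule structure of Cartier algebras is used, so that the degree-shifts in the multiplication formula do not interfere.
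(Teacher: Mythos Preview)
Your arguments for parts (i) and (ii) are correct and match the paper's approach: the paper works on sections over opens $U\subseteq Y$ and uses that the ring map $\mathcal{C}_Y(U)\to\mathcal{C}_X(f^{-1}(U))$ carries $\mathcal{C}_{Y+}^e$ into $\mathcal{C}_{X+}^e$, and for the affine converse invokes Proposition~\ref{FPurePushforward} (which you make explicit by iterating and then using faithfulness of $f_*$).

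In part (iii), however, your claimed equality $\coker(f_*\varphi)=f_*N/f_*\im\varphi$ is not valid in general. Since $f_*$ is only left exact, the map $f_*M\to f_*\im\varphi$ induced by the surjection $M\twoheadrightarrow\im\varphi$ need not be onto; one only has $\im(f_*\varphi)\subseteq f_*\im\varphi$, and hence an exact sequence
\[
0 \longrightarrow f_*\im\varphi/\im(f_*\varphi) \longrightarrow \coker(f_*\varphi) \longrightarrow f_*N/f_*\im\varphi \longrightarrow 0.
\]
Your argument handles the right-hand term, but the left-hand term (which embeds into $R^1f_*\ker\varphi$) is unaccounted for. One fix is to note that $R^1f_*$ is computed by a \v{C}ech complex of Cartier modules, so $R^1f_*\ker\varphi$ is a subquotient of direct sums of (affine) pushforwards of localizations of the nilpotent $\ker\varphi$ and is therefore nilpotent. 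A cleaner route is to apply $f_*$ directly to the diagram of Proposition~\ref{Nilisocriterion}, using $f_*F_{X*}^e\cong F_{Y*}^e f_*$, to produce the required $f_*\alpha$ for $f_*\varphi$. The paper itself is terse here, simply asserting that nilpotence preservation yields a functor on crystals without spelling out the nil-isomorphism check.
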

\begin{proof}
Assume that $M$ is nilpotent. If $U \subseteq Y$ is open then $f_\ast M(U) = M(f^{-1}(U))$. The action of $\mathcal{C}_Y$ is induced by the natural map $\mathcal{C}_Y(U) \to \mathcal{C}_X(f^{-1}(U))$. By assumption $\mathcal{C}_{X+}^e(f^{-1}(U))$ acts as zero on $M(f^{-1}(U))$ for all sufficiently large powers $e$. We conclude that $\mathcal{C}_{Y+}^e(U)$ also acts as zero. Hence, if the open sets $U$ run through a covering of $Y$ we obtain some $e$ so that $\mathcal{C}_{Y+}^e$ annihilates $f_\ast M$. If $f$ is affine the converse follows from Proposition \ref{FPurePushforward}. Since $f_\ast$ preserves nilpotence it induces a functor on crystals.
\end{proof}

\begin{Bsp}
Consider $k = \mathbb{F}_p$, $f: \mathbb{P}^n_k \to \Spec k$ and $\omega = \mathcal{O}_{\mathbb{P}^n_k}(-n -1)$ with Cartier algebra generated by the Cartier operator. Then $f_\ast \omega = H^0(\mathbb{P}^n_k, \omega) = 0$ so that $f_\ast \omega$ is nilpotent, while clearly $\omega$ is not nilpotent.
\end{Bsp}

\section{Test modules and twisted inverse image}
\label{SectionTestmodulesPullbacks}

In this section we show that $F$-regularity (and hence the test module) is preserved by \'etale, and more generally, smooth twisted inverse images. Moreover, we will see that one has an (in general proper) inclusion $\tau(f^!M) \subseteq f^! \tau(M)$ in the finite case.

This is a local issue so that we may reduce the smooth case to the \'etale case and the situation $\mathbb{A}^n_R \to \Spec R$. The latter will be handled explicitly.
The idea to prove the \'etale case is as follows. Test element theory allows us to reduce to the situation where $f: \Spec S \to \Spec R$ is finite \'etale with $R$ a normal integral domain. With a little more work we may further assume that the extension of function fields is Galois, where we replace $\Spec S$ by any connected component. Then the claim follows from a Galois invariance argument.
In particular, this argument also removes the assumption that the Cartier algebra is generically principal that was necessary for the approach of \cite[Theorem 6.15]{staeblertestmodulnvilftrierung} to work.

Starting with this section we adopt the following convention: If $\Spec S \to \Spec R$ is a morphism and $\eta$ a prime in $R$ then the notation $S_\eta$ is short-hand notation for $S[ (R \setminus \eta)^{-1}]$.

\begin{Le}
\label{CartierOperationEtalePullbackCommutes}
If $M$ is a $\mathcal{C}_Y$-module and $f: X \to Y$ essentially \'etale or smooth then $\mathcal{C}_{X+} f^! M = f^! \mathcal{C}_{Y+} M$, where $\mathcal{C}_X=f^*\mathcal{C}_Y$.
\end{Le}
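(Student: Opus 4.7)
The plan is to reduce to an affine local computation and apply the explicit descriptions of the $\mathcal{C}_X$-module structure on $f^!M$ from Theorem \ref{UpperShriekCartierStructure}. Both $\mathcal{C}_{X+}f^!M$ and $f^!\mathcal{C}_{Y+}M$ are $\mathcal{C}_X$-submodules of $f^!M$ whose formation commutes with restriction to an open cover, so one may assume $X = \Spec S$ and $Y = \Spec R$ are affine, in which case $\mathcal{C}_X = \mathcal{C}_Y \otimes_R S$.

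In the essentially \'etale case, $f^!M = M \otimes_R S$ and the Cartier structure is characterized on generators by $(\kappa \otimes s^{p^e})(m \otimes t^{p^e}) = \kappa(m) \otimes st$ for $\kappa$ homogeneous of degree $e$. The inclusion $\mathcal{C}_{X+} f^!M \subseteq \mathcal{C}_{Y+}M \otimes_R S = f^!\mathcal{C}_{Y+}M$ is then immediate from this formula, while the reverse inclusion follows by writing any generator $\kappa(m) \otimes s$ of $\mathcal{C}_{Y+}M \otimes_R S$ as $(\kappa \otimes 1)(m \otimes s^{p^e}) \in \mathcal{C}_{X+}f^!M$.

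For the smooth case, I would use a local factorization $f \colon X \xrightarrow{g} \mathbb{A}^n_Y \xrightarrow{\pi} Y$ with $g$ essentially \'etale, together with the identifications $f^! = g^! \circ \pi^!$ and $\mathcal{C}_X = g^*\pi^*\mathcal{C}_Y$. The already-established \'etale case applied to $g$ reduces the claim to $\pi$, and iterating in $n$ reduces it further to $\pi \colon \mathbb{A}^1_R \to \Spec R$. In this case $\pi^!M = R[x] \cdot dx \otimes_R M$, and by Theorem \ref{UpperShriekCartierStructure}(c) the element $\kappa \otimes x^j$ (for $\kappa$ of degree $e$) sends $dx \otimes 1 \otimes m$ to $x^{(j+1)/p^e - 1} dx \otimes 1 \otimes \kappa(m)$ when $p^e \mid j+1$ and to $0$ otherwise. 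The inclusion $\subseteq$ is then immediate, and conversely a generator $x^k dx \otimes 1 \otimes \kappa(m)$ of $\pi^!\mathcal{C}_{Y+}M$ arises as the image of $dx \otimes 1 \otimes m$ under $\kappa \otimes x^{(k+1)p^e - 1}$.

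The main obstacle is expected to lie in the smooth case, where one must check that the local factorization is compatible with the Cartier algebra pullback and with iterating the one-variable formula. Given the explicit formulas of Theorem \ref{UpperShriekCartierStructure}, however, this amounts to routine bookkeeping of the left and right $R$-actions on $\mathcal{C}_Y$ that enter in the formation of $\mathcal{C}_X = \mathcal{C}_Y \otimes_R S$.
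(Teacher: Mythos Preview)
Your proposal is correct and follows essentially the same route as the paper: reduce to the affine case, handle the essentially \'etale case via the explicit formula of Theorem \ref{UpperShriekCartierStructure}(b), and in the smooth case factor locally through $\mathbb{A}^1_R \to \Spec R$ and use the formula of Theorem \ref{UpperShriekCartierStructure}(c). The only cosmetic difference is that you and the paper swap which inclusion is called ``immediate'': the paper declares $f^!\mathcal{C}_{Y+}M \subseteq \mathcal{C}_{X+}f^!M$ clear and spells out the opposite one by rewriting $m \otimes st = \sum_i m_i \otimes u_i^{p^e}$ via \'etaleness, whereas you do the reverse; both directions are short once the formula is in hand.
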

\begin{proof}
This is local on $X$ and hence on $Y$ so that we may assume that $f: \Spec S \to \Spec R$. We start with the essentially \'etale case. The inclusion from right to left is clear. On the other hand, given $\kappa \otimes s$ homogeneous of degree $e$ and $m \otimes t$ we may write $m \otimes st = \sum_i m_i \otimes u_i^{p^e}$ and then $\kappa \otimes s \cdot m \otimes t = \sum_i \kappa(m_i) \otimes u_i \in f^! \mathcal{C}_+ M$.

For the smooth case we may factor $f$ as an \'etale map $\varphi: \Spec S \to \Spec \mathbb{A}^n_R$ followed by the structural map $\mathbb{A}^n_R \to \Spec R$. It suffices to treat the latter case and to restrict to $n =1$, i.e. $f$ is the natural map $\Spec R[x] \to \Spec R$. For the inclusion from right to left if $u = dx \otimes r\otimes \kappa(m) \in f^!\mathcal{C}_{X+}M$, where $\kappa$ is homogeneous of degree $e$, then $dx \otimes r^{p^e} x^{p^e -1} \otimes m$ is mapped to $u$ by $\kappa \otimes 1$. The other inclusion is clear.
\end{proof}

\begin{Le}
\label{EtaleShriekAssPrimes}
Let $M$ be a $\mathcal{C}_Y$-module and let $f: X \to Y$ be an essentially \'etale morphism. Then $\Ass f^!M = f^{-1} \Ass M$.
\end{Le}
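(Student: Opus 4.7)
The plan is to work point by point: for every $\mathfrak{p}\in X$ lying over $\eta=f(\mathfrak{p})\in Y$, I want to show that $\mathfrak{p}\in\Ass f^!M$ if and only if $\eta\in\Ass M$. Since $\Ass$ can be tested on an open affine cover, I first reduce to the case $f\colon \Spec S\to \Spec R$ essentially \'etale. Because $f$ is essentially \'etale one has $f^!=f^*$, $\mathcal{C}_X=f^*\mathcal{C}_Y$ and the Cartier action on $f^!M=M\otimes_R S$ is given by the formula $(\kappa\otimes s^{p^e})\cdot(m\otimes t^{p^e})=\kappa(m)\otimes ts$ of Proposition \ref{UpperShriekCartierStructure}(b).

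Fix $\mathfrak{p}\in\Spec S$ with $\eta=\mathfrak{p}\cap R$. The induced map $R_\eta\to S_\mathfrak{p}$ is again essentially \'etale and local, so it is faithfully flat and unramified; in particular $\eta S_\mathfrak{p}=\mathfrak{p}S_\mathfrak{p}$. The natural identification $(f^*M)_\mathfrak{p}\cong M_\eta\otimes_{R_\eta}S_\mathfrak{p}$ together with flat base change for local cohomology therefore yields
\[
    H^0_\mathfrak{p}\bigl((f^*M)_\mathfrak{p}\bigr)\;\cong\; H^0_{\eta S_\mathfrak{p}}\bigl(M_\eta\otimes_{R_\eta}S_\mathfrak{p}\bigr)\;\cong\; H^0_\eta(M_\eta)\otimes_{R_\eta}S_\mathfrak{p},
\]
and by construction this is an isomorphism of $\mathcal{C}_{S_\mathfrak{p}}$-modules, where the right hand side carries the Cartier structure coming from Proposition \ref{UpperShriekCartierStructure}(b) applied to the local map $R_\eta\to S_\mathfrak{p}$.

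It therefore suffices to prove the following: if $N$ is a $\mathcal{C}_{R_\eta}$-module, then $N$ is nilpotent if and only if $N\otimes_{R_\eta}S_\mathfrak{p}$ is $\mathcal{C}_{S_\mathfrak{p}}$-nilpotent. For the direction $\Rightarrow$, if $\mathcal{C}_{R_\eta+}^e N=0$ then the formula $(\kappa\otimes s^{p^e})(n\otimes t^{p^e})=\kappa(n)\otimes ts$ shows that every homogeneous local section of $\mathcal{C}_{S_\mathfrak{p}+}^e$ annihilates $N\otimes_{R_\eta}S_\mathfrak{p}$, so the tensor product is nilpotent. For the direction $\Leftarrow$, assume $\mathcal{C}_{S_\mathfrak{p}+}^e$ acts as zero on $N\otimes_{R_\eta}S_\mathfrak{p}$. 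Then for every $\kappa\in \mathcal{C}_{R_\eta+}^e$ and every $n\in N$ the element $\kappa\otimes 1\in\mathcal{C}_{S_\mathfrak{p}+}^e$ sends $n\otimes 1$ to $\kappa(n)\otimes 1$, so $\kappa(n)\otimes 1=0$; faithful flatness of $R_\eta\to S_\mathfrak{p}$ yields $\kappa(n)=0$, so $N$ is nilpotent.

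Combining these steps gives $\mathfrak{p}\in\Ass f^!M\iff H^0_\mathfrak{p}((f^!M)_\mathfrak{p})$ is not nilpotent $\iff H^0_\eta(M_\eta)$ is not nilpotent $\iff \eta\in\Ass M$, which is the desired equality. The only point requiring any care is ensuring that the flat base change isomorphism for $H^0$ is compatible with the Cartier action defined in Proposition \ref{UpperShriekCartierStructure}(b); this is where the explicit formula and the unramifiedness $\eta S_\mathfrak{p}=\mathfrak{p}S_\mathfrak{p}$ are essential, and constitutes the main technical obstacle.
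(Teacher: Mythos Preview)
Your argument is correct, and it is somewhat more direct than the paper's proof. Both approaches ultimately rest on faithful flatness of the local map $R_\eta\to S_\mathfrak{p}$, but you exploit it immediately via flat base change for $H^0$ and the unramifiedness identity $\eta S_\mathfrak{p}=\mathfrak{p}S_\mathfrak{p}$, whereas the paper first passes to $\underline{M}$, then replaces $H^0_\eta$ by $i^!$ via Lemma~\ref{LocalCohomShriekUpToNilpotence}, sets up the full fibre square over $\eta$, and argues with the \'etale surjective map $\beta\colon\Spec(S/\nu)_\nu\to\Spec(R/\eta)_\eta$. Your route avoids the $F$-pure replacement and the reduction to $\Supp M=\Spec R$, and the ``main technical obstacle'' you flag is in fact harmless: the flat base change map $H^0_\eta(M_\eta)\otimes_{R_\eta}S_\mathfrak{p}\hookrightarrow M_\eta\otimes_{R_\eta}S_\mathfrak{p}$ is just the pullback of an inclusion of Cartier modules, hence automatically $\mathcal{C}_{S_\mathfrak{p}}$-linear, and its image is the Cartier submodule $H^0_{\eta S_\mathfrak{p}}$ by flatness. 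The paper's longer setup is not wasted, though: the identification of $\underline{H^0_\nu(f^!M)}_\nu$ with a base change of $\underline{H^0_\eta(M)}_\eta$ along the reduced fibre is reused verbatim in the proof of Theorem~\ref{EtalePullbackFRegularity}, so the extra scaffolding pays off there. One small stylistic point: your backward nilpotence step is really the equality $\mathcal{C}_{S_\mathfrak{p}+}^e(N\otimes S_\mathfrak{p})=(\mathcal{C}_{R_\eta+}^e N)\otimes S_\mathfrak{p}$ of Lemma~\ref{CartierOperationEtalePullbackCommutes} combined with faithful flatness, and citing that lemma would make the argument crisper than the elementwise verification.
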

\begin{proof}
This is local on $X$ and hence also $Y$ so that we may assume that $f: \Spec S \to \Spec R$ is a morphism of affine schemes.
We may assume that $M$ is $F$-pure by Lemma \ref{AssDescendsToCrystals}. By Proposition \ref{TauFormalProperties} we may further assume that $\Supp M = \Spec R$ and that $R$ is reduced (it follows that $S$ is reduced since $f$ is \'etale). By Lemma \ref{LocalCohomShriekUpToNilpotence} we may identify $\underline{H^0_\eta(M)}$ with $\underline{i^!(M)}$, where $i: \Spec R/\eta \to \Spec R$ is the natural closed immersion and $\eta \in \Ass M$. Furthermore $\underline{H^0_\eta(M_\eta)} = \underline{H^0_\eta(M)}_\eta$ is then given by $j^!\underline{ i^! M}$, where $j: \Spec (R/\eta)_\eta \to \Spec R/\eta$ is the inclusion of the generic point. Since $F$-purity is preserved by essentially \'etale morphisms (use Lemma \ref{CartierOperationEtalePullbackCommutes} above) we may argue similarly for $\nu \in \Ass f^! M$.
We have the following pullback diagram
\[\begin{xy}
   \xymatrix{\Spec (R/\eta)_\eta \ar[r]^j & \Spec R/\eta \ar[r]^i & \Spec R \\
F \ar[r]^{j'} \ar[u]^{f''} & \Spec S/\eta S \ar[r]^{i'} \ar[u]^{f'} & \Spec S \ar[u]^f\\
}
  \end{xy}
\]
where $F = \Spec (S/\eta S)[(R \setminus \eta)^{-1}]$ is simply the fiber of $f$ over $\eta$. Since $f$ is \'etale the fiber is finite. Hence, if $\nu$ is a point in the fiber, which by abuse of notation we identify with a point in $\Spec S$, then the inclusion $\alpha: (\Spec S/\nu)_{\nu} \to F$ is open hence \'etale. We conclude that $\beta = f'' \circ \alpha: \Spec (S/\eta S)_\nu \to \Spec (R/\eta)_\eta$ is \'etale and surjective.

Assume now that $\eta \in \Ass M$ and that $\nu$ is a point in the fiber of $f$ over $\eta$. By assumption $N = j^! i^!M$ is not nilpotent. If $\beta^! j^! i^! M = H^0_\nu(f^!M)_\nu$ is nilpotent then using Lemma \ref{CartierOperationEtalePullbackCommutes} and faithful flatness of $\beta$ we conclude that $j^! i^! M$ is also nilpotent. This is a contradiction.

Conversely, if $\nu \in \Ass f^! M$, i.e.\ $\beta^! j^! i^! M $ is not nilpotent, then a fortiori $j^! i^! M$ is not nilpotent.
\end{proof}

The following lemmata are well-known. However, we were unable to track down a precise reference.

\begin{Le}
\label{GInvariantsNiceEtaleSituation}
Let $f: \Spec S \to \Spec R$ be a finite \'etale morphism of normal integral schemes such that the extension of function fields is Galois. Given a coherent $R$-module $M$ there is a non-zero $c \in R$ such that $(M \otimes_R S)_c^G = M_c$, where $M \otimes_R S$ carries the $G$-action $m \otimes s \mapsto m \otimes g(s)$.
\end{Le}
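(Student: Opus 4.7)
The approach is classical Galois descent combined with a generic-point torsion argument: I would first check that the natural comparison map $\varphi\colon M \to (M\otimes_R S)^G$, $m \mapsto m\otimes 1$, is a generic isomorphism, and then exploit finite generation to invert a single $c \in R$ trivializing its kernel and cokernel.

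Set $K = \operatorname{Frac}(R)$ and $L = \operatorname{Frac}(S)$, so that $L/K$ is finite Galois with group $G$ by assumption. As a warm-up (and as an ingredient), observe that $S^G \subseteq S \cap L^G = S \cap K$, and since $R$ is normal in $K$ and $S$ is integral over $R$, this intersection equals $R$; hence $S^G = R$. For general $M$, the central point is that $\varphi$ becomes an isomorphism after tensoring with $K$: since $G$ is finite, $(-)^G$ is an equalizer of finitely many $R$-linear maps and therefore commutes with the flat localization $- \otimes_R K$; combined with $S \otimes_R K = L$ (valid because $S$ is an étale $R$-algebra which is a domain), this gives
\[
(M \otimes_R S)^G \otimes_R K \;=\; (M_K \otimes_K L)^G \;=\; M_K,
\]
the last equality being Galois descent for vector spaces. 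Hence $\varphi \otimes_R K$ is an isomorphism.

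Because $R$ is noetherian and $(M\otimes_R S)^G$ is an $R$-submodule of the finite $R$-module $M\otimes_R S$, both $\ker\varphi$ and $\coker\varphi$ are finitely generated torsion $R$-modules. Choosing a common nonzero annihilator $c \in R$, the map $\varphi_c$ becomes an isomorphism $M_c \xrightarrow{\sim} ((M\otimes_R S)^G)_c$. To conclude, one identifies $((M\otimes_R S)^G)_c$ with $((M\otimes_R S)_c)^G$: since $c \in R = S^G$ the localization is $G$-equivariant, and exactness of localization applied to the equalizer defining $(-)^G$ gives this commutation for free.

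There is no real obstacle here; the conceptual weight sits entirely in the Galois descent $(M_K \otimes_K L)^G = M_K$, and the rest is formal bookkeeping ensuring that $(-)^G$ commutes with the relevant flat base changes, which works because $G$ is finite and $c$ is chosen in the invariant ring $R$.
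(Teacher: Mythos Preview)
Your argument is correct. The route differs from the paper's only in presentation: the paper first invokes generic freeness to find $c$ with $M_c$ free, then carries out the Galois descent by hand --- writing an invariant element in a basis $\{m_i \otimes 1\}$, forcing $g(s_i)=s_i$ for all $g$, and using $Q(S)^G=Q(R)$ together with normality of $R$ to conclude $s_i \in R$. You instead pass directly to the generic point, quote Galois descent for $K$-vector spaces as a black box (which is exactly the computation the paper unwinds), and then recover a single $c$ from the fact that $\ker\varphi$ and $\coker\varphi$ are finitely generated torsion. Your approach is a touch more conceptual and avoids explicitly naming generic freeness; the paper's is more self-contained. Both hinge on the same two facts: $S^G=R$ (via normality) and $(-)^G$ commuting with the relevant flat localizations because $G$ is finite.
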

\begin{proof}
First of all note that, due to normality of $S$, we have $g(s) \in S$.
The inclusion $M \subseteq (M \otimes_R S)^G$ is clear. By generic freeness we find $c$ such that $M_c$ and $(M \otimes_R S)_c$ are free. Choose a basis $m_1, \ldots, m_n$ for $M$ and assume that $m = \sum_i m_i \otimes s_i \in (M \otimes_R S)^G$. Then for any $g \in G$ we have $g m = m$ so that $\sum_i m_i \otimes g(s_i) - s_i  = \sum_i (g(s_i) - s_i) \cdot m_i \otimes 1= 0$. Since the $m_i \otimes 1$ form a basis for $M \otimes S$ we conclude that $g(s_i) = s_i$ for all $g \in G$. Since $Q(S)^G = Q(R)$ and $R$ is normal we obtain that $s_i \in R$ as desired.
\end{proof}

\begin{Le}
\label{GInvariantsGenericEquality}
Let $f: \Spec S \to \Spec R$ be a finite \'etale morphism of normal integral schemes such that the extension of function fields is Galois. If $N \subseteq f^! M$ is an $S$-submodule, where $M$ is free, and $N$ and $f^! M$ agree generically, then $N^G = N \cap (f^!M)^G$ and $M$ agree generically.
\end{Le}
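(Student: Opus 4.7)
The plan is to combine the generic equality of $N$ and $f^!M$ with the preceding Lemma \ref{GInvariantsNiceEtaleSituation}. Since $f$ is finite \'etale, I identify $f^!M$ with $M \otimes_R S$, and view $M$ as the $R$-submodule $M \otimes 1 \subseteq M \otimes_R S$, which is $G$-invariant because $g$ fixes $1 \in S$. As $R$ is an integral domain and the modules involved are coherent, the hypothesis that $N$ and $f^!M$ agree generically means there exists a nonzero $d \in R$ with $N_d = (f^!M)_d$ inside $(f^!M)_d$.

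Lemma \ref{GInvariantsNiceEtaleSituation} supplies a nonzero $c \in R$ with $((f^!M)_c)^G = M_c$. Setting $e := cd$, one then has simultaneously $N_e = (f^!M)_e$ and $((f^!M)_e)^G = M_e$.

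The final step uses that localization at $e \in R$ commutes with forming $G$-invariants (because $G$ is finite and acts $R$-linearly, so that $(-)^G$ is a finite limit which commutes with the flat localization $R \to R_e$) and with intersections of submodules of a common ambient module (by exactness of localization, applied to $A \cap B = \ker(C \to C/A \oplus C/B)$). Therefore
\[
(N \cap (f^!M)^G)_e \;=\; N_e \cap ((f^!M)^G)_e \;=\; (f^!M)_e \cap M_e \;=\; M_e,
\]
where the last equality uses the inclusion $M_e \subseteq (f^!M)_e$. This is the required generic equality of $N^G := N \cap (f^!M)^G$ with $M$. The only mild subtlety is the compatibility of $(-)^G$ with localization, but this is a routine verification given the finiteness of $G$ and that $e$ lies in $R$.
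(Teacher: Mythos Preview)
Your argument is correct, but it differs from the paper's. The paper proceeds explicitly via the norm: choosing a basis $m_1,\ldots,m_n$ of $M$ and a nonzero $c \in S$ with $c(m_i \otimes 1) \in N$, one has $\mathcal{N}(c)(m_i \otimes 1) \in N$ because $N$ is an $S$-module and $\mathcal{N}(c) = \prod_{g \in G} g(c) \in S$; these elements are visibly $G$-invariant and, since $\mathcal{N}(c) \in R \setminus \{0\}$, span $M$ generically. Your route instead invokes Lemma~\ref{GInvariantsNiceEtaleSituation} together with the formal fact that $(-)^G$ (a finite limit) commutes with the flat localization $R \to R_e$, which is clean and perfectly valid.

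Two small remarks. First, since $M$ is already assumed free, the proof of Lemma~\ref{GInvariantsNiceEtaleSituation} shows directly that $(f^!M)^G = M$ without any localization, so your $c$ can be taken to be $1$ and only the single localization at $d$ is needed. Second, your claim that one may take $d \in R$ (rather than merely in $S$) deserves one more word: it follows because $S$ is a domain finite over $R$, so $S \otimes_R Q(R) = Q(S)$ and hence the finitely generated $R$-module $f^!M/N$ is already torsion over $R$. The paper's norm argument sidesteps this by starting with $c \in S$ and producing $\mathcal{N}(c) \in R$ directly.
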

\begin{proof}
Fix a basis $m_1, \ldots, m_n$ of $M$. Since $f^! M/N$ is generically zero we find $0 \neq c \in S$ such that $c (m_i \otimes 1) \in N$ for all $i=1, \ldots, n$. If $\mathcal{N}: S \to R$ denotes the restriction of the norm then the $\mathcal{N}(c) (m_i \otimes 1)$ are contained in $N^G$ and generically span all of $M$.
\end{proof} 

In order to apply test element theory we need to assume that our base is essentially of finite type over an $F$-finite field in the next result.

\begin{Theo}
\label{EtalePullbackFRegularity}
Let $f: X \to Y$ be an essentially \'etale morphism with $Y$ essentially of finite type over an $F$-finite field and $M$ a $\mathcal{C}_Y$-module. If $M$ is $F$-regular then $f^!M$ is an $F$-regular $\mathcal{C}_X=f^*\mathcal{C}_Y$-module. If $f$ is surjective the converse holds.
\end{Theo}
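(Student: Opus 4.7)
The plan is to proceed via test element theory, reducing to a finite \'etale Galois situation, and handle the converse via faithfully flat descent.

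After localizing we may assume $f\colon \Spec S \to \Spec R$ with $R$ essentially of finite type over an $F$-finite field. That $f^!M$ is $F$-pure whenever $M$ is follows directly from Lemma \ref{CartierOperationEtalePullbackCommutes} applied to $\mathcal{C}_{Y+}M = M$. By Lemma \ref{EtaleShriekAssPrimes}, $\Ass(f^!M) = f^{-1}\Ass(M)$, so every associated prime $\nu$ of $f^!M$ lies above some $\eta_i \in \Ass M$. The forward direction will follow if we produce a sequence of test elements for $f^!M$ and check, via Theorem \ref{TestElementExistence}, that the corresponding formula for $\tau(f^!M)$ yields all of $f^!M$.

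For this, by Theorem \ref{TauImpliesTestElements} choose a sequence of test elements $c_1,\ldots,c_n$ for $M$ at $\eta_1,\ldots,\eta_n$. Each $c_i$, viewed in $S$ via $f^\sharp$, lies outside every $\nu$ over $\eta_i$. The key step is to show that $\underline{H^0_\nu((f^!M)_{c_i})}$ is $F$-regular, knowing that $\underline{H^0_{\eta_i}(M_{c_i})}$ is. Using Lemma \ref{LocalCohomShriekUpToNilpotence} to identify $\underline{H^0_{(\cdot)}}$ with $\underline{i^!(\cdot)}$ and the flat base change square from the proof of Lemma \ref{EtaleShriekAssPrimes}, this reduces to the assertion that essentially \'etale twisted inverse image preserves $F$-regularity in the finite situation over a normal integral domain. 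Combined with the functoriality of $\tau$ (Proposition \ref{TestModulesAreFunctorial}) and the formula of Theorem \ref{TestElementExistence}, this yields $\tau(f^!M) = f^!\tau(M) = f^!M$.

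To establish the reduced assertion we further localize and, after enlarging the extension by passing to a connected component of the Galois closure (which remains finite \'etale and surjective, and the latter descent step is harmless by the converse statement we are also proving), we may assume $R\subseteq S$ is a finite \'etale extension of normal integral domains whose induced extension of function fields is Galois with group $G$. Then $G$ acts on $f^!M = M\otimes_R S$ through the second factor, commuting with the $\mathcal{C}_R$-action. Any $\mathcal{C}_S$-submodule $N\subseteq f^!M$ satisfying the nil-isomorphism condition at all $\nu\in\Ass f^!M$ gives rise, upon taking $G$-invariants of $\sum_{g\in G} g\cdot N$ and applying Lemmas \ref{GInvariantsNiceEtaleSituation} and \ref{GInvariantsGenericEquality}, to a $\mathcal{C}_R$-submodule of $M$ which agrees with $M$ generically at each $\eta_i$; $F$-regularity of $M$ then forces equality, hence $N=f^!M$.

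For the converse, assume $f$ is surjective and $f^!M$ is $F$-regular. Given a Cartier submodule $N\subseteq M$ satisfying the nil-iso condition at every $\eta\in\Ass M$, exactness of $f^!$ and Lemma \ref{EtaleShriekAssPrimes} show that $f^!N\subseteq f^!M$ satisfies the analogous condition at every $\nu\in\Ass f^!M$; $F$-regularity of $f^!M$ forces $f^!N=f^!M$, and faithful flatness of $f$ yields $N=M$. The main obstacle is controlling the chain of reductions in the Galois descent step, verifying that at each stage the test element and nil-isomorphism data are preserved and that the Galois-invariant Cartier submodule produced is genuinely $\mathcal{C}_R$-stable, which relies essentially on the fact that $G$ acts trivially on $\mathcal{C}_R\otimes_R S$ through its first tensor factor.
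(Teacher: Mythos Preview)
Your overall strategy matches the paper's: reduce to the affine case, use test element theory to reduce the forward direction to showing that $\underline{H^0_{\eta_i}(M)}_{c_i}$ $F$-regular implies its \'etale pullback is, then shrink further to a finite \'etale Galois extension of normal integral domains and argue by Galois descent; handle the converse by faithful flatness. The paper does exactly this, including the passage to a Galois closure using the already-proven converse.

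There is, however, a genuine gap in your Galois step. You form $\sum_{g\in G} g\cdot N$ and take $G$-invariants. Even granting that this produces a $\mathcal{C}_R$-submodule of $M$ which is generically all of $M$, $F$-regularity of $M$ only forces $\bigl(\sum_g g\cdot N\bigr)^G = M$, hence $\sum_g g\cdot N = f^!M$. This does \emph{not} give $N = f^!M$ for an arbitrary $N$ satisfying the nil-isomorphism condition, since passing to $\sum_g g\cdot N$ destroys properness. The paper instead works with $N^G$ directly: one checks $N^G$ is $\mathcal{C}_R$-stable (this is where your observation about $G$ acting trivially on the first tensor factor enters), uses Lemma~\ref{GInvariantsGenericEquality} to see that $N^G \subseteq M$ is still a generic equality, and then $F$-regularity of $M$ gives $N^G = M$; the contradiction comes from $f^!(N^G) \subseteq N \subsetneq f^!M$, since $N$ is an $S$-module containing $N^G$. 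Your argument can be repaired either by switching to $N^G$ in this way, or by first observing that $\tau(f^!M)$ itself is $G$-stable (because each $g$ permutes the associated primes and sends $\mathcal{C}_S$-submodules to $\mathcal{C}_S$-submodules, so $g\cdot\tau(f^!M)$ again satisfies the defining minimality), so that it suffices to treat $G$-stable $N$.

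A smaller point: Lemmas~\ref{GInvariantsNiceEtaleSituation} and~\ref{GInvariantsGenericEquality} require $M$ to be free and the base to be normal integral; the paper arranges this by further shrinking the test element $c_i$ (generic freeness, normality on a dense open), and you should make that step explicit. Likewise, in this reduced situation there is only one associated prime and it is minimal, so the nil-isomorphism condition becomes a genuine generic equality---which is what Lemma~\ref{GInvariantsGenericEquality} actually addresses.
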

\begin{proof}
$F$-regularity is local so that we may assume that $Y = \Spec R $ and $X= \Spec S$ are both affine.

By Lemma \ref{CartierOperationEtalePullbackCommutes} we may assume that both $M$ and $f^!M$ are $F$-pure. In particular, by making a base change we may assume that $R$ is reduced and $\Supp M = \Spec R$.

Assume that $f$ is surjective and that $M$ is not $F$-regular. Hence, $M$ admits a proper submodule $N$ for which the inclusion $H^0_\eta(N) \subseteq H^0_\eta(M)$ is a nil-isomorphism for all associated primes $\eta$ of $\Ass M$. By faithful flatness $f^!N$ is a proper submodule of $f^!M$ and $f^! H^0_\eta(M_\eta) = H^0_{\eta S}(f^! M_{\eta})$ (\cite[Lemma 4.3.1]{brodmannsharp}). Let now $\nu$ be an associated prime of $\Ass f^! M$ which by Lemma \ref{EtaleShriekAssPrimes} lies above some associated prime $\eta$ of $M$. From the proof of Lemma \ref{EtaleShriekAssPrimes} we deduce that \[\underline{H^0_\nu(f^!N)}_\nu = \underline{H^0_{\eta S}(f^! N)}_\nu \text{ and } \underline{H^0_{\eta S}(f^! M)}_\nu = \underline{H^0_\nu(f^!M)}_\nu.\] Since the inclusion $\underline{H^0_{\eta S}(f^! N)}_\nu \subseteq \underline{H^0_{\eta S}(f^! M)}_\nu$ is a nil-isomorphism we have that the inclusion $\underline{H^0_\nu(f^!N)_\nu} \subseteq \underline{H^0_\nu(f^!M)_\nu}$ is also a nil-isomorphism. But then also $H^0_\nu(f^!N)_\nu \subseteq H^0_\nu(f^!M)_\nu$ is a nil-isomorphism as desired.

For the other direction let us denote the associated primes of $M$ by $\eta_1, \ldots, \eta_n$. We claim that it is sufficient to find a sequence of test elements $c_1, \ldots, c_n \in R$ for $M$, where each $c_i$ is chosen in such a way that $H^0_{\eta_i}(M)_{c_i}$ has only one associated prime, namely $\eta_i$, such that $f^! (\underline{H^0_{\eta_i}(M)})_{c_i} = \underline{H^0_{\eta_i S}(f^!M)}_{c_i}$ is $F$-regular. Assume that this holds and fix an associated prime $\eta$ of $\Spec R$ and $c \in R$ as above. We note that the associated primes of $f^! H^0_{\eta_i}(M)_{c_i}$ are the elements in $f^{-1}(\eta_i)$ which we will denote by $\nu_1, \ldots, \nu_{m_i}$. We are therefore in the following situation:
\[\begin{xy} \xymatrix{\Spec (S/\eta S)_c \ar[d]^{f'} \ar[r]^{i'} & \Spec S_c \ar[d]^f\\
   \Spec (R/\eta)_c \ar[r]^i & \Spec R_c}
  \end{xy}
\]
Choosing a different $c$ we may assume that $(R/\eta)_c$ is normal. Since $f'$ is \'etale and $(\Spec R/\eta)_c$ is normal and integral $\Spec (S/\eta S)_c$ splits as a direct sum, where the irreducible (equivalently connected) components are given by $(\Spec S/\nu_j)_c$, for $j=1, \ldots, m$. Since all associated primes of $f^! H^0_\eta(M)_c$ are minimal we may invoke \cite[Lemma 6.13]{staeblertestmodulnvilftrierung} to see that the $\underline{H^0_{\nu_j}(f^!M)}_c$ are $F$-regular for $j = 1, \ldots, m$. Hence, the $c_i$ are test elements for the $\nu_{1}, \ldots, \nu_{m_i}$ and the $c_i$ (with appropriate repetitions) form a sequence of test elements for $f^! M$.

Observe that we have an inclusion $\sum_{j=1}^m \mathcal{C}_+ c \underline{H^0_{\nu_j}(f^!M)} \subseteq \underline{H^0_{\eta S}(f^! M)}$ which becomes an equality when localizing at $c$. 
Hence, by Lemma \ref{LocalizatonAtcCoincidesInclusionAfterMultbyc} we get \[c \underline{H^0_{\eta S}(f^!M)} \subseteq \sum_{j=1}^m \mathcal{C}_+ c \underline{H^0_{\nu_j}(f^!M)}. \]
Using Theorem \ref{TestElementExistence} we conclude that $\tau(f^!M)$ must contain \[\sum_{i} \mathcal{C}_+ f^!(c_i \underline{H^0_{\eta_{i}}(M)}) = \sum_i f^!(\mathcal{C}_+ c_i \underline{H^0_{\eta_i}(M)}) = f^!\tau(M) = f^! M\] as claimed.

In order to finish the proof we argue that we may find test elements $c_1, \ldots, c_n \in R$ for $M$ with the required properties. Since $\Ass H^0_{\eta_i}(M)$ consists of associated primes containing $\eta_i$ we find $c$ such that the only associated prime of $H^0_{\eta_i}(M_{c_i})$ is $\eta_i$. Let us now fix $\eta_i$ and omit the index. If $i: \Spec R/\eta \to \Spec R$ denotes the closed immersion then Lemma \ref{CartierOperationEtalePullbackCommutes} implies that $f'^! \underline{i^!M} = \underline{i'^! f^! M}$, where $f': \Spec S/\eta S \to \Spec R/\eta R$ is the base change of $f$ and $i'$ the base change of $i$. Due to Remark \ref{ShriekLocCohomAbuseOfNotation} we may identify $\underline{i^! M}$ with $\underline{H^0_\eta(M)}$.

Replacing $c$ by $cd$ for suitable $d$ we may further assume that $\Spec (R/\eta)_c$ is regular and integral. In particular, $(S/\eta S)_c$ is a product of regular domains. By further restriction we may assume that $f: \Spec (S/\eta S)_c \to \Spec (R/\eta)_c$ is finite (\cite[Exercise III.3.7]{hartshornealgebraic}). Since we only have minimal associated primes we may moreover assume that $\Spec (S/\eta S)$ is integral by \cite[Lemma 6.13]{staeblertestmodulnvilftrierung}. Finally, by the first direction of the theorem we may assume that the extension of function fields is Galois.

Assume now that $f^! \underline{i^!M}_c$ is not $F$-regular. This means that we find a proper submodule $N \subseteq f^! \underline{i^!M}_c$ that generically coincides with $f^! \underline{i^! M}_c$. Note that the $G$-invariants $N^G$ are a $\mathcal{C}$-module. By Lemma \ref{GInvariantsNiceEtaleSituation} we obtain an inclusion $N^G \subseteq \underline{i^!M}_c$ which still induces a generic equality (Lemma \ref{GInvariantsGenericEquality}), where we once again change $c$. Since $\underline{i^!M}_c$ is $F$-regular this inclusion has to be an equality. But this contradicts the fact that $f^! (N^G) \subseteq N \subsetneq f^!M$. 
\end{proof}

\begin{Ko}
\label{TauCommutesEtalePullback}
Let $f: X \to Y$ be an essentially \'etale morphism with $Y$ essentially of finite type over an $F$-finite field and $M$ a $\mathcal{C}_Y$-module. Then $f^! \tau(M) = \tau(f^!M)$.
\end{Ko}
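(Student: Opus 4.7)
The plan is to establish the two inclusions $f^!\tau(M) \subseteq \tau(f^!M)$ and $\tau(f^!M) \subseteq f^!\tau(M)$ separately; both test modules exist by Theorems \ref{TauImpliesTestElements} and \ref{TestElementExistence}. The first inclusion is essentially formal from the preceding theorem: since $\tau(M)$ is $F$-regular by definition, Theorem \ref{EtalePullbackFRegularity} implies that $f^!\tau(M)$ is an $F$-regular $\mathcal{C}_X$-module, so $\tau(f^!\tau(M)) = f^!\tau(M)$. Because $f^!=f^*$ is exact in the essentially \'etale case, the inclusion $f^!\tau(M) \hookrightarrow f^!M$ is injective, and functoriality of $\tau$ (Proposition \ref{TestModulesAreFunctorial}) then yields $f^!\tau(M) = \tau(f^!\tau(M)) \subseteq \tau(f^!M)$.

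For the reverse inclusion I would verify that $f^!\tau(M)$ already satisfies the defining property of $\tau(f^!M)$ and conclude by minimality. Fix $\nu \in \Ass f^!M$; by Lemma \ref{EtaleShriekAssPrimes} it lies over some $\eta \in \Ass M$. By the definition of $\tau(M)$, the inclusion $H^0_\eta(\tau(M)_\eta) \subseteq H^0_\eta(M_\eta)$ is a nil-isomorphism, equivalent to the equality $\underline{H^0_\eta(\tau(M)_\eta)} = \underline{H^0_\eta(M_\eta)}$. The computation inside the proof of Lemma \ref{EtaleShriekAssPrimes} (combined with Lemma \ref{LocalCohomShriekUpToNilpotence} and Remark \ref{ShriekLocCohomAbuseOfNotation}) identifies $\underline{H^0_\nu((f^!L)_\nu)}$ with $\beta^!\underline{H^0_\eta(L_\eta)}$, functorially in a $\mathcal{C}_Y$-submodule $L \subseteq M$, where $\beta \colon \Spec(S/\eta S)_\nu \to \Spec(R/\eta)_\eta$ is the essentially \'etale, faithfully flat morphism constructed there. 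Since $\beta^! = \beta^*$ is exact and commutes with $\underline{\phantom{M}}$ by Lemma \ref{CartierOperationEtalePullbackCommutes}, applying it to the equality above for $L=\tau(M)$ and $L=M$ gives $\underline{H^0_\nu((f^!\tau(M))_\nu)} = \underline{H^0_\nu((f^!M)_\nu)}$, i.e.\ the inclusion $H^0_\nu((f^!\tau(M))_\nu) \subseteq H^0_\nu((f^!M)_\nu)$ is a nil-isomorphism. Minimality of $\tau(f^!M)$ then produces $\tau(f^!M) \subseteq f^!\tau(M)$.

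The only mild obstacle is ensuring that the identification underlying Lemma \ref{EtaleShriekAssPrimes} is natural enough to transfer from $M$ to its Cartier submodule $\tau(M)$, but this is just flat base change for local cohomology along the essentially \'etale morphism $\beta$ combined with Lemma \ref{CartierOperationEtalePullbackCommutes}. With those ingredients in place the corollary is a formal consequence of Theorem \ref{EtalePullbackFRegularity}.
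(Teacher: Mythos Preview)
Your proof is correct and follows essentially the same approach as the paper's: both combine the minimality of $\tau(f^!M)$ (after checking that $f^!\tau(M)$ satisfies the nil-isomorphism condition at each $\nu\in\Ass f^!M$, via flat base change of local cohomology along the essentially \'etale map over $\eta$) with the $F$-regularity of $f^!\tau(M)$ furnished by Theorem~\ref{EtalePullbackFRegularity}. The only difference is the order in which you present the two inclusions, and that you spell out the functoriality argument for $f^!\tau(M)\subseteq\tau(f^!M)$ a bit more explicitly than the paper's one-line ``which shows that equality holds.''
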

\begin{proof}
We may replace $M$ by $\underline{M}$. By exactness of $f^!$ we have an inclusion $f^! \tau(M) \subseteq f^! M$. Since for any associated prime $\eta$ of $M$ we have nil-isomorphisms \[H^0_{\eta S}(f^! \tau(M))_{\eta} \subseteq H^0_{\eta S}(f^!M)_\eta\] we a fortiori have nil-isomorphisms \[H^0_{\nu}(f^! \tau(M))_\nu \subseteq H^0_\nu(f^! M)_\nu\] for any associated prime $\nu$ of $f^!M$.
Now $\tau(f^! M)$ is minimal with this property so that $\tau (f^!M) \subseteq f^! \tau(M)$. By Theorem \ref{EtalePullbackFRegularity} the module $f^! \tau(M)$ is $F$-regular which shows that equality holds.
\end{proof}

We now turn to the smooth case:

\begin{Le}
\label{LocalCohomSmoothShriekFPureCompatible}
Let $g: \mathbb{A}^n_R \to \Spec R$ be the natural morphism. If $M$ is a Cartier module and $\eta$ a prime in $R$ then $g^! \underline{H^0_\eta(M)} = \underline{H^0_{\eta R[x]}(g^!M)}$.
\end{Le}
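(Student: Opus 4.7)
The plan is to chain together two compatibilities: flatness of $g^!$ with local cohomology, and compatibility of $g^!$ with the underline operation.

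First I would observe that $g \colon \mathbb{A}^n_R \to \Spec R$ is flat, and by Theorem \ref{UpperShriekCartierStructure} $g^!M = \omega_g \otimes_{R[x]} g^\ast M$, where $\omega_g$ is locally free of rank one on $\mathbb{A}^n_R$. Since flat base change commutes with $I$-power torsion (cf.\ \cite[Lemma 4.3.1]{brodmannsharp}) and tensoring with a locally free module of finite rank preserves kernels, we get
\[
    g^! H^0_\eta(M) \;=\; \omega_g \otimes_{R[x]} g^\ast H^0_\eta(M) \;=\; \omega_g \otimes_{R[x]} H^0_{\eta R[x]}(g^\ast M) \;=\; H^0_{\eta R[x]}(g^! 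M).
\]
Note also that $H^0_\eta(M)$ is a finitely generated $R$-submodule of $M$ since $R$ is noetherian, so it is coherent as a Cartier module and the underline operation is defined on it.

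Next I would invoke Lemma \ref{CartierOperationEtalePullbackCommutes} in the smooth case, which says $\mathcal{C}_{X+} g^! N = g^! \mathcal{C}_{Y+} N$ for any $\mathcal{C}_Y$-module $N$. An easy induction in $e$ yields $\mathcal{C}_{X+}^e g^! N = g^! \mathcal{C}_{Y+}^e N$ for every $e \geq 1$. For coherent Cartier modules the chain $\mathcal{C}_+^e N$ stabilizes at $\underline{N}$ for $e \gg 0$, and so
\[
    \underline{g^! N} \;=\; \mathcal{C}_{X+}^e g^! N \;=\; g^! \mathcal{C}_{Y+}^e N \;=\; g^! \underline{N}
\]
for $e$ sufficiently large. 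That is, $g^!$ commutes with $\underline{\phantom{M}}$.

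Applying this with $N = H^0_\eta(M)$ and then using the first step yields
\[
    g^! \underline{H^0_\eta(M)} \;=\; \underline{g^! H^0_\eta(M)} \;=\; \underline{H^0_{\eta R[x]}(g^! M)},
\]
which is the claim. There is no serious obstacle here: the only delicate point is verifying that $g^!$ (not just $g^\ast$) commutes with local cohomology, which comes down to the fact that $\omega_g$ is locally free, combined with the flat base change formula for local cohomology. Everything else is a formal consequence of Lemma \ref{CartierOperationEtalePullbackCommutes} and stabilization of the descending chain $\mathcal{C}_+^e N$.
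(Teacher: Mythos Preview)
Your proof is correct and follows essentially the same approach as the paper: both use flatness of $g$ together with the local freeness of $\omega_g$ to commute $g^!$ with $H^0_\eta$, and then invoke Lemma~\ref{CartierOperationEtalePullbackCommutes} to commute $g^!$ with the underline operation. You simply spell out in more detail what the paper compresses into a single sentence (and the paper additionally reduces to $n=1$, which is harmless but not needed in your argument).
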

\begin{proof}
It suffices to prove the case $n =1$. Since $\omega_{R[x]/R}$ is free of rank one and $g^! A = g^\ast A \otimes_{R[x]} \omega_{R[x]/R} = A \otimes_R \omega_{R[x]/R}$ this follows from flatness of $g$ and the fact that $\mathcal{C}'_+ g^! M = g^! \mathcal{C}_+ M$ for a Cartier module $M$ (Lemma \ref{CartierOperationEtalePullbackCommutes}), where $\mathcal{C}' = g^\ast \mathcal{C}$.
\end{proof}

\begin{Le}
\label{AssPrimesSmoothPullback}
Let $g: \mathbb{A}^n_R \to \Spec R$ be the natural morphism. Then the map \[s: \Spec R \to \mathbb{A}^n_R, \quad \eta \mapsto \eta R[x_1, \ldots, x_n]\] induces a bijection between $\Ass M$ and $\Ass g^! M$.
\end{Le}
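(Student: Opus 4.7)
First I would reduce to the case $n=1$: write $g$ as the composition $\mathbb{A}^n_R \to \mathbb{A}^{n-1}_R \to \cdots \to \mathbb{A}^1_R \to \Spec R$ of canonical projections, observe that $g^!$ factors as the composition of the corresponding twisted inverse images, and that $s$ factors as the corresponding chain of generic points of the fibres. It thus suffices to treat the case $g\colon \Spec R[x] \to \Spec R$.

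The next step is to show that every $\nu \in \Ass g^!M$ is necessarily of the form $\eta R[x]$. As $\omega_{R[x]/R}$ is free of rank one, the underlying $R[x]$-module of $g^!M$ is (non-canonically) isomorphic to $M \otimes_R R[x]$, so by the classical flat-base-change formula for associated primes
\[\Assoc_{R[x]}(M\otimes_R R[x]) = \{\eta R[x] \mid \eta \in \Assoc_R M\}.\]
Any Cartier-associated prime is in particular a usual associated prime of the underlying module (if $H^0_\nu((g^!M)_\nu)$ is not nilpotent, it is non-zero, forcing $\nu \in \Assoc_{R[x]}(g^!M)$), so every $\nu \in \Ass g^!M$ has the claimed form.

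To finish, I would prove for each $\eta \in \Spec R$ the equivalence $\eta \in \Ass M \iff \eta R[x] \in \Ass g^!M$. Unpacking the definition of $\Ass$ and using that the underline commutes with localization, this amounts to
\[\underline{H^0_\eta(M)}_\eta = 0 \iff \underline{H^0_{\eta R[x]}(g^!M)}_{\eta R[x]} = 0.\]
By Lemma \ref{LocalCohomSmoothShriekFPureCompatible} the right-hand module equals $(g^!\underline{H^0_\eta(M)})_{\eta R[x]}$. Setting $N = \underline{H^0_\eta(M)}$ and using triviality of $\omega_{R[x]/R}$ yields $(g^!N)_{\eta R[x]} \cong N \otimes_R R[x]_{\eta R[x]} \cong N_\eta \otimes_{R_\eta} R[x]_{\eta R[x]}$, and the sought equivalence then follows from the fact that $R_\eta \to R[x]_{\eta R[x]}$ is a flat local map and hence faithfully flat.

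The main obstacle is essentially bookkeeping: verifying that underlining commutes with localization (immediate from exactness of localization and the explicit description of $\underline{M}$ as the stable member of $\mathcal{C}_+^e M$), and that Cartier-associated primes embed into the usual associated primes, so that the classical flat-base-change formula for $\Assoc$ cuts down the candidates for $\Ass g^!M$ before Lemma \ref{LocalCohomSmoothShriekFPureCompatible} finishes the job.
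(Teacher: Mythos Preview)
Your argument is correct and follows essentially the same route as the paper: reduce to $n=1$, use the classical formula $\Assoc_{R[x]}(M\otimes_R R[x]) = \{\eta R[x] : \eta \in \Assoc_R M\}$ to restrict the candidates, and then compare $H^0_\eta(M_\eta)$ with $H^0_{\eta R[x]}((g^!M)_{\eta R[x]})$ via flat base change and preservation of $F$-purity under $g^!$. The only cosmetic difference is that you package the key identification by quoting Lemma~\ref{LocalCohomSmoothShriekFPureCompatible} and then invoking faithful flatness of $R_\eta \to R[x]_{\eta R[x]}$, which gives both implications at once; the paper instead derives $g^!_\eta H^0_\eta(M_\eta) = H^0_{\eta R[x]}((g^!M)_\eta)$ inline and appeals to $F$-purity preservation (Lemma~\ref{CartierOperationEtalePullbackCommutes}), writing out only the forward direction explicitly.
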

\begin{proof}
Again we only need to prove this for $n=1$. By \cite[Theorem 23.2 (ii)]{matsumuracommutativeringtheory} (note that the $\Ass_A$ should read $\Ass_B$) the associated primes of the $S$-module $M \otimes_R R[x]$ are precisely the $\eta R[x]$ for which $\eta$ is an associated prime of the $R$-module $M$.

Assume that $\eta$ is associated to $M$ and note that we may identify $g^!$ with $g^\ast$ since the relative dualizing sheaf $\omega = R[x] dx$ is trivial. We have $g^!_\eta H^0_\eta(M_\eta) = H^0_{\eta S}(g^!_\eta M_\eta) = H^0_{\eta S}((g^! M)_{\eta})$. Since $g^!_\eta$ and localization preserve $F$-purity by Lemma \ref{CartierOperationEtalePullbackCommutes} we conclude that $\eta R[x]$ is an associated prime of $g^!M$.
\end{proof}

\begin{Theo}
\label{SmoothPullbackFRegularity}
Let $f: X \to Y$ be a smooth morphism, where $Y$ is essentially of finite type over an $F$-finite field. If $M$ is an $F$-regular Cartier module then $f^! M$ is also $F$-regular.
\end{Theo}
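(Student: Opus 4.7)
The plan is to reduce to the base case of the structural morphism $g\colon \Spec R[x] \to \Spec R$ and apply the test element machinery, following the template of Theorem \ref{EtalePullbackFRegularity}. Since $F$-regularity is local, I may assume $Y = \Spec R$ is affine. Any smooth morphism factors locally as an essentially étale map composed with the structural projection $\mathbb{A}^n_R \to \Spec R$. The étale part is handled by Theorem \ref{EtalePullbackFRegularity}, and induction on $n$ (using $\mathbb{A}^n_R = \mathbb{A}^1_{\mathbb{A}^{n-1}_R}$ together with the compatibility of $f^!$ with composition) reduces the problem to the case $g\colon \Spec R[x] \to \Spec R$.

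For this base case, by Proposition \ref{TauFormalProperties} I may assume $M$ is $F$-pure, $R$ is reduced, and $\Supp M = \Spec R$. Let $c_1,\ldots,c_n$ be a sequence of test elements for $M$ (Theorem \ref{TauImpliesTestElements}). By Lemma \ref{AssPrimesSmoothPullback} the associated primes of $g^!M$ are $\{\eta_i R[x]\}$, and by Lemma \ref{LocalCohomSmoothShriekFPureCompatible} we have the identification $\underline{H^0_{\eta_i R[x]}((g^!M)_{c_i})} = g^!\underline{H^0_{\eta_i}(M)}_{c_i}$. Thus I would like to show each $c_i$ is a test element for $g^!M$, which amounts to the $F$-regularity of $g^!\underline{H^0_{\eta_i}(M)}_{c_i}$. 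Once this is established, Theorem \ref{TestElementExistence} gives $\tau(g^!M) = \sum_i\sum_e \mathcal{C}'_e c_i g^!\underline{H^0_{\eta_i}(M)}$; combined with Lemma \ref{CartierOperationEtalePullbackCommutes} and the hypothesis $M = \sum_i\sum_e \mathcal{C}_e c_i \underline{H^0_{\eta_i}(M)}$ (the $F$-regularity of $M$ written via test elements), this yields $\tau(g^!M) = g^!M$, as required.

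The main obstacle is verifying that $g^!N$ is $F$-regular when $N$ is an $F$-regular Cartier module over a reduced ring with a single associated (necessarily generic) prime; this is the theorem in its simplest form, and the other cases reduce to it after shrinking the $c_i$. For this, suppose $P \subseteq g^!N$ is a Cartier submodule with $P_{(0)R[x]}$ nil-isomorphic to $(g^!N)_{(0)R[x]}$. Since $g^!N$ is $F$-pure (by Lemma \ref{CartierOperationEtalePullbackCommutes}), so is the quotient, and a nilpotent $F$-pure module is zero, forcing $P_{(0)R[x]} = (g^!N)_{(0)R[x]}$. Lemma \ref{LocalizatonAtcCoincidesInclusionAfterMultbyc} then yields $f\cdot g^!N \subseteq P$ for some nonzero $f \in R[x]$. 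I would then use the explicit Cartier structure of Theorem \ref{UpperShriekCartierStructure}(c)---in particular the identity $(\kappa \otimes 1)\cdot(x^{p^e-1} dx \otimes n) = dx \otimes \kappa(n)$ for $\kappa \in \mathcal{C}_e$---together with the $F$-regularity of $N$ to show that the $\mathcal{C}'$-closure of $f\cdot g^!N$ recovers all of $g^!N$. The delicate point is to handle a general $f \in R[x]$ rather than merely $f \in R$; the compatibility $\mathcal{C}'_+ g^!N = g^!(\mathcal{C}_+ N)$ from Lemma \ref{CartierOperationEtalePullbackCommutes} is the essential tool to reduce Cartier-generation questions on $g^!N$ back to the $F$-regularity of $N$ on the base.
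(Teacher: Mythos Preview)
Your overall reduction matches the paper's proof exactly: localize, factor through \'etale plus $\mathbb{A}^1_R \to \Spec R$, invoke Theorem \ref{EtalePullbackFRegularity} for the \'etale part, and then use test element theory together with Lemmas \ref{AssPrimesSmoothPullback} and \ref{LocalCohomSmoothShriekFPureCompatible} to transport a sequence of test elements from $M$ to $g^!M$. The final computation $\tau(g^!M) = \sum_i \mathcal{C}'_+ c_i\, g^!\underline{H^0_{\eta_i}(M)} = g^!\tau(M) = g^!M$ is also exactly what the paper does.

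The gap is the step you flag yourself as ``the delicate point''. You correctly isolate the crux: one must show that if $N$ is $F$-regular with a single minimal associated prime, then $g^!N$ is $F$-regular. You reduce to showing that $\mathcal{C}'\!\cdot f\, g^!N = g^!N$ for an arbitrary nonzero $f \in R[x]$, but you do not carry this out. The compatibility $\mathcal{C}'_+ g^!N = g^!(\mathcal{C}_+ N)$ from Lemma \ref{CartierOperationEtalePullbackCommutes} only tells you that $g^!N$ is $F$-pure; it does not by itself let you absorb an $x$-dependent factor $f$ into the Cartier action on the base, and the explicit formula in Theorem \ref{UpperShriekCartierStructure}(c) does not make this automatic either. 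So as written your argument is incomplete precisely at this point.

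The paper avoids this computation altogether. Since after shrinking the $c_i$ the module $\underline{H^0_{\eta_i}(M)}_{c_i}$ has only \emph{minimal} associated primes, Proposition \ref{OldNewTestModuleRelation} identifies the new $\tau$ with the old $\tau'$ of \cite{blicklep-etestideale}. The paper then cites \cite[Lemma 3.6]{staeblerunitftestmoduln}, which already establishes (in the old framework) that $g^!$ preserves $F$-regularity for the projection $\mathbb{A}^1_R \to \Spec R$. So the missing ingredient in your argument is exactly this bridge: reduce to the old definition via Proposition \ref{OldNewTestModuleRelation} and invoke the known result, rather than attempting to handle an arbitrary $f \in R[x]$ by hand.
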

\begin{proof}
By Proposition \ref{TauFormalProperties} (b) the issue whether $f^! M$ is $F$-regular is local on $X$. Hence, we reduce to the situation $f: \Spec S \to \Spec R$ and may factor $f$ as $g \circ \varphi$, where $\varphi$ is \'etale and $g: \mathbb{A}^n_R \to \Spec R$ is the natural map. The \'etale case is Theorem \ref{EtalePullbackFRegularity}, hence we only need to show that $g^!M$ is $F$-regular if $M$ is $F$-regular. As before it suffices to deal with the case $n=1$.

Denote the associated primes of $M$ by $\eta_1, \ldots, \eta_n$. By Theorem \ref{TauImpliesTestElements} above we find test elements $c_1, \ldots, c_n$ for $M$ such that the only associated prime of $H^0_{\eta_i}(M)_{c_i}$ is $\eta_i$. By Lemma \ref{AssPrimesSmoothPullback} the associated primes of $g^! M$ are given by $\eta_1 R[x], \cdots, \eta_n R[x]$. Since the associated primes of $\underline{H^0_{\eta_i}(M_{c_i})}$ are all minimal we conclude using Proposition \ref{OldNewTestModuleRelation}, Lemma \ref{LocalCohomSmoothShriekFPureCompatible} and \cite[Lemma 3.6]{staeblerunitftestmoduln} that $\underline{H^0_{\eta R[x]}(g^! M)}_{c_i}$ is $F$-regular. In particular, the $c_i$ are test elements for $\eta_i R[x]$. Hence, \begin{align*}\tau(g^!M) &= \sum_i \mathcal{C}'_+ c_i \underline{H^0_{\eta_i R[x]}(g^! M)} \\&= \sum_i \mathcal{C}'_+ c_i \underline{H^0_{\eta_i}(M)} \otimes_R R[x] dx = \tau(M) \otimes_R R[x]dx = g^!M.\end{align*}
\end{proof}

\begin{Ko}\label{TauCommutesSmoothPullback}
Let $f: X \to Y$ be a smooth morphism, where $Y$ is essentially of finite type over an $F$-finite field. Then for any coherent $\mathcal{C}$-module $M$ one has $f^! \tau(M) = \tau(f^!M)$.
\end{Ko}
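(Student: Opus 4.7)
The plan is to mimic the argument used for Corollary \ref{TauCommutesEtalePullback} (the essentially \'etale analogue), replacing each use of the \'etale-specific results by their smooth counterparts established just above. Since the statement is local on $X$ (by Proposition \ref{TauFormalProperties}(b) and the remark after Proposition \ref{TauFormalProperties}), I would reduce to the situation $f\colon \Spec S \to \Spec R$ and factor $f$ as $g\circ\varphi$ with $\varphi$ essentially \'etale and $g\colon \mathbb{A}^n_R \to \Spec R$ the structural map. For $\varphi$ the result is Corollary \ref{TauCommutesEtalePullback}, so I only need to treat $g$, and by induction it suffices to handle $n=1$.

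First I would replace $M$ by $\underline{M}$. Since $g$ is flat, $g^!$ is exact, so $g^!\tau(M) \subseteq g^! M$. For each associated prime $\eta$ of $M$, the inclusion $H^0_\eta(\tau(M)_\eta) \subseteq H^0_\eta(M_\eta)$ is a nil-isomorphism by definition of $\tau(M)$. Applying $g^!$ (which commutes with localization) and using Lemma \ref{LocalCohomSmoothShriekFPureCompatible} to identify $g^!\underline{H^0_\eta(-)}$ with $\underline{H^0_{\eta R[x]}(g^!-)}$, together with Lemma \ref{CartierOperationEtalePullbackCommutes} to preserve $F$-purity, I obtain a nil-isomorphism $H^0_{\eta R[x]}(g^!\tau(M))_\eta \subseteq H^0_{\eta R[x]}(g^!M)_\eta$, which remains a nil-isomorphism after further localizing at $\eta R[x]$. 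By Lemma \ref{AssPrimesSmoothPullback} every associated prime $\nu$ of $g^!M$ is of this form, so $g^!\tau(M)$ satisfies the defining condition of $\tau(g^!M)$; by minimality I conclude $\tau(g^!M) \subseteq g^!\tau(M)$.

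For the reverse inclusion I invoke Theorem \ref{SmoothPullbackFRegularity}: since $\tau(M)$ is $F$-regular, so is $g^!\tau(M)$, hence $\tau(g^!\tau(M)) = g^!\tau(M)$. Functoriality of $\tau$ (Proposition \ref{TestModulesAreFunctorial}) applied to the inclusion $g^!\tau(M) \hookrightarrow g^!M$ then yields $g^!\tau(M) = \tau(g^!\tau(M)) \subseteq \tau(g^!M)$, completing the equality.

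I do not expect any real obstacle here: all the substantive work has been done in Theorem \ref{SmoothPullbackFRegularity}, and the present corollary is a formal consequence along the same lines as Corollary \ref{TauCommutesEtalePullback}. The only mildly delicate point is to ensure that the passage from ``nil-isomorphism at $\eta R[x]$ localized at $\eta$'' to ``nil-isomorphism at $\eta R[x]$ localized at $\eta R[x]$'' is harmless, which it is because localization preserves $F$-purity and nil-isomorphisms.
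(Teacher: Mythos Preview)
Your proposal is correct and follows essentially the same approach as the paper's own proof: reduce locally to $g\colon \mathbb{A}^1_R \to \Spec R$ via Corollary \ref{TauCommutesEtalePullback}, then rerun the argument of that corollary with Lemma \ref{AssPrimesSmoothPullback} identifying the associated primes of $g^!M$ and Theorem \ref{SmoothPullbackFRegularity} supplying the $F$-regularity of $g^!\tau(M)$. Your spelled-out verification of the nil-isomorphism at $\eta R[x]$ (via flat base change for $H^0$ and preservation under further localization) is exactly what the paper's terse ``similarly'' refers to.
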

\begin{proof}
Since this is local and using Corollary \ref{TauCommutesEtalePullback} we may immediately reduce to the situation that $f: \mathbb{A}^1_R \to \Spec R$. The argument proceeds now similarly to Corollary \ref{TauCommutesEtalePullback} using Lemma \ref{AssPrimesSmoothPullback}.
\end{proof}

\begin{Ko}
\label{GrCommutesSmoothPullback}
Let $f: X \to Y$ be a smooth morphism, where $Y$ is essentially of finite type over an $F$-finite field, $\mathfrak{a}$ an ideal sheaf and $t \in \mathbb{Q}$. Then for any coherent $\mathcal{C}$-module $M$ one has $f^! (\tau(M, \mathfrak{a}^{t-\eps})/\tau(M, \mathfrak{a}^t))= \tau(f^!M, \mathfrak{a}^{t-\eps} \mathcal{O}_X)/\tau(f^!M, \mathfrak{a}^t \mathcal{O}_X)$.
\end{Ko}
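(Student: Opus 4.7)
The plan is to deduce this from Corollary \ref{TauCommutesSmoothPullback} together with the exactness of $f^!$ in the smooth case, after verifying that pullback is compatible with the $\mathfrak{a}^t$-twist of a Cartier algebra.

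First, I would observe that the formation of the twisted Cartier algebra commutes with pullback in the sense that $f^{\ast}(\mathcal{C}^{\mathfrak{a}^s}) = (f^{\ast}\mathcal{C})^{(\mathfrak{a}\mathcal{O}_X)^s}$ for every $s \geq 0$. Indeed, unwinding the definition in \autoref{PullbackCartierRingStructure}, a homogeneous element of $\mathcal{C}_e^{\mathfrak{a}^s}$ pulls back to $\kappa \otimes 1$ for $\kappa \in \mathcal{C}_e \mathfrak{a}^{\lceil s p^e \rceil}$, and after extending scalars these generate precisely $(f^*\mathcal{C})_e (\mathfrak{a}\mathcal{O}_X)^{\lceil sp^e \rceil}$.

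Next I would apply Corollary \ref{TauCommutesSmoothPullback} to the Cartier algebras $\mathcal{C}^{\mathfrak{a}^{t-\eps}}$ and $\mathcal{C}^{\mathfrak{a}^t}$ to obtain
\[
f^!\tau(M,\mathfrak{a}^{t-\eps}) \;=\; \tau(f^!M,(\mathfrak{a}\mathcal{O}_X)^{t-\eps}) \quad \text{and} \quad f^!\tau(M,\mathfrak{a}^t)\;=\;\tau(f^!M,(\mathfrak{a}\mathcal{O}_X)^t).
\]
Since $f$ is smooth, $f^!$ is given locally by $f^{\ast}(\phantom{M})\otimes\omega_{X/Y}$ with $\omega_{X/Y}$ locally free, so $f^!$ is exact. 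Therefore applying $f^!$ to the short exact sequence
\[
0 \to \tau(M,\mathfrak{a}^t) \to \tau(M,\mathfrak{a}^{t-\eps}) \to \tau(M,\mathfrak{a}^{t-\eps})/\tau(M,\mathfrak{a}^t) \to 0
\]
yields $f^!\bigl(\tau(M,\mathfrak{a}^{t-\eps})/\tau(M,\mathfrak{a}^t)\bigr) = \tau(f^!M,(\mathfrak{a}\mathcal{O}_X)^{t-\eps})/\tau(f^!M,(\mathfrak{a}\mathcal{O}_X)^t)$, which is the desired identity.

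The only mild subtlety is that the $\eps$ appearing on the two sides of the claimed equality must be taken small enough for right-continuity (Proposition \ref{Rightcontinuity}) to hold \emph{simultaneously} for $(M,\mathcal{C})$ on $Y$ and for $(f^!M, f^{\ast}\mathcal{C})$ on $X$. But this is automatic: by right-continuity applied to each of the two pairs, any sufficiently small $\eps > 0$ realizes the limiting object $\tau(M,\mathfrak{a}^{t-\eps})$ and likewise downstairs; choosing $\eps$ below both thresholds makes the argument uniform. I expect no serious obstacle here, as the real work has already been carried out in Corollary \ref{TauCommutesSmoothPullback}; the present statement is a formal consequence once one notes the exactness of $f^!$ and the pullback compatibility of $\mathcal{C}^{\mathfrak{a}^t}$.
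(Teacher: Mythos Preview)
Your proof is correct and follows essentially the same approach as the paper: the paper's one-line proof simply says ``Immediate from Corollary \ref{TauCommutesSmoothPullback} and right-continuity (Proposition \ref{Rightcontinuity}) since $f^!$ is exact,'' and you have unpacked precisely these three ingredients. Your additional remark on the compatibility $f^{\ast}(\mathcal{C}^{\mathfrak{a}^s}) = (f^{\ast}\mathcal{C})^{(\mathfrak{a}\mathcal{O}_X)^s}$ is a useful detail the paper leaves implicit.
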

\begin{proof}
Immediate from Corollary \ref{TauCommutesSmoothPullback} and right-continuity (Proposition \ref{Rightcontinuity}) since $f^!$ is exact.
\end{proof}

\begin{Le}
\label{FiniteShriekAssPrimes}
Let $f: X \to Y$ be a finite dominant morphism with $Y$ $F$-finite and $M$ a $\mathcal{C}_Y$-module. Then $\Ass f^! M = f^{-1} \Ass M$.
\end{Le}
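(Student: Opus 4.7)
The plan is to mimic the proof of Lemma~\ref{EtaleShriekAssPrimes}, but replace the faithful flatness argument (unavailable in the finite non-\'etale case) by a direct computation over the residue field extension $k(\eta)\subseteq k(\nu)$. Since the statement is local on $Y$, I reduce to $f\colon\Spec S\to\Spec R$ finite dominant, and by Lemma~\ref{AssDescendsToCrystals} I may replace $M$ by $\underline{M}$ and assume $M$ is $F$-pure. Fixing $\nu\in\Spec S$ and setting $\eta:=\nu\cap R$, the task is to show $\underline{H^0_\nu((f^!M)_\nu)}\neq 0$ if and only if $\underline{H^0_\eta(M_\eta)}\neq 0$.

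The first step is a base-change identification. Let $i\colon\Spec R/\eta\to\Spec R$ and $i'\colon\Spec S/\eta S\to\Spec S$ be the closed immersions, let $f'\colon\Spec S/\eta S\to\Spec R/\eta$ be the base change of $f$, set $\bar{i}\colon\Spec S/\nu\to\Spec S/\eta S$, and put $f'':=f'\circ\bar{i}$. Because $f^!=\Hom_R(S,-)$ and any $R$-linear map $S\to N$ into an $R/\eta$-module factors through $S/\eta S$, one obtains the base-change identity $f^!\circ i_*\cong i'_*\circ f'^!$. Combining this with Lemma~\ref{LocalCohomShriekUpToNilpotence} (applied both to $\eta\subset R$ and to $\nu\subset S$) and the functoriality $(f\circ i'\circ\bar{i})^!=f''^!\circ i^!$, one deduces up to nilpotence
\[
\underline{H^0_\nu(f^!M)}\;\cong\;\underline{f''^!\,\underline{H^0_\eta(M)}}.
\]
Localizing at $\nu$ and using that $\Spec S/\nu$ is integral with generic point $\nu$ (so $(S/\nu)_\eta=L:=k(\nu)$), together with the standard compatibility of $\Hom$ out of a finitely presented module with localization, this identifies
\[
\underline{H^0_\nu((f^!M)_\nu)}\;\cong\;\underline{\Hom_K\!\left(L,\,\underline{H^0_\eta(M_\eta)}\right)},
\]
where $K:=k(\eta)$ and the right-hand side carries its $\mathcal{C}_L$-structure from Theorem~\ref{UpperShriekCartierStructure}(a).

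It remains to show that for a finite field extension $K\subseteq L$ and any $\mathcal{C}_K$-module $V$, the $\mathcal{C}_L$-module $\Hom_K(L,V)$ is nilpotent if and only if $V$ is. This follows from the explicit formula $(\kappa\otimes s)\cdot\varphi=\kappa\circ F^e_*(\varphi\circ\mu_s)\circ F^e$: if $\kappa$ acts as zero on $V$, then every $(\kappa\otimes s)$ acts as zero on $\Hom_K(L,V)$, while if $\kappa v\neq 0$ for some $v\in V$, then choosing $\varphi\in\Hom_K(L,V)$ with $\varphi(1)=v$ makes $(\kappa\otimes 1)\varphi$ send $1$ to $\kappa v\neq 0$. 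The main obstacle I expect is the bookkeeping in the first step: one must carefully verify that the various applications of $\underline{\phantom{M}}$ commute with $f''^!$, $i^!$, localization, and closed-immersion pushforwards. Each of these is routine, invoking Corollary~\ref{Nonnilpotencepreservedbyshriek} and Proposition~\ref{PushforwardPreservesNilpotence}, but they must be composed consistently.
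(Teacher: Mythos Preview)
Your proposal is correct and follows essentially the same approach as the paper: both reduce to the affine situation, use the base-change compatibility $f^!i_*\cong i'_*f'^!$ together with Lemma~\ref{LocalCohomShriekUpToNilpotence} to reduce the question to a finite field extension $K\subseteq L$, and then settle that case by the identical computation (one direction via Corollary~\ref{Nonnilpotencepreservedbyshriek}, the other by choosing $\varphi\in\Hom_K(L,V)$ with $\varphi(1)=v$ for some $v$ with $\kappa v\neq 0$). The only cosmetic difference is that the paper packages the reduction via the pullback square and the maps $\alpha,\alpha',\beta,\gamma$ rather than your $i,i',\bar{i},f''$, and it does not bother passing to $\underline{M}$ at the outset.
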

\begin{proof}
This is local and so reduces to a situation $f:\Spec S \to \Spec R$.
The proof is similar to the one of Lemma \ref{EtaleShriekAssPrimes}. Given a prime $\eta \in \Spec R$ and $\nu$ a prime in the fiber of $\eta$ we consider the pullback square

\[
\begin{xy}
\xymatrix{
\Spec (S/\eta S)_{\eta} \ar[d]^{f'}\ar[r]^{\alpha'} & \Spec S \ar[d]^f\\
\Spec (R/\eta)_\eta \ar[r]^\alpha & \Spec R }
\end{xy}
\]
and the closed immersion $\beta: \Spec (S/\nu)_{\nu} \to \Spec (S/\eta S)_{\eta}$. Due to Lemma \ref{LocalCohomShriekUpToNilpotence} the prime $\eta$ is associated to $M$ if and only if $\alpha^! M$ is not nilpotent. Similarly, $\nu$ is associated to $f^!M$ if and only if $\beta^! f'^! \alpha^!M$ is not nilpotent. In particular, Corollary \ref{Nonnilpotencepreservedbyshriek} implies that if $\nu$ is associated to $f^! M$ then $\nu \in f^{-1} \Ass M$.

Let us denote the composition $f' \circ \beta$ by $\gamma$ and note that $\gamma$ corresponds to a finite field extension $K \subseteq L$. Hence, we have to show that if $M$ a non-nilpotent $K$-vector space then $\gamma^! M = \Hom_K(L, M)$ is not nilpotent as an $L$-vector space. Recall that if $\kappa: F_\ast^e M \to M$ corresponds to the action of a homogeneous element of $(\mathcal{C}_+)^h$ then the corresponding action on $\gamma^! M$ is given by the formula $\varphi \mapsto \kappa \varphi F^e$. If $M$ is not nilpotent we find one such $\kappa$ and $m \in M$ such that $\kappa(m)$ is non-zero. Since $1$ is part of a $K$-basis of $L$ we find $\varphi \in \gamma^! M$ such that $\varphi(1) = m$. We then obtain $\kappa \varphi(F^e(1)) = \kappa(m) \neq 0$ which shows that $\gamma^! M$ is not nilpotent.
\end{proof}

\begin{Prop}
\label{FiniteShriekTauInclusion}
Let $f: X \to Y$ be a finite dominant map and $M$ a $\mathcal{C}_Y$-module. Then $\tau(f^!M) \subseteq f^! \tau(M)$.
\end{Prop}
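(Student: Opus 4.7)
The strategy is to show that $f^!\tau(M)$ is a Cartier submodule of $f^!M$ satisfying the defining property of $\tau(f^!M)$, so that the desired inclusion then follows from the minimality of $\tau(f^!M)$. The inclusion $f^!\tau(M)\subseteq f^!M$ is automatic: the map $\tau(M)\hookrightarrow M$ induces a $\mathcal{C}_X$-linear morphism by Theorem \ref{UpperShriekCartierStructure}(a), and in the finite case $f^!=\bar{f}^{\ast}\mathcal{H}om(f_\ast\mathcal{O}_X,-)$ is left exact, so this map is injective. The task therefore reduces to verifying, for every $\nu\in\Ass f^!M$, that
\[
H^0_\nu((f^!\tau(M))_\nu)\hookrightarrow H^0_\nu((f^!M)_\nu)
\]
is a nil-isomorphism.

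I would work locally so that $f\colon\Spec S\to\Spec R$ is finite and dominant, and invoke Lemma \ref{FiniteShriekAssPrimes} to write $\nu\cap R=\eta\in\Ass M$. Using the cartesian diagram and the morphism $\beta$ from that lemma,
\[
\begin{xy}
\xymatrix{
\Spec(S/\nu)_\nu \ar[r]^\beta & \Spec(S/\eta S)_\eta \ar[d]^{f'} \ar[r]^{\alpha'} & \Spec S \ar[d]^f \\
 & \Spec(R/\eta)_\eta \ar[r]^\alpha & \Spec R,
}
\end{xy}
\]
functoriality of $(-)^!$ (applied to the decomposition of each morphism into finite, closed-immersion and essentially \'etale pieces) yields a natural isomorphism $\alpha'^!\circ f^!\cong f'^!\circ\alpha^!$. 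Combined with Lemma \ref{LocalCohomShriekUpToNilpotence} and its compatibility with localization, this produces a chain of natural nil-isomorphisms
\[
\underline{H^0_\nu((f^!M)_\nu)}\;\cong\;\underline{\beta^!\alpha'^! f^!M}\;\cong\;\underline{\beta^! f'^!\alpha^! M}\;\cong\;\underline{\beta^! f'^! H^0_\eta(M)_\eta},
\]
and an identical chain with $\tau(M)$ in place of $M$.

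By the defining property of $\tau(M)$, the inclusion $H^0_\eta(\tau(M)_\eta)\hookrightarrow H^0_\eta(M_\eta)$ is a nil-isomorphism. Since $f'$ is finite (as the base change of a finite morphism) and $\beta$ factors as a closed immersion followed by an essentially \'etale localization, both $f'^!$ and $\beta^!$ preserve nil-isomorphisms by Corollary \ref{Nonnilpotencepreservedbyshriek}. Applying $\beta^!f'^!$ thus produces a nil-isomorphism on the right end of the chain, and naturality transports it to the required nil-isomorphism on the left; minimality of $\tau(f^!M)$ then gives $\tau(f^!M)\subseteq f^!\tau(M)$.

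The principal obstacle I expect is verifying the base-change isomorphism $\alpha'^! f^!\cong f'^!\alpha^!$ together with the compatibility of $\alpha^!$ with $H^0_\eta$ and with localization, since $(-)^!$ is defined only piecewise in the paper (cf.\ the remark after Theorem \ref{UpperShriekCartierStructure}). However, each morphism appearing in the diagram factors into finite and essentially \'etale pieces, so the required compatibilities can be extracted directly from the explicit formulas given in Theorem \ref{UpperShriekCartierStructure}(a) and (b).
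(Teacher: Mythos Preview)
Your proposal is correct and follows essentially the same route as the paper: both arguments reduce to the affine case, invoke Lemma~\ref{FiniteShriekAssPrimes} to identify $\nu\in\Ass f^!M$ as lying over some $\eta\in\Ass M$, use a base-change identification $\alpha'^!f^!\cong f'^!\alpha^!$ together with Lemma~\ref{LocalCohomShriekUpToNilpotence} to rewrite $\underline{H^0_\nu((f^!-)_\nu)}$ in terms of $(-)^!$ applied to $\underline{H^0_\eta(-)_\eta}$, and then transport the nil-isomorphism $H^0_\eta(\tau(M)_\eta)\subseteq H^0_\eta(M_\eta)$ along these functors via Corollary~\ref{Nonnilpotencepreservedbyshriek}. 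The only cosmetic difference is that the paper passes to the \emph{reduced} fiber $F=\prod_i\Spec(S/\nu_i)_{\nu_i}$ and uses the open-and-closed inclusion $j\colon\Spec(S/\nu)_\nu\hookrightarrow F$ in place of your $\beta$ into the possibly non-reduced fiber $\Spec(S/\eta S)_\eta$; after applying $\underline{\phantom{M}}$ the two are equivalent, and the paper is equally informal about the base-change compatibility you flag as the main obstacle.
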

\begin{proof}
This is local so that we may assume that $X = \Spec S$ and $Y = \Spec R$.
We will show that for $\nu \in \Ass f^!M$ the inclusion $H^0_\nu(\underline{f^! \tau(M)})_{\nu} \subseteq H^0_\nu(\underline{f^!M})_\nu$ is a nil-isomorphism. Since $\tau(f^!M)$ is minimal with this property we then obtain $\tau(f^!M) \subseteq \underline{f^! \tau(M)} \subseteq f^! \tau(M)$. Let us write $f(\nu) = \eta$. From the diagram in Lemma \ref{FiniteShriekAssPrimes} we obtain that $\underline{f'^! H^0_{\eta}(M)}_\eta = \underline{H^0_{\eta S}(f^! M)}_{\eta}$ which lives on the reduced fiber $F  = \Spec (S/\nu_1)_{\nu_1} \times \ldots \times (S/\nu_n)_{\nu_n}$, where each $(S/\nu_i)_{\nu_i}$ is a finite field extension of $(R/\eta)_\eta$ (with $\nu = \nu_1$ say). If $j: \Spec (S/\nu)_\nu \to F$ denotes the inclusion then we obtain that $j^!\underline{H^0_{\eta S}(f^! M)}_{\eta} = \underline{H^0_\nu(f^!M)}_\nu$. Consider now the inclusion $H^0_\eta(\tau(M))_\eta \subseteq H^0_\eta(M)_\eta$ which is a nil-isomorphism. We obtain that the inclusion $H^0_\nu(\underline{f^! \tau(M)})_{\nu} \subseteq H^0_\nu(\underline{f^!M})_\nu$ is still a nil-isomorphism since $f^!$, $j^!$ are left-exact, $\underline{f^! M} \subseteq f^!M$ and $\underline{f^! \tau(M)} \subseteq f^! \tau(M)$ are nil-isomorphisms and since $H^0_\nu$, localization and $f^!$ preserve nil-isomorphisms (Lemma \ref{H0FunctorOnCrys} and Corollary \ref{Nonnilpotencepreservedbyshriek}).
\end{proof}

Equality does not hold in general (cf.\ \cite[Example 3.13]{staeblerunitftestmoduln} and note that the Cartier module in the example only has minimal associated primes).

Let us write $Gr(M, \mathfrak{a}^t)$ for the quotient $\tau(M, \mathfrak{a}^{t-\eps})/\tau(M, \mathfrak{a}^t)$. With this notation we have:

\begin{Ko}
\label{FiniteShriekGrInclusion}
Let $f: \Spec S \to \Spec R$ be a finite dominant map and $M$ a $\mathcal{C}_R$-module and $\mathfrak{a}$ an ideal. Then $Gr(f^! M, \mathfrak{a}^t) \subseteq f^! Gr(M, \mathfrak{a}^t)$.
\end{Ko}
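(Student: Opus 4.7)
The plan is to mirror the proof of Corollary \ref{GrCommutesSmoothPullback} with two modifications: the inclusion from Proposition \ref{FiniteShriekTauInclusion} replaces the equality from Corollary \ref{TauCommutesSmoothPullback}, and only left-exactness of $f^!$ (which for finite $f$ equals $\bar{f}^*\mathcal{H}om_{\mathcal{O}_Y}(f_*\mathcal{O}_X,-)$) is available, rather than the full exactness used in the smooth case. The corresponding loss of equality is reflected in the replacement of ``$=$'' by ``$\subseteq$'' in the conclusion.

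Concretely, by right-continuity (Proposition \ref{Rightcontinuity}) applied simultaneously to $(M,\mathcal{C}^{\mathfrak{a}^{\bullet}})$ on $Y$ and to $(f^!M,(f^*\mathcal{C})^{(\mathfrak{a}S)^{\bullet}})$ on $X$, I would first fix a single $\eps>0$ so small that
\[
Gr(M,\mathfrak{a}^t)=\tau(M,\mathfrak{a}^{t-\eps})/\tau(M,\mathfrak{a}^t)\quad\text{and}\quad Gr(f^!M,\mathfrak{a}^t)=\tau(f^!M,(\mathfrak{a}S)^{t-\eps})/\tau(f^!M,(\mathfrak{a}S)^t).
\]
Applying Proposition \ref{FiniteShriekTauInclusion} to the Cartier algebras $\mathcal{C}^{\mathfrak{a}^{t-\eps}}$ and $\mathcal{C}^{\mathfrak{a}^t}$ (noting that $f^*(\mathcal{C}^{\mathfrak{a}^s})=(f^*\mathcal{C})^{(\mathfrak{a}S)^s}$) then yields compatible inclusions $\tau(f^!M,(\mathfrak{a}S)^{t-\eps})\subseteq f^!\tau(M,\mathfrak{a}^{t-\eps})$ and $\tau(f^!M,(\mathfrak{a}S)^t)\subseteq f^!\tau(M,\mathfrak{a}^t)$; passing to quotients in the resulting commutative square defines a natural map
\[
Gr(f^!M,\mathfrak{a}^t)\longrightarrow f^!\tau(M,\mathfrak{a}^{t-\eps})/f^!\tau(M,\mathfrak{a}^t).
\]
Applying the left-exact functor $f^!$ to the short exact sequence $0\to\tau(M,\mathfrak{a}^t)\to\tau(M,\mathfrak{a}^{t-\eps})\to Gr(M,\mathfrak{a}^t)\to 0$ produces a canonical embedding $f^!\tau(M,\mathfrak{a}^{t-\eps})/f^!\tau(M,\mathfrak{a}^t)\hookrightarrow f^!Gr(M,\mathfrak{a}^t)$, and composing the two maps gives the desired comparison morphism $Gr(f^!M,\mathfrak{a}^t)\to f^!Gr(M,\mathfrak{a}^t)$.

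The main technical obstacle will be verifying that the first map in the composition is genuinely injective, so that the ``$\subseteq$'' in the statement is literal rather than merely the existence of a canonical natural transformation. Its kernel is $\bigl(\tau(f^!M,(\mathfrak{a}S)^{t-\eps})\cap f^!\tau(M,\mathfrak{a}^t)\bigr)/\tau(f^!M,(\mathfrak{a}S)^t)$. To show the vanishing of this kernel I would try to exhibit the intersection as already satisfying the nil-isomorphism condition at each associated prime $\nu\in\Ass_{\mathcal{C}^{(\mathfrak{a}S)^t}}(f^!M)$; via Lemma \ref{FiniteShriekAssPrimes} this reduces to a statement on the fibre over $f(\nu)$, and using Lemma \ref{LocalCohomShriekUpToNilpotence} and Lemma \ref{H0FunctorOnCrys} one can sandwich $H^0_\nu$ of the intersection between $H^0_\nu(\tau(f^!M,(\mathfrak{a}S)^t))_\nu$ and $H^0_\nu(f^!M)_\nu$. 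Minimality of the test module then only provides the trivial inclusion, so this is the step where the argument is more delicate than in the smooth analogue; I suspect one needs to invoke test element theory (Theorem \ref{TestElementExistence}) together with the interaction between test elements on $M$ and on $f^!M$ under the finite dominant map $f$ to close the argument.
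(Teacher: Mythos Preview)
Your approach is essentially the same as the paper's: set up the commutative square with the two inclusions from Proposition~\ref{FiniteShriekTauInclusion} as vertical maps, use left-exactness of $f^!$ for the bottom row, and read off the induced map on quotients. The paper's proof is exactly this diagram followed by the sentence ``One sees that this induces the desired inclusion.''

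Where you diverge is in flagging the injectivity of the induced map $Gr(f^!M,\mathfrak{a}^t)\to f^!Gr(M,\mathfrak{a}^t)$ as a genuine obstacle. You are right that this is not automatic from the diagram: the kernel is $\bigl(\tau(f^!M,\mathfrak{a}^{t-\eps})\cap f^!\tau(M,\mathfrak{a}^t)\bigr)/\tau(f^!M,\mathfrak{a}^t)$, and the minimality defining $\tau(f^!M,\mathfrak{a}^t)$ gives the inclusion in the wrong direction. The paper does \emph{not} supply an argument for this either; it simply asserts the conclusion. So the gap you identify is a gap in the paper's proof as well, if one reads the ``$\subseteq$'' literally.

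However, the resolution is much simpler than the test-element argument you sketch. Looking at the introduction and at how the result is used (e.g.\ in Corollary~\ref{GrQuasifinitepushforward}), what is actually claimed and needed is only a natural transformation $Gr^t\circ f^!\to f^!\circ Gr^t$ of crystals, not a monomorphism of Cartier modules. The diagram already produces this natural map, and that is all the corollary is meant to record. So rather than trying to prove injectivity---which may well fail on the nose---you should read the statement as asserting the existence of a canonical comparison map, constructed exactly as you (and the paper) do. Your first two paragraphs already constitute the complete proof in that sense; the third paragraph is attacking a stronger statement than what is required.
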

\begin{proof}
We have the following commutative diagram with exact rows, where the vertical arrows are injective (due to Proposition \ref{FiniteShriekTauInclusion}):
\[\begin{xy}\xymatrix{ 0 \ar[r] & \tau(f^!M, \mathfrak{a}^t) \ar[r] \ar[d]& \tau(f^!M, \mathfrak{a}^{t-\eps}) \ar[r] \ar[d]& Gr(f^!M, \mathfrak{a}^t) \ar[r] &0\\
 0 \ar[r] & f^! \tau(M, \mathfrak{a}^t) \ar[r] & f^! \tau(M, \mathfrak{a}^{t-\eps}) \ar[r] & f^! Gr(M, \mathfrak{a}^t)}
\end{xy}\]
One sees that this induces the desired inclusion.
\end{proof}

\section{A relative notion of gauge boundedness}
\label{SectionRelativeGaugebounds}

The goal of this section is to prove that, under suitable conditions on $\mathcal{C}_Y$, for $f: X \to Y$ a morphism of finite type and $M$ a coherent $\mathcal{C}_X$-module on $X$ one finds a \emph{coherent} $\mathcal{C}_Y$-submodule $N \subseteq f_\ast M$ for which the inclusion is a \emph{local nil-isomorphism} (see \ref{DefLNilpotence} below). This enables us to make sense of $\tau(f_\ast M)$.

The idea of the proof is a modified version of the argument used in \cite[Theorem 3.2.14]{blickleboecklecartiercrystals}, where this is proven in the situation of a single Cartier linear operator. One needs to incorporate techniques used in \cite{blicklep-etestideale} on gauge boundedness to make this work in greater generality.

\begin{Def}
Let $S$ be a finite type $R$-algebra (for some ring $R$) and fix a presentation $S = R[x_1, \ldots, x_n]/I$. For a multi-index $i = (i_1, \ldots, i_n)$ we denote by $\abs{i}$ its maximum norm. This induces an increasing filtration $F_d$ on $R[x_1, \ldots, x_n]$, where $F_d$ is the $R$-module generated by monomials $x_1^{i_1} \cdots x_n^{i_n}$ with $\abs{i} \leq d$ and $F_{-\infty} = 0$. This in turn induces an increasing filtration $S_d$ on $S$ given by $S_d = F_d \cdot 1$ for $d \geq 0$ and $S_{-\infty} = 0$.

Let now $M$ be a coherent $S$-module and fix generators $m_1, \ldots, m_r$. Then we can define an increasing filtration $M_d = S_d \cdot \langle m_1, \ldots, m_d\rangle$, $M_{-\infty} = 0$ on $M$. Given $m \in M$ we write $\delta(m) = d$, where $d$ is defined by the property $m \in M_d \setminus M_{d-1}$. We call $\delta$ a \emph{gauge} for $M$.
\end{Def}

Note that a gauge depends both on the presentation of $S$ and on the choice of generators of $M$. One should think of a gauge as a substitute for the degree in a polynomial ring. As was already exploited by Blickle (for $R = k$ an $F$-finite field) in \cite{blicklep-etestideale} gauges are a measure for the contracting property of Cartier linear operators.

For the next definition, recall Lemma \ref{Cartiermodulestructureviahomomorphism} which said that a $\mathcal{C}_S$-module structure on $M$ is equivalent to a ring homomorphism $\mathcal{C}_S \to \bigoplus \Hom_S(F_\ast^e M, M)$.

\begin{Def}
Let $f: \Spec S \to \Spec R$ of finite type, $\mathcal{C}_R$ a Cartier algebra and $f^\ast \mathcal{C}_R = \mathcal{C}_S$. Given a coherent $\mathcal{C}_S$-module $M$ we say that $(M, \mathcal{C}_S)$ is \emph{gauge bounded} if for some gauge $\delta$ on $M$ there exists a subset $\{\kappa_{e_i}\ \, \vert \, i \in I\} \subseteq {\mathcal{C}_S}_+$ of homogeneous elements of degrees $e_i$ such that their images in $\bigoplus_e \Hom_S(F_\ast^e M, M)$ generate the image of ${\mathcal{C}_S}_+$ as a \emph{right $S$-module} and a constant $K$ such that for all $i$ we have $\delta(\kappa_{e_i}(m)) \leq \frac{\delta(m)}{p^{e_i}} + K$.
We say that $\mathcal{C}_S$ is \emph{gauge bounded} if for all coherent $\mathcal{C}_S$-modules $M$ the pairs $(M, \mathcal{C}_S)$ are gauge bounded.
\end{Def}

As noted earlier any Cartier algebra $\mathcal{C}$ finitely generated over a ring $R$ essentially of finite type over an $F$-finite field is gauge bounded. If $\mathcal{C}$ is a gauge bounded Cartier algebra over $\Spec R$ then any algebra of the form $\mathcal{C}^{\mathfrak{a}^t}$ for an ideal $\mathfrak{a}$ and a rational number $t \geq 0$ is also gauge bounded (see \cite[Propositions 4.9, 4.15]{blicklep-etestideale}).

\begin{Def}
\label{DefLNilpotence}
We say that a quasi-coherent $\mathcal{C}$-module $M$ is \emph{locally nilpotent} if it admits a filtration $\bigcup_{e \in \mathbb{N}} M_e = M$ of nilpotent $\mathcal{C}$-submodules $M_e$. If $\varphi: M \to N$ is a morphism of quasi-coherent $\mathcal{C}$-modules then we say that $\varphi$ is a \emph{local nil-isomorphism} if both $\ker \varphi$ and $\coker \varphi$ are locally nilpotent.
\end{Def}

In the following results one obtains a trivial sharpening if one replaces the condition on the algebra $\mathcal{C}$ with the corresponding condition on the image of the algebra under the ring homomorphism $\mathcal{C} \to \bigoplus_e \Hom(F_\ast^e M, M)$.

\begin{Le}
\label{GaugeboundPushforwardNiliso}
Let $f: \Spec S \to \Spec R$ be of finite type, $\mathcal{C}$ a Cartier algebra on $\Spec R$ with homogeneous right $R$-module generators $\{\kappa_{i} \, \vert \, i \in I\}$ and $\mathcal{C}' = f^\ast \mathcal{C}$. If $M$ is a  coherent $\mathcal{C}'$-module, gauge bounded with respect to the $\kappa_{i} \otimes 1$, then $f_\ast M$ is locally nil-isomorphic to a coherent $\mathcal{C}$-module.
\end{Le}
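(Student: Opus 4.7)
The plan is to exhibit a coherent $R$-submodule $N \subseteq f_\ast M$ that is $\mathcal{C}$-stable and such that $f_\ast M/N$ is locally nilpotent; the inclusion will then provide the desired local nil-isomorphism. We may work locally, so set $Y = \Spec R$ and $X = \Spec S$. Fix a presentation $S = R[x_1,\ldots,x_n]/I$ and $S$-module generators $m_1,\ldots,m_r$ of $M$ inducing a gauge $\delta$ with filtration $M_d = S_d \cdot \langle m_1,\ldots,m_r\rangle$. Each $M_d$ is an $R$-submodule of $f_\ast M$ that is finitely generated over $R$ (since $S_d$ is), and $f_\ast M = \bigcup_d M_d$. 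By hypothesis there exists $K \geq 0$ with $\delta(\kappa_i \cdot m) \leq \delta(m)/p^{e_i} + K$ for all $i$ and all $m$, where $\kappa_i \cdot m$ is the $\mathcal{C}$-action on $f_\ast M$ induced by $\kappa_i \otimes 1 \in \mathcal{C}'$; moreover the $\kappa_i$ right-$R$-generate $\mathcal{C}_+$.

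I would set $N := M_{d_0}$ for $d_0 \geq Kp/(p-1)$ and first verify that $N$ is $\mathcal{C}$-stable. Any $\kappa \in \mathcal{C}_+$ may be written $\kappa = \sum_i \kappa_i r_i$ with $r_i \in R$. Via the multiplication rule of $\mathcal{C}'$ and the identity $(\kappa_i \otimes 1)(r_i \otimes 1) = (\kappa_i r_i) \otimes 1$, the action unpacks as $\kappa \cdot m = \sum_i \kappa_i(r_i m)$. For $m \in N$, $R$-stability of $N$ gives $r_i m \in N$, and the one-step gauge bound yields $\delta(\kappa_i(r_i m)) \leq d_0/p^{e_i} + K \leq d_0$, so $\kappa \cdot m \in N$. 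The same argument shows that $M_d$ is $\mathcal{C}$-stable for every $d \geq d_0$.

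Next, $f_\ast M/N = \bigcup_{d \geq d_0} M_d/N$ is a union of coherent Cartier subquotients, and I would show each $M_d/N$ is nilpotent. Iterating the one-step bound, an $E$-fold composition of generators satisfies $\delta(\kappa_{i_1}\cdots\kappa_{i_E}(m)) \leq \delta(m)/p^E + Kp/(p-1)$, the error term being bounded by the geometric series $K\sum_{j\geq 0} p^{-j}$. Using the bimodule relation $r\kappa = \kappa r^{p^e}$ repeatedly to push $R$-factors to the right, the ideal $\mathcal{C}_+^E$ is the right-$R$-span of such $E$-fold products of the $\kappa_i$, so the same bound controls the action of any $\kappa \in \mathcal{C}_+^E$ on $m \in M_d$. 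Choosing $E$ large enough so that $d/p^E + Kp/(p-1) \leq d_0$ forces $\mathcal{C}_+^E M_d \subseteq N$, hence $M_d/N$ is nilpotent.

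The main obstacle is the bookkeeping in the first step: translating the right-$R$-decomposition $\kappa = \sum \kappa_i r_i$ into the action formula via the multiplication rule of $\mathcal{C}'$ and verifying that the $R$-factors $r_i$ get absorbed into $N$ without raising the gauge above $d_0$. Once this is settled, both steps follow from straightforward telescoping gauge estimates, and the inclusion $N \hookrightarrow f_\ast M$ is the claimed local nil-isomorphism.
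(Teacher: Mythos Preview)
Your argument is correct and follows the same gauge-contraction idea as the paper. The only substantive difference is the choice of the coherent submodule: the paper takes $N=\underline{\mathcal{C} M_{\leq K}}$, whereas you take $N=M_{d_0}$ directly and verify it is $\mathcal{C}$-stable. Your choice is arguably more elementary since it avoids invoking the stabilization $\underline{(-)}$; the paper's choice gives the slightly sharper bound $\delta(\mathcal{C}_+^a M_{\leq A})\leq A/p^a+K$ (no geometric-series error) by first observing that the one-step bound holds for \emph{every} homogeneous $\kappa\in\mathcal{C}_+$, not just the generators.

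One small point: in your second step you need $d/p^E+Kp/(p-1)\leq d_0$, which fails for all $E$ if $d_0=Kp/(p-1)$ exactly. Either take $d_0$ to be any integer strictly larger than $Kp/(p-1)$, or invoke that $\delta$ is integer-valued so that $d/p^E+Kp/(p-1)<d_0+1$ for large $E$ already forces $\delta\leq d_0$.
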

\begin{proof}
By assumption we find $K$ such that for all $m \in M$ we have $\delta(\kappa_i \otimes 1(m)) \leq \frac{\delta(m)}{p^{e_i}} + K$, where $e_i$ is the degree of $\kappa_i$. Denoting the elements of gauge at most $e$ by $M_{\leq e}$ we claim that the inclusion $\underline{\mathcal{C} M_{\leq K}} \subseteq f_\ast M$ is the desired local nil-isomorphism.

Any homogeneous element $\kappa$ of degree $e$ in $\mathcal{C}$ may be written as a sum $\sum_i \kappa_i r_i$, where $r_i \in R$. Since $\delta(\sum_i m_i) \leq \max\{ \delta(m_i) \, \vert \, i\}$ and $R$ is contained in gauge zero any such $\kappa$ satisfies the inequality
\[\delta(\kappa(m)) \leq \frac{\delta(m)}{p^e} + K.\]
Let now $A \geq K +1$ then \[ \delta(\mathcal{C}_+^a M_A) \leq \frac{A}{p^a} + K.\] We conclude that the elements of $\mathcal{C}_+^a(M_A)$ have gauge $\leq K$ for all $a \gg 0$ since $\delta$ is integer valued. By definition $\underline{\mathcal{C} M_{\leq K}} = \mathcal{C}_+^b M_{\leq K} + \mathcal{C}_+^{b+1} M_{\leq K}$ for all $b \gg 0$. Hence, $\mathcal{C}_+^{2c} M_{A} = \mathcal{C}_+^c (\mathcal{C}_+^c M_A) \subseteq \mathcal{C}_+^c M_{\leq K} \subseteq \underline{\mathcal{C} M_{\leq K}}$ for all $c \geq \max\{a, b\}$. Note that $\underline{\mathcal{C} M_{\leq K}}$ is clearly a $\mathcal{C}$-module. Moreover, $\mathcal{C} M_{\leq K}$ consists of elements of gauge at most $\frac{K}{p} + K$. Hence, by the above inequality $\underline{\mathcal{C} M_{\leq K}}$ consists of elements of gauge at most $K$ and is thus coherent.
\end{proof}

\begin{Bem}
Unfortunately we do not know how to get rid of the assumption that one has a gauge bound on generators of the form $\kappa \otimes 1$. The issue is that if we start with some arbitrary generating system of $\mathcal{C}'$, then we do not get any control over the behavior of the algebra $\mathcal{C}$ on $f_\ast M$.
\end{Bem}

We will now show that the hypothesis of Lemma \ref{GaugeboundPushforwardNiliso} is satisfied if $\mathcal{C}$ is finitely generated or if $\mathcal{C} = \mathcal{D}^{\mathfrak{a}^t}$ for some finitely generated Cartier algebra $\mathcal{D}$, $\mathfrak{a}$ an ideal and $t$ a positive rational number.

\begin{Prop}
\label{GaugeBoundSingleMorphism}
Let $f: \Spec S \to \Spec R$ be of finite type, $M$ a coherent $S$-module endowed with a gauge $\delta$. Assume that $R$ is $F$-finite. If $\varphi: F_\ast^eM \to M$ is an $S$-linear map, then there is a constant $K$ such that for all $m \in M$ we have \[ \delta(\varphi(m)) \leq \frac{\delta(m)}{p^e} + \frac{K}{p^e -1}.\]
\end{Prop}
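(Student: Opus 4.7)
My approach is to mimic the classical argument (for $R = k$ an $F$-finite field) found in \cite[Proposition 4.13]{blicklep-etestideale}, the only novelty being a Frobenius-basis trick on $R$ needed to handle the coefficients. Fix the chosen presentation $S = R[x_1,\ldots,x_n]/I$ and the chosen generators $m_1,\ldots,m_\rho$ of $M$ defining the gauge $\delta$. Since $R$ is $F$-finite, $F_\ast^e R$ is finitely generated over $R$; choose $b_1,\ldots,b_\ell \in R$ such that every $a \in R$ admits a (non-unique) decomposition of the form $a = \sum_{j=1}^\ell c_j^{p^e} b_j$ with $c_j \in R$. This finite datum plays the role that a $p^e$-th power basis of a field plays in the classical argument.

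Given $m \in M$ with $\delta(m) = d$, I would write $m = \sum_i s_i m_i$ with each $s_i$ lifting to some polynomial $\sum_{|k|_\infty \leq d} a_{i,k} x^k \in R[x_1,\ldots,x_n]$. Two simultaneous Frobenius decompositions then come into play: for each $k$ with $|k|_\infty \leq d$, Euclidean division yields $k = p^e q + r$ with $0 \leq r_s < p^e$ for all $s$ and $|q|_\infty \leq \lfloor d/p^e\rfloor$; and each coefficient $a_{i,k}$ decomposes as $\sum_j c_{i,k,j}^{p^e} b_j$. Combining the two, $a_{i,k} x^k$ becomes a sum of terms of the form $(c_{i,k,j} x^q)^{p^e}\, b_j x^r$. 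Passing to $S$ and invoking the $S$-linearity of $\varphi$ (i.e.\ the identity $\varphi(t^{p^e} n) = t \varphi(n)$ for $t \in S$, $n \in M$) produces an expression
\[
    \varphi(m) \;=\; \sum_{i,k,j} \overline{c_{i,k,j} x^q}\,\varphi\!\left(\overline{b_j x^r}\, m_i\right).
\]

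The punchline is that the arguments $\overline{b_j x^r}\, m_i$ on the right range over a \emph{finite} set indexed by $(i,j,r) \in \{1,\ldots,\rho\}\times\{1,\ldots,\ell\}\times\{0,\ldots,p^e-1\}^n$, hence their $\varphi$-images share a uniform gauge bound $L$. Since $c_{i,k,j} \in R \subseteq S_0$ and $|q|_\infty \leq \lfloor d/p^e\rfloor$, each prefactor $\overline{c_{i,k,j} x^q}$ lies in $S_{\lfloor d/p^e \rfloor}$, and the displayed equation exhibits $\varphi(m)$ as a finite sum of elements of $M_{\lfloor d/p^e \rfloor + L}$. Setting $K := L(p^e - 1)$ then gives the claimed inequality $\delta(\varphi(m)) \leq d/p^e + K/(p^e-1)$. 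The only step with genuine content is the construction of the Frobenius-basis $b_1,\ldots,b_\ell$; this is precisely what the $F$-finiteness hypothesis on $R$ supplies, and indeed without it no such finite controlling datum could be expected. Everything else is bookkeeping: carefully lifting to the polynomial ring so that the decomposition makes sense, and then projecting back to $S$, which is harmless because the gauge on $S$ is by construction the image filtration coming from $F_\bullet$.
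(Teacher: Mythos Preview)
Your argument is correct and follows essentially the same strategy as the paper: decompose each coefficient $a_{i,k}$ along a Frobenius generating set of $R$, decompose each monomial $x^k$ via Euclidean division, pull out the $p^e$-th powers using $S$-linearity of $\varphi$, and bound the finitely many remaining $\varphi$-images uniformly. The only substantive difference is in how the coefficient decomposition is set up. The paper first invokes Gabber's result that any $F$-finite $\Spec R$ embeds into some $\Spec A$ on which Frobenius is finite \emph{free}, thereby reducing to the case where $F^e_\ast R$ has an honest $R$-basis, and then runs the argument of \cite[Lemma 4.4, Proposition 4.5]{blicklep-etestideale} verbatim. You bypass this reduction by observing that a mere generating set $b_1,\ldots,b_\ell$ of $F^e_\ast R$ over $R$ (which exists by $F$-finiteness alone) already suffices: the decomposition $a=\sum_j c_j^{p^e}b_j$ need not be unique, but uniqueness is never used. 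Your route is therefore slightly more elementary, at the cost of writing out the bookkeeping that the paper outsources to the reference.
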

\begin{proof}
By \cite[Remark 13.4]{gabbertstructures} there is a closed embedding $\Spec R \to \Spec A$ such that $F: A \to F_\ast A$ is finite free. We may therefore replace $R$ with $A$ and assume that the Frobenius $F$ is free on $R$. Next, since $f$ is of finite type we may replace $S$ by a polynomial ring $R[x_1, \ldots, x_n]$. Since the monomials $x_1^{e_1} \cdots x_n^{e_n}$ with $e_i \leq q -1$ are a basis for $S$ over $R[x_1^q, \cdots, x_n^q]$ and since $F_\ast^e R$ admits a finite $R$-basis the rest of the proof works just as in \cite[Lemma 4.4, Proposition 4.5]{blicklep-etestideale}.
\end{proof}

\begin{Prop}
\label{FiniteGenerationGaugeBoundWithSpecialGenerators}
Let $f: \Spec S \to \Spec R$ be of finite type with $R$ $F$-finite, $\mathcal{C}$ a Cartier algebra finitely generated as an $R$-Cartier algebra. Then there are generators $\{\kappa_i \otimes 1 \, \vert \, i \in I\}$ such that for any coherent $\mathcal{C}'_S= f^\ast \mathcal{C}$-module $M$ the pair $(M, \mathcal{C}')$ is gauge bounded.
\end{Prop}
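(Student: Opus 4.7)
The plan is to fix a finite generating set of $\mathcal{C}$ as an $R$-Cartier algebra, tensor with $S$ to obtain candidate generators of $\mathcal{C}'$, invoke Proposition \ref{GaugeBoundSingleMorphism} to bound each of them individually, and then use the ring-homomorphism description of the action from Lemma \ref{Cartiermodulestructureviahomomorphism} to iterate these bounds into a single uniform bound on arbitrary products.

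First, I would fix homogeneous $\kappa_1,\ldots,\kappa_n \in \mathcal{C}_+$ of respective degrees $e_1,\ldots,e_n\geq 1$ whose monomials form a right $R$-module generating set of $\mathcal{C}$; these exist by finite generation. After tensoring with $S$ and restricting to positive degree, every element of $\mathcal{C}'_+$ is a right $S$-linear combination of the homogeneous elements
\[
\mu_I := (\kappa_{i_1}\otimes 1)\cdots(\kappa_{i_s}\otimes 1), \qquad I=(i_1,\ldots,i_s),\ s\geq 1,\ i_j\in\{1,\ldots,n\},
\]
so the collection $\{\mu_I\}$ is the generating set demanded by the statement, and it does not depend on $M$.

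Next, given a coherent $\mathcal{C}'$-module $M$ with an arbitrary gauge $\delta$, Lemma \ref{Cartiermodulestructureviahomomorphism} lets me view each $\kappa_i\otimes 1$ as an honest $S$-linear map $F_\ast^{e_i}M\to M$, and Proposition \ref{GaugeBoundSingleMorphism} yields constants $K_1,\ldots,K_n$ with $\delta((\kappa_i\otimes 1)(m))\leq \delta(m)/p^{e_i}+K_i/(p^{e_i}-1)$. Since there are only finitely many generators and each $p^{e_i}-1\geq 1$, I set $K_0 := \max_i K_i/(p^{e_i}-1)$ to obtain the uniform single-step bound $\delta((\kappa_i\otimes 1)(m))\leq \delta(m)/p^{e_i}+K_0$ for every generator.

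To finish, I would iterate: by Lemma \ref{Cartiermodulestructureviahomomorphism} the action of $\mu_I$ on $m$ agrees set-theoretically with successively applying the maps $\Xi(\kappa_{i_j}\otimes 1)$ from $j=s$ down to $j=1$. By induction on $s$, writing $e=e_{i_1}+\ldots+e_{i_s}$, the single-step bound propagates to
\[
\delta(\mu_I(m)) \leq \delta(m)/p^{e} + K_0 \sum_{j=0}^{s-1} p^{-(e_{i_1}+\ldots+e_{i_j})},
\]
and since every $e_{i_k}\geq 1$ the exponents $e_{i_1}+\ldots+e_{i_j}$ are strictly increasing in $j$, so the tail sum is dominated by the geometric series $\sum_{j\geq 0}p^{-j}=p/(p-1)$. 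Taking $K := K_0\cdot p/(p-1)$ yields a single constant bounding every $\mu_I$, as required. The only substantive obstacle is producing a uniform initial bound on the chosen generators; this is handled by applying Proposition \ref{GaugeBoundSingleMorphism} finitely many times and taking the maximum, after which the rest is geometric-series bookkeeping.
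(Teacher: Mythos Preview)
Your proof is correct and follows essentially the same approach as the paper: both take the finite products of the algebra generators tensored with $1$ as the right $S$-module generating set, invoke Proposition~\ref{GaugeBoundSingleMorphism} on each individual generator, and then iterate via a geometric-series estimate. In fact you spell out the iteration explicitly, whereas the paper simply refers to ``an argument similar to the one in \cite[Corollary 4.6]{blicklep-etestideale}'' for that step.
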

\begin{proof}
Fix algebra generators $\varphi_1, \ldots, \varphi_n$ of $\mathcal{C}$. Then the set $\{\psi \, \vert \, \psi$ finite product of $\varphi_i \otimes 1\}$ is a set of $S$-module generators for $\mathcal{C}_S$. It follows from Proposition \ref{GaugeBoundSingleMorphism} and an argument similar to the one in \cite[Corollary 4.6]{blicklep-etestideale} that $M$ is gauge bounded with respect to the $\psi$'s.
\end{proof}

\begin{Prop}
\label{FG+IdealGaugeBoundWithSpecialGenerators}
Let $f: \Spec S \to \Spec R$ be of finite type with $R$ $F$-finite, $\mathcal{C}$ a Cartier algebra finitely generated as an $R$-Cartier algebra, $\mathfrak{a} \subseteq R$ an ideal and $t \geq 0$ a rational number.
Then there are generators $\{ \kappa_i \otimes 1\, \vert\, i \in I\}$ of $\mathcal{C}_S= f^\ast(\mathcal{C}^{\mathfrak{a}^t})$ such that for any $\mathcal{C}_S$-module the pair $(M, \mathcal{C}_S)$ is gauge bounded.
\end{Prop}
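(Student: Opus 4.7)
The plan is to combine the argument of Proposition \ref{FiniteGenerationGaugeBoundWithSpecialGenerators} with the observation that the $\mathfrak{a}^t$-twist interacts well with Cartier-linearity, much in the spirit of \cite[Proposition 4.15]{blicklep-etestideale}. Fix homogeneous $R$-Cartier algebra generators $\varphi_1,\ldots,\varphi_n$ of $\mathcal{C}$ of respective degrees $d_1,\ldots,d_n$, and let $f_1,\ldots,f_s$ be generators of the ideal $\mathfrak{a}\subseteq R$. Then as a right $S$-module, $\mathcal{C}_S=f^\ast(\mathcal{C}^{\mathfrak{a}^t})$ is generated in degree $e$ by the elements
\[
\kappa_{\psi,\alpha}\;=\;\psi\cdot f_1^{\alpha_1}\cdots f_s^{\alpha_s}\otimes 1,
\]
where $\psi$ ranges over finite products of the $\varphi_i$ of total degree $e$ and $\alpha=(\alpha_1,\ldots,\alpha_s)\in\mathbb{N}^s$ satisfies $|\alpha|\geq\lceil tp^e\rceil$. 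This is the candidate generating set.

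The key step is the gauge estimate. Fix a gauge $\delta$ on a coherent $\mathcal{C}_S$-module $M$. By Proposition \ref{FiniteGenerationGaugeBoundWithSpecialGenerators}, there is a single constant $K_0$ (depending on the $\varphi_i\otimes 1$ and the gauge) such that $\delta((\psi\otimes 1)(m))\leq \delta(m)/p^e + K_0$ for every product $\psi$ of the $\varphi_i$ of degree $e$ and every $m\in M$. Now given $\kappa_{\psi,\alpha}$ of degree $e$, write each $\alpha_i=p^e\beta_i+r_i$ with $0\leq r_i<p^e$. Because each $\varphi_i$, and hence $\psi$, is Cartier-linear of degree a divisor of $p^e$, and because the elements of $R$ act through $S$ by the structure map, one has
\[
\kappa_{\psi,\alpha}(m)\;=\;f_1^{\beta_1}\cdots f_s^{\beta_s}\cdot(\psi\otimes 1)\bigl(f_1^{r_1}\cdots f_s^{r_s}\,m\bigr).
\]
Since scalar multiplication by elements of $R$ does not increase the gauge (as $R\subseteq S_0$), we obtain
\[
\delta(\kappa_{\psi,\alpha}(m))\;\leq\;\delta\bigl((\psi\otimes 1)(f_1^{r_1}\cdots f_s^{r_s}m)\bigr)\;\leq\;\frac{\delta(m)}{p^e}+K_0,
\]
with a constant $K_0$ that is \emph{independent} of the multi-index $\alpha$.

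The main (and only real) obstacle is precisely this uniformity in $\alpha$, since the $\lceil tp^e\rceil$ grow with $e$ and one might na\"ively fear an unbounded contribution from the ideal part. The Cartier-linearity identity $\varphi(f^{p^e}x)=f\varphi(x)$ resolves this by transferring all but a bounded ($<p^e$) part of each exponent outside the Cartier operator, where it acts merely as $R$-multiplication and therefore does not affect the gauge bound. With the estimate above in hand, the $\kappa_{\psi,\alpha}\otimes 1$ are generators of the required form, proving that $(M,\mathcal{C}_S)$ is gauge bounded.
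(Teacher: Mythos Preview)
Your proof is correct and follows essentially the same approach as the paper: use Proposition~\ref{FiniteGenerationGaugeBoundWithSpecialGenerators} to get a uniform gauge bound for the products $\psi\otimes 1$, then observe that multiplying by generators of $\mathfrak{a}^{\lceil tp^e\rceil}$ does not disturb this bound because $\mathfrak{a}\subseteq R$ lies entirely in gauge zero.

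The one difference is that your Cartier-linearity decomposition $\alpha_i=p^e\beta_i+r_i$ is an unnecessary detour here. Since every $f_j\in R\subseteq S_0$, one has directly $\delta(f_1^{\alpha_1}\cdots f_s^{\alpha_s}m)\leq\delta(m)$ regardless of how large $|\alpha|$ is, so
\[
\delta(\kappa_{\psi,\alpha}(m))=\delta\bigl((\psi\otimes 1)(f_1^{\alpha_1}\cdots f_s^{\alpha_s}m)\bigr)\leq \frac{\delta(m)}{p^e}+K_0
\]
without any splitting of exponents. The ``obstacle'' you identify (unbounded growth of $\lceil tp^e\rceil$) is a genuine issue when $\mathfrak{a}$ lives in $S$ and its generators may have positive gauge---this is the situation of \cite[Proposition~4.15]{blicklep-etestideale} that you allude to---but in the present proposition $\mathfrak{a}\subseteq R$ makes it disappear immediately. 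The paper's proof exploits precisely this shortcut.
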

\begin{proof}
By Proposition \ref{FiniteGenerationGaugeBoundWithSpecialGenerators} we find elements $\{\kappa_i \otimes 1\, \vert \, i \in I\}$ that generate $f^\ast \mathcal{C}$ such that $(M, f^\ast \mathcal{C})$ is gauge bounded for any $f^\ast\mathcal{C}$-module $M$. Since $\mathfrak{a} \subseteq R$ the ideal $\mathfrak{a}^{\lceil tp^e\rceil}$ is generated in gauge zero. Then the $\kappa_i a_{je_i} \otimes 1$ with $\kappa_i \otimes 1 \in \mathcal{C}_{e_i}$ and $a_{j e_i}$, $j = 1, \ldots, n = n(e_i)$, generators of $\mathfrak{a}^{\lceil t p^{e_i} \rceil}$ form a system of generators for $\mathcal{C}'$ which is gauge bounded.
\end{proof}

In order to state the main result of this section we need one more definition.

\begin{Def}
Let $X$ be a noetherian $F$-finite scheme and $\mathcal{C}_X$ a Cartier algebra on $X$ we say that $\mathcal{C}_X$ is \emph{finitely generated} if there exists a finite covering $U_i = \Spec A_i$ of open affine such that $\mathcal{C}_{U_i}$ is finitely generated as an $A_i$-Cartier algebra for all $i$.
\end{Def}

\begin{Theo}
\label{PushforwardCoherentCrystals}
Let $f: X \to Y$ a morphism of finite type between noetherian $F$-finite schemes. Let $\mathcal{C}_Y$ be a finitely generated Cartier algebra on $Y$, $\mathfrak{a} \subseteq \mathcal{O}_Y$ an ideal sheaf and $t \geq 0$ a rational number. If $M$ is a coherent $f^\ast \mathcal{C}_Y^{\mathfrak{a}^t}$-module, then there is a coherent $\mathcal{C}_Y^{\mathfrak{a}^t}$-module $N \subseteq f_\ast M$ for which the inclusion is a local nil-isomorphism.
\end{Theo}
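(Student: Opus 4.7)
The plan is to reduce to the affine local situation, apply the gauge-bounded construction of Lemma \ref{GaugeboundPushforwardNiliso} on each chart, and then glue. First, using the definition of a finitely generated Cartier algebra, choose a finite open affine cover $\{V_i = \Spec R_i\}$ of $Y$ on which $\mathcal{C}_{V_i}$ is finitely generated as an $R_i$-Cartier algebra. On each chart, Proposition \ref{FG+IdealGaugeBoundWithSpecialGenerators} produces homogeneous generators of $f^*\mathcal{C}_{V_i}^{\mathfrak{a}^t}$ of the special form $\kappa_j \otimes 1$ with respect to which the coherent $f^*\mathcal{C}_{V_i}^{\mathfrak{a}^t}$-module $M|_{f^{-1}(V_i)}$ is gauge bounded. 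Lemma \ref{GaugeboundPushforwardNiliso} then yields a coherent $\mathcal{C}_{V_i}^{\mathfrak{a}^t}$-submodule $N_i \subseteq (f_\ast M)|_{V_i}$ for which the inclusion is a local nil-isomorphism.

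To globalize, I use that $Y$ is noetherian and $f_\ast M$ is quasi-coherent, so each $N_i$ extends to a coherent $\mathcal{O}_Y$-subsheaf $\widetilde{N}_i \subseteq f_\ast M$. Setting $N' = \sum_i \widetilde{N}_i$ gives a coherent $\mathcal{O}_Y$-submodule, and I let $N$ be the $\mathcal{C}_Y^{\mathfrak{a}^t}$-submodule of $f_\ast M$ that $N'$ generates. Since $N|_{V_i} \supseteq N_i$, the quotient $(f_\ast M/N)|_{V_i}$ is a quotient of $(f_\ast M/N_i)|_{V_i}$ and hence locally nilpotent, so the inclusion $N \subseteq f_\ast M$ is a local nil-isomorphism.

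The main obstacle is showing that this $N$ is coherent, since the $\mathcal{C}$-submodule generated by a coherent submodule is a priori only quasi-coherent. I intend to handle this locally on each $V_i$ using gauge boundedness. Fix a presentation of $S_i = \mathcal{O}_X(f^{-1}(V_i))$ as a finite type $R_i$-algebra and generators of $M(f^{-1}(V_i))$, so that a gauge $\delta$ is defined, and let $K$ be the constant from gauge-boundedness so that $\delta((\kappa_j \otimes 1)(m)) \leq \delta(m)/p^{e_j} + K$ for every generator $\kappa_j$. Since $N'|_{V_i}$ is coherent, its elements have gauge bounded by some $D$; iterating the estimate, every word in the generators applied to such an element yields an element of gauge at most $\max\{D, Kp/(p-1)\}$, and multiplication by elements of $R_i$ does not affect the gauge. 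Thus $\mathcal{C}_+ \cdot N'|_{V_i}$ is contained in the submodule of elements of gauge $\leq C$ for a constant $C$ depending only on $D$ and $K$, which is a finitely generated $R_i$-module. Consequently $N|_{V_i} = N'|_{V_i} + \mathcal{C}_+ \cdot N'|_{V_i}$ is finitely generated, and coherence of $N$ follows from finiteness of the cover.
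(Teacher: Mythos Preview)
Your argument is essentially correct when $f$ is an affine morphism, and in that case it follows the same route as the paper: one applies Proposition~\ref{FG+IdealGaugeBoundWithSpecialGenerators} together with Lemma~\ref{GaugeboundPushforwardNiliso} on an affine cover of $Y$. Your globalization over $Y$ (extend, sum, generate as a $\mathcal{C}$-module, bound the gauge) is carefully done and works.

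However, there is a genuine gap for general finite type $f$. You write ``Fix a presentation of $S_i = \mathcal{O}_X(f^{-1}(V_i))$ as a finite type $R_i$-algebra and generators of $M(f^{-1}(V_i))$, so that a gauge $\delta$ is defined'', and you invoke Proposition~\ref{FG+IdealGaugeBoundWithSpecialGenerators} and Lemma~\ref{GaugeboundPushforwardNiliso}, both of which are stated for a map $\Spec S \to \Spec R$. If $f$ is not affine then $f^{-1}(V_i)$ need not be affine, $\mathcal{O}_X(f^{-1}(V_i))$ need not be a finite type $R_i$-algebra, $M(f^{-1}(V_i))$ need not be finitely generated, and the gauge formalism underlying the entire coherence estimate is unavailable. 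Covering $f^{-1}(V_i)$ by affines $U_{ij}$ does not immediately help either, because $(f_\ast M)|_{V_i}$ is not a single affine pushforward $(f|_{U_{ij}})_\ast(M|_{U_{ij}})$.

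The paper supplies exactly the missing reduction: $f_\ast M$ is computed as the kernel in a \v{C}ech resolution built from an affine open cover of $X$, whose terms are pushforwards along affine morphisms of finite type. This makes the argument local on $X$ and places you in the situation where your (and the paper's) affine argument applies. Once you insert this \v{C}ech step, your proof goes through.
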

\begin{proof}
Since $f_\ast$ may be defined by applying $f_\ast$ to a \v{C}ech resolution the argument is local on $X$. Hence, one only needs to consider affine morphisms $f:\Spec S \to \Spec R$ of finite type. This in turn follows from Lemma \ref{GaugeboundPushforwardNiliso} and Proposition \ref{FG+IdealGaugeBoundWithSpecialGenerators}
\end{proof}

\begin{Bem}
Note that if we only care about the coherence of a particular pushforward $f_\ast M$ then it is sufficient that the image of $\mathcal{C}_X$ in $\bigoplus_{e \geq 0} \mathcal{H}om(F_\ast^e M, M)$ locally admits a gauge bounded generating system of the form $\kappa_i \otimes 1$ as in Lemma \ref{GaugeboundPushforwardNiliso}. At present we are not aware of a naturally occurring example where this is not satisfied (cf.\ Remark \ref{CircumventGaugeBoundedness} below).
\end{Bem}

With this theorem we can now define the test module for a pushforward of finite type:

\begin{Def}
\label{DefTestmodulePushforward}
Let $f: X \to Y$ be a morphism of finite type between noetherian $F$-finite schemes. Let $\mathcal{C}_Y$ be a finitely generated Cartier algebra on $Y$, $\mathfrak{a} \subseteq \mathcal{O}_Y$ an ideal sheaf and $t \geq 0$ a rational number. Then for any coherent $f^\ast \mathcal{C}_Y^{\mathfrak{a}^t}$-module we define $\tau(f_\ast M, \mathcal{C}_Y^{\mathfrak{a}^t})$ as $\tau(N, \mathcal{C}_Y^{\mathfrak{a}^t})$ for any $N$ as in Theorem \ref{PushforwardCoherentCrystals}.
\end{Def}

This is independent of the choice of $N$ for if $N' \subseteq f_\ast M$ is another such submodule then $\underline{N} = \underline{N'}$ as $\mathcal{C}_Y$-submodules of $f_\ast M$. The proof is similar to the one of \cite[Lemma 2.4]{staeblerunitftestmoduln}. Since $\tau(N) = \tau(\underline{N})$ the claim follows.

Just as in \cite{blickleboecklecartiercrystals} one can develop a theory of pushforwards more generally in the category of crystals under the above assumptions on the Cartier algebra. The details are a relatively straightforward generalization of \cite{blickleboecklecartiercrystals}.

We conclude this section with an example that shows that the assertion of the theorem does not hold without a gauge boundedness assumption.

\begin{Bsp}
\begin{enumerate}[(a)]
 \item Let $\mathcal{C}$ be a principal Cartier algebra over $R$ with generator $\kappa$. Let $\mathcal{D}$ be a Cartier subalgebra over $R$. Then each graded component $\mathcal{D}_e$ is of the form $\kappa^e\mathfrak{a}_e$ for some ideal $\mathfrak{a}_e$ of $R$. The inclusion $\mathcal{D}_e \mathcal{D}_f \subseteq \mathcal{D}_{e+f}$ implies that $(\mathfrak{a}_e)_{e \in \mathbb{N}}$ is a $F$-graded system of ideals, i.e. $\mathfrak{a}_e^{[p^f]} \mathfrak{a}_f \subseteq \mathfrak{a}_{e+f}$, where the square brackets denote the image of $\mathfrak{a}_e$ under the $f$th iterate of the Frobenius.
\item If $R$ is a principal ideal domain of finite type over an $F$-finite field, then each $\mathfrak{a}_e = (a_e)$ for some element $a_e \in R$. A system of right $R$-module generators of $\mathcal{D}$ is therefore given by $\kappa^e a_e, e \in \mathbb{N}$. The $F$-graded system property implies that $a_{e+f} | a_e^{p^f}a_f$, and in particular $a_e | a_1^{1+p+p^2+\ldots+p^{e-1}} | a_1^{p^e}$.

If $M$ is any finitely generated $R$-module with gauge $\delta$ induced by a gauge $\delta$ on $R$, then
\[
\delta(\kappa^e(a_e m)) \leq \frac{\delta(a_em)}{p^e} + K \leq \frac{\delta(a_1^{p^e}) + \delta(m)}{p^e} + K \leq \frac{\delta(m)}{p^e} + \delta(a_1) + K,\]
where $K$ is some constant as in Proposition \ref{GaugeBoundSingleMorphism}. Hence every Cartier subalgebra of a principal $R$-Cartier algebra with $R$ a principal ideal domain is gauge bounded on every finitely generated $R$-module.
\item The following example shows that for higher dimensional $R$ the corresponding result is not true.
Let $R=k[x,y]$ with gauge $\delta$ given by grading of the variables. Consider the Cartier subalgebra $\mathcal{D}$ given by the graded system of monomial ideals $\mathfrak{a}_e = (x^2,xy^{ep^e})$. It is straightforward to check that this is a graded system (essentially since $x^2$ is in each of the ideals). Now the operators $\kappa^e x^2,\kappa^e xy^{ep^e}$ are generators of $\mathcal{D}_e$ as a right $R$-module. Letting $\kappa$ act on $R$ as the Cartier operator, we get
\[
(\kappa^e xy^{ep^e})(x^{p^e-2}y^{p^e-1}) = \kappa^e((y^e)^{p^e}(xy)^{p^e-1})=y^e \]
Hence $\delta((\kappa^e xy^{ep^e})(x^{p^e-2}y^{p^e-1}))=e$. But this is a contradiction to the gauge boundedness since $\delta(x^{p^e-2}y^{p^e-1})/p^e\leq 1$. This shows that the gauge boundedness cannot hold for the particular choice of right $R$-generators of the Cartier algebra. Assume now that $C_i$ are another set of generators in degree $e$. Then we can write $C_i = \kappa^e x^2 a_{i1} + \kappa^e x y^{ep^e} a_{i2}$ and since the $C_i$ are generators we obtain $\sum_i C_i r_i = \kappa^e x y^{ep^e}$ for some $r_i \in R$. By a degree argument this is only possible if for some $i$ the constant terms of $a_{i2}$ and of $r_i$ are non-zero. We conclude that for any such $i$ one has $\delta(C_i(x^{p^e-2}y^{p^e-1})) \geq \delta(y^e) = e$ which shows that this set of generators is also not gauge bounded.

Note that $R$ is $F$-pure. To see this it suffices to show that all monomials $x^ay^b$ are in the image of $\mathcal{C}_+ R$. Taking, for instance, $e = 2$ we have $\kappa^2 x^2 \cdot x^{p^2 - 3} y^{p^2 -1} x^{ap^2} y^{bp^2} = x^ay^b$.

\item A variation of the example discussed in (c) also shows that the pushforward of a coherent Cartier module along a morphism of finite type need not be coherent in general. Consider the morphism $f: \Spec \mathbb{F}_p[x,y] \to \mathbb{F}_p$ and let $\mathcal{C}$ be the free non-commutative algebra \[\mathbb{F}_p[\kappa_{i,e}, c_{a,b} \, \vert \, i =1,2; e \geq 1, a,b \geq 0]\] which we endow with an $\mathbb{N}$-grading by assigning $\kappa_{i,e}$ degree $e$ and $c_{a,b}$ degree $2$. Then we let $f^\ast \mathcal{C}$ act on $R = \mathbb{F}_p[x,y]$ by letting $\kappa_{1,e}$ act as $\kappa x^2$, $\kappa_{2,e}$ as $\kappa x y^{ep^e}$ and the $c_{a,b}$ act as $\kappa^2 x^{ap^2} y^{bp^2} (xy)^{p^2-1}$, where $\kappa$ is the Cartier operator. Note that the Cartier algebra $f^\ast \mathcal{C}$ acts on $R$ just as the one in (c) and hence is also not gauge bounded.

Since $c_{a,b}(1) = x^a y^b$ the Cartier submodule of $f_\ast R$ generated by $1$ is in fact $f_\ast R$. This shows that $\mathcal{C}_+^h \mathbb{F}_p$ contains monomials of arbitrary high degree for all $h \in \mathbb{N}$. If there was $V \subseteq f_\ast R$ finite dimensional and locally nil-isomorphic to $f_\ast R$ via the inclusion then in particular $\mathcal{C}_+^h \mathbb{F}_p$ would be contained in $V$ for all $h \gg 0$. This shows that $f_\ast R$ does not admit any coherent submodule for which the inclusion induces a local nil-isomorphism.
\end{enumerate}
\end{Bsp}

\section{Testmodules and Pushforwards}
\label{SectionTestmodulesPushforwards}

In this section we show that for $f: X \to Y$ quasi-finite and of finite type  and $M$ a $\mathcal{C}_X$-module one has a natural transformation $\tau(f_\ast M) \to f_\ast\tau(M)$ under the restriction on the Cartier algebra $\mathcal{C}_X$ which ensures the nil-coherence of $f_*M$ as explained in the preceding section.

The idea is as follows. By Zariski's main theorem we may factor $f$ as $i \circ g$, where $g$ is finite and $i$ is an open immersion. If $g$ is not dominant then we may factor $g$ as a finite dominant morphism followed by a closed immersion. In the case of a closed immersion this natural transformation is given by the identity which follows from Proposition \ref{TauFormalProperties} (d). We are thus left to deal with the case of a finite dominant morphism and that of an open immersion. In order to tackle the case of an open immersion we need to assume that $\mathcal{C}_Y$ is of the form $\mathcal{D}^{\mathfrak{a^t}}$ for $\mathcal{D}$ finitely generated, $\mathfrak{a}$ an ideal sheaf and $t \geq 0$ a rational number. Moreover, we will require that the base is $F$-finite in this case.

We begin with the finite case.

\begin{Prop}
\label{FiniteShriekPushNaturalIso}
Let $f: X \to Y$ and $i: \Spec Z \to Y$ be finite morphisms. The natural base change morphism obtained form the pullback diagram
\[
\xymatrix{ X_Z \ar[r]^{f'}\ar[d]^{i'} & Z \ar[d]^i \\ X \ar[r]^f & Y}
\]
$f'_\ast i'^\flat \to i^\flat f_\ast $ is an isomorphism of functors of Cartier modules (resp. Cartier crystals).
\end{Prop}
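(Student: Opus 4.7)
The plan is to reduce to the affine setting and then exhibit the natural base change isomorphism explicitly via the classical tensor--hom adjunction, verifying afterwards that it is compatible with the Cartier structures supplied by Theorem~\ref{UpperShriekCartierStructure}(a). Since both $f$ and $i$ are finite (hence affine) and both functors in question are local on $Y$, I would pass to affine opens so that $Y=\Spec R$, $Z=\Spec B$ and $X=\Spec A$ with $X_Z=\Spec(A\otimes_R B)$, all four structural maps being ring maps out of $R$. For a $\mathcal{C}_X$-module $M$, the underlying $B$-module of $f'_\ast i'^\flat M$ is $\Hom_A(A\otimes_R B,M)$, while that of $i^\flat f_\ast M$ is $\Hom_R(B,M)$ (viewing $M$ as an $R$-module via $f$).

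I would then write down the tensor--hom adjunction
\[
\beta_M\colon \Hom_A(A\otimes_R B,M)\xrightarrow{\ \sim\ }\Hom_R(B,M),\qquad \psi\mapsto\bigl[\,b\mapsto\psi(1\otimes b)\,\bigr],
\]
with inverse $\bar\psi\mapsto[a\otimes b\mapsto a\bar\psi(b)]$. The $B$-linearity, bijectivity, and naturality in $M$ are immediate, and after sheafifying $\beta_M$ manifestly agrees with the natural base change morphism on the underlying quasi-coherent sheaves.

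The main step is verifying that $\beta_M$ respects the Cartier actions. The $\mathcal{C}_Z=\mathcal{C}_R\otimes_R B$-structure on $i^\flat f_\ast M$ is induced by $\mathcal{C}_R\to f_\ast\mathcal{C}_X$, $\kappa\mapsto\kappa\otimes 1$, from Lemma~\ref{PushforwardCartierstructure}. On the other side $f'_\ast i'^\flat M$ carries the $\mathcal{C}_{X_Z}=\mathcal{C}_R\otimes_R(A\otimes_R B)$-structure of Theorem~\ref{UpperShriekCartierStructure}(a); its $\mathcal{C}_Z$-module structure under $f'$ is obtained by the corresponding map $\kappa\otimes b\mapsto\kappa\otimes(1\otimes b)$. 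For a homogeneous element $\kappa\otimes b$ of degree $e$, unwinding the formula in Theorem~\ref{UpperShriekCartierStructure}(a) and using $A$-linearity of $\psi$ yields, for $y\in B$,
\[
\beta_M\bigl((\kappa\otimes b)\cdot\psi\bigr)(y)=\bigl((\kappa\otimes b)\cdot\psi\bigr)(1\otimes y)=(\kappa\otimes 1)_M\bigl(\psi(1\otimes by^{p^e})\bigr),
\]
while on the right-hand side
\[
\bigl((\kappa\otimes b)\cdot\beta_M(\psi)\bigr)(y)=\kappa_{f_\ast M}\bigl(\beta_M(\psi)(by^{p^e})\bigr)=(\kappa\otimes 1)_M\bigl(\psi(1\otimes by^{p^e})\bigr),
\]
since by construction $\kappa$ acts on $f_\ast M$ exactly as $\kappa\otimes 1\in\mathcal{C}_X$ acts on $M$. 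The two expressions agree, so $\beta_M$ is $\mathcal{C}_Z$-linear. Naturality in $M$ is built into the formula, and the local isomorphisms glue to a natural isomorphism of functors of Cartier modules.

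For the crystal statement, note that $f_\ast$ and $f'_\ast$ preserve nilpotence by Proposition~\ref{PushforwardPreservesNilpotence}, while $i^\flat$ and $i'^\flat$ preserve nilpotence by Corollary~\ref{Nonnilpotencepreservedbyshriek}; hence both composites descend to functors on Cartier crystals, and a natural isomorphism of Cartier-module functors automatically induces one of the crystal functors. The only real obstacle in the argument is bookkeeping, keeping track of which Cartier algebra is acting where and how the Frobenius twists enter the formula of Theorem~\ref{UpperShriekCartierStructure}(a); once the affine reduction is made and the notation is fixed, the compatibility check is a short direct computation.
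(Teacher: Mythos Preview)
Your proof is correct and lands on the same tensor--Hom adjunction $\psi\mapsto[b\mapsto\psi(1\otimes b)]$ that the paper uses after the affine reduction. The one genuine difference is how Cartier linearity is obtained: the paper constructs the base change map abstractly by applying $f_\ast$ to the counit $i'_\ast i'^\flat\to\id$ and then using the $(i_\ast,i^\flat)$-adjunction of Proposition~\ref{adjunctionsoffiniteandetale}; since those adjunction maps are already known to be Cartier linear, Cartier linearity of the composite is automatic and no computation is needed. You instead write the map down explicitly and check $\mathcal{C}_Z$-linearity by hand from the formula in Theorem~\ref{UpperShriekCartierStructure}(a). Your route is more self-contained and makes the compatibility completely transparent, at the cost of the short computation; the paper's route is quicker but relies on the Cartier-linear adjunctions already being in place. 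Your treatment of the crystal statement via Proposition~\ref{PushforwardPreservesNilpotence} and Corollary~\ref{Nonnilpotencepreservedbyshriek} is also fine and slightly more explicit than the paper, which simply says ``is immediate''.
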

\begin{proof}
The base change map is obtained by applying $f_*$ to the unit of adjunction $ i'_*i^\flat \to \id$ from Proposition \autoref{adjunctionsoffiniteandetale} to obtain
\[
    i_*f_*i'^\flat \cong f'_*i'_*i'^\flat \to f_*.
\]
Using adjunction for $(i_*,i^\flat)$, this yields the base change morphism $f'_\ast i'^\flat \to i^\flat f_\ast $. By construction this morphism is Cartier linear. To check that this map is an isomorphism is a local property, so that we may assume that  $Y = \Spec R$ and since both $f$ and $i$ are affine also $X = \Spec S$ and $Z = \Spec A$. With this notation the assertion is that the natural map $f'_\ast \Hom_S(A \otimes_R S, M) \to \Hom_R(A, f_\ast M)$ is an isomorphism, but this is just the usual Tensor-Hom adjointness  $\Hom_S(A \otimes_R S, M) \to \Hom_R(A, M), \varphi \mapsto [a \mapsto \varphi(a \otimes 1)]$.
\end{proof}

\begin{Prop}
\label{EtaleShriekFinitePushNaturalIso}
Let $f: X \to Y$ be a morphism of finite type and $j: Z \to Y$ essentially \'etale. Then there is a natural isomorphism of functors of Cartier modules or crystals $j^! f_\ast \cong f'_\ast j'^!$, where $f'$ and $j'$ are the pullbacks of $f$ and $j$.
\end{Prop}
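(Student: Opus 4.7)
The plan is to construct the base change morphism via adjunction, reduce to the affine setting, and then check that the underlying map of $\mathcal{O}$-modules is an isomorphism (by flat base change, since $j$ is flat) and that it is compatible with the Cartier structures.

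First I construct the natural transformation. By Proposition \ref{adjunctionsoffiniteandetale}(2) we have the adjoint pair $(j'^*, j'_*)$ on Cartier modules. The unit $\eta \colon \id \to j'_* j'^*$ gives $f_\ast \to f_\ast j'_\ast j'^\ast$. Since the square is commutative and all functors involved are push-forwards of morphisms of ringed spaces (hence strictly commute), $f_\ast j'_\ast = j_\ast f'_\ast$. The $(j^*, j_*)$-adjunction turns the map $f_\ast \to j_\ast f'_\ast j'^\ast$ into a Cartier-linear morphism $j^! f_\ast = j^\ast f_\ast \to f'_\ast j'^\ast = f'_\ast j'^!$, which is the base change morphism we want.

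Since the statement is local on $Y$ and $Z$ we may assume $Y = \Spec R$, $Z = \Spec A$ with $R \to A$ essentially \'etale. Since $f$ is of finite type and $f_\ast$ can be computed from an affine (Cech) cover of $X$ (compatibly with restriction to the pullback cover of $X_Z$) we may further reduce to $X = \Spec S$ affine. Then $X_Z = \Spec (S\otimes_R A)$, and both $j^! f_\ast M$ and $f'_\ast j'^! M$ have $M \otimes_R A$ as underlying $A$-module. Under this identification the base change morphism is the identity on the underlying $A$-module, hence an isomorphism of $\mathcal{O}_Z$-modules (this is just flat base change for quasi-coherent modules since $j$ is flat).

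It remains to check Cartier-linearity, which in this situation amounts to showing that the two Cartier structures on $M \otimes_R A$ agree. By Theorem \ref{UpperShriekCartierStructure}(b) the structure coming from $j^!f_\ast M$ is $(\kappa \otimes s^{p^e})(m \otimes t^{p^e}) = \kappa(m)\otimes ts$. On the other hand, for $f'_\ast j'^! M$ we first endow $M \otimes_R A$ with its $\mathcal{C}_{S\otimes_R A}$-structure by the same formula (using the identification $f^\ast \mathcal{C}_R \otimes_S (S \otimes_R A) = \mathcal{C}_R \otimes_R (S\otimes_R A) = j'^\ast f^\ast \mathcal{C}_R$), and then pull back to a $\mathcal{C}_A$-structure along the Cartier algebra map $\mathcal{C}_A = \mathcal{C}_R \otimes_R A \to \mathcal{C}_R \otimes_R (S\otimes_R A)$, $\kappa \otimes s^{p^e} \mapsto \kappa \otimes 1 \otimes s^{p^e}$, from Lemma \ref{PushforwardCartierstructure}. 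Both structures thus act by the same explicit formula on elementary tensors, proving Cartier-linearity. This immediately gives the claim for Cartier crystals since the isomorphism is functorial and Cartier linear.

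The only delicate point is the bookkeeping in the last paragraph: one has to carefully track the identifications of the various pullback Cartier algebras $f'^\ast \mathcal{C}_A$, $j'^\ast f^\ast \mathcal{C}_R$, and $(f\circ j')^\ast \mathcal{C}_R$ and verify that under them the formula of Theorem \ref{UpperShriekCartierStructure}(b) applied to $j$ followed by pushforward along $f'$ agrees with the formula applied to $j'$ after pushforward along $f$. This is a routine but somewhat tedious local computation, and is where any ``main obstacle'' lies.
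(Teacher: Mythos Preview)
Your proof is correct, and in fact more explicit than the paper's. The paper's argument is a two-line appeal to the fact that the \v{C}ech complex is a resolution in the category of Cartier modules (citing \cite[Theorem 3.2.2]{blickleboecklecartiercrystals}), so that the usual flat base change isomorphism of quasi-coherent sheaves from \cite[Proposition III.9.3]{hartshornealgebraic} is automatically Cartier linear.

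Your route is different: you build the base change morphism directly from the $(j^*,j_*)$ and $(j'^*,j'_*)$ adjunctions of Proposition~\ref{adjunctionsoffiniteandetale}, which guarantees Cartier linearity \emph{a priori}, and then check that the underlying $\mathcal{O}$-module map is an isomorphism by reducing to the affine case. This has the pleasant feature that the \v{C}ech reduction is only needed for the underlying quasi-coherent statement, not for any Cartier compatibility. Consequently, your final two paragraphs verifying Cartier linearity by hand are actually redundant: once you have the map as a composite of units and counits of adjunctions between categories of Cartier modules, Cartier linearity is automatic, and the only thing left is to identify the underlying module map with the standard flat base change isomorphism (which you do). The ``main obstacle'' you flag is therefore not an obstacle at all. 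What both approaches buy is the same result; yours is more self-contained, while the paper's is shorter but leans on the cited \v{C}ech compatibility.
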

\begin{proof}
The \v{C}ech complex is a resolution in the category of Cartier modules (\cite[Theorem 3.2.2]{blickleboecklecartiercrystals} and the discussion preceding it). Hence, the corresponding isomorphism of modules obtained via \cite[Proposition III.9.3]{hartshornealgebraic} is compatible with Cartier structures.
\end{proof}

\begin{Le}
\label{PushforwardAssociatedPrimes}
Let $f: \Spec S \to \Spec R$ be a finite morphism and $M$ a $\mathcal{C}_S$-module. Then $\Ass f_\ast M = f(\Ass M)$.
\end{Le}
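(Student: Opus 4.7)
The plan is to compute $H^0_\eta((f_*M)_\eta)$ for $\eta \in \Spec R$ directly in terms of the local cohomology of $M$ at the primes of $S$ lying over $\eta$, and then reconcile the two notions of nilpotence coming from $\mathcal{C}_R$ and $\mathcal{C}_S$. Since $f$ is affine, $(f_*M)_\eta$ is just $M \otimes_R R_\eta$, which viewed as a module over $S_\eta := S \otimes_R R_\eta$ agrees with the localization $M_\eta$ of $M$ along $R \setminus \eta$. The elements of $M_\eta$ killed by a power of $\eta$ are precisely those killed by a power of $\eta S_\eta$, so as subsets of $M_\eta$ one has $H^0_\eta((f_*M)_\eta) = H^0_{\eta S_\eta}(M_\eta)$.

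Second, I will use finiteness of $f$ to decompose this torsion module. Let $\nu_1, \dots, \nu_n$ be the primes of $S$ above $\eta$; since $f$ is finite they are pairwise coprime maximal ideals of $S_\eta$ with $\sqrt{\eta S_\eta} = \nu_1 \cap \cdots \cap \nu_n$. A standard Chinese Remainder argument then yields a direct sum decomposition of $S_\eta$-modules
\[
    H^0_{\eta S_\eta}(M_\eta) \;=\; \bigoplus_{i=1}^n H^0_{\nu_i}(M_{\nu_i}),
\]
where I identify $H^0_{\nu_i}(M_\eta) = H^0_{\nu_i}(M_{\nu_i})$ using maximality of $\nu_i$ in $S_\eta$ (so that elements of $S \setminus \nu_i$ act invertibly on $\nu_i$-power torsion). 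Each summand is stable under $\mathcal{C}_S$: if $\nu_i^k m = 0$ and $\kappa \in \mathcal{C}_{S,e}$, the Cartier relation $r\kappa = \kappa r^{p^e}$ gives $\nu_i^{kp^e} \kappa m = \kappa \nu_i^k m = 0$.

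The key compatibility I need is that an $\mathcal{C}_S$-module $N$ is $\mathcal{C}_R$-nilpotent (with respect to the pushforward structure $\kappa \mapsto \kappa \otimes 1$ of Lemma \ref{PushforwardCartierstructure}) if and only if it is $\mathcal{C}_S$-nilpotent. The forward direction is immediate. For the converse, every element of $(\mathcal{C}_S)_+^e$ is a sum of products $(\kappa_1 \otimes s_1)\cdots(\kappa_e \otimes s_e)$, and by the multiplication rule of Proposition \ref{PullbackCartierRingStructure} each such product is of the form $\kappa \otimes t$ with $\kappa \in (\mathcal{C}_R)_+^e$ and $t \in S$. Writing $\kappa \otimes t = (\kappa \otimes 1) \cdot t$ in $\mathcal{C}_S$, its action on $m \in N$ equals $(\kappa \otimes 1)(tm)$, which vanishes once $(\mathcal{C}_R)_+^e$ annihilates $N$ via pushforward. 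I expect this bookkeeping around the two Cartier algebra structures to be the main technical content; the remaining pieces are commutative algebra.

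Chaining the three observations yields: $\eta \in \Ass f_*M$ iff $H^0_\eta((f_*M)_\eta)$ is not $\mathcal{C}_R$-nilpotent, iff $\bigoplus_i H^0_{\nu_i}(M_{\nu_i})$ is not $\mathcal{C}_S$-nilpotent, iff some summand $H^0_{\nu_i}(M_{\nu_i})$ is not $\mathcal{C}_S$-nilpotent, iff some $\nu_i$ lies in $\Ass M$, i.e.\ $\eta \in f(\Ass M)$. The argument also covers the degenerate case where $\eta$ has no preimage in $\Spec S$: then $\eta S_\eta = S_\eta$, both sides vanish, and $\eta$ is in neither set.
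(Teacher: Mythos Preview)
Your proof is correct and follows essentially the same route as the paper: identify $H^0_\eta((f_*M)_\eta)$ with the $\eta S$-torsion of $M_\eta$, decompose the latter as $\bigoplus_{\nu \in f^{-1}(\eta)} H^0_\nu(M_\nu)$ over the (finite) fiber, and then use that $\mathcal{C}_R$-nilpotence of the pushforward coincides with $\mathcal{C}_S$-nilpotence. The only difference is packaging: the paper obtains these identifications by invoking its base-change isomorphisms (Propositions \ref{FiniteShriekPushNaturalIso}, \ref{EtaleShriekFinitePushNaturalIso}) and Proposition \ref{PushforwardPreservesNilpotence}, whereas you unpack the same content by hand via the Chinese Remainder Theorem and a direct check on $(\mathcal{C}_S)_+^e = (\mathcal{C}_R)_+^e \otimes S$.
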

\begin{proof}
The statement holds for the underlying modules by \cite[Proposition 9.A]{matsumura}. Using Propositions \ref{FiniteShriekPushNaturalIso}, \ref{EtaleShriekFinitePushNaturalIso} and \ref{PushforwardPreservesNilpotence} we conclude that \[\underline{H^0_\eta(f_\ast M)_\eta} = f'_\ast \underline{H^0_{\eta S}(M)_\eta}.\] Since $\underline{H^0_{\eta S}(M)_\eta}$ lives on the reduced fiber of $\eta$ and is given by $\bigoplus_{\nu \in f^{-1}(\eta)} \underline{H^0_\nu(M)}_\nu$ we may use Proposition \ref{PushforwardPreservesNilpotence} and \cite[proof of Lemma 6.13]{staeblertestmodulnvilftrierung} to conclude that $H^0_\eta(f_\ast M)$ is nilpotent if and only if each $H^0_\nu(M)_\nu$ is nilpotent.
\end{proof}

\begin{Prop}
\label{FinitePushforwardPreservesFRegular}
Let $f: \Spec S \to \Spec R$ be a finite dominant map and $M$ an $S$-Cartier module. Then $M$ is $F$-regular if and only if  $f_\ast M$ is $F$-regular.
\end{Prop}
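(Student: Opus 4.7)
The plan is to reduce both implications of this equivalence to a single bijection: since $f$ is finite and hence affine, pushforward induces a bijection between $\mathcal{C}_X$-submodules $N' \subseteq M$ and $\mathcal{C}_Y$-submodules $N \subseteq f_*M$ via $N = f_*N'$, and I will show that under this bijection the defining condition for $F$-regularity on each side matches up.

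First I would establish the bijection. Since $f$ is affine, $f_*$ is an equivalence between quasi-coherent $\mathcal{O}_X$-modules and quasi-coherent $f_*\mathcal{O}_X$-modules, so $\mathcal{O}_Y$-submodules of $f_*M$ correspond bijectively to $S$-submodules of $M$. Writing $\mathcal{C}_X = \mathcal{C}_Y \otimes_R S$, a local section $\kappa \otimes s$ acts on $m$ as $(\kappa \otimes 1)(s\cdot m)$, so an $S$-submodule stable under the image of $\mathcal{C}_Y$ (which is what $\mathcal{C}_Y$-stability of $f_*N'$ buys us) is automatically stable under all of $\mathcal{C}_X$, and conversely. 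Moreover, $M$ is $F$-pure if and only if $f_*M$ is $F$-pure by Proposition \ref{FPurePushforward}, so the $F$-purity part of $F$-regularity transfers across the bijection.

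Next I would analyze the nil-isomorphism condition at associated primes. By Lemma \ref{PushforwardAssociatedPrimes}, $\Ass f_*M = f(\Ass M)$, and the computation in its proof gives, for $\eta \in \Ass f_*M$,
\[
\underline{H^0_\eta(f_*M)_\eta} \;=\; f'_*\Bigl(\bigoplus_{\nu \in f^{-1}(\eta)} \underline{H^0_\nu(M)_\nu}\Bigr),
\]
where $f'$ is the base change of $f$ to the reduced fiber over $\eta$, with the completely analogous formula for $N = f_*N'$. Since $f'$ is affine, $f'_*$ is exact on quasi-coherent sheaves and reflects nilpotence by Proposition \ref{PushforwardPreservesNilpotence}; combined with the fact that a direct sum of inclusions is a nil-isomorphism iff each summand is, this shows that the inclusion $H^0_\eta(N)_\eta \subseteq H^0_\eta(f_*M)_\eta$ is a nil-isomorphism iff $H^0_\nu(N')_\nu \subseteq H^0_\nu(M)_\nu$ is a nil-isomorphism for every $\nu \in f^{-1}(\eta)$. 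For $\nu \notin \Ass M$ this is automatic (both sides are nilpotent), so the condition ranging over $\eta \in \Ass f_*M$ is equivalent to the condition ranging over $\nu \in \Ass M$.

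Combining the two equivalences gives both implications at once: the proper Cartier submodules of $f_*M$ violating $F$-regularity correspond bijectively to the proper Cartier submodules of $M$ violating $F$-regularity, so $M$ is $F$-regular iff $f_*M$ is. The main obstacle is really just the fiber decomposition in the displayed formula above, and that is supplied by the proof of Lemma \ref{PushforwardAssociatedPrimes}; once that is in hand, the remainder is bookkeeping about the bijection of submodules and the elementary behavior of nil-isomorphisms under affine pushforward and finite direct sums.
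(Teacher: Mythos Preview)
There is a genuine gap: the claimed bijection between $\mathcal{C}_X$-submodules of $M$ and $\mathcal{C}_Y$-submodules of $f_*M$ is false in the direction you need. It is true that $f_*$ sends $\mathcal{C}_S$-submodules of $M$ to $\mathcal{C}_R$-submodules of $f_*M$, and this suffices for the implication ``$f_*M$ $F$-regular $\Rightarrow$ $M$ $F$-regular'' (and indeed this is how the paper argues that direction). But the inverse map fails: an $R$-submodule $N\subseteq f_*M$ stable under the image of $\mathcal{C}_R$ need not be an $S$-submodule at all, so there is no $\mathcal{C}_S$-submodule $N'\subseteq M$ with $f_*N'=N$. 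Your sentence ``$\mathcal{O}_Y$-submodules of $f_*M$ correspond bijectively to $S$-submodules of $M$'' is the error; $f_*$ identifies quasi-coherent $\mathcal{O}_X$-modules with $f_*\mathcal{O}_X$-modules, not with $\mathcal{O}_Y$-modules. Concretely, take $R=\mathbb{F}_p$ with $p$ odd, $S=\mathbb{F}_{p^2}$, $M=S$ with the $p^{-1}$-linear map $\kappa=\sigma^{-1}$ (inverse Frobenius). Then any $v\in\mathbb{F}_{p^2}$ with $v^{p-1}=-1$ spans a one-dimensional $\mathbb{F}_p$-subspace of $f_*M$ stable under $\kappa$, hence a $\mathcal{C}_R$-submodule, yet it is not an $\mathbb{F}_{p^2}$-subspace.

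Because of this, the implication ``$M$ $F$-regular $\Rightarrow$ $f_*M$ $F$-regular'' cannot be obtained from your bijection, and the paper's argument for it is genuinely different. Given a $\mathcal{C}_R$-submodule $N\subseteq f_*M$ satisfying the nil-isomorphism condition at all associated primes, the paper passes to the $\mathcal{C}_S$-submodule $f^!N\cap M$ of $M$, using the unit embedding $M\hookrightarrow f^!f_*M$. One checks (via Lemma \ref{FiniteShriekAssPrimes} and the fact that $f^!$, $H^0_\nu$, and localization preserve nil-isomorphisms) that $f^!N\cap M\subseteq M$ again satisfies the nil-isomorphism condition at every $\nu\in\Ass M$, so $F$-regularity of $M$ forces $M\subseteq f^!N$; applying $f_*$ and the trace then yields $f_*M\subseteq N$. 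The passage through $f^!$ and the trace is precisely what produces an honest $S$-submodule from $N$, and this step is missing from your approach.
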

\begin{proof}
Let us assume first that $f_*M$ is $F$-regular. By Proposition \ref{FPurePushforward} $M$ is certainly $F$-pure. Assume that $M$ is not $F$-regular. That is, there exists a proper submodule $N \subseteq M$ such that the inclusion $H^0_\nu(N)_\nu \subseteq H^0_\nu(M)_\nu$ is a nil-isomorphism for all associated primes $\nu$ of $M$. Note that the inclusion $f_\ast N \subseteq f_\ast M$ is still proper. Since the $\nu \in f^{-1}(\eta)$ form an open covering we conclude that $H^0_{\eta S}(N)_\eta \subseteq H^0_{\eta S}(M)_\eta$ is a nil-isomorphism.

We have to show that the inclusion $H^0_\eta(f_\ast N)_\eta \subseteq H^0_\eta(f_\ast M)_\eta$ is a nil-isomorphism for all $\eta \in \Ass f_\ast M$. Consider the following diagram
\[\begin{xy}
\xymatrix{H^0_\eta(f_\ast N)_\eta \ar[r]^{\subseteq} & H^0_\eta(f_\ast M)_\eta \\ \underline{H^0_\eta(f_\ast N)}_\eta \ar[u]^{\subseteq} & \underline{H^0_\eta(f_\ast M)}_\eta \ar[u]^{\subseteq}\\ \underline{f_\ast H^0_{\eta S}(N)}_\eta \ar[r]^{\subseteq} \ar[u]^{=}& \underline{f_\ast H^0_{\eta S}(M)}_\eta \ar[u]^{=}}
\end{xy}\]
Note that the bottom vertical arrows are nil-isomorphisms obtained from Proposition \ref{FiniteShriekPushNaturalIso} and \ref{EtaleShriekFinitePushNaturalIso}. By assumption, Proposition \ref{PushforwardPreservesNilpotence}, Lemma \ref{PushforwardAssociatedPrimes} and the above observation the bottom horizontal arrow is also a nil-isomorphism whence the claim.

For the converse, assume now that $M$ is $F$-regular and let $N \subseteq f_\ast M$ be a submodule such that $H^0_\eta(N)_\eta \subseteq H^0_\eta(f_\ast M)_\eta$ is a nil-isomorphism for all $\eta \in \Ass f_\ast M$. Note that $f_\ast M$ is $F$-pure by Proposition \ref{FPurePushforward}. Given $\eta$ we write $\alpha: \Spec (R/\eta)_\eta \to \Spec R$ for the inclusion. Note that $f'^! \alpha^! = \alpha'^! f^!$, where $\alpha'$ and $f'$ are the pullbacks of $\alpha$ and $f$. If we denote by $\gamma$ the inclusion $\Spec (S/\nu)_\nu \to \Spec (S/ \eta S)_\eta$, where $\nu$ is a point in the fiber $f^{-1}(\eta)$, then the inclusion $N \subseteq f_\ast M$ induces a nil-isomorphism $\gamma^! \alpha'^! f^! N \subseteq \gamma^! \alpha'^! f^! f_\ast M$ which means that $\underline{H^0_\nu(f^! N)}_\nu \subseteq \underline{H^0_\nu(f^! f_\ast M)}_\nu$ is a nil-isomorphism. We conclude that the inclusion \begin{equation}\label{EQ2} H^0_\nu(f^! N)_\nu \subseteq H^0_\nu(f^! f_\ast M)_\nu\end{equation} is also a nil-isomorphism.

Consider now the counit $M \to f^! f_\ast M, m \mapsto [1 \mapsto m]$ which is an embedding. Since both $H^0_\nu$ and localization commute with intersections we obtain that the inclusion $f^! N \cap M \subseteq M$ induces a nil-isomorphism \[H^0_\nu(f^! N \cap M)_\nu \subseteq H^0_\nu(M)_\nu\] by intersecting the inclusion (\ref{EQ2}) in the previous paragraph with $H^0_\nu(M)_\nu$. Since $\Ass M \subseteq \Ass f^! f_\ast M = f^{-1} \Ass f_\ast M$ by Lemmata \ref{AssForCrystals} and \ref{FiniteShriekAssPrimes} we have a nil-isomorphism for all associated primes of $M$. We conclude that $f^! N \cap M = M$ by $F$-regularity of $M$. This implies $M \subseteq f^!N$. Taking pushforwards and applying the trace map shows that $N = f_\ast M$.
\end{proof}

\begin{Prop}
\label{TauFiniteDomPushforward}
Let $f: \Spec S \to \Spec R$ be a finite dominant map and $M$ an $S$-Cartier module. Then $f_\ast \tau(M) = \tau(f_\ast M)$
\end{Prop}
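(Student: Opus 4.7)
The plan is a two-direction minimality argument, using Proposition \ref{FinitePushforwardPreservesFRegular} and the machinery built up in its proof. Direction one, $\tau(f_\ast M) \subseteq f_\ast \tau(M)$: I will show that $f_\ast \tau(M) \subseteq f_\ast M$ already satisfies the defining nil-isomorphism property of $\tau(f_\ast M)$, and conclude by minimality. Direction two, $f_\ast \tau(M) \subseteq \tau(f_\ast M)$: I will use that $\tau(M)$ is $F$-regular and invoke Proposition \ref{FinitePushforwardPreservesFRegular} together with functoriality of $\tau$.

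For direction one, fix $\eta \in \Ass f_\ast M$, which by Lemma \ref{PushforwardAssociatedPrimes} lies in $f(\Ass M)$. For every prime $\nu' \in f^{-1}(\eta)$ the inclusion $H^0_{\nu'}(\tau(M))_{\nu'} \subseteq H^0_{\nu'}(M)_{\nu'}$ is a nil-isomorphism: if $\nu' \in \Ass M$ this is the defining property of $\tau(M)$, and otherwise both sides are nilpotent. As in the proof of Proposition \ref{FinitePushforwardPreservesFRegular}, the fact that the $\nu' \in f^{-1}(\eta)$ form an open cover lets us assemble these into a nil-isomorphism $H^0_{\eta S}(\tau(M))_\eta \subseteq H^0_{\eta S}(M)_\eta$. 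Using the identification $\underline{H^0_\eta(f_\ast N)}_\eta \cong \underline{f_\ast H^0_{\eta S}(N)}_\eta$ established there (via Propositions \ref{FiniteShriekPushNaturalIso} and \ref{EtaleShriekFinitePushNaturalIso}), we conclude that $H^0_\eta(f_\ast \tau(M))_\eta \subseteq H^0_\eta(f_\ast M)_\eta$ is a nil-isomorphism, and minimality of $\tau(f_\ast M)$ yields $\tau(f_\ast M) \subseteq f_\ast \tau(M)$.

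For direction two, $\tau(M)$ is $F$-regular in the sense of Definition \ref{DefFRegular}: the inclusion $\Ass \tau(M) \subseteq \Ass M$ is Lemma \ref{AssForCrystals}(a), while the reverse is forced by the nil-isomorphism $H^0_\eta(\tau(M))_\eta \subseteq H^0_\eta(M)_\eta$ for $\eta \in \Ass M$. Any submodule of $\tau(M)$ satisfying the test module condition for $\tau(M)$ satisfies it for $M$ by composition, so equals $\tau(M)$ by minimality. Applying Proposition \ref{FinitePushforwardPreservesFRegular} to $\tau(M)$ then shows $f_\ast \tau(M)$ is $F$-regular, i.e., $\tau(f_\ast \tau(M)) = f_\ast \tau(M)$. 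Functoriality of $\tau$ on the injection $f_\ast \tau(M) \hookrightarrow f_\ast M$ (Proposition \ref{TestModulesAreFunctorial}(a)) finally gives $f_\ast \tau(M) = \tau(f_\ast \tau(M)) \subseteq \tau(f_\ast M)$.

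The only genuinely non-routine ingredient is the fiber-assembly step in direction one, but this has been carried out verbatim in the proof of Proposition \ref{FinitePushforwardPreservesFRegular}; the present proof is therefore a mostly formal rearrangement of ingredients already at hand.
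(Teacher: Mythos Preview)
Your proof is correct and follows essentially the same route as the paper's: first show $f_\ast\tau(M)\subseteq f_\ast M$ satisfies the defining nil-isomorphism condition (via the fiber assembly and Propositions \ref{FiniteShriekPushNaturalIso}, \ref{EtaleShriekFinitePushNaturalIso}) to get $\tau(f_\ast M)\subseteq f_\ast\tau(M)$, then use Proposition \ref{FinitePushforwardPreservesFRegular} on the $F$-regular module $\tau(M)$ to force equality. The only difference is cosmetic: you spell out explicitly why $\tau(M)$ is $F$-regular and invoke functoriality to get the reverse inclusion, whereas the paper compresses this into a single sentence.
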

\begin{proof}
We may assume that $M$ is $F$-pure. By Propositions \ref{FiniteShriekPushNaturalIso} and \ref{EtaleShriekFinitePushNaturalIso} we have $f_\ast H^0_{\eta S}(\tau(M))_{\eta} = H^0_\eta(f_\ast \tau(M))_\eta$ and similarly $f_\ast H^0_{\eta S}(M)_\eta = H^0_\eta(f_\ast M)_\eta$. Since $\nu \in f^{-1}(\eta)$ form an open covering, we have a nil-isomorphism $H^0_{\eta S}(\tau(M))_{\eta} \subseteq H^0_{\eta S}(M)_\eta$. We thus obtain a nil-isomorphism $H^0_\eta(f_\ast \tau(M))_\eta \subseteq H^0_\eta(f_\ast M)_\eta$. Since $\tau(f_\ast M)$ is minimal with this property we obtain the inclusion $\tau(f_\ast M) \subseteq f_\ast \tau(M)$.

For the other inclusion we note that by Proposition \ref{FinitePushforwardPreservesFRegular} above the inclusion $f_\ast \tau(M) \supseteq \tau(f_\ast M)$ has to be an equality.
\end{proof}

\begin{Prop}
\label{FiniteMorphismTauNatTransf}
Let $f: X \to Y$ be a finite morphism of noetherian schemes and $M$ a Cartier module on $X$. Then the restriction of the identity on $f_\ast M$ induces a natural isomorphism of Cartier modules $f_\ast \tau(M) \to \tau(f_\ast M)$.
\end{Prop}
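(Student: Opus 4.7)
The plan is to reduce to the two cases already handled in the previous results: finite dominant morphisms (Proposition \ref{TauFiniteDomPushforward}) and closed immersions (Proposition \ref{TauFormalProperties}(d)). This reduction is achieved by factoring $f$ through its scheme-theoretic image.

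First I would reduce to the affine case. Since $f$ is finite, hence affine, and since both $f_\ast \tau(M)$ and $\tau(f_\ast M)$ are quasi-coherent sub-$\mathcal{O}_Y$-modules of $f_\ast M$ whose formation commutes with restriction to affine opens (by Proposition \ref{TauFormalProperties}(b) and compatibility of $f_\ast$ with restriction to open subsets for affine morphisms, together with the remark after Proposition \ref{TauFormalProperties}), it suffices to treat $Y = \Spec R$, $X = \Spec S$ with $R \to S$ finite. Set $I = \ker(R \to S)$ and factor $f = i \circ g$ where $g \colon \Spec S \to \Spec R/I$ is finite and $i \colon \Spec R/I \to \Spec R$ is the closed immersion defined by $I$. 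Since $R/I \hookrightarrow S$ is injective and integral, by lying over $g$ is surjective, in particular dominant in the sense used in Proposition \ref{TauFiniteDomPushforward}. The Cartier-algebra structures are coherent: on $X$ the algebra is $f^\ast \mathcal{C}_Y = g^\ast(i^\ast \mathcal{C}_Y)$, and on the intermediate scheme $\Spec R/I$ we use $i^\ast \mathcal{C}_Y$, so that $g_\ast M$ is naturally an $(i^\ast\mathcal{C}_Y)$-Cartier module and $i_\ast g_\ast M = f_\ast M$ is a $\mathcal{C}_Y$-Cartier module.

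Now I would invoke the two building blocks. Proposition \ref{TauFiniteDomPushforward} applied to the finite dominant morphism $g$ and the $\mathcal{C}_X$-module $M$ yields
\[
  g_\ast \tau(M) = \tau(g_\ast M)
\]
as Cartier submodules of $g_\ast M$. Proposition \ref{TauFormalProperties}(d) applied to the closed immersion $i$ and the $(i^\ast\mathcal{C}_Y)$-Cartier module $g_\ast M$ yields
\[
  i_\ast \tau(g_\ast M) = \tau(i_\ast g_\ast M) = \tau(f_\ast M)
\]
as Cartier submodules of $f_\ast M$. Applying $i_\ast$ to the first identity and splicing with the second gives the chain
\[
  f_\ast \tau(M) = i_\ast g_\ast \tau(M) = i_\ast \tau(g_\ast M) = \tau(f_\ast M),
\]
and every identification takes place inside $f_\ast M$ via the identity, so the composition is precisely the map induced by restricting the identity on $f_\ast M$ to the submodule $f_\ast \tau(M)$. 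Naturality in $M$ is inherited from Propositions \ref{TauFiniteDomPushforward}, \ref{TauFormalProperties}(d) and the functoriality of $\tau$ (Proposition \ref{TestModulesAreFunctorial}).

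The main obstacle is essentially non-existent once the factorization is set up, because both the dominant and the closed-immersion cases have been done. The one point worth verifying carefully is that $g$ is \emph{dominant}, i.e.\ that the hypothesis of Proposition \ref{TauFiniteDomPushforward} applies; but this is immediate from the construction of $I = \ker(R \to S)$, because $R/I \to S$ is an injective integral extension and therefore $g$ is surjective on spectra. A secondary point is to confirm that the identifications above really do coincide with the inclusion $f_\ast\tau(M) \hookrightarrow f_\ast M$, which is simply the observation that both Propositions \ref{TauFiniteDomPushforward} and \ref{TauFormalProperties}(d) are asserted as equalities of submodules of the respective pushforwards.
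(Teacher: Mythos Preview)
Your proof is correct and arrives at the result via the same two building blocks as the paper, namely Proposition \ref{TauFiniteDomPushforward} (finite dominant) and Proposition \ref{TauFormalProperties}(d) (closed immersion). The route differs slightly: you factor $f = i \circ g$ at the outset and chain the two equalities of submodules directly, whereas the paper first constructs the natural transformation abstractly via the adjunction $(f_\ast,f^!)$ together with the inclusion $\tau(f^! -) \subseteq f^!\tau(-)$ from Propositions \ref{TauFormalProperties}(c) and \ref{FiniteShriekTauInclusion}, and only then invokes the same two propositions to check that this map is the identity. Your approach is more elementary and avoids any appeal to $f^!$; the paper's approach makes naturality manifest from the adjunction, whereas you have to remark separately that naturality is inherited from the pieces. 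Either way the content is the same.
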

\begin{proof}
The assertion is clearly local on $Y$ and hence also on $X$. We are thus reduced to a situation $f: \Spec S \to \Spec R$ with $f$ finite.

Starting with the identity $f_\ast M \to f_\ast M$ we obtain via adjunction and by applying $\tau$ a morphism $\tau(M) \to \tau(f^! f_\ast M)$. By Propositions \ref{TauFormalProperties} (c), \ref{FiniteShriekTauInclusion} we have an inclusion $\tau(f^! f_\ast M) \subseteq f^! \tau(f_\ast M)$. Applying adjunction once more to the morphism $\tau(M) \to f^! \tau(f_\ast M)$ yields the desired natural transformation. Since $\tau$ applied to a morphism is simply restriction we immediately obtain from Propositions \ref{TauFiniteDomPushforward}, \ref{TauFormalProperties} (d) that this natural transformation is the identity.
\end{proof}

Before we proceed with the case of an open immersion we remind the reader of Definition \ref{DefTestmodulePushforward} which explained how to attach a test module to finite type pushforwards. In particular, from now on we assume that the Cartier algebra on the base is of the form $\mathcal{C}_Y^{\mathfrak{a}^t}$ with $\mathcal{C}_Y$ finitely generated.

\begin{Bem}
\label{CircumventGaugeBoundedness}
It seems plausible that one can circumvent the assumption on $\mathcal{C}_Y$ as follows. Assume that $Y$ is affine. Let $M$ be an $F$-pure coherent $\mathcal{C}_X$-module. Fix a finitely generated sub-algebra $\mathcal{C}' \subseteq \mathcal{C}_X$ that satisfies the assumptions of Lemma \ref{TauExistenceFinitelyGeneratedC}. Fix an open affine covering $U \subseteq f^{-1}(Y)$ and finitely many algebra generators $\kappa_e$ of $\mathcal{C}'\vert_U$. Then each $\kappa_e$ is of the form $\sum_i \kappa_{ei} \otimes s_{ei}$. Denote the sub-algebra of $\mathcal{C}_Y$ generated by the $\kappa_{ei}$ by $\mathcal{C}''$. Then clearly $f^\ast \mathcal{C}''$ satisfies the assumptions of Lemma \ref{TauExistenceFinitelyGeneratedC} so that $\mathcal{C}_X \tau(M, f^\ast\mathcal{C}'') = \tau(M, \mathcal{C}_X)$. Since $\mathcal{C}''$ is finitely generated Theorem \ref{PushforwardCoherentCrystals} applies. Hence, we find a coherent $\mathcal{C}''$-submodule $N$ of $f_\ast M$ which is locally nil-isomorphic to $f_\ast M$ (as $\mathcal{C}''$-modules). Then we define $\tau(f_\ast M, \mathcal{C}_Y) = \mathcal{C}_Y \tau(N, \mathcal{C}'')$. By the discussion following Definition \ref{DefTestmodulePushforward}, this does not depend on the choice of $N$ for fixed $\mathcal{C}''$. Lemma \ref{TauExistenceFinitelyGeneratedC} makes it plausible that it also should not depend on $\mathcal{C}''$. If this is the case then we obtain the same result for general $Y$ by gluing.

However, in applications (i.e.\ when studying singularities) one usually studies Cartier algebras constructed from $\bigoplus_e \mathcal{H}om(F_\ast^e M, M)$ and some ideal $\mathfrak{a}$. At present we are not aware of an example in this situation where Theorem \ref{PushforwardCoherentCrystals} does not apply directly. There is an example (cf.\ \cite{katzmannonfgcartieralgebra}) where the algebra $\bigoplus_e \mathcal{H}om(F_\ast^e M, M)$ is not finitely generated. However, in this case one easily checks that it is gauge bounded.
\end{Bem}

\begin{Prop}
\label{OpenImmersionTauNatTransf}
Let $j: X \to Y$ be an open immersion of $F$-finite schemes and assume that the Cartier algebra on $Y$ is of the form $\mathcal{C}_Y^{\mathfrak{a}^{t}}$ with $\mathcal{C}_Y$ finitely generated. If $M$ is an $\mathcal{O}_X$-Cartier module then we have a natural transformation $\tau j_\ast \to j_\ast \tau$ of functors from Cartier modules on $X$ to Cartier modules on $Y$ which is given by the natural inclusion.
\end{Prop}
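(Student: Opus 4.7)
The plan is to construct, naturally in the coherent Cartier module $M$ on $X$, a submodule inclusion $\tau(j_\ast M) \subseteq j_\ast \tau(M)$ inside $j_\ast M$; since this is just the restriction of $\id_{j_\ast M}$ to a subsheaf, naturality in $M$ will then follow automatically from the functoriality of $j_\ast$ and of $\tau$ (Proposition \ref{TestModulesAreFunctorial}). First I will use Theorem \ref{PushforwardCoherentCrystals} to pick a coherent $\mathcal{C}_Y^{\mathfrak{a}^t}$-submodule $N \subseteq j_\ast M$ for which the inclusion is a local nil-isomorphism; Definition \ref{DefTestmodulePushforward} then gives $\tau(j_\ast M) = \tau(N)$, reducing the task to showing $\tau(N) \subseteq j_\ast \tau(M)$ as subsheaves of $j_\ast M$.

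The main computation happens after restricting along $j$. I will first check that $j^\ast N \hookrightarrow j^\ast j_\ast M \cong M$ is an honest nil-isomorphism on $X$: its cokernel $j^\ast(j_\ast M / N)$ is locally nilpotent as the restriction of a locally nilpotent sheaf, but it is also coherent as a quotient of the coherent $\mathcal{O}_X$-module $M$; since an ascending union of nilpotent submodules of a coherent Cartier module on a noetherian scheme must terminate, coherent plus locally nilpotent implies nilpotent. Lemma \ref{TauNilisoLemma} then identifies $\tau(j^\ast N)$ with $\tau(M)$ as subsheaves of $M$. Combining this with the compatibility of $\tau$ with restriction to open subsets---Proposition \ref{TauFormalProperties}(b) together with the gluing remark immediately after it, which handles open immersions without invoking the stronger hypotheses of Corollary \ref{TauCommutesEtalePullback}---gives $j^\ast \tau(N) = \tau(j^\ast N) = \tau(M)$ inside $M$.

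It remains to translate this equality on $X$ into the desired inclusion on $Y$. For any open $U \subseteq Y$ one has $(j_\ast M)(U) = M(U \cap X) = (j_\ast M)(U \cap X)$, and the restriction map in $j_\ast M$ from $U$ to $U \cap X$ is the identity on $M(U \cap X)$. Thus the sections $\tau(N)(U) \subseteq M(U \cap X)$ restrict, as subsets of $M(U \cap X)$, into $\tau(N)(U \cap X) = j^\ast \tau(N)(U \cap X) = \tau(M)(U \cap X) = (j_\ast \tau(M))(U)$, yielding the section-wise inclusion for every $U$ and hence the desired inclusion of subsheaves.

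The step that I expect to require the most care, and which I view as the main obstacle, is the identification $j^\ast \tau(N) = \tau(j^\ast N)$ under the weaker hypotheses available here: Corollary \ref{TauCommutesEtalePullback} requires $Y$ essentially of finite type over an $F$-finite field, so one must instead extract this compatibility directly from Proposition \ref{TauFormalProperties}(b) via the gluing principle noted in the remark following it. The remaining ingredients---existence of $N$, the passage from locally nilpotent to nilpotent using coherence, and the sheaf-theoretic bookkeeping at the end---are routine given the earlier machinery.
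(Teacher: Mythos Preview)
Your proof is correct and follows essentially the same strategy as the paper's: pick a coherent $N \subseteq j_\ast M$ representing the pushforward up to local nilpotence, show that $j^\ast N \hookrightarrow M$ is a genuine nil-isomorphism, apply Lemma~\ref{TauNilisoLemma}, and then use $j^\ast \tau(N) = \tau(j^\ast N)$ to descend to an inclusion $\tau(N) \subseteq j_\ast \tau(M)$.

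The one substantive difference is how you justify $j^\ast \tau(N) = \tau(j^\ast N)$. The paper invokes Theorem~\ref{EtalePullbackFRegularity} (the \'etale case of $f^!\tau = \tau f^!$), which as you correctly observe carries the hypothesis that the base be essentially of finite type over an $F$-finite field. You instead appeal directly to Proposition~\ref{TauFormalProperties}(b) and the gluing remark following it, which for an open immersion is exactly the statement that $\tau$ is local and hence commutes with restriction. This is more elementary and avoids importing the extra hypothesis of Theorem~\ref{EtalePullbackFRegularity}; since existence of $\tau$ is already being assumed implicitly (via Definition~\ref{DefTestmodulePushforward}), your route is in fact cleaner here. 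The remaining cosmetic differences---your explicit ``locally nilpotent plus coherent implies nilpotent'' argument for the cokernel, and your section-wise bookkeeping in place of the paper's adjunction---are equivalent reformulations of the same steps.
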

\begin{proof}
Note that $j^! = j^\ast$. We start with the identity $j_\ast M \to j_\ast M$. Adjunction yields $j^\ast j_\ast M \to M$. Applying $\tau$ we get an isomorphism $\tau(j^\ast j_\ast M) \to \tau(M)$. If $N$ is any coherent submodule for which the inclusion $N \subseteq j_\ast M$ is a local nil-isomorphism then $j^\ast N \subseteq j^\ast j_\ast M = M$ is again a nil-isomorphism by Proposition \ref{CartierOperationEtalePullbackCommutes} (the proof also works in the quasi-coherent case). 
Using Lemma \ref{TauNilisoLemma} and Theorem \ref{EtalePullbackFRegularity} we get $\tau(j^\ast j_\ast M) = \tau(j^\ast N) = j^\ast \tau(N) \to \tau(M)$. Adjunction now yields $\tau(j_\ast M) = \tau(N) \to j_\ast \tau(M)$. Note that $\tau(j^\ast j_\ast M) = j^\ast \tau(j_\ast M)$ is an equality as submodules of $j^\ast j_\ast M$. Since the natural transformation is induced by the identity $j_\ast \to j_\ast$ it is simply the natural inclusion.
\end{proof}

\begin{Bsp}
The natural transformation $\tau j_\ast \to j_\ast \tau$ does not in general induce an isomorphism of Cartier modules or crystals. Consider $M = R = k[x,x^{-1}]$ with Cartier structure obtained from identification with $\omega_R$ and let $j: D(x) \to \mathbb{A}^1_k$ be the natural inclusion. Then $\tau(M) = k[x,x^{-1}]$ while $\tau(j_\ast M) = \tau(k[x] \cdot x^{-1}) = k[x]$. One checks that the natural inclusion is not a local nil-isomorphism.

In fact, $k[x]$ is simple as a Cartier crystal while $k[x] \cdot x^{-1}$ admits the proper sub-crystal $k[x]$. Hence, they are not even abstractly isomorphic.
\end{Bsp}

\begin{Theo}
\label{TauQuasifinitepushforward}
For $f: X \to Y$ a quasi-finite finite type morphism of $F$-finite schemes with Cartier algebra on $Y$ of the form $\mathcal{C}_Y^{\mathfrak{a}^t}$ with $\mathcal{C}_Y$ finitely generated one has a natural transformation $\tau f_\ast \to f_\ast \tau$ of Cartier modules or crystals.
\end{Theo}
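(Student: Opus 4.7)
The plan is to reduce to the two cases already handled: open immersions (Proposition \ref{OpenImmersionTauNatTransf}) and finite morphisms (Proposition \ref{FiniteMorphismTauNatTransf}). By Zariski's Main Theorem, a quasi-finite finite-type morphism $f \colon X \to Y$ of noetherian schemes factors as $f = j \circ g$, where $g \colon X \to X'$ is finite and $j \colon X' \to Y$ is an open immersion. Since $\mathcal{C}_Y$ is of the form $\mathcal{D}^{\mathfrak{a}^t}$ with $\mathcal{D}$ finitely generated, the restriction $\mathcal{C}_{X'} = j^*\mathcal{C}_Y = (j^*\mathcal{D})^{(\mathfrak{a}\mathcal{O}_{X'})^t}$ is of the same form, and then $\mathcal{C}_X = g^*\mathcal{C}_{X'} = f^*\mathcal{C}_Y$. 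In particular, Theorem \ref{PushforwardCoherentCrystals} applies to each of $g_*$, $j_*$ and $f_*$, so that $\tau(g_*M)$, $\tau(j_*g_*M)$ and $\tau(f_*M)$ are all well-defined in the sense of Definition \ref{DefTestmodulePushforward}.

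Given a coherent $\mathcal{C}_X$-module $M$, the plan is to compose the natural isomorphism of Proposition \ref{FiniteMorphismTauNatTransf} applied to $g$ with the natural transformation of Proposition \ref{OpenImmersionTauNatTransf} applied to $j$ (after pushing forward by $j_*$ in the latter case):
\[
\tau(f_*M) \;=\; \tau(j_*g_*M) \;\longrightarrow\; j_*\tau(g_*M) \;\xleftarrow[\cong]{}\; j_*g_*\tau(M) \;=\; f_*\tau(M).
\]
The right-hand arrow is an isomorphism by Proposition \ref{FiniteMorphismTauNatTransf} (applied after $j_*$), so composing with its inverse yields a morphism $\tau(f_*M) \to f_*\tau(M)$. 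Naturality in $M$ is inherited from the constituent natural transformations, and the construction does not refer to any choice of base scheme, so it glues.

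The main subtlety, and the point requiring some care, is to check that the construction is compatible with the definition of $\tau$ on the pushforward, which hinges on Definition \ref{DefTestmodulePushforward} via a choice of coherent submodule $N \subseteq j_*g_*M$ locally nil-isomorphic to $j_*g_*M$. Concretely one observes: if $N' \subseteq g_*M$ is a coherent $\mathcal{C}_{X'}$-submodule locally nil-isomorphic to $g_*M$ (provided by Theorem \ref{PushforwardCoherentCrystals} for $g$, which is in fact finite so local nil-isomorphism reduces to nil-isomorphism here), then $j_*N'$ need not itself be coherent over $\mathcal{C}_Y$, but Theorem \ref{PushforwardCoherentCrystals} for the composition $f = j \circ g$ produces a coherent $N \subseteq j_*N' \subseteq j_*g_*M$ with both inclusions local nil-isomorphisms. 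Applying $\tau$, using that $\tau$ depends only on the underlying crystal (so is compatible with local nil-isomorphisms of coherent submodules, by the same argument that yielded well-definedness in Definition \ref{DefTestmodulePushforward}), the composite above is realised concretely by the chain of inclusions $\tau(N) \subseteq \tau(j_*N') = j_*\tau(N') = j_*g_*\tau(M)$, where the first equality uses Proposition \ref{OpenImmersionTauNatTransf} applied to $N'$ and the second uses Proposition \ref{FiniteMorphismTauNatTransf} applied to $g$. The hardest part of the write-up is thus not a new idea but rather the careful bookkeeping to reconcile the two possible interpretations of $\tau$ on a non-coherent pushforward and to check that the resulting transformation is independent of the coherent approximations chosen at each stage; once that is done, the statement follows immediately from the finite and open-immersion cases.
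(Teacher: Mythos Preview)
Your factorization is backwards. Zariski's Main Theorem factors a separated quasi-finite morphism of finite type as an \emph{open immersion followed by a finite morphism}, i.e.\ $f = g \circ j$ with $j \colon X \hookrightarrow \bar X$ an open immersion and $g \colon \bar X \to Y$ finite. It does \emph{not} in general give a factorization $f = j \circ g$ with $g$ finite and $j$ an open immersion. For a concrete obstruction, take the squaring map $\mathbb{A}^1 \to \mathbb{A}^1$ and remove one preimage of a branch point from the source: the resulting map is quasi-finite with image all of $\mathbb{A}^1$, so any open immersion $j \colon X' \hookrightarrow \mathbb{A}^1$ through which it factors must have $X' = \mathbb{A}^1$, forcing $g$ to be the non-finite original map. (You may have been misled by the informal description at the start of the section, which has the order reversed; the actual proof in the paper uses the correct order.)

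With the correct factorization the argument runs as in the paper: one composes
\[
\tau(f_* M) = \tau(g_* j_* M) \xrightarrow{\ \cong\ } g_* \tau(j_* M) \longrightarrow g_* j_* \tau(M) = f_* \tau(M),
\]
using Proposition \ref{FiniteMorphismTauNatTransf} for the finite $g$ and then Proposition \ref{OpenImmersionTauNatTransf} for the open immersion $j$. The point about independence of choices is then immediate, since both constituent transformations are the natural inclusions of submodules of $f_*M$. Your careful bookkeeping about coherent approximations is on the right track, but it needs to be redone with the roles of the finite and open-immersion pieces swapped.
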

\begin{proof}
By Zariski's main theorem we may factor $f = g \circ j$ with $g$ finite and $j$ an open immersion. Then we define the natural transformation by composing the natural transformations $f_\ast \tau \to \tau f_\ast $ and $\tau j_\ast \to j_\ast \tau$ of Propositions \ref{FiniteMorphismTauNatTransf} and \ref{OpenImmersionTauNatTransf}. Since these natural transformations are all given by the natural inclusions this is independent of the choice of factorization.
\end{proof}

Recall that we use the notation $Gr(M, \mathfrak{a}^t)$ for the quotient $\tau(M, \mathfrak{a}^{t-\eps})/\tau(M, \mathfrak{a}^t)$.

\begin{Ko}
\label{GrQuasifinitepushforward}
For $f: X \to Y$ a quasi-finite finite type morphism of $F$-finite schemes with Cartier algebra on $Y$ of the form $\mathcal{C}_Y^{\mathfrak{a}^t}$ with $\mathcal{C}_Y$ finitely generated one has a natural transformation $Gr(f_\ast -, \mathfrak{a}^t) \to f_\ast Gr(-, (\mathfrak{a} \cdot \mathcal{O}_X)^t)$ of Cartier modules or crystals.
\end{Ko}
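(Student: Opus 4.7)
The plan is to mimic the argument used in Corollary \ref{FiniteShriekGrInclusion}, replacing the inclusion for finite pullbacks from Proposition \ref{FiniteShriekTauInclusion} with the natural transformation for quasi-finite pushforwards provided by Theorem \ref{TauQuasifinitepushforward}. Concretely, first I apply Theorem \ref{TauQuasifinitepushforward} at the two parameter values $s \in \{t, t-\eps\}$ (for $\eps > 0$ sufficiently small) to obtain natural morphisms
\[
\tau(f_\ast M, \mathfrak{a}^s) \longrightarrow f_\ast \tau(M, (\mathfrak{a}\mathcal{O}_X)^s).
\]

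By construction these maps arise, via the finite/open-immersion factorization supplied by Zariski's Main Theorem, from the restriction of the identity on $f_\ast M$ (see Propositions \ref{FiniteMorphismTauNatTransf} and \ref{OpenImmersionTauNatTransf}). They are therefore automatically compatible with the horizontal inclusions $\tau(-, \mathfrak{a}^t) \subseteq \tau(-, \mathfrak{a}^{t-\eps})$ coming from right-continuity (Proposition \ref{Rightcontinuity}), yielding a commutative square
\[
\begin{xy}\xymatrix{
\tau(f_\ast M, \mathfrak{a}^t) \ar[r] \ar[d] & \tau(f_\ast M, \mathfrak{a}^{t-\eps}) \ar[d] \\
f_\ast \tau(M, (\mathfrak{a}\mathcal{O}_X)^t) \ar[r] & f_\ast \tau(M, (\mathfrak{a}\mathcal{O}_X)^{t-\eps})
}\end{xy}
\]
whose horizontal maps are inclusions. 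Passing to horizontal cokernels produces a natural morphism
\[
Gr(f_\ast M, \mathfrak{a}^t) \longrightarrow f_\ast \tau(M, (\mathfrak{a}\mathcal{O}_X)^{t-\eps}) \big/ f_\ast \tau(M, (\mathfrak{a}\mathcal{O}_X)^t).
\]
Finally, applying the left-exact functor $f_\ast$ to the defining short exact sequence $0 \to \tau(M,(\mathfrak{a}\mathcal{O}_X)^t) \to \tau(M,(\mathfrak{a}\mathcal{O}_X)^{t-\eps}) \to Gr(M,(\mathfrak{a}\mathcal{O}_X)^t) \to 0$ identifies the target above with a submodule of $f_\ast Gr(M, (\mathfrak{a}\mathcal{O}_X)^t)$. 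Composition yields the desired natural transformation.

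The main point to verify is the existence of the commutative square, i.e.\ that the natural transformation of Theorem \ref{TauQuasifinitepushforward} is functorial in the exponent. This is not so much an obstacle as a bookkeeping matter: since both horizontal maps descend from $\id_{f_\ast M}$, it suffices to pick a single coherent representative $N \subseteq f_\ast M$ locally nil-isomorphic to $f_\ast M$ with respect to both Cartier algebras $\mathcal{C}_Y^{\mathfrak{a}^t}$ and $\mathcal{C}_Y^{\mathfrak{a}^{t-\eps}}$ simultaneously (possible after enlarging, and harmless by the independence remark after Definition \ref{DefTestmodulePushforward}). The passage to Cartier crystals is then automatic because all involved functors preserve nil-isomorphisms by Proposition \ref{TauFunctorOnCrys} and Proposition \ref{PushforwardPreservesNilpotence}.
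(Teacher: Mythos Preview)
Your proposal is correct and follows essentially the same approach as the paper: both rely on the diagram chase from Corollary \ref{FiniteShriekGrInclusion}, adapted to the pushforward setting via the inclusion $\tau f_\ast \hookrightarrow f_\ast \tau$. The only organizational difference is that the paper first factors $f = g \circ j$ and disposes of the finite part $g$ immediately (since $g_\ast$ is exact and Proposition \ref{FiniteMorphismTauNatTransf} gives an isomorphism there), reserving the diagram chase for the open immersion $j$; you instead run the diagram chase once for all of $f$, which is perfectly legitimate since Theorem \ref{TauQuasifinitepushforward} already packages the two pieces together as a single inclusion.
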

\begin{proof}
We may factor $f$ as $g \circ j$ with $g$ finite and $j$ an open immersion. Since $g_\ast$ is exact the assertion immediately follows from Proposition \ref{FiniteMorphismTauNatTransf}. For an open immersion the argument is similar to the one of Corollary \ref{FiniteShriekGrInclusion} using Proposition \ref{OpenImmersionTauNatTransf}.
\end{proof}

\begin{Bem}
We point out that the restriction of the identity does not induce a natural transformation $\tau f_\ast \to f_\ast \tau$ in the category of Cartier crystals for general finite type morphisms. Indeed, consider the structural map $\mathbb{A}^1_k \to \Spec k$, where $k$ is any $F$-finite field and the Cartier module $\omega = \omega_{k[x]}$ endowed with Cartier structure $\kappa x^{p-1}$. Then $\tau(\omega) = x \omega$ while we have $f_\ast \omega = k dx$ and $f_\ast \tau( \omega) = 0$. Since $f_\ast \omega$ is defined over a field one has (as crystals) $\tau( f_\ast \omega) = f_\ast \omega$.
\end{Bem}

\bibliography{bibliothek}
\bibliographystyle{amsalpha}
\end{document}